%% LyX 2.2.2 created this file.  For more info, see http://www.lyx.org/.
%% Do not edit unless you really know what you are doing.
\documentclass[11pt,english]{article}
\usepackage{lmodern}
\usepackage{lmodern}
\usepackage[T1]{fontenc}
\usepackage[latin9]{inputenc}
\usepackage{babel}
\usepackage{prettyref}
\usepackage{float}
\usepackage{mathrsfs}
\usepackage{mathtools}
\usepackage{enumitem}
\usepackage{amsmath}
\usepackage{amsthm}
\usepackage{amssymb}
\usepackage{stmaryrd}
\usepackage{graphicx}
\usepackage{setspace}
\setstretch{1.1}
\usepackage[unicode=true,pdfusetitle,
 bookmarks=true,bookmarksnumbered=true,bookmarksopen=false,
 breaklinks=false,pdfborder={0 0 1},backref=false,colorlinks=false]
 {hyperref}
\hypersetup{
 colorlinks,linkcolor={blue},citecolor={crimson},urlcolor={orange}}

\makeatletter
%%%%%%%%%%%%%%%%%%%%%%%%%%%%%% Textclass specific LaTeX commands.
      % auxiliary length
  \theoremstyle{plain}
  \newtheorem{prop}{\protect\propositionname}[section]
  \theoremstyle{remark}
  \newtheorem*{rem*}{\protect\remarkname}
  \theoremstyle{plain}
  \newtheorem{lem}{\protect\lemmaname}[section]
  \theoremstyle{plain}
  \newtheorem{thm}{\protect\theoremname}[section]
  \theoremstyle{plain}
  \newtheorem{cor}{\protect\corollaryname}[section]

%%%%%%%%%%%%%%%%%%%%%%%%%%%%%% User specified LaTeX commands.
\usepackage{babel}

\usepackage{dsfont}
\usepackage{bbm}

\newcommand{\ones}{\mathbbm{1}}

\usepackage{xcolor}

\definecolor{crimson}{HTML}{DC143C}

\definecolor{huntergreen}{HTML}{355E3B}

\definecolor{emerald}{HTML}{50C878}

\newrefformat{prop}{Proposition \ref{#1}}
\newrefformat{figure}{Figure \ref{#1}}

\newrefformat{subsec}{Section \ref{#1}}

\newrefformat{proof}{Proof in \ref{#1}}

\usepackage{babel}

\usepackage[totalwidth=460pt, totalheight=660pt]{geometry}

\usepackage{tikz}
\usetikzlibrary{arrows,calc}
\tikzset{
%Define standard arrow tip
>=stealth',
%Define style for different line styles
help lines/.style={dashed, thick},
axis/.style={<->},
important line/.style={thick},
connection/.style={thick, dotted},
}

\usepackage{pgfplots}

\pgfmathdeclarefunction{gauss}{2}{%
  \pgfmathparse{1/(#2*sqrt(2*pi))*exp(-((x-#1)^2)/(2*#2^2))}%
}

\usepackage{caption}
\usepackage{subcaption}

\makeatother

  \providecommand{\lemmaname}{Lemma}
  \providecommand{\propositionname}{Proposition}
  \providecommand{\remarkname}{Remark}
\providecommand{\corollaryname}{Corollary}
\providecommand{\theoremname}{Theorem}

\usepackage[blocks]{authblk}

\date{}

\begin{document}

\title{\textbf{On the Convergence of the EM Algorithm:}\\
\textbf{A Data-Adaptive Analysis}}

\author[1]{\href{mailto:cwu@live.com}{\textcolor{black}{Chong Wu}}}
\author[1]{\href{mailto:eeyang@hkbu.edu.hk}{\textcolor{black}{Can Yang}}}
\author[2]{\href{mailto:hongyu.zhao@yale.edu}{\textcolor{black}{Hongyu Zhao}}}
\author[3]{\href{mailto:jizhu@umich.edu}{\textcolor{black}{Ji Zhu}}}

\affil[1]{Department of Mathematics\\Hong Kong Baptist University}
\affil[2]{Department of Biostatistics\\Yale School of Public Health\\Yale University}
\affil[3]{Department of Statistics\\University of Michigan}

\affil[ ]{ }

\affil[ ]{\href{mailto:chongwu@hkbu.edu.hk}{\textcolor{black}{\texttt{\{chongwu,eeyang\}@hkbu.edu.hk}}}}
\affil[ ]{\href{mailto:hongyu.zhao@yale}{\textcolor{black}{\texttt{hongyu.zhao@yale.edu}}}}
\affil[ ]{\href{mailto:jizhu@umich.edu}{\textcolor{black}{\texttt{jizhu@umich.edu}}}}

\renewcommand\Authands{ and }

\maketitle
\vspace{-35pt}

\begin{abstract}
The Expectation-Maximization (EM) algorithm is an iterative method
to maximize the log-likelihood function for parameter estimation.
Previous works on the convergence analysis of the EM algorithm have
established results on the asymptotic (population level) convergence
rate of the algorithm. In this paper, we give a data-adaptive analysis
of the sample level local convergence rate of the EM algorithm. In
particular, we show that \emph{the local convergence rate of the EM
algorithm is a random variable} $\overline{K}_{n}$ derived from the
data generating distribution, which adaptively yields the convergence
rate of the EM algorithm on each finite sample data set from the same
population distribution. We then give a non-asymptotic concentration
bound of $\overline{K}_{n}$ on the population level optimal convergence
rate $\overline{\kappa}$ of the EM algorithm, which implies that
$\overline{K}_{n}\to\overline{\kappa}$ in probability as the sample
size $n\to\infty$. Our theory identifies the effect of sample size
on the convergence behavior of sample EM sequence, and explains a
surprising phenomenon in applications of the EM algorithm, i.e. the
finite sample version of the algorithm sometimes converges faster
even than the population version. We apply our theory to the EM algorithm
on three canonical models and obtain specific forms of the adaptive
convergence theorem for each model.
\end{abstract}

\section{Introduction}

The iterative algorithm of expectation-maximization (EM) has been
proposed in various special forms by a number of authors as early
as in the 1970s, notably \cite{Baum,Orchard,Sundberg-1,Rubin,Sundberg-2,Sundberg-3}.
Since the advent of its modern formulation by Dempster, Laird and
Rubin \cite{Dempster}, the EM algorithm has received much attention
in the statistical community. A vast literature on theoretical properties
and real applications of the EM algorithm has been accumulated thereafter
(see e.g. \cite{Dempster,Boyles,Wu,Redner,Meng-1,Meng-2}). Classical
work of Wu \cite{Wu} established general convergence results for
EM sequences to the MLE or some stationary points of the log-likelihood
function; Redner and Walker \cite{Redner} proved asymptotic results
on the convergence of the EM algorithm for mixture of densities from
the exponential family; Meng and Rubin \cite{Meng-1} analyzed both
asymptotic componentwise and global convergence rates of the EM algorithm;
some variants or generalizations of the EM algorithm were also proposed:
Meng and Rubin \cite{Meng-0} developed ECM algorithm to replace a
complicated $M$-step by several simpler $CM$-steps (conditional
maximization); Liu et al. \cite{Liu} proposed PX-EM to use the expanded
complete-data model to accelerate the convergence of the EM algorithm.
The book of McLachlan and Krishnan \cite{McLachlan} gave a comprehensive
account on both theoretical and practical aspects of the EM algorithm.

Recent work of Balakrishnan et al. \cite{Balakrishnan} presented
statistical guarantees for the local linear convergence of the EM
algorithm and first-order EM algorithm to the true population parameter
$\theta^{*}$ within statistical precision. Along this line, Wang
et al. \cite{Wang} considered extensions to high-dimensional settings
by introducing a truncation step; Yi and Caramanis \cite{Yi} proved
statistical guarantees for generalizations to regularized EM algorithms
in high-dimensional latent variable models. In this paper, we give
a data-adaptive analysis of the finite sample level convergence behavior
of the EM algorithm, especially the dynamics of the convergence rate
when the EM algorithm is performed on multiple finite random data
sets (with possibly different sample sizes) sampled from the same
population distribution.

\subsection{Problem Setup}

Suppose $\{\mathbb{P}_{\theta}\mid\theta\in\Omega\subseteq\mathbb{R}^{p}\}$
is a family of parametric distributions, and $\mathbb{P}_{\theta}$
has density function $p_{\theta}(y)$ with respect to the Lebesgue
measure on $\mathbb{R}^{d}$. A set of i.i.d. samples $\{y_{k}\}_{k=1}^{n}$
of $Y\sim\mathbb{P}_{\theta^{*}}$ is observed, where $\theta^{*}\in\Omega$
is an unknown population true parameter.

In latent variable models, $Y$ is the observed part of a pair $(Y,Z)$
of random variables and $Z$ is a latent variable. Suppose $f_{\theta}(y,z)$
is the joint density of $(Y,Z)$ and for $\theta\in\Omega$, the density
$p_{\theta}(y)=\int_{\mathcal{Z}}f_{\theta}(y,z)dz$ is the marginalization
of $f_{\theta}(y,z)$ over $z$, then the EM algorithm can be applied
to estimate $\theta^{*}$ from the samples $\{y_{k}\}_{k=1}^{n}$
of $Y\sim\mathbb{P}_{\theta^{*}}$.

Specifically, one first calculates the sample $Q$-function (see Definition
1 in \cite{Balakrishnan}) by a conditional \emph{Expectation} ($E$-step):
\[
Q_{n}(\theta'|\theta;\{y_{k}\})=\frac{1}{n}\sum_{k=1}^{n}\int_{\mathcal{Z}(y_{k})}\log\left(f_{\theta'}(y_{k},z)\right)k_{\theta}(z|y_{k})dz,
\]
where $k_{\theta}(z|y)\coloneqq\frac{f_{\theta}(y,z)}{p_{\theta}(y)}$
is the conditional density of $Z$ given $Y$. Then for an initial
point $\theta_{n}^{0}\in B_{r}(\theta^{*})$, the sample EM sequence $\{\theta_{n}^{t}\}_{t\ge0}$
is constructed by \emph{Maximization} ($M$-step):
\[
\theta_{n}^{t+1}\in\arg\max\{Q_{n}(\theta'|\theta_{n}^{t};\{y_{k}\})\mid\theta'\in\Omega\},
\]
and we refer to this procedure as \emph{the EM algorithm is performed
on the samples $\{y_{k}\}_{k=1}^{n}$.}

We notice that the sample $Q$-function $Q_{n}(\theta'|\theta;\{y_{k}\})$
depends on a specific set of samples $\{y_{k}\}_{k=1}^{n}$, hence
so does the sample EM sequence $\{\theta_{n}^{t}\}_{t\ge0}$ defined above.
Since the samples are i.i.d. realizations of $Y\sim\mathbb{P}_{\theta^{*}}$,
it is sensible to conjecture that the convergence rate of the sample
EM sequence depends on the data generating distribution $\mathbb{P}_{\theta^{*}}$.
When the EM algorithm is performed on \emph{different} sets of samples
from the \emph{same} population, the corresponding sample EM sequences
constructed as in the above procedure ought to converge at different rates.
In the subsequent numerical experiments, we have also confirmed this
phenomenon, e.g. see \prettyref{figure:simu}. This observation motivates
us to characterize the convergence rate of the EM algorithm as a data-adaptive
quantity.

\begin{figure}[ht]
\centering \vspace{-60pt}
 \begin{subfigure}{.5\textwidth} \centering \includegraphics[width=1\linewidth]{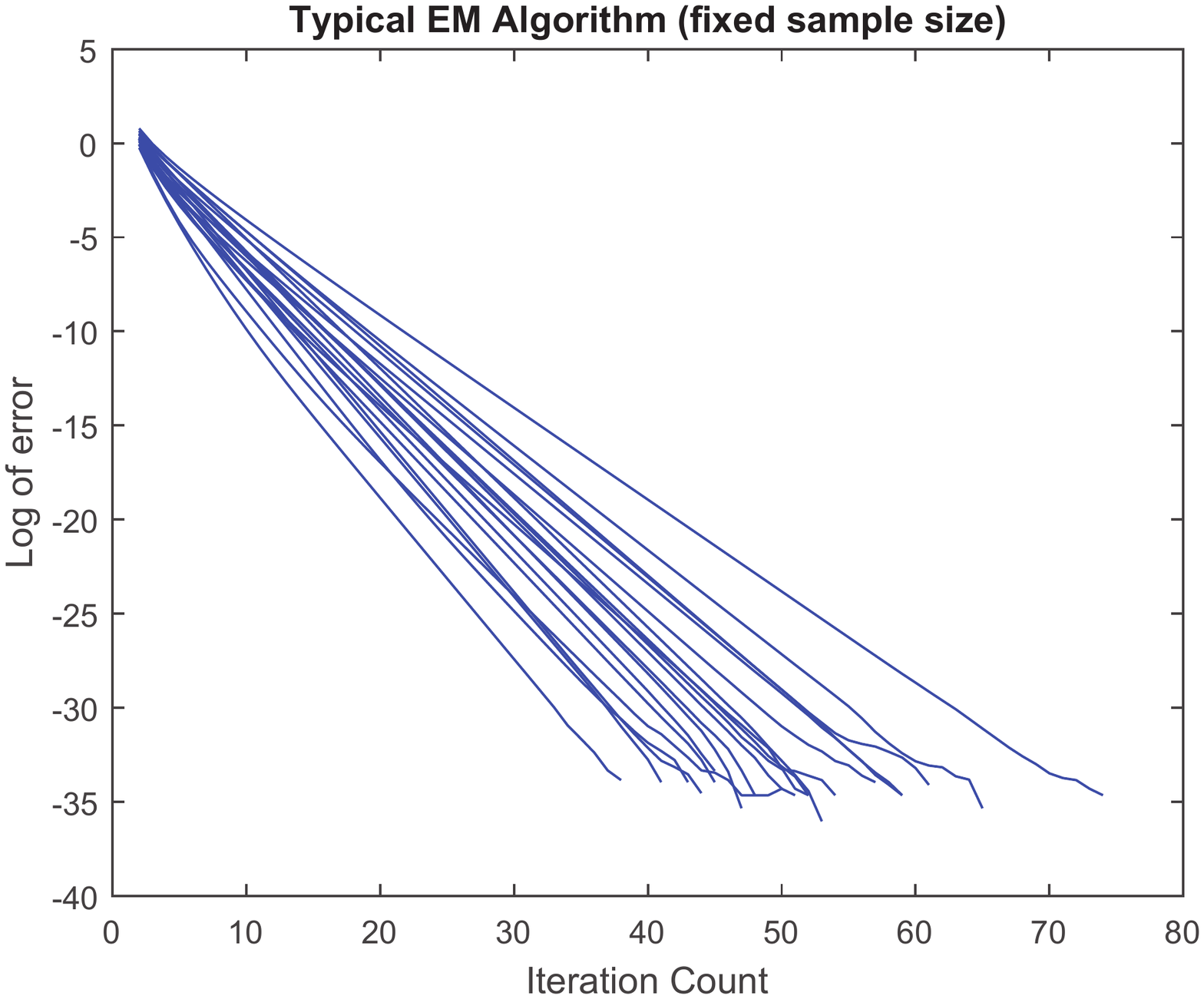}
\vspace{-70pt}
 \caption{}
\label{figure:em-fix-sam-size} \end{subfigure}\begin{subfigure}{.5\textwidth}
\centering \includegraphics[width=1\linewidth]{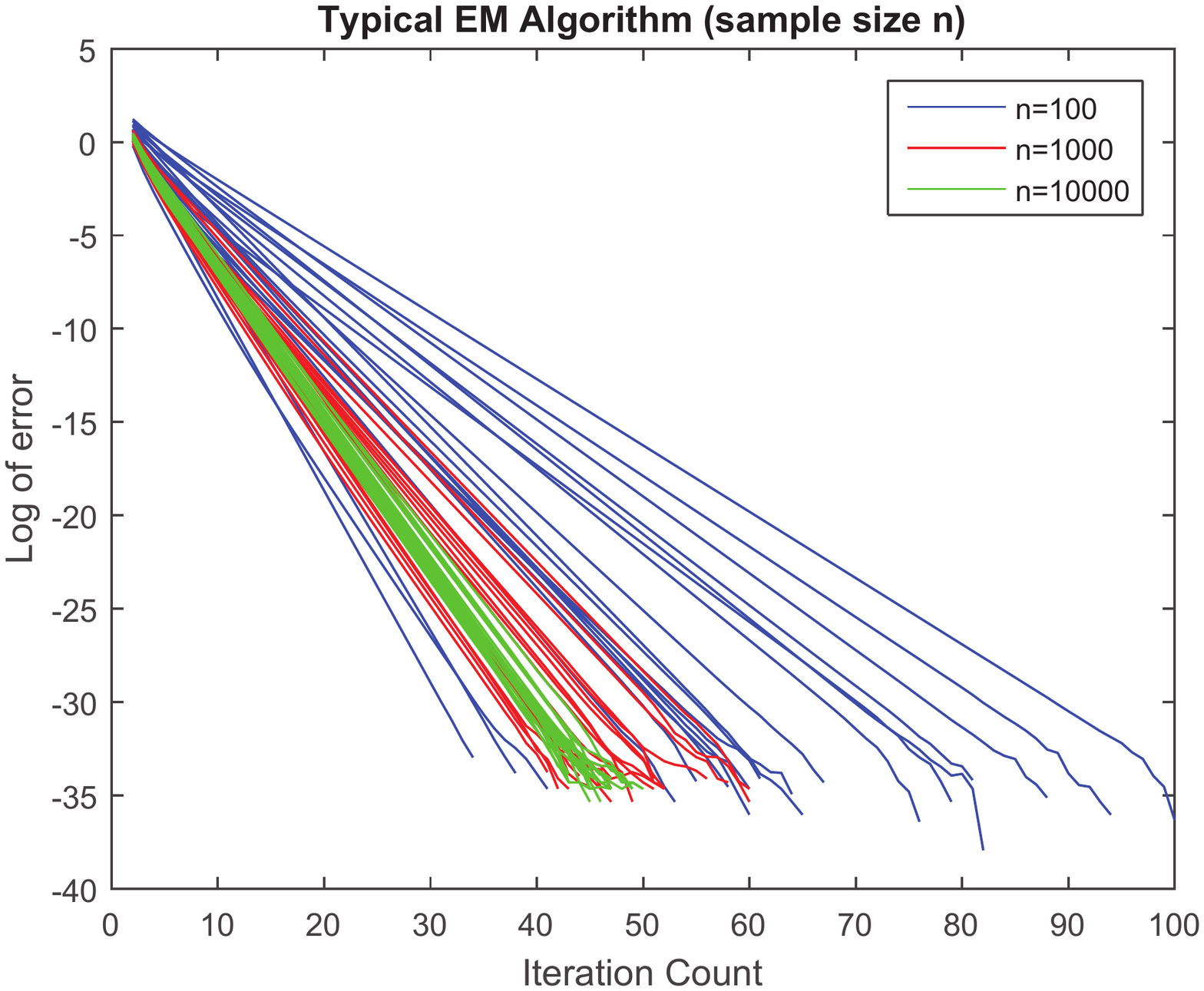}
\vspace{-70pt}
 \caption{}
\label{figure:em-var-sam-size} \end{subfigure} \caption{{\small{}These plots are generated by applying the EM algorithm to
simulated data from a }\emph{\small{}fixed}{\small{} Gaussian Mixture
Model (\prettyref{subsec:gau-mix}) with the dimension of $\theta^{*}$
set to $p=5$, and the SNR (Signal-to-Noise Ratio) $\frac{\left\Vert \theta^{*}\right\Vert }{\sigma}=1$.
Each line in the plots represents an instance of the EM algorithm
performed on a different data set sampled from the Gaussian mixture
distribution; the convergence rates of the EM algorithm are the slopes
of these lines. }\textbf{\small{}(a)}{\small{} 20 instances of the
EM algorithm each performed on a different set of $n=300$ random
samples; the slopes of the lines vary from one instance to another.
}\textbf{\small{}(b)}{\small{} 20 instances of the EM algorithm performed
on different sets of $n$ random samples for each $n\in\{100,1000,10000\}$;
when the sample size is small, the lines are more spread-out (blue
lines), hence the fluctuations in the convergence rate are large;
when the sample size is large, the lines are more clustered (red lines and green
lines), hence the fluctuations in the convergence rate are small.}}
\label{figure:simu}
\end{figure}

\subsection{Main Results and Contributions}

The main results of this paper are as follows: we characterize the
convergence rate of the \emph{empirical EM sequence} as a derived
random variable $\overline{K}_{n}$ of the data generating distribution
$\mathbb{P}_{\theta^{*}}$ in \prettyref{thm:oect}, then we give
the concentration bound of $\overline{K}_{n}$ in \prettyref{thm:orct}.

\paragraph{\emph{Optimal Empirical Convergence Theorem} }

The primary goal of \prettyref{thm:oect} is to show that \emph{the
convergence rate of the EM algorithm is a random variable}. To this
end, we adopt a novel data-adaptive viewpoint in the finite sample
level analysis by considering the samples as i.i.d. copies $\{Y_{k}\}_{k=1}^{n}$
of the random variable $Y\sim\mathbb{P}_{\theta^{*}}$ and exploiting
the concentration of measure phenomenon to obtain non-asymptotic sample
level convergence results.

The theorem states that if the EM algorithm is initialized as $\Theta_{n}^{0}\in B_{r}(\theta^{*})$
in the ball of population contraction (to be defined precisely), then
with high probability, we have a convergence inequality in the form
\begin{equation}
\left\Vert \Theta_{n}^{t}-\theta^{*}\right\Vert \le\left(\overline{K}_{n}\right)^{t}\left\Vert \Theta_{n}^{0}-\theta^{*}\right\Vert +\frac{\overline{E}_{n}}{\overline{V}_{n}-\overline{\Gamma}_{n}},\label{eq:oect-intro}
\end{equation}
where $\{\Theta_{n}^{t}\}_{t\ge0}$ is the \emph{empirical EM sequence},
defined as
\[
\Theta_{n}^{t+1}\in\arg\max\{Q_{n}(\Theta'|\Theta_{n}^{t};\{Y_{k}\})\mid\Theta'\in B_{R}(\theta^{*})\},
\]
for a set of i.i.d. copies $\{Y_{k}\}_{k=1}^{n}$ of $Y\sim\mathbb{P}_{\theta^{*}}$.
The quantities $\overline{\varGamma}_{n}$, $\overline{V}_{n}$, $\overline{E}_{n}$
and $\overline{K}_{n}$ are measurable functions of $(Y_{1},\cdots,Y_{n})$,
hence are random variables derived from $\mathbb{P}_{\theta^{*}}$.
$\overline{K}_{n}$ is called the \emph{optimal empirical convergence
rate} (See \prettyref{subsec:def-oecr} for the definitions), which
holds the information of how the data generating distribution $\mathbb{P}_{\theta^{*}}$
``propagates'' the randomness in sample data to the convergence
rate of the empirical EM sequence.

This theorem characterizes the convergence behavior of sample EM sequence
\emph{adaptively}: Given a set of i.i.d. realizations (or samples)
$\{y_{k}\}_{k=1}^{n}$ of $Y\sim\mathbb{P}_{\theta^{*}}$, we have
corresponding realizations $g_{n}$, $v_{n}$, $e_{n}$ and $k_{n}$
of $\overline{\varGamma}_{n}$, $\overline{V}_{n}$, $\overline{E}_{n}$
and $\overline{K}_{n}$ respectively, and a realization of the convergence
inequality \prettyref{eq:oect-intro} as
\begin{equation}
\left\Vert \theta_{n}^{t}-\theta^{*}\right\Vert \le\left(k_{n}\right)^{t}\left\Vert \theta_{n}^{0}-\theta^{*}\right\Vert +\frac{e_{n}}{v_{n}-g_{n}},\label{eq:oect-rel-intro}
\end{equation}
where the sample EM sequence $\{\theta_{n}^{t}\}_{t\ge0}$, as a realization
of $\{\Theta_{n}^{t}\}_{t\ge0}$, is constructed as
\[
\theta_{n}^{t+1}\in\arg\max\{Q_{n}(\theta'|\theta_{n}^{t};\{y_{k}\})\mid\theta'\in B_{R}(\theta^{*})\}.
\]
Hence this particular realization $k_{n}$ of $\overline{K}_{n}$
gives the convergence rate of the corresponding sample EM sequence
$\{\theta_{n}^{t}\}_{t\ge0}$ constructed when the EM algorithm is
performed on the samples $\{y_{k}\}_{k=1}^{n}$. A different set of
i.i.d. samples $\{y_{k}'\}_{k=1}^{n'}$ gives rise to a different
sample EM sequence $\{\theta_{n'}^{'t}\}_{t\ge0}$, a different realization
$k_{n'}'$ of $\overline{K}_{n'}$ and a different realization of
\prettyref{eq:oect-intro} in a form similar to \prettyref{eq:oect-rel-intro}.
Thus given each sample data set, the random variable $\overline{K}_{n}$
adaptively yields the convergence rate of the corresponding sample
EM sequence, and \prettyref{thm:oect} is precisely the mathematical
substantiation of our claim that the convergence rate of the EM algorithm
is a random variable.

\paragraph{\emph{Optimal Rate Convergence Theorem}}

Given the data generating distribution $\mathbb{P}_{\theta^{*}}$,
it is in general difficult to calculate the distribution or density
function of the derived random variables $\overline{\varGamma}_{n}$,
$\overline{V}_{n}$, $\overline{E}_{n}$ or $\overline{K}_{n}$. Nonetheless,
in \prettyref{thm:orct} we give a non-asymptotic concentration bound
of $\overline{K}_{n}$ on the \emph{optimal oracle convergence rate
}$\overline{\kappa}$, which sheds some light on the stochastic behavior
of the derived random variable $\overline{K}_{n}$.

The theorem states that if the EM algorithm is initialized within
the ball of population contraction, the optimal empirical convergence
rate $\overline{K}_{n}$ satisfies
\begin{equation}
\left|\overline{K}_{n}-\overline{\kappa}\right|\le\frac{2}{\overline{\nu}}\left(\varepsilon_{1}(\delta,r,n,p)+\overline{\kappa}\varepsilon_{2}(\delta,r,R,n,p)\right)\label{eq:orct-intro}
\end{equation}
with probability at least $1-\delta$, where $\varepsilon_{1}(\delta,r,n,p)$
and $\varepsilon_{2}(\delta,r,R,n,p)$ are infinitesimals as $n\to\infty$.
It then follows that $\overline{K}_{n}\to\overline{\kappa}$ in probability
as $n\to\infty$. One of our contributions on the three canonical
models is the calculation of the infinitesimals $\varepsilon_{1}(\delta,r,n,p)$
and $\varepsilon_{2}(\delta,r,R,n,p)$ in closed forms and the concentration
bound of the random variable $\overline{K}_{n}$ for each model (see
\prettyref{sec:app-cla-mod}).

\paragraph{\emph{On the Convergence of the EM Algorithm}}

The data-adaptive analysis in our paper offers some new insights and
theoretical explanations to the convergence behavior of the EM algorithm.
\begin{enumerate}
\item The sample size does not \emph{directly} affect the convergence rate
of the EM algorithm. Indeed, as we observed in numerical experiments
and real applications, the EM algorithm performed on smaller sample
sets can converge faster than performed on larger sample sets, even
faster than the population (with infinite many samples) EM algorithm.
\prettyref{thm:oect} suggests a theoretical explanation to this phenomenon:
the sample EM sequence constructed from a finite sample data set $\{y_{k}\}_{k=1}^{n}$
converges at the rate $k_{n}$, which is a realization of the random
variable $\overline{K}_{n}$ given the sample data set. Since $\overline{K}_{n}$
randomly fluctuates around $\overline{\kappa}$, and in view of the
concentration bound \prettyref{eq:orct-intro}, it is possible that
the realization $k_{n}<\overline{\kappa}$. When this is the case,
the sample EM sequence exhibits a faster convergence rate than the
population EM sequence.

\emph{The convergence rate of the sample EM sequence randomly fluctuates
around the optimal population convergence rate, and it is not simply
proportional to the sample size.}
\item The convergence behavior displayed in \prettyref{figure:simu} is
ubiquitous in numerical experiments and real applications of the EM
algorithm. Our theory provides a cogent explanation to such phenomena.
For \prettyref{figure:em-fix-sam-size}, the EM algorithm is performed
on $20$ data sets with the same sample size $n=300$. By \prettyref{thm:oect},
the convergence rates of the sample EM sequences are $20$ realizations
of the random variable $\overline{K}_{n}$, one for each sample data
set. The randomness of the sampling process causes random fluctuations
among the $20$ realizations of $\overline{K}_{n}$, which accounts
for the variations of the slopes of these blue lines. For \prettyref{figure:em-var-sam-size},
the EM algorithm is performed on $20$ data sets for each sample size
$n\in\{100,1000,10000\}$. In view of the concentration bound \prettyref{eq:orct-intro}
in \prettyref{thm:orct}, when the sample size $n$ is large, the
right-hand side of \prettyref{eq:orct-intro} is small and the realizations
of $\overline{K}_{n}$ are more concentrated around $\overline{\kappa}$,
hence the convergence rate is \emph{stable} (i.e. the green lines
cluster together). Conversely, when the sample size $n$ is smaller,
the right-hand side of \prettyref{eq:orct-intro} is larger and the
realizations of $\overline{K}_{n}$ are more scattered, hence the
convergence rate is unstable (i.e. the red lines, and especially the
blue lines fan out).

\emph{The sample size regulates the stability of the convergence rate
of the sample EM sequence. The convergence rate of the EM algorithm
performed on larger sample sets is stabler than on smaller sample
sets.}
\item In low-dimensional regime where $p\ll n$, the convergence behavior
of the sample EM sequence is ``concentrated'' on the convergence
behavior of the \emph{corresponding} (initialized at the same point
$\theta_{n}^{0}$) population EM sequence. In particular, the ball of
contraction for the sample EM sequence is the \emph{same} as that for the
population EM sequence; and the convergence rate of the sample EM
sequence is also well approximated by the population convergence rate
with high probability. For concrete models, our theory gives quantitative
characterization of the low-dimensional regime with respect to approximation
error $\epsilon>0$ and tolerance $\delta>0$ as $\mathcal{R}_{\ell}(\epsilon,\delta)=\{(p,n)\mid\left|\overline{K}_{n}-\overline{\kappa}\right|<\epsilon\text{ with probability at least }1-\delta\}$.

\emph{The study of the convergence behavior of the EM algorithm in
low-dimensional regime can basically be reduced to the study of the
population EM sequence.}
\end{enumerate}

\subsection{Related Works}

Our work was inspired by an insightful Population-Sample based analysis
in \cite{Balakrishnan} and we built upon many classical works on
the EM algorithm. The major differences of our theory to previous
works are in the following respects:
\begin{enumerate}
\item We focus on the study of a different problem in the convergence analysis
of the EM algorithm. Previous works studied the convergence rate of
the EM algorithm on an arbitrary but \emph{fixed} sample data set.
We study the dynamics of the convergence rate when the EM algorithm
is performed on multiple data sets (with possibly different sample
sizes) from the same population distribution, and quantify the intrinsic
connection between the data generating distribution and the convergence
rate of the sample EM algorithm. The central objects in our analysis
are \emph{derived random variables }(defined in the sequel) from the
data generating distribution $\mathbb{P}_{\theta^{*}}$. As we shall
see, the population means of these random variables characterize the
convergence of the population EM sequence, while their empirical means
characterize the convergence of the empirical EM sequence.
\item Classical works on the EM algorithm (e.g. \cite{Dempster,Redner,Meng-1,Meng-2})
analyzed the convergence rate of the EM algorithm asymptotically.
Recent work of Balakrishnan et al. \cite{Balakrishnan} proved geometric
convergence results for sample EM algorithm when initialized within
the basin of contraction. They directly leveraged the $\kappa$-contractivity
of the population $M$-operator to obtain the sample level convergence
result, hence the convergence rate is essentially the population level
(asymptotic) rate. In this paper, we characterize the finite sample
level convergence rate of the EM algorithm as a random variable, which
adaptively yields the convergence rate of the EM algorithm for each
finite sample set.
\item From the technical aspects, the main tools in classical analysis
of the convergence of the EM algorithm are information matrices and
the rate matrix (i.e. Jacobian matrix of the $M$-operator). Balakrishnan
et al. \cite{Balakrishnan} exploited the KKT conditions which characterize
the optimality of $\theta^{*}$ and $M(\theta)$ to derive the $\kappa$-contractivity
of the population $M$-operator. In this paper, we do not follow the $M$-operator
approach in previous works \cite{Balakrishnan,Wang,Yi}. Instead,
we directly leverage the optimality of the EM sequence in each $M$-step
of the EM iteration for both population and sample (empirical) EM
sequences. This approach allows us to prove a basic contraction inequality
\prettyref{eq:ora-contr-ineq}, which can be viewed as a generalization
of the inequality in Theorem 4 of \cite{Balakrishnan}. Meanwhile,
this approach overcomes the difficulty of verifying conditions involving
$M$-operators, e.g. the First-Order Stability or the (uniform) deviation
bounds of sample $M$-operators to population $M$-operator etc. Another
technical difference is that, under natural concentration assumptions,
the quantity characterizing the statistical error in our theory is
guaranteed to converge to zero in probability as the sample size $n\to\infty$.
This observation allows us to avoid the difficulty in bounding an
empirical process of $M$-operators, and prove the statistical consistency
of the EM algorithm not only for specific models, but also at a general
theoretical level.
\end{enumerate}
\vspace{10pt}

The remainder of this paper is organized as follows. Following Notations
and Conventions, we briefly review the EM algorithm in \prettyref{sec:rev-em-alg}.
Then we formulate our convergence theory in two parts: \prettyref{subsec:ora-conv}
contains the theory of oracle convergence; \prettyref{subsec:emp-conv}
contains the theory of empirical convergence and the consistency of
the EM algorithm. In \prettyref{sec:app-cla-mod}, we apply our theory
to three canonical models: the Gaussian Mixture Model (\prettyref{subsec:gau-mix}),
the Mixture of Linear Regressions (\prettyref{subsec:mix-lin-reg});
and Linear Regression with Missing Covariates (\prettyref{subsec:lin-reg-mis-cov}).
We conclude the paper with Discussion (\prettyref{sec:disc}) and defer the detailed
proofs for the canonical models to the Appendix.
\newpage
\paragraph{Notations and Conventions}
\begin{itemize}
\item For $p\ge1$ and $x\in\mathbb{R}^{p}$, let $\left\Vert x\right\Vert =\left(\sum_{j=1}^{p}\left|x^{j}\right|^{2}\right)^{\frac{1}{2}}$
be the $L^{2}$-norm of $x$.
\item For $r>0$ and $\theta\in\mathbb{R}^{p}$, let $B_{r}(\theta)\coloneqq\left\{ x\in\mathbb{R}^{p}\mid\left\Vert x-\theta\right\Vert <r\right\} $
be the open ball; and $\overline{B}_{r}(\theta)\coloneqq\left\{ x\in\mathbb{R}^{p}\mid\left\Vert x-\theta\right\Vert \le r\right\} $
be the closed ball; and $B_{r}^{\times}(\theta)\coloneqq B_{r}(\theta)\backslash\{\theta\}$
be the punctured open ball; and $\mathbb{S}^{p-1}\coloneqq\left\{ x\in\mathbb{R}^{p}\mid\left\Vert x\right\Vert =1\right\} $
be the standard unit sphere in $\mathbb{R}^{p}$.
\item A random variable $Y$ is a real-valued Borel measurable function
on a probability measure space $(\mathscr{S},\mathscr{E},\mathbb{P})$
(see e.g. \cite{Billingsley,Dudley,Folland}). For $\varpi\in\mathscr{S}$,
the function value $y=Y(\varpi)\in\mathbb{R}$ is called a \emph{realization}
or \emph{sample} of $Y\sim\mathbb{P}_{Y}$. Hence for any Borel set
$\mathcal{B}\subseteq\mathbb{R}$, an expression like ``\emph{any
realization $y\in\mathcal{B}$ with probability at least $\delta$}''
is simply ``\emph{$\Pr\left\{ Y\in\mathcal{B}\right\} \ge\delta$}''
paraphrased.
\item For a real valued Borel measurable function $f$ on $\mathbb{R}$,
we follow the convention of abusing the notation $f(Y)$ for both
a function on the range (or realizations) of a random variable $Y$
and the random variable $f\circ Y$ (or as a functional of $Y$).
\end{itemize}

\section{\label{sec:rev-em-alg}Review of the EM Algorithm}

In this section, we briefly review the notations and indicate some
extensions to the basic theory of the EM algorithm.

\subsection{Log-Likelihood Function and Maximum Likelihood Estimate}

Suppose a set of \emph{independent and identically distributed} (i.i.d.)
random samples $\{y_{k}\}_{k=1}^{n}$ are observed from a distribution
$\mathbb{P}_{\theta^{*}}$ with an unknown parameter $\theta^{*}\in\Omega\subseteq\mathbb{R}^{p}$.
The goal is to estimate $\theta^{*}$ from these samples. In practice,
we assume the parametric distribution $\mathbb{P}_{\theta}$ has a
density function $p_{\theta}(y)$ for $\theta\in\Omega$ with respect
to the Lebesgue measure on $\mathbb{R}^{d}$.

We consider the samples $\{y_{k}\}_{k=1}^{n}$ as a \emph{realization}
of the i.i.d. copies $\{Y_{k}\}_{k=1}^{n}$ of $Y\sim\mathbb{P}_{\theta^{*}}$.
For the random variable $Y\sim\mathbb{P}_{\theta^{*}}$, we define
the \emph{stochastic log-likelihood functional}
\[
L(\theta;Y)\coloneqq\log p_{\theta}(Y),
\]
for $\theta\in\Omega$. Define the \emph{empirical log-likelihood
functional }as the empirical mean of the stochastic log-likelihood
functionals of the i.i.d. copies $Y_{k}$ ($k=1,\cdots,n$) of $Y$,
i.e.
\[
L_{n}(\theta;\{Y_{k}\})\coloneqq\frac{1}{n}\sum_{k=1}^{n}L(\theta;Y_{k}).
\]

A \emph{maximum likelihood estimate} (MLE) $\widehat{\theta}$ is
obtained by maximizing a\emph{ realization} of the empirical log-likelihood
functional, that is, $\widehat{\theta}\in\arg\max_{\theta\in\Omega}L_{n}(\theta;\{y_{k}\})$
where the maximizer of $L_{n}(\theta;\{y_{k}\})$ may not be unique.
The \emph{expected } \emph{(oracle) log-likelihood function} is the
expectation of the stochastic log-likelihood functional
\begin{equation}
L_{*}(\theta)\coloneqq\mathbb{E}_{\theta^{*}}L(\theta;Y)=\int_{\mathbb{R}^{d}}\log\left(p_{\theta}(y)\right)p_{\theta^{*}}(y)dy.\label{eq:ora-log-lik}
\end{equation}

A fundamental property of the expected log-likelihood function is
the following result.
\begin{prop}
\label{prop:sel-con-loglik}The true population parameter $\theta^{*}$
is a global maximizer of $L_{*}(\theta)$ over $\Omega$. Namely,
\[
\theta^{*}\in\arg\max_{\theta\in\Omega}L_{*}\left(\theta\right).
\]
\end{prop}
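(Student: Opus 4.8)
The plan is to reduce the statement to the nonnegativity of the Kullback--Leibler divergence between $\mathbb{P}_{\theta^{*}}$ and $\mathbb{P}_{\theta}$, which in turn follows from Jensen's inequality applied to the concave function $\log$. First I would fix an arbitrary $\theta\in\Omega$ and, using the definition \prettyref{eq:ora-log-lik}, write the difference of expected log-likelihoods as a single integral against the probability density $p_{\theta^{*}}$:
\[
L_{*}(\theta)-L_{*}(\theta^{*})=\int_{\mathbb{R}^{d}}\log\left(\frac{p_{\theta}(y)}{p_{\theta^{*}}(y)}\right)p_{\theta^{*}}(y)\,dy,
\]
where the integrand is taken to be $0$ on the set $\{p_{\theta^{*}}=0\}$. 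The goal is then to show that the right-hand side is $\le 0$, which immediately yields $L_{*}(\theta)\le L_{*}(\theta^{*})$ and hence $\theta^{*}\in\arg\max_{\theta\in\Omega}L_{*}(\theta)$.

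The key step is Jensen's inequality: since $\log$ is concave and $p_{\theta^{*}}(y)\,dy$ is a probability measure on $\mathbb{R}^{d}$, the integral above is bounded above by
\[
\log\int_{\{p_{\theta^{*}}>0\}}\frac{p_{\theta}(y)}{p_{\theta^{*}}(y)}\,p_{\theta^{*}}(y)\,dy=\log\int_{\{p_{\theta^{*}}>0\}}p_{\theta}(y)\,dy\le\log\int_{\mathbb{R}^{d}}p_{\theta}(y)\,dy=\log 1=0,
\]
using that $p_{\theta}$ is a density and therefore integrates to $1$. This chain of inequalities completes the argument for the fixed $\theta$, and since $\theta$ was arbitrary the proposition follows.

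The part requiring the most care — more a bookkeeping issue than a genuine obstacle — is the measure-theoretic handling of degenerate cases: one must restrict the integration to the support $\{p_{\theta^{*}}>0\}$ (the complement contributes no mass), and one should allow $L_{*}(\theta)$ to equal $-\infty$, in which event the desired inequality holds trivially, while $L_{*}(\theta^{*})$ is implicitly assumed finite so that the subtraction above is meaningful. I would also remark that no identifiability of $\theta^{*}$ is needed: the claim only asserts membership of $\theta^{*}$ in $\arg\max_{\theta\in\Omega}L_{*}(\theta)$, which may contain other parameters as well, consistent with the earlier observation that the maximizer of the empirical log-likelihood need not be unique.
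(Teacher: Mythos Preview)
Your proof is correct and follows essentially the same approach as the paper: both reduce the claim to the nonnegativity of the Kullback--Leibler divergence via Jensen's inequality for the concave function $\log$. The paper packages this step into a standalone lemma (their \prettyref{lem:jen-ineq}, which asserts $\int g\log f\le\int g\log g$ for densities $f,g$), whereas you carry out the Jensen argument inline and add some measure-theoretic bookkeeping about supports and the possibility $L_{*}(\theta)=-\infty$; these are cosmetic differences, not substantive ones.
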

\begin{proof}
See \prettyref{subsec:pf-sel-con-loglik}.
\end{proof}
\begin{rem*}
The expected log-likelihood function and the above result are well-known
in the statistics literature (e.g. \cite{Conniffe}) . A proof is given
only for the completeness. Further, if $\theta^{*}$ is an interior
point of $\Omega$ and the function $L_{*}:\Omega\to\mathbb{R}$ is
differentiable\footnote{In the sense of possessing first order partial derivatives.}
in a neighborhood of $\theta^{*}$ then $\nabla_{1}L_{*}(\theta^{*})=0$.
Further, if $\theta^{*}$ is the \emph{unique} maximizer in an open
neighborhood in which $L_{*}$ is twice continuously differentiable,
then in addition to $\nabla_{1}L_{*}(\theta^{*})=0$, the Hessian
matrix of $L_{*}$ at $\theta^{*}$ is negative definite or $\nabla_{1}\nabla_{1}^{\intercal}L_{*}(\theta^{*})\prec0$.
This is equivalent to the positive definiteness of the Fisher information
matrix $I(\theta^{*})$. See \prettyref{eq:obs-inf-mat} and \prettyref{eq:fis-inf-mat}.
\end{rem*}

\subsection{The EM Algorithm and \texorpdfstring{$Q$}{Q}-Functions}

The EM Algorithm is often applied to  maximum likelihood estimation
in latent variable models. The basic assumptions and formulation of
the EM algorithm is briefly summarized as follows.

In latent variable models, the random variable $Y\sim\mathbb{P}_{\theta^{*}}$
is considered as the observed part of a pair $(Y,Z)$ in which $Z$
is the latent or hidden variable. Suppose for $\theta\in\Omega$,
the complete joint density $f_{\theta}(y,z)$ of $\left(Y,Z\right)\in\mathcal{Y}\times\mathcal{Z}\subseteq\mathbb{R}^{d}$
is known, then the density $p_{\theta}(y)$ of $Y$ is the marginalization
$p_{\theta}(y)=\int_{\mathcal{Z}}f_{\theta}(y,z)dz$. Define the conditional
density of $Z$ given $y\in\mathcal{Y}$ as $k_{\theta}(z|y)\coloneqq\frac{f_{\theta}(y,z)}{p_{\theta}(y)}$
for $z\in\mathcal{Z}(y)\coloneqq\left\{ z\mid\left(y,z\right)\in\mathcal{Y}\times\mathcal{Z}\right\} $,
then the stochastic log-likelihood function satisfies
\begin{equation}
L(\theta';y)=\log p_{\theta'}(y)=\log f_{\theta'}(y,z)-\log k_{\theta'}(z|y).\label{eq:int-1-em}
\end{equation}

Now taking conditional expectation of $Z$ given $y$ at parameter
$\theta$ (i.e. multiplying $k_{\theta}(z|y)$ on both sides and integrating
with respect to $z$), one has
\[
L(\theta';y)=Q(\theta'|\theta;y)-H(\theta'|\theta;y)
\]
where $H(\theta'|\theta;y):=\mathbb{E}_{\theta}\left[\log k_{\theta'}(Z|y)\mid y\right]$
and we define the \emph{stochastic $Q$-function}
\[
Q(\theta'|\theta;y)\coloneqq\mathbb{E}_{\theta}\left[\log f_{\theta'}(y,Z)\mid y\right]=\int_{\mathcal{Z}(y)}\log\left(f_{\theta'}(y,z)\right)k_{\theta}(z|y)dz.
\]

Balakrishnan et al. \cite{Balakrishnan} introduced the sample and population $Q$-functions to study the EM algorithm from a Population-Sample based perspective. They defined the \emph{sample (empirical) $Q$-function} as
\[
Q_{n}(\theta'|\theta;\{y_{k}\})\coloneqq\frac{1}{n}\sum_{k=1}^{n}Q(\theta'|\theta;y_{k})=\frac{1}{n}\sum_{k=1}^{n}\int_{\mathcal{Z}(y_{k})}\log\left(f_{\theta'}(y_{k},z)\right)k_{\theta}(z|y_{k})dz,
\]
which is the empirical mean of the stochastic $Q$-functions of a set of
i.i.d. realizations $\{y_{k}\}_{k=1}^{n}$ of $Y\sim\mathbb{P}_{\theta^{*}}$
and also the \emph{population} (or \emph{oracle})\emph{ $Q$-function}
\[
Q_{*}(\theta'|\theta)\coloneqq\int_{\mathcal{Y}}\left(\int_{\mathcal{Z}(y)}\log\left(f_{\theta'}(y,z)\right)k_{\theta}(z|y)dz\right)p_{\theta^{*}}(y)dy
\]
for $\theta',\theta\in\Omega$, which is the expectation (or population mean)
of the stochastic $Q$-function.

A sequence $\{\theta^{t}_{n}\}_{t\ge0}$ generated by maximizing a sample
$Q$-function recursively is called a \emph{sample EM sequence}. A
sequence $\{\theta^{t}\}_{t\ge0}$ generated by maximizing a population
$Q$-function recursively is called a \emph{population (or oracle)
EM sequence}.

Note that the sample $Q$-function is a quantity we can actually compute
in real applications, since its definition involves only a set of samples from
the population distribution $\mathbb{P}_{\theta^{*}}$, while the computation of the
oracle $Q$-function requires knowledge of the true population parameter
$\theta^{*}$.

To develop a data-adaptive theory, we extend these important concepts
in our analysis: First we define the \emph{stochastic $Q$-functional}
\[
Q(\theta'|\theta;Y)\coloneqq\mathbb{E}_{\theta}\left[\log f_{\theta'}(Y,Z)\mid Y\right]
\]
as the basic \emph{derived} random variable of $Y\sim\mathbb{P}_{\theta^{*}}$,
then the stochastic $Q$-function is simply a realization of $Q(\theta'|\theta;Y)$,
the sample $Q$-function is a realization of the \emph{empirical $Q$-functional}
\[
Q_{n}(\theta'|\theta;\{Y_{k}\})\coloneqq\frac{1}{n}\sum_{k=1}^{n}Q(\theta'|\theta;Y_{k}),
\]
which is the empirical mean of the stochastic $Q$-functionals of
i.i.d. copies $\{Y_{k}\}_{k=1}^{n}$ of $Y\sim\mathbb{P}_{\theta^{*}}$,
and the oracle $Q$-function is the population mean of $Q(\theta'|\theta;Y)$.
Then given an initial point $\Theta_{n}^{0}\in B_{r}(\theta^{*})$,
we define the \emph{empirical EM sequence} $\{\Theta_{n}^{t}\}_{t\ge0}$
as
\begin{equation}
\Theta_{n}^{t+1}\in\arg\max\{Q_{n}(\Theta'|\Theta_{n}^{t};\{Y_{k}\})\mid\Theta'\in B_{R}(\theta^{*})\}\text{ for }t\ge0.\label{eq:emp-em-sequ}
\end{equation}
It is not difficult to see that $\Theta_{n}^{t}$ is a measurable
function of $(Y_{1},\cdots,Y_{n})$ for each $t\ge0$, hence a random
variable. Now the sample EM sequence $\{\theta_{n}^{t}\}_{t\ge0}$
is a realization of the empirical EM sequence $\{\Theta_{n}^{t}\}_{t\ge0}$.
For a concrete example, consider the Gaussian Mixture model (see \prettyref{subsec:gau-mix}):
For $\Theta_{n}^{0}\in B_{r}(\theta^{*})$, we have
\[
\Theta_{n}^{t+1}=\frac{1}{n}\sum_{k=1}^{n}\tanh\left(\frac{\left\langle \Theta_{n}^{t},Y_{k}\right\rangle }{\sigma^{2}}\right)Y_{k},
\]
where each $Y_{k}$ is an i.i.d. copy of $Y\sim\frac{1}{2}\mathcal{N}(\theta^{*},\sigma^{2}I_{p})+\frac{1}{2}\mathcal{N}(-\theta^{*},\sigma^{2}I_{p})$
for $1\le k\le n$.

A fundamental property of the oracle $Q$-function, referred as \emph{self-consistency}
in \cite{McLachlan,Balakrishnan}, is the following well-known result.
\begin{prop}
\label{prop:self-const}The true population parameter $\theta^{*}$
is a global maximizer of the oracle $Q$-function on $\Omega$. Namely,
\[
\theta^{*}\in\arg\max_{\theta'\in\Omega}Q_{*}\left(\theta'|\theta^{*}\right).
\]
\end{prop}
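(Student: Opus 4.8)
The plan is to reduce the self-consistency of the oracle $Q$-function to the self-consistency of the expected log-likelihood (\prettyref{prop:sel-con-loglik}) via the decomposition $L = Q - H$ derived above, together with a Jensen-type inequality for the $H$-term. First I would take the pointwise identity $L(\theta'|y) = Q(\theta'|\theta;y) - H(\theta'|\theta;y)$, integrate it against $p_{\theta^{*}}(y)\,dy$, and use Fubini/Tonelli to justify swapping the order of integration over $y$ and $z$, obtaining the population-level decomposition
\[
L_{*}(\theta') = Q_{*}(\theta'|\theta) - H_{*}(\theta'|\theta), \qquad H_{*}(\theta'|\theta) \coloneqq \mathbb{E}_{\theta^{*}}\!\left[ H(\theta'|\theta;Y)\right],
\]
valid for all $\theta',\theta \in \Omega$.

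Second comes the key lemma: for every fixed $\theta$ and every $y$, the map $\theta' \mapsto H(\theta'|\theta;y)$ is maximized at $\theta' = \theta$. Indeed
\[
H(\theta'|\theta;y) - H(\theta|\theta;y) = \mathbb{E}_{\theta}\!\left[ \log \frac{k_{\theta'}(Z|y)}{k_{\theta}(Z|y)} \,\Big|\, y \right] \le \log \mathbb{E}_{\theta}\!\left[ \frac{k_{\theta'}(Z|y)}{k_{\theta}(Z|y)} \,\Big|\, y \right] = \log \int_{\mathcal{Z}(y)} k_{\theta'}(z|y)\, dz = 0
\]
by Jensen's inequality applied to the concave function $\log$ (and the fact that a conditional density integrates to $1$). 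Integrating over $y \sim \mathbb{P}_{\theta^{*}}$ gives $H_{*}(\theta'|\theta) \le H_{*}(\theta|\theta)$ for all $\theta'$; specializing to $\theta = \theta^{*}$ yields $H_{*}(\theta'|\theta^{*}) \le H_{*}(\theta^{*}|\theta^{*})$.

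Finally I would combine the two facts: for an arbitrary $\theta' \in \Omega$,
\[
Q_{*}(\theta'|\theta^{*}) = L_{*}(\theta') + H_{*}(\theta'|\theta^{*}) \le L_{*}(\theta^{*}) + H_{*}(\theta^{*}|\theta^{*}) = Q_{*}(\theta^{*}|\theta^{*}),
\]
where the inequality uses \prettyref{prop:sel-con-loglik} for the first summand and the key lemma for the second. Hence $\theta^{*} \in \arg\max_{\theta' \in \Omega} Q_{*}(\theta'|\theta^{*})$, which is exactly the claim.

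The only delicate point is the integrability bookkeeping: one must ensure that $Q_{*}$ and $H_{*}$ are well-defined (so that the arithmetic $Q_{*} = L_{*} + H_{*}$ is not corrupted by $-\infty$ terms) and that the conditional Jensen step is legitimate, which it is since $k_{\theta'}(\cdot|y)/k_{\theta}(\cdot|y)$ is integrable against $k_{\theta}(\cdot|y)$. These I would simply record as standard regularity assumptions on the parametric family; the substantive content is the $L = Q - H$ decomposition plus the Jensen inequality, both of which are the classical ingredients behind the EM ascent property. So the expected main obstacle is purely a matter of stating the hypotheses under which the expectations exist, not of the inequality chain itself.
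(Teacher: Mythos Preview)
Your proof is correct, but the paper takes a shorter route. You decompose $Q_{*}=L_{*}+H_{*}$ and then bound the two summands separately, invoking \prettyref{prop:sel-con-loglik} for $L_{*}$ and a conditional Jensen argument for $H_{*}$; in effect you apply Gibbs' inequality twice, once on the marginal density $p_{\theta}$ and once on the conditional density $k_{\theta}(\cdot\mid y)$. The paper instead notices that when the second argument is $\theta^{*}$ the weight $k_{\theta^{*}}(z\mid y)\,p_{\theta^{*}}(y)$ collapses to the joint density $f_{\theta^{*}}(y,z)$, so that
\[
Q_{*}(\theta'\mid\theta^{*})=\int_{\mathcal{Y}\times\mathcal{Z}}\log f_{\theta'}(y,z)\,f_{\theta^{*}}(y,z)\,dz\,dy,
\]
and a single application of the Jensen-type \prettyref{lem:jen-ineq} to the complete-data densities $f_{\theta'}$ and $f_{\theta^{*}}$ finishes the job. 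Your approach is the classical EM-ascent argument and has the advantage that the $H$-inequality holds for \emph{any} conditioning point $\theta$, not just $\theta^{*}$; the paper's approach is more economical here because it exploits the special structure at $\theta=\theta^{*}$ and avoids introducing $H_{*}$ altogether.
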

\begin{proof}
See \prettyref{subsec:pf-self-const}.
\end{proof}
\begin{rem*}
The proof is given only for the completeness. Note that if $\theta^{*}$
is an interior point of $\Omega$ and the function $Q_{*}(\cdot|\theta^{*}):\Omega\to\mathbb{R}$
is differentiable in a neighborhood of $\theta^{*}$, then the self-consistency
implies that,
\begin{equation}
\nabla_{1}Q_{*}\left(\theta^{*}|\theta^{*}\right)=0\label{eq:self-const}
\end{equation}
which holds true in our \emph{local} analysis of the convergence of
oracle EM sequences.
\end{rem*}

\section{\label{sec:main}Theory for the Convergence of the EM Algorithm}

In this section, we formulate our theoretical framework for the convergence
of the EM algorithm. The main results consist of the optimal oracle
convergence theorem (\prettyref{thm:oct}), the optimal empirical
convergence theorem (\prettyref{thm:oect}) and the optimal rate convergence
theorem (\prettyref{thm:orct}). We also prove the consistency of
the EM algorithm (\prettyref{thm:cema}).

\subsection{\label{subsec:ora-conv}The Oracle Convergence of the EM Algorithm}

We analyze the convergence of oracle EM sequences in this section.
We first define the \emph{derived} random quantities whose population
means characterize the oracle convergence, then we define the set
of contraction parameters and deduce the oracle contraction inequality
which leads to the main theorem.

\subsubsection{Definitions\label{subsec:grv-crv-sev}}

For $Y\sim\mathbb{P}_{\theta^{*}}$, where $\theta^{*}\in\Omega$ is the unknown true population parameter, we define three derived random quantities, the\emph{ gradient difference
random vector }(GRV)
\begin{equation}
\Gamma(\theta;Y)\coloneqq\nabla_{1}Q(\theta^{*}|\theta;Y)-\nabla_{1}Q(\theta^{*}|\theta^{*};Y),\label{eq:grad-ran-vec}
\end{equation}
the \emph{concavity random variable }(CRV)
\begin{equation}
V(\theta'|\theta;Y)\coloneqq Q(\theta'|\theta;Y)-Q(\theta^{*}|\theta;Y)-\left\langle \nabla_{1}Q(\theta^{*}|\theta;Y),\theta'-\theta^{*}\right\rangle ,\label{eq:conc-ran-var}
\end{equation}
and the \emph{statistical error vector }(SEV)
\begin{equation}
\mathcal{E}(Y)\coloneqq\nabla_{1}Q(\theta^{*}|\theta^{*};Y).\label{eq:stat-err-vec}
\end{equation}
As we shall see, the convergence of an oracle EM sequence is characterized by their
population means
\[
\mathbb{E}_{\theta^{*}}\Gamma(\theta;Y)=\nabla_{1}Q_{*}(\theta^{*}|\theta)-\nabla_{1}Q_{*}(\theta^{*}|\theta^{*})\text{ and}
\]
\begin{equation}
\mathbb{E}_{\theta^{*}}V(\theta'|\theta;Y)=Q_{*}(\theta'|\theta)-Q_{*}(\theta^{*}|\theta)-\left\langle \nabla_{1}Q_{*}(\theta^{*}|\theta),\theta'-\theta^{*}\right\rangle .\label{eq:exp-grv-crv}
\end{equation}
Note that
\begin{equation}
\mathbb{E}_{\theta^{*}}\mathcal{E}(Y)=\mathbb{E}_{\theta^{*}}\nabla_{1}Q(\theta^{*}|\theta^{*};Y)=\nabla_{1}Q_{*}(\theta^{*}|\theta^{*})=0,\label{eq:exp-stat-err}
\end{equation}
which follows from \prettyref{prop:self-const} and the remark on
the self-consistency of the oracle $Q$-function.

\subsubsection{The Contraction Parameters}

In our theory, the convergence behavior of an oracle EM sequence in
a given ball $B_{r}(\theta^{*})$ is characterized by a pair of parameters
$\left(\gamma,\nu\right)$ and we consider \emph{all }possible parameters
for \emph{any} ball centered at the true population parameter $\theta^{*}$.
Specifically, for $0<r\le R$, we define the following sets,
\begin{equation}
\mathcal{G}(r)\coloneqq\{\gamma>0\mid\left\Vert \mathbb{E}_{\theta^{*}}\Gamma(\theta;Y)\right\Vert \le\gamma\left\Vert \theta-\theta^{*}\right\Vert \text{ for }\theta\in B_{r}(\theta^{*})\}\text{ and}\label{eq:G-r}
\end{equation}
\begin{equation}
\mathcal{V}(r,R)\coloneqq\{\nu>0\mid\mathbb{E}_{\theta^{*}}V(\theta'|\theta;Y)\le-\nu\left\Vert \theta'-\theta^{*}\right\Vert ^{2}\text{ for }\left(\theta',\theta\right)\in B_{R}(\theta^{*})\times B_{r}(\theta^{*})\}.\label{eq:V-r-R}
\end{equation}

Thus in view of \prettyref{eq:exp-grv-crv}, for each $\gamma\in$$\mathcal{G}(r)$,
the oracle $Q$-function satisfies a \emph{gradient stability} ($\gamma$-GS)
condition:
\begin{equation}
\left\Vert \nabla_{1}Q_{*}(\theta^{*}|\theta)-\nabla_{1}Q_{*}(\theta^{*}|\theta^{*})\right\Vert \le\gamma\left\Vert \theta-\theta^{*}\right\Vert \text{ for }\theta\in B_{r}(\theta^{*}),\label{eq:grad-stab-ora}
\end{equation}
and for each $\nu\in\mathcal{V}(r,R)$, the oracle $Q$-function satisfies
a \emph{local uniform strong concavity} ($\nu$-LUSC) condition:
\begin{equation}
Q_{*}(\theta'|\theta)-Q_{*}(\theta^{*}|\theta)-\left\langle \nabla_{1}Q_{*}(\theta^{*}|\theta),\theta'-\theta^{*}\right\rangle \le-\nu\left\Vert \theta'-\theta^{*}\right\Vert ^{2}\label{eq:stro-conc-ora}
\end{equation}
for $\left(\theta',\theta\right)\in B_{R}(\theta^{*})\times B_{r}(\theta^{*})$.

Note the gradient stability condition \prettyref{eq:grad-stab-ora}
is different from the Gradient Smoothness Condition and the First-order
Stability Condition introduced in \cite{Balakrishnan}. Our gradient
stability condition is equivalent to the gradient $\nabla_{1}Q_{*}\left(\theta^{*}|\cdot\right)$
being Lipschitz continuous at $\theta^{*}$ with parameter $\gamma$.

The local uniform strong concavity condition is different from the
Strong Concavity condition in \cite{Balakrishnan,Wang,Yi}. It requires
that the oracle $Q$-function $Q_{*}(\theta'|\theta)$ is $\nu$-strongly
concave with respect to $\theta'$ at the point $\theta^{*}$, and
that it holds uniformly for all $\theta\in B_{r}(\theta^{*})$. This
condition is easily verified when $Q_{*}(\theta'|\theta)$ is a quadratic
function in $\theta'$ and independent of $\theta$, which is the
case for Gaussian Mixture (\prettyref{subsec:gau-mix}) and Mixture
of Linear Regression (\prettyref{subsec:mix-lin-reg}). From the theoretical
perspective, the local uniform strong concavity condition is motivated
by the following proposition.
\begin{prop}
\label{prop:fisher-non-empt-V}If the Fisher information matrix $I(\theta)$
of the parametric density $p_{\theta}(y)$ is positive definite at
$\theta^{*}$, then there exist $0<r\le R$ such that $\mathcal{V}(r,R)\neq\varnothing$.
\end{prop}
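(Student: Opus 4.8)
The plan is to connect the Fisher information matrix $I(\theta^*)$ to the second-order behavior of the oracle $Q$-function in its first argument at $\theta^*$, and then use a standard Taylor-expansion/continuity argument to extract the two radii. First I would recall the classical missing-information decomposition: since $L(\theta';y) = Q(\theta'|\theta;y) - H(\theta'|\theta;y)$, taking expectations over $Y\sim\mathbb{P}_{\theta^*}$ with conditioning parameter $\theta=\theta^*$ gives $L_*(\theta') = Q_*(\theta'|\theta^*) - H_*(\theta'|\theta^*)$, where $H_*(\theta'|\theta^*) = \mathbb{E}_{\theta^*} H(\theta'|\theta^*;Y)$. Differentiating twice in $\theta'$ at $\theta^*$, this yields (under the usual regularity/interchange-of-derivative-and-integral assumptions, which Proposition~\ref{prop:fisher-non-empt-V} implicitly presupposes by positing that $I(\theta^*)$ exists and is positive definite)
\[
-\nabla_1\nabla_1^\intercal Q_*(\theta^*|\theta^*) = I(\theta^*) + \nabla_1\nabla_1^\intercal H_*(\theta^*|\theta^*)\big|_{\text{neg. semidef.}},
\]
where the Hessian of $H_*(\cdot|\theta^*)$ at $\theta^*$ is negative semidefinite because $\theta^*$ maximizes $H_*(\cdot|\theta^*)$ (this is the Gibbs/information inequality, the same fact underlying Proposition~\ref{prop:self-const}). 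Hence $-\nabla_1\nabla_1^\intercal Q_*(\theta^*|\theta^*) \succeq I(\theta^*) \succ 0$, so there is a constant $c>0$ with $-\nabla_1\nabla_1^\intercal Q_*(\theta^*|\theta^*) \succeq 2c\, I_p$.

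Next I would fix $\theta$ and Taylor-expand $Q_*(\theta'|\theta)$ in $\theta'$ around $\theta^*$ to second order. Writing $g(\theta) := \nabla_1 Q_*(\theta^*|\theta)$, the integral form of Taylor's theorem gives
\[
Q_*(\theta'|\theta) - Q_*(\theta^*|\theta) - \langle g(\theta),\theta'-\theta^*\rangle = \tfrac12 (\theta'-\theta^*)^\intercal \Big(\int_0^1 2(1-s)\, \nabla_1\nabla_1^\intercal Q_*(\theta^* + s(\theta'-\theta^*)\,|\,\theta)\, ds\Big)(\theta'-\theta^*).
\]
Assuming $\nabla_1\nabla_1^\intercal Q_*(\cdot|\cdot)$ is continuous in a neighborhood of $(\theta^*,\theta^*)$ (again part of the ambient smoothness), by continuity there is a radius $\rho>0$ such that for all $(\theta',\theta)$ in $\overline{B}_\rho(\theta^*)\times\overline{B}_\rho(\theta^*)$ one has $\nabla_1\nabla_1^\intercal Q_*(\theta'|\theta) \preceq -c\, I_p$. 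Then the displayed quadratic form is bounded above by $-\tfrac{c}{2}\|\theta'-\theta^*\|^2$, so choosing $R = r = \rho$ (or any $0 < r \le R \le \rho$) and $\nu = c/2$ shows $\nu \in \mathcal{V}(r,R)$, i.e. $\mathcal{V}(r,R)\neq\varnothing$. By \eqref{eq:exp-grv-crv} this is exactly the $\nu$-LUSC condition, so we are done.

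The main obstacle is not the linear-algebra core but the justification of the derivative interchanges and the second-order Taylor expansion of $Q_*$: one must argue that $\nabla_1\nabla_1^\intercal Q_*(\theta'|\theta)$ exists, is jointly continuous near $(\theta^*,\theta^*)$, and that differentiation passes under the expectation $\mathbb{E}_{\theta^*}$ defining $Q_*$ and under the integral $\int_{\mathcal{Z}(y)}(\cdot)k_{\theta^*}(z|y)\,dz$ defining $Q$. In the write-up I would either invoke a blanket regularity hypothesis of the type standard in EM theory (dominated-convergence domination of the relevant $\theta'$-derivatives of $\log f_{\theta'}(y,z)$, uniformly over a neighborhood, integrable against $k_{\theta^*}(z|y)p_{\theta^*}(y)$) or simply note that these conditions hold for each of the three canonical models treated in Section~\ref{sec:app-cla-mod} and are what one implicitly assumes whenever the Fisher information is invoked. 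Everything else — the Gibbs-inequality sign of the Hessian of $H_*$, the matrix inequality $-\nabla_1\nabla_1^\intercal Q_*(\theta^*|\theta^*)\succeq I(\theta^*)$, and the continuity argument producing $\rho$ — is routine.
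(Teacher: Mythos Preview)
Your approach is essentially the same as the paper's: both use the missing-information decomposition to show $-\nabla_1\nabla_1^\intercal Q_*(\theta^*|\theta^*) \succeq I(\theta^*) \succ 0$ (the paper phrases this via the complete and missing information matrices $\mathcal{I}_c(\theta^*|\theta)$ and $\mathcal{I}_m(\theta^*|\theta)$, with $\mathcal{I}_m(\theta^*|\theta^*)\succeq 0$ playing the role of your $-\nabla_1\nabla_1^\intercal H_*(\theta^*|\theta^*)$), then invoke continuity of the Hessian and a second-order Taylor argument to produce the radii $r=R$. One minor slip to fix in your write-up: the displayed identity should read $-\nabla_1\nabla_1^\intercal Q_*(\theta^*|\theta^*) = I(\theta^*) - \nabla_1\nabla_1^\intercal H_*(\theta^*|\theta^*)$ (a minus sign, not a plus), which together with $\nabla_1\nabla_1^\intercal H_*(\theta^*|\theta^*)\preceq 0$ yields the inequality $-\nabla_1\nabla_1^\intercal Q_*(\theta^*|\theta^*)\succeq I(\theta^*)$ you correctly state.
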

\begin{proof}
See the proof in \prettyref{subsec:infor-mat}.
\end{proof}
Intuitively, for $r>0$, the set $\mathcal{G}(r)$ consists of all
$\gamma>0$, such that the oracle $Q$-function satisfies a $\gamma$-GS
condition in $B_{r}(\theta^{*})$; and for $0<r\le R$, the set $\mathcal{V}(r,R)$
consists of all $\nu>0$ such that the oracle $Q$-function satisfies
a $\nu$-LUSC condition in $B_{R}(\theta^{*})\times B_{r}(\theta^{*})$.
It is easy to see that $\mathcal{G}(r_{1})\subseteq\mathcal{G}(r_{2})$
if $r_{1}\ge r_{2}\ge0$ and $\mathcal{V}(r,R_{1})\subseteq\mathcal{V}(r,R_{2})$
if $R_{1}\ge R_{2}\ge r\ge0$.

There is no \emph{a priori} guarantee that these sets are non-empty
for given $0<r\le R$, but if this is the case, then the following
lemma completely characterizes these sets.
\begin{lem}
\label{lem:G=00003D00003D000026V}Let $\mathcal{G}(r)$ and $\mathcal{V}(r,R)$
be defined as above.

$(a)$ If $\mathcal{G}(r)\neq\varnothing$ for some $r>0$, then $\mathcal{G}(r)=\left[\overline{\gamma},+\infty\right)$
where
\begin{equation}
\overline{\gamma}\coloneqq\sup\left\{ \frac{\left\Vert \mathbb{E}_{\theta^{*}}\Gamma(\theta;Y)\right\Vert }{\left\Vert \theta-\theta^{*}\right\Vert }\mid\theta\in B_{r}^{\times}(\theta^{*})\right\} \in\mathbb{R}\label{eq:gam-bar};
\end{equation}

$(b)$ If $\mathcal{V}(r,R)\neq\varnothing$ for some $R\ge r>0$,
then $\mathcal{V}(r,R)=\left(0,\overline{\nu}\right]$ where
\begin{equation}
\overline{\nu}\coloneqq\inf\left\{ -\frac{\mathbb{E}_{\theta^{*}}V(\theta'|\theta;Y)}{\left\Vert \theta'-\theta^{*}\right\Vert ^{2}}\mid\left(\theta',\theta\right)\in B_{R}^{\times}(\theta^{*})\times B_{r}(\theta^{*})\right\} \in\mathbb{R}\label{eq:nu-bar}.
\end{equation}
\end{lem}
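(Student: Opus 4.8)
The plan is to prove both parts of \prettyref{lem:G=00003D00003D000026V} by the same elementary argument: each set is defined by a one-parameter family of linear (in the parameter) inequalities that must hold uniformly over a fixed domain, so it is a half-line determined by a supremum or infimum of a fixed ratio over that domain. I would first treat part $(a)$. Fix $r>0$ and suppose $\mathcal{G}(r)\neq\varnothing$, so there exists at least one $\gamma_0>0$ with $\left\Vert \mathbb{E}_{\theta^{*}}\Gamma(\theta;Y)\right\Vert \le \gamma_0\left\Vert \theta-\theta^{*}\right\Vert$ for all $\theta\in B_{r}(\theta^{*})$. The key observation is that at $\theta=\theta^{*}$ the inequality is trivially $0\le 0$, so the constraint only bites on the punctured ball $B_{r}^{\times}(\theta^{*})$, where $\left\Vert \theta-\theta^{*}\right\Vert >0$ and we may divide: the condition ``$\gamma\in\mathcal{G}(r)$'' is equivalent to ``$\gamma\ge \frac{\left\Vert \mathbb{E}_{\theta^{*}}\Gamma(\theta;Y)\right\Vert}{\left\Vert \theta-\theta^{*}\right\Vert}$ for all $\theta\in B_{r}^{\times}(\theta^{*})$,'' i.e. $\gamma$ is an upper bound for the set of ratios. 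Hence $\mathcal{G}(r)=\{\gamma>0\mid \gamma\ge \overline{\gamma}\}$ where $\overline{\gamma}$ is the supremum in \prettyref{eq:gam-bar}. Since $\gamma_0$ is a finite upper bound, the supremum is finite, $\overline{\gamma}\in\mathbb{R}$; and since the ratio is nonnegative, $\overline{\gamma}\ge 0$. It remains to check that the endpoint $\overline{\gamma}$ itself lies in $\mathcal{G}(r)$, which is immediate because $\left\Vert \mathbb{E}_{\theta^{*}}\Gamma(\theta;Y)\right\Vert \le \overline{\gamma}\left\Vert \theta-\theta^{*}\right\Vert$ holds for $\theta\in B_{r}^{\times}(\theta^{*})$ by definition of the supremum and holds with equality ($0=0$) at $\theta=\theta^{*}$; thus $\mathcal{G}(r)=[\overline{\gamma},+\infty)$ (intersected with $(0,\infty)$, but $\overline{\gamma}\ge 0$ and if $\overline{\gamma}=0$ the set is $(0,\infty)=[\,0,\infty)\cap(0,\infty)$, which one may note as a degenerate edge case, or one observes $\overline\gamma>0$ whenever $\Gamma(\theta;Y)$ is not a.s.\ $\theta$-constant).

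For part $(b)$ the argument is dual. Fix $R\ge r>0$ and suppose $\mathcal{V}(r,R)\neq\varnothing$, witnessed by some $\nu_0>0$. Again the defining inequality $\mathbb{E}_{\theta^{*}}V(\theta'|\theta;Y)\le -\nu\left\Vert \theta'-\theta^{*}\right\Vert^2$ is vacuous when $\theta'=\theta^{*}$, so it need only be imposed over $(\theta',\theta)\in B_{R}^{\times}(\theta^{*})\times B_{r}(\theta^{*})$, where $\left\Vert \theta'-\theta^{*}\right\Vert^2>0$ and we may divide to get the equivalent form $\nu\le -\frac{\mathbb{E}_{\theta^{*}}V(\theta'|\theta;Y)}{\left\Vert \theta'-\theta^{*}\right\Vert^2}$ for all such $(\theta',\theta)$. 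Therefore $\nu\in\mathcal{V}(r,R)$ iff $0<\nu\le\overline{\nu}$ with $\overline{\nu}$ the infimum in \prettyref{eq:nu-bar}. The existence of $\nu_0>0$ with $\nu_0\le$ every ratio shows $\overline{\nu}\ge \nu_0>0$, so $\overline{\nu}\in\mathbb{R}$ and is strictly positive; in particular $\overline{\nu}$ itself satisfies the defining inequality on $B_{R}^{\times}(\theta^{*})\times B_{r}(\theta^{*})$ by definition of the infimum, and trivially at $\theta'=\theta^{*}$, so $\overline{\nu}\in\mathcal{V}(r,R)$ and $\mathcal{V}(r,R)=(0,\overline{\nu}]$.

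I do not expect a genuine obstacle here; the statement is essentially a bookkeeping lemma. The one point that needs a little care — and the closest thing to a ``hard part'' — is the reduction from the ball to the \emph{punctured} ball, i.e. verifying that the value $\theta=\theta^{*}$ (resp.\ $\theta'=\theta^{*}$) contributes no constraint and hence the supremum/infimum over the punctured domain genuinely characterizes membership; this is what makes the sup finite and the inf positive precisely under the nonemptiness hypothesis, and it is also what guarantees the half-lines are \emph{closed} at the finite endpoint rather than open. A secondary point worth a sentence is the well-definedness of $\overline\gamma$ and $\overline\nu$ as extended reals and the remark that the nonemptiness hypothesis is exactly equivalent to $\overline\gamma<\infty$ (resp.\ $\overline\nu>0$), which is why the lemma is phrased conditionally.
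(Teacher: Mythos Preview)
Your proposal is correct and follows essentially the same approach as the paper: reduce to the punctured ball by noting the constraint is trivial at $\theta^{*}$ (resp.\ $\theta'=\theta^{*}$), rewrite membership as an upper/lower bound on the ratio, identify the set as a closed half-line with endpoint the sup/inf, and verify the endpoint belongs to the set. Your explicit flagging of the $\overline{\gamma}=0$ edge case (arising from the requirement $\gamma>0$ in the definition of $\mathcal{G}(r)$) is in fact slightly more careful than the paper's own proof, which silently writes $\mathcal{G}(r)=[\overline{\gamma},+\infty)$ without commenting on this degeneracy.
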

\begin{proof}
$(a)$ For any $\gamma\in\mathcal{G}(r)\neq\varnothing$, we have
by definition
\[
\left\Vert \mathbb{E}_{\theta^{*}}\Gamma(\theta;Y)\right\Vert \le\gamma\left\Vert \theta-\theta^{*}\right\Vert \text{ for }\theta\in B_{r}(\theta^{*}),
\]
and hence
\[
\frac{\left\Vert \mathbb{E}_{\theta^{*}}\Gamma(\theta;Y)\right\Vert }{\left\Vert \theta-\theta^{*}\right\Vert }\le\gamma\text{ for }\theta\in B_{r}^{\times}(\theta^{*}).
\]
It then follows that
\[
\overline{\gamma}=\sup\left\{ \frac{\left\Vert \mathbb{E}_{\theta^{*}}\Gamma(\theta;Y)\right\Vert }{\left\Vert \theta-\theta^{*}\right\Vert }\mid\theta\in B_{r}^{\times}(\theta^{*})\right\} \le\gamma
\]
and hence $\overline{\gamma}\le\inf\mathcal{G}(r)$, since $\gamma\in\mathcal{G}(r)$
is arbitrary. Now by definition
\[
\frac{\left\Vert \mathbb{E}_{\theta^{*}}\Gamma(\theta;Y)\right\Vert }{\left\Vert \theta-\theta^{*}\right\Vert }\le\overline{\gamma}\text{ for }\theta\in B_{r}^{\times}(\theta^{*})
\]
and in view of the fact that $\mathbb{E}_{\theta^{*}}\Gamma(\theta^{*};Y)=0$,
we see $\overline{\gamma}\in\mathcal{G}(r)$ and hence $\overline{\gamma}=\min\mathcal{G}(r)$.
The result follows by noticing that if $\gamma\in\mathcal{G}(r)$,
then $\gamma'\in\mathcal{G}(r)$ for any $\gamma'>\gamma$.

$(b)$ For any $\nu\in\mathcal{V}(r,R)\neq\varnothing$, we have by
definition
\[
\mathbb{E}_{\theta^{*}}V(\theta'|\theta;Y)\le-\nu\left\Vert \theta'-\theta^{*}\right\Vert ^{2}\text{ for }\left(\theta',\theta\right)\in B_{R}(\theta^{*})\times B_{r}(\theta^{*}),
\]
and hence
\[
-\frac{\mathbb{E}_{\theta^{*}}V(\theta'|\theta;Y)}{\left\Vert \theta'-\theta^{*}\right\Vert ^{2}}\ge\nu\text{ for }\left(\theta',\theta\right)\in B_{R}^{\times}(\theta^{*})\times B_{r}(\theta^{*}).
\]
It then follows that
\[
\overline{\nu}=\inf\left\{ -\frac{\mathbb{E}_{\theta^{*}}V(\theta'|\theta)}{\left\Vert \theta'-\theta^{*}\right\Vert ^{2}}\mid\left(\theta',\theta\right)\in B_{R}^{\times}(\theta^{*})\times B_{r}(\theta^{*})\right\} \ge\nu
\]
and hence $\overline{\nu}\ge\sup\mathcal{V}(r,R)$, since $\nu\in\mathcal{V}(r,R)$
is arbitrary. Now by definition
\[
-\frac{\mathbb{E}_{\theta^{*}}V(\theta'|\theta;Y)}{\left\Vert \theta'-\theta^{*}\right\Vert ^{2}}\ge\overline{\nu}\text{ for }\left(\theta',\theta\right)\in B_{R}^{\times}(\theta^{*})\times B_{r}(\theta^{*})
\]
and in view of the fact that $\mathbb{E}_{\theta^{*}}V(\theta^{*}|\theta;Y)=0$,
we see $\overline{\nu}\in\mathcal{V}(r,R)$ and hence $\overline{\nu}=\max\mathcal{V}(r,R)$.
The result follows by noticing that if $\nu\in\mathcal{V}(r,R)$,
then $\nu'\in\mathcal{V}(r,R)$ for any $0<\nu'<\nu$.
\end{proof}

\subsubsection{The Oracle Contraction Inequality}

The pair of parameters $(\gamma,\nu)\in\mathcal{G}(r)\times\mathcal{V}(r,R)$
gives rise to an inequality which lies in the core of our oracle convergence
theory.
\begin{prop}
\label{prop:ora-ineq}If $\mathcal{G}(r)\times\mathcal{V}(r,R)\neq\varnothing$
for some $0<r\le R$, then for any $\theta\in B_{r}(\theta^{*})$
and $\theta'\in B_{R}(\theta^{*})$ such that
\[
Q_{*}(\theta'|\theta)\ge Q_{*}(\theta^{*}|\theta),
\]
there holds the inequality
\begin{equation}
\left\Vert \theta'-\theta^{*}\right\Vert \le\frac{\gamma}{\nu}\left\Vert \theta-\theta^{*}\right\Vert \label{eq:ora-contr-ineq}
\end{equation}
for each pair $(\gamma,\nu)\in\mathcal{G}(r)\times\mathcal{V}(r,R)$.
\end{prop}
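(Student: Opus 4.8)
The plan is to chain together the two conditions that define the contraction parameters — the $\nu$-LUSC inequality \prettyref{eq:stro-conc-ora} and the $\gamma$-GS inequality \prettyref{eq:grad-stab-ora} — together with the self-consistency identity $\nabla_{1}Q_{*}(\theta^{*}|\theta^{*})=0$ from \prettyref{eq:self-const}. First I would fix an arbitrary pair $(\gamma,\nu)\in\mathcal{G}(r)\times\mathcal{V}(r,R)$ and observe that, since $\theta\in B_{r}(\theta^{*})$ and $\theta'\in B_{R}(\theta^{*})$, both \prettyref{eq:stro-conc-ora} and \prettyref{eq:grad-stab-ora} are applicable to the pair $(\theta',\theta)$ by the very construction of $\mathcal{G}(r)$ and $\mathcal{V}(r,R)$. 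If $\theta'=\theta^{*}$, the desired inequality \prettyref{eq:ora-contr-ineq} holds trivially, so from here on I would assume $\left\Vert \theta'-\theta^{*}\right\Vert >0$.

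Next, I would feed the hypothesis $Q_{*}(\theta'|\theta)\ge Q_{*}(\theta^{*}|\theta)$, i.e.\ $Q_{*}(\theta'|\theta)-Q_{*}(\theta^{*}|\theta)\ge0$, into the $\nu$-LUSC inequality \prettyref{eq:stro-conc-ora}, which rearranges to
\[
\nu\left\Vert \theta'-\theta^{*}\right\Vert ^{2}\le\langle\nabla_{1}Q_{*}(\theta^{*}|\theta),\theta'-\theta^{*}\rangle.
\]
Then, using $\nabla_{1}Q_{*}(\theta^{*}|\theta^{*})=0$ to write the right-hand side as $\langle\nabla_{1}Q_{*}(\theta^{*}|\theta)-\nabla_{1}Q_{*}(\theta^{*}|\theta^{*}),\theta'-\theta^{*}\rangle$, applying the Cauchy--Schwarz inequality, and finally invoking the $\gamma$-GS bound \prettyref{eq:grad-stab-ora}, I would obtain
\[
\nu\left\Vert \theta'-\theta^{*}\right\Vert ^{2}\le\gamma\left\Vert \theta-\theta^{*}\right\Vert \left\Vert \theta'-\theta^{*}\right\Vert .
\]
Dividing both sides by $\nu\left\Vert \theta'-\theta^{*}\right\Vert >0$ gives exactly the oracle contraction inequality \prettyref{eq:ora-contr-ineq}.

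There is no genuine obstacle here: the argument is a short deduction combining the two contraction-parameter conditions, the self-consistency identity, and Cauchy--Schwarz, valid for every admissible $(\gamma,\nu)$ simultaneously. The only points deserving a moment of care are dispatching the degenerate case $\theta'=\theta^{*}$ before dividing by $\left\Vert \theta'-\theta^{*}\right\Vert$, and checking that the containments $\theta\in B_{r}(\theta^{*})$ and $\theta'\in B_{R}(\theta^{*})$ are precisely what licenses the use of \prettyref{eq:stro-conc-ora} and \prettyref{eq:grad-stab-ora} — which they are, by the definitions \prettyref{eq:G-r} and \prettyref{eq:V-r-R} and the identity \prettyref{eq:exp-grv-crv}.
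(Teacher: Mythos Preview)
Your proposal is correct and follows essentially the same route as the paper's proof: combine the hypothesis $Q_{*}(\theta'|\theta)\ge Q_{*}(\theta^{*}|\theta)$ with the $\nu$-LUSC bound, use self-consistency \prettyref{eq:self-const} and Cauchy--Schwarz, then apply the $\gamma$-GS bound and divide through. The only cosmetic difference is that you explicitly dispatch the degenerate case $\theta'=\theta^{*}$, whereas the paper leaves it implicit.
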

\begin{proof}
By definitions \prettyref{eq:G-r} and \prettyref{eq:V-r-R}, for
any $(\gamma,\nu)\in\mathcal{G}(r)\times\mathcal{V}(r,R)$, we have
\[
\left\Vert \mathbb{E}_{\theta^{*}}\Gamma(\theta;Y)\right\Vert \le\gamma\left\Vert \theta-\theta^{*}\right\Vert \text{ for }\theta\in B_{r}(\theta^{*})\text{ and}
\]
\[
\mathbb{E}_{\theta^{*}}V(\theta'|\theta;Y)\le-\nu\left\Vert \theta'-\theta^{*}\right\Vert ^{2}\text{ for }\left(\theta',\theta\right)\in B_{R}^{\times}(\theta^{*})\times B_{r}(\theta^{*}).
\]
Then it follows that
\begin{align*}
0 & \le Q_{*}(\theta'|\theta)-Q_{*}(\theta^{*}|\theta)\\
 & \stackrel{(a)}{=}\mathbb{E}_{\theta^{*}}V(\theta'|\theta;Y)+\left\langle \nabla_{1}Q_{*}(\theta^{*}|\theta),\theta'-\theta^{*}\right\rangle \\
 & \stackrel{(b)}{\le}\mathbb{E}_{\theta^{*}}V(\theta'|\theta;Y)+\left\Vert \nabla_{1}Q_{*}(\theta^{*}|\theta)\right\Vert \cdot\left\Vert \theta'-\theta^{*}\right\Vert \\
 & \stackrel{(c)}{=}\mathbb{E}_{\theta^{*}}V(\theta'|\theta;Y)+\left\Vert \mathbb{E}_{\theta^{*}}\Gamma(\theta;Y)\right\Vert \cdot\left\Vert \theta'-\theta^{*}\right\Vert \\
 & \le-\nu\left\Vert \theta'-\theta^{*}\right\Vert ^{2}+\gamma\left\Vert \theta-\theta^{*}\right\Vert \cdot\left\Vert \theta'-\theta^{*}\right\Vert
\end{align*}
where $(a)$ follows from the definition of $\mathbb{E}_{\theta^{*}}V(\theta'|\theta;Y)$;
$(b)$ follows from the Cauchy-Schwartz inequality; and $(c)$ follows
from the definition of $\mathbb{E}_{\theta^{*}}\Gamma(\theta;Y)$ and the self-consistency \prettyref{eq:self-const}. Hence the proposition follows.
\end{proof}

\subsubsection{\label{subsec:main-thm-oct}The Optimal Oracle Convergence Theorem}

We note \prettyref{eq:ora-contr-ineq} holds for \emph{any} pair of
$(\gamma,\nu)\in\mathcal{G}(r)\times\mathcal{V}(r,R)$, not only those
$\gamma<\nu$. However, we are more interested in the case when it
is indeed a contraction. For this purpose, let
\[
\mathcal{T}\coloneqq\{(x,y)\in\mathbb{R}^{+}\times\mathbb{R}^{+}\mid x<y\}
\]
be the open upper-triangle of the first quadrant and define the \emph{set
of contraction parameters}
\[
\mathcal{C}(r,R)\coloneqq\left(\mathcal{G}(r)\times\mathcal{V}(r,R)\right)\cap\mathcal{T}=\{(\gamma,\nu)\mid\gamma\in\mathcal{G}(r),\nu\in\mathcal{V}(r,R)\text{ such that }\gamma<\nu\},
\]
and we say $0<r\le R$ are\emph{ radii of contraction} if $\mathcal{C}(r,R)\not=\varnothing$.

If $0<r\le R$ are radii of contraction, then in view of \prettyref{lem:G=00003D00003D000026V},
we have
\[
\mathcal{C}(r,R)=\left(\left[\overline{\gamma},+\infty\right)\times\left(0,\overline{\nu}\right]\right)\cap\mathcal{T},
\]
and we call $\left(\overline{\gamma},\overline{\nu}\right)$ the \emph{optimal
pair }since the ratio $\frac{\overline{\gamma}}{\overline{\nu}}\le\frac{\gamma}{\nu}<1$
for any $(\gamma,\nu)\in\mathcal{C}(r,R)$, which then gives the \emph{optimal
rate of oracle convergence} with respect to the radii of contraction
$r\le R$. Indeed, this is the content of our main theorem of this
section.
\begin{thm}[Optimal Oracle Convergence Theorem]
\label{thm:oct}Suppose $0<r\le R$ are radii of contraction, then
given initial point $\theta^{0}\in B_{r}(\theta^{*})$, any oracle
EM sequence $\{\theta^{t}\}_{t\ge0}$ such that
\begin{equation}
\theta^{t+1}\in\arg\max\{Q_{*}(\theta'|\theta^{t})\mid\theta'\in B_{R}(\theta^{*})\}\text{ for }t\ge0,\label{eq:ora-em-seq}
\end{equation}
satisfies the inequality
\begin{equation}
\left\Vert \theta^{t}-\theta^{*}\right\Vert \le\overline{\kappa}^{t}\left\Vert \theta^{0}-\theta^{*}\right\Vert \label{eq:ora-conv}
\end{equation}
where $\overline{\kappa}\coloneqq\frac{\overline{\gamma}}{\overline{\nu}}\le\frac{\gamma}{\nu}<1$
for any $(\gamma,\nu)\in\mathcal{C}(r,R)$, is the optimal rate of
oracle convergence with respect to $r\le R$.
\end{thm}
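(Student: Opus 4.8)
The plan is to establish \eqref{eq:ora-conv} by induction on $t$, feeding each step of the oracle EM recursion \eqref{eq:ora-em-seq} into \prettyref{prop:ora-ineq} with the optimal pair $(\overline{\gamma},\overline{\nu})$, and to carry along, as part of the induction hypothesis, the fact that the current iterate lies in $B_r(\theta^*)$ --- which is exactly what keeps \prettyref{prop:ora-ineq} applicable at the next step.

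Before the induction I would assemble three elementary facts. (i) Because $0<r\le R$ are radii of contraction, $\mathcal{C}(r,R)\neq\varnothing$, so there is some $(\gamma,\nu)\in\mathcal{G}(r)\times\mathcal{V}(r,R)$ with $\gamma<\nu$; combining this with \prettyref{lem:G=00003D00003D000026V} yields $\overline{\gamma}\le\gamma<\nu\le\overline{\nu}$, hence $\overline{\kappa}=\overline{\gamma}/\overline{\nu}<1$, and moreover $(\overline{\gamma},\overline{\nu})\in\mathcal{G}(r)\times\mathcal{V}(r,R)$, so \prettyref{prop:ora-ineq} may legitimately be invoked with this pair. (ii) The same chain of inequalities shows $\overline{\kappa}\le\gamma/\nu$ for every $(\gamma,\nu)\in\mathcal{C}(r,R)$, which is the optimality assertion. (iii) Since $R\ge r>0$, the open ball $B_R(\theta^*)$ contains its own centre $\theta^*$; hence, for any $t$, because $\theta^{t+1}$ maximizes $Q_*(\,\cdot\,|\theta^t)$ over $B_R(\theta^*)$, we obtain the comparison $Q_*(\theta^{t+1}|\theta^t)\ge Q_*(\theta^*|\theta^t)$.

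Then I would run the induction, with the claim at stage $t$ being: $\theta^t\in B_r(\theta^*)$ and $\left\Vert\theta^t-\theta^*\right\Vert\le\overline{\kappa}^{\,t}\left\Vert\theta^0-\theta^*\right\Vert$. The base case $t=0$ is just the hypothesis $\theta^0\in B_r(\theta^*)$. For the inductive step, assume the claim at $t$; then $\theta^t\in B_r(\theta^*)$, $\theta^{t+1}\in B_R(\theta^*)$ by construction, and $Q_*(\theta^{t+1}|\theta^t)\ge Q_*(\theta^*|\theta^t)$ by (iii), so \prettyref{prop:ora-ineq} applied with $\theta=\theta^t$, $\theta'=\theta^{t+1}$ and the pair $(\overline{\gamma},\overline{\nu})$ gives
\[
\left\Vert\theta^{t+1}-\theta^*\right\Vert\le\overline{\kappa}\,\left\Vert\theta^t-\theta^*\right\Vert\le\overline{\kappa}^{\,t+1}\left\Vert\theta^0-\theta^*\right\Vert.
\]
Since $0\le\overline{\kappa}<1$ and $\left\Vert\theta^0-\theta^*\right\Vert<r$, the right-hand side is $<r$, so $\theta^{t+1}\in B_r(\theta^*)$, which closes the induction and yields \eqref{eq:ora-conv} for all $t$; the optimality of $\overline{\kappa}$ among admissible rates is fact (ii).

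I do not anticipate a genuine obstacle: the argument is short, and the only points needing care are the two self-propagating invariants of the induction --- that $\theta^*$ lies in the feasible region $B_R(\theta^*)$ of the $M$-step (so that $Q_*(\theta^{t+1}|\theta^t)\ge Q_*(\theta^*|\theta^t)$ is licensed) and that every iterate stays inside $B_r(\theta^*)$ (so that $\overline{\gamma}\in\mathcal{G}(r)$ and $\overline{\nu}\in\mathcal{V}(r,R)$ remain in force for the pair $(\theta^{t+1},\theta^t)$) --- with the strict contraction $\overline{\kappa}<1$, itself a consequence of $\mathcal{C}(r,R)\neq\varnothing$, being precisely what prevents the iterates from leaving $B_r(\theta^*)$. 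Replacing $(\overline{\gamma},\overline{\nu})$ by any other $(\gamma,\nu)\in\mathcal{C}(r,R)$ in the same argument gives the weaker geometric rate $\gamma/\nu$, confirming that $\overline{\kappa}$ is the best rate extractable from \prettyref{prop:ora-ineq} for the given radii of contraction $r\le R$.
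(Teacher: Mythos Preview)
Your proposal is correct and follows essentially the same approach as the paper's own proof: induction on $t$, using \prettyref{prop:ora-ineq} at each step together with the observation that $Q_*(\theta^{t+1}|\theta^t)\ge Q_*(\theta^*|\theta^t)$ and that the contraction keeps iterates inside $B_r(\theta^*)$. The only cosmetic difference is that the paper runs the induction with an arbitrary $(\gamma,\nu)\in\mathcal{C}(r,R)$ and then specializes to the optimal pair, whereas you work with $(\overline{\gamma},\overline{\nu})$ from the outset; both routes are equivalent.
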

\begin{proof}
We only need to show that
\begin{equation}
\left\Vert \theta^{t}-\theta^{*}\right\Vert \le\left(\frac{\gamma}{\nu}\right)^{t}\left\Vert \theta^{0}-\theta^{*}\right\Vert \label{eq:int-1-oct}
\end{equation}
holds for any $(\gamma,\nu)\in\mathcal{C}(r,R)$ and $t\in\mathbb{N}$,
and from which the result follows. We proceed by induction. It is
clear that \prettyref{eq:int-1-oct} holds for $t=0$. Assume it holds
for $t\ge0$ then $\theta^{t}\in B_{r}(\theta^{*})$ since $\frac{\gamma}{\nu}<1$,
and by definition
\[
Q_{*}(\theta^{t+1}|\theta^{t})\ge Q_{*}(\theta^{*}|\theta^{t})
\]
and $\theta^{t+1}\in B_{R}(\theta^{*})$. It follows from \prettyref{prop:ora-ineq}
and induction hypothesis that
\[
\left\Vert \theta^{t+1}-\theta^{*}\right\Vert \le\frac{\gamma}{\nu}\left\Vert \theta^{t}-\theta^{*}\right\Vert \le\left(\frac{\gamma}{\nu}\right)^{t+1}\left\Vert \theta^{0}-\theta^{*}\right\Vert
\]
and hence \prettyref{eq:int-1-oct} holds for $t+1$ and the proof
is complete.
\end{proof}
\begin{rem*}
The theorem above can be viewed as a stronger version of the population
convergence result in Theorem 4 of \cite{Balakrishnan}. It gives
a \emph{family} of deterministic convergence inequalities for oracle
EM sequences, one for each pair of $(\gamma,\nu)\in\mathcal{C}(r,R)$.
It also asserts that the oracle EM sequence converges geometrically
at the \emph{optimal} rate $\overline{\kappa}$ with respect to given
radii of contraction $0<r\le R$. Although in specific models and
real applications, we only \emph{calculate }one or a class of convergence
rates $\kappa$ for some ball of contraction, (see Sections \ref{subsec:ora-cov-gmm},
\ref{subsec:ora-cov-mlr} and \ref{subsec:ora-cov-rmc}), the oracle
EM sequence always converges at the optimal rate $\overline{\kappa}\le\kappa$
with respect to that ball of contraction.
\end{rem*}

\subsection{\label{subsec:emp-conv}The Empirical Convergence and Consistency
of the EM Algorithm}

The empirical EM sequence is constructed iteratively
by maximizing the empirical $Q$-functional $Q_{n}\left(\theta'|\theta;\{Y_{k}\}\right)$,
which is the empirical approximation on a finite set $\{Y_{k}\}_{k=1}^{n}$
of i.i.d. copies of $Y\sim\mathbb{P}_{\theta^{*}}$, to the oracle
(population) $Q$-function.

Our intuition is that, due to the concentration of measure phenomenon
of random variables, the convergence behavior of the empirical EM
sequence ought to ``concentrate'' on the convergence behavior of
the corresponding oracle EM sequence, with high probability.

Hence the results established in the oracle convergence theorem, namely
the radii of contraction and the set of contraction parameters are
\emph{oracle }information that we can exploit to help illuminate the
convergence behavior of the empirical EM sequence. To substantiate
this idea with mathematical rigor, we prove the optimal empirical
convergence theorem in this section and as a consequence, a theorem
on the statistical consistency of the EM algorithm. We first introduce
the empirical versions of GRV, CRV and SEV.

\subsubsection{Basic Definitions and Assumptions\label{subsec:grv-crv-sev-emp}}

For a set $\{Y_{k}\}_{k=1}^{n}$ of i.i.d. copies of $Y\sim\mathbb{P}_{\theta^{*}}$,
we define the \emph{empirical gradient difference random vector} as
\[
\Gamma_{n}(\theta;\{Y_{k}\})\coloneqq\frac{1}{n}\sum_{k=1}^{n}\Gamma(\theta;Y_{k})=\nabla_{1}Q_{n}(\theta^{*}|\theta;\{Y_{k}\})-\nabla_{1}Q_{n}(\theta^{*}|\theta^{*};\{Y_{k}\}),
\]
the \emph{empirical concavity random variable} as
\begin{align*}
V_{n}(\theta'|\theta;\{Y_{k}\}) & \coloneqq\frac{1}{n}\sum_{k=1}^{n}V(\theta'|\theta;Y_{k})\\
 & =Q_{n}(\theta'|\theta;\{Y_{k}\})-Q_{n}(\theta^{*}|\theta;\{Y_{k}\})-\left\langle \nabla_{1}Q_{n}(\theta^{*}|\theta;\{Y_{k}\}),\theta'-\theta^{*}\right\rangle ,
\end{align*}
and also the \emph{empirical} \emph{statistical error vector} as
\[
\mathcal{E}_{n}(\{Y_{k}\})\coloneqq\frac{1}{n}\sum_{k=1}^{n}\mathcal{E}(Y_{k})=\nabla_{1}Q_{n}(\theta^{*}|\theta^{*};\{Y_{k}\}).
\]

In order to exploit the oracle information from the population version
of these quantities, we postulate the following assumptions on the
empirical versions of the GRV, CRV and SEV.
\begin{description}
\item [{Assumptions\label{Assumptions}}]~

For $\delta\in(0,1)$ and a set $\{Y_{k}\}_{k=1}^{n}$ of i.i.d. copies
of $Y\sim\mathbb{P}_{\theta^{*}}$:
\begin{enumerate}[label=(\textbf{A\arabic*}),ref={A\arabic*}]
\item \label{A1}There exits $\varepsilon_{1}(\delta,r,n,p)\ge0$ such
that
\[
\left\Vert \Gamma_{n}(\theta;\{Y_{k}\})-\mathbb{E}_{\theta^{*}}\Gamma(\theta;Y)\right\Vert \le\varepsilon_{1}(\delta,r,n,p)\left\Vert \theta-\theta^{*}\right\Vert
\]
for $\theta\in B_{r}(\theta^{*})$ with probability at least $1-\delta$.
\item \label{A2}There exits $\varepsilon_{2}(\delta,r,R,n,p)\ge0$ such
that
\[
\left|V_{n}(\theta'|\theta;\{Y_{k}\})-\mathbb{E}_{\theta^{*}}V(\theta'|\theta;Y)\right|\le\varepsilon_{2}(\delta,r,R,n,p)\left\Vert \theta'-\theta^{*}\right\Vert ^{2}
\]
for $\left(\theta',\theta\right)\in B_{R}(\theta^{*})\times B_{r}(\theta^{*})$
with probability at least $1-\delta$.
\item \label{A3}There exists $\varepsilon_{s}(\delta,r,R,n,p)>0$ such
that
\[
\left\Vert \mathcal{E}_{n}(\{Y_{k}\})\right\Vert \le\varepsilon_{s}(\delta,r,R,n,p)
\]
with probability at least $1-\delta$.
\end{enumerate}
\end{description}
\begin{rem*}
For the measurability issue involved in the assumptions, see \prettyref{subsec:mea}.
These assumptions are natural concentration inequalities for random
variables or vectors, they are readily verified in canonical example
models. And in view of the Law of Large Numbers, we have that $\varepsilon_{1}(\delta,r,n,p)\to0$,
$\varepsilon_{2}(\delta,r,R,n,p)\to0$ and $\varepsilon_{s}(\delta,r,R,n,p)\to0$
as the sample size $n\to\infty$.
\end{rem*}

\subsubsection{\label{subsec:def-oecr}The Optimal Empirical Convergence Rate}

Now we proceed to define the central object of our data-adaptive analysis
of the EM algorithm. For $0<r\le R$ and i.i.d. copies $\{Y_{k}\}_{k=1}^{n}$
of $Y\sim\mathbb{P}_{\theta^{*}}$, define the (possibly) extended
real-valued (with range $\mathbb{R}\cup\{\pm\infty\}$) random variables\footnote{For the measurability issue, see \prettyref{subsec:mea}}
\[
\overline{\varGamma}_{n}\coloneqq\sup\left\{ \frac{\left\Vert \Gamma_{n}(\theta;\{Y_{k}\})\right\Vert }{\left\Vert \theta-\theta^{*}\right\Vert }\mid\theta\in B_{r}^{\times}(\theta^{*})\right\} ,\quad\overline{E}_{n}\coloneqq\left\Vert \mathcal{E}_{n}(\{Y_{k}\})\right\Vert \text{ and }
\]
\begin{equation}
\overline{V}_{n}\coloneqq\inf\left\{ -\frac{V_{n}(\theta'|\theta;\{Y_{k}\})}{\left\Vert \theta'-\theta^{*}\right\Vert ^{2}}\mid\left(\theta',\theta\right)\in B_{R}^{\times}(\theta^{*})\times B_{r}(\theta^{*})\right\} \label{eq:opt-emp-con-paras}
\end{equation}

The following lemma asserts that these random variables assume finite
values and are properly bounded with high probability under our assumptions.
\begin{lem}
\label{lem:bounds}Suppose $\delta\in(0,1)$ and $0<r\le R$. If
assumptions (\ref{A1}), (\ref{A2}) and (\ref{A3}) hold true, then
for any $(\gamma,\nu)\in\mathcal{G}(r)\times\mathcal{V}(r,R)$, with
probability at least $1-\delta$, these random variables satisfy $\overline{\varGamma}_{n}\le\gamma_{n}$,
$\overline{V}_{n}\ge\nu_{n}$ and $\overline{E}_{n}\le\varepsilon_{s}(\delta,r,R,n,p)$
where $\gamma_{n}\coloneqq\gamma+\varepsilon_{1}(\delta,r,n,p)\text{ and }\nu_{n}\coloneqq\nu-\varepsilon_{2}(\delta,r,R,n,p).$
\end{lem}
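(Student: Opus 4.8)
The plan is to transport the oracle contraction parameters $\gamma\in\mathcal{G}(r)$ and $\nu\in\mathcal{V}(r,R)$ to the empirical quantities by perturbing them with the concentration errors supplied by \ref{A1}--\ref{A3}, using only the triangle inequality and a union bound. Fix $(\gamma,\nu)\in\mathcal{G}(r)\times\mathcal{V}(r,R)$ (if this product is empty the assertion is vacuous). By \prettyref{eq:G-r} membership gives $\left\Vert\mathbb{E}_{\theta^{*}}\Gamma(\theta;Y)\right\Vert\le\gamma\left\Vert\theta-\theta^{*}\right\Vert$ for $\theta\in B_{r}(\theta^{*})$, and by \prettyref{eq:V-r-R} it gives $\mathbb{E}_{\theta^{*}}V(\theta'|\theta;Y)\le-\nu\left\Vert\theta'-\theta^{*}\right\Vert^{2}$ for $(\theta',\theta)\in B_{R}(\theta^{*})\times B_{r}(\theta^{*})$; recall $\gamma_{n}=\gamma+\varepsilon_{1}(\delta,r,n,p)$ and $\nu_{n}=\nu-\varepsilon_{2}(\delta,r,R,n,p)$. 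Let $\mathcal{A}_{1},\mathcal{A}_{2},\mathcal{A}_{3}$ be the events on which the inequalities of \ref{A1}, \ref{A2}, \ref{A3} hold, and work on $\mathcal{A}\coloneqq\mathcal{A}_{1}\cap\mathcal{A}_{2}\cap\mathcal{A}_{3}$; invoking each assumption at confidence $1-\delta/3$ (absorbed into the arguments of the $\varepsilon$'s) and taking a union bound gives $\Pr(\mathcal{A})\ge1-\delta$.

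On $\mathcal{A}_{1}$, for every $\theta\in B_{r}^{\times}(\theta^{*})$ the triangle inequality together with \prettyref{eq:G-r} yields
\[ \left\Vert\Gamma_{n}(\theta;\{Y_{k}\})\right\Vert\le\left\Vert\mathbb{E}_{\theta^{*}}\Gamma(\theta;Y)\right\Vert+\left\Vert\Gamma_{n}(\theta;\{Y_{k}\})-\mathbb{E}_{\theta^{*}}\Gamma(\theta;Y)\right\Vert\le(\gamma+\varepsilon_{1}(\delta,r,n,p))\left\Vert\theta-\theta^{*}\right\Vert=\gamma_{n}\left\Vert\theta-\theta^{*}\right\Vert. \]
Dividing by $\left\Vert\theta-\theta^{*}\right\Vert$ over $\theta\in B_{r}^{\times}(\theta^{*})$ and passing to the supremum gives $\overline{\varGamma}_{n}\le\gamma_{n}$; in particular $\overline{\varGamma}_{n}$ is finite on $\mathcal{A}$, so the a priori extended-real-valued supremum in \prettyref{eq:opt-emp-con-paras} is actually real-valued there.

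On $\mathcal{A}_{2}$, for every $(\theta',\theta)\in B_{R}^{\times}(\theta^{*})\times B_{r}(\theta^{*})$, \ref{A2} gives $V_{n}(\theta'|\theta;\{Y_{k}\})-\mathbb{E}_{\theta^{*}}V(\theta'|\theta;Y)\le\varepsilon_{2}(\delta,r,R,n,p)\left\Vert\theta'-\theta^{*}\right\Vert^{2}$, hence $-V_{n}(\theta'|\theta;\{Y_{k}\})\ge-\mathbb{E}_{\theta^{*}}V(\theta'|\theta;Y)-\varepsilon_{2}(\delta,r,R,n,p)\left\Vert\theta'-\theta^{*}\right\Vert^{2}\ge\nu_{n}\left\Vert\theta'-\theta^{*}\right\Vert^{2}$ by \prettyref{eq:V-r-R}. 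Dividing by $\left\Vert\theta'-\theta^{*}\right\Vert^{2}>0$ and taking the infimum over the punctured product ball yields $\overline{V}_{n}\ge\nu_{n}$. Finally, $\overline{E}_{n}=\left\Vert\mathcal{E}_{n}(\{Y_{k}\})\right\Vert\le\varepsilon_{s}(\delta,r,R,n,p)$ holds verbatim on $\mathcal{A}_{3}$ by \ref{A3}. Thus all three bounds hold on $\mathcal{A}$, which completes the proof.

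I do not anticipate a substantive obstacle: this is a bridging lemma whose proof is triangle inequality plus union bound. The only points requiring genuine care are (i) the measurability of the extended-real-valued random variables $\overline{\varGamma}_{n}$ and $\overline{V}_{n}$, so that the probabilistic statements are well posed, which is deferred to \prettyref{subsec:mea}; and (ii) bookkeeping the confidence budget across the three events so the final probability is exactly $1-\delta$. I would also emphasize that the lemma claims only $\overline{V}_{n}\ge\nu_{n}$, not $\nu_{n}>0$: positivity of $\nu_{n}$ — needed for the ratio $\overline{\varGamma}_{n}/\overline{V}_{n}$ to define a bona fide empirical convergence rate $\overline{K}_{n}$ later — requires $n$ large enough that $\varepsilon_{2}(\delta,r,R,n,p)<\nu$, which is guaranteed by the Law of Large Numbers observation in the remark following the Assumptions.
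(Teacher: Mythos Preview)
Your proof is correct and follows essentially the same approach as the paper's: both combine the membership conditions \prettyref{eq:G-r}, \prettyref{eq:V-r-R} with the concentration bounds from (\ref{A1})--(\ref{A3}) via the triangle inequality, then pass to the supremum/infimum and finish with a union bound over three events each of probability at least $1-\delta/3$. Your additional remarks on measurability and on the fact that $\nu_{n}>0$ is not part of the claim are accurate and match the paper's treatment elsewhere.
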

\begin{proof}
Since $(\gamma,\nu)\in\mathcal{G}(r)\times\mathcal{V}(r,R)$ and by
definitions \prettyref{eq:G-r} and \prettyref{eq:V-r-R}, one has
\[
\left\Vert \mathbb{E}_{\theta^{*}}\Gamma(\theta;Y)\right\Vert \le\gamma\left\Vert \theta-\theta^{*}\right\Vert \text{ for }\theta\in B_{r}(\theta^{*})\text{ and }
\]
\[
\mathbb{E}_{\theta^{*}}V(\theta'|\theta;Y)\le-\nu\left\Vert \theta'-\theta^{*}\right\Vert ^{2}\text{ for }\left(\theta',\theta\right)\in B_{R}(\theta^{*})\times B_{r}(\theta^{*}).
\]
Then by assumption (\ref{A1}), for the set $\{Y_{k}\}_{k=1}^{n}$
of i.i.d. copies of $Y\sim\mathbb{P}_{\theta^{*}}$ and $\theta\in B_{r}(\theta^{*})$,
\begin{align*}
\left\Vert \Gamma_{n}(\theta;\{Y_{k}\})\right\Vert  & \le\left\Vert \mathbb{E}_{\theta^{*}}\Gamma(\theta;Y)\right\Vert +\left\Vert \Gamma_{n}(\theta;\{Y_{k}\})-\mathbb{E}_{\theta^{*}}\Gamma(\theta;Y)\right\Vert \\
 & \le\gamma\left\Vert \theta-\theta^{*}\right\Vert +\varepsilon_{1}(\delta,r,n,p)\left\Vert \theta-\theta^{*}\right\Vert \\
 & =\gamma_{n}\left\Vert \theta-\theta^{*}\right\Vert
\end{align*}
with probability at least $1-\delta/3$. It follows that
\[
\overline{\varGamma}_{n}=\sup\left\{ \frac{\left\Vert \Gamma_{n}(\theta;\{Y_{k}\})\right\Vert }{\left\Vert \theta-\theta^{*}\right\Vert }\mid\theta\in B_{r}^{\times}(\theta^{*})\right\} \le\gamma_{n}.
\]

Likewise, by assumption (\ref{A2}) and for $\left(\theta',\theta\right)\in B_{R}(\theta^{*})\times B_{r}(\theta^{*})$,
we have
\begin{align*}
V_{n}(\theta'|\theta;\{Y_{k}\}) & \le\mathbb{E}_{\theta^{*}}V(\theta'|\theta;Y)+\left|V_{n}(\theta'|\theta;\{Y_{k}\})-\mathbb{E}_{\theta^{*}}V(\theta'|\theta;Y)\right|\\
 & \le-\nu\left\Vert \theta'-\theta^{*}\right\Vert ^{2}+\varepsilon_{2}(\delta,r,R,n,p)\left\Vert \theta'-\theta^{*}\right\Vert ^{2}\\
 & =-\nu_{n}\left\Vert \theta'-\theta^{*}\right\Vert ^{2}
\end{align*}
with probability at least $1-\delta/3$. It follows that
\[
\overline{V}_{n}=\inf\left\{ -\frac{V_{n}(\theta'|\theta;\{Y_{k}\})}{\left\Vert \theta'-\theta^{*}\right\Vert ^{2}}\mid\left(\theta',\theta\right)\in B_{R}^{\times}(\theta^{*})\times B_{r}(\theta^{*})\right\} \ge\nu_{n}.
\]

Moreover by assumption (\ref{A3}), $\overline{E}_{n}=\left\Vert \mathcal{E}_{n}(\{Y_{k}\})\right\Vert \le\varepsilon_{s}(\delta,r,R,n,p)$
with probability at least $1-\delta/3$. Then the lemma is proved by
applying a union bound.
\end{proof}
If $\delta\in(0,1)$ and $0<r\le R$ are radii of contraction, then
the above lemma holds for the optimal pair $(\overline{\gamma},\overline{\nu})\in\mathcal{C}(r,R)$
and set $\overline{\gamma}_{n}\coloneqq\overline{\gamma}+\varepsilon_{1}(\delta,r,n,p)$
and $\overline{\nu}_{n}\coloneqq\overline{\nu}-\varepsilon_{2}(\delta,r,R,n,p).$
We call $(\overline{\gamma}_{n},\overline{\nu}_{n})$ the \emph{optimal
pair} of the empirical contraction parameters. Define the event
\begin{equation}
\mathscr{E}_{n}\coloneqq\left\{ \varpi\mid\overline{\varGamma}_{n}\le\overline{\gamma}_{n},\overline{V}_{n}\ge\overline{\nu}_{n}\text{ and }\overline{E}_{n}\le\varepsilon_{s}(\delta,r,R,n,p)\right\} ,\label{eq:evt-E_n}
\end{equation}
then the above lemma implies that $\Pr\mathscr{E}_{n}\ge1-\delta$.
Now we define the random variable
\begin{equation}
\overline{K}_{n}\coloneqq\begin{cases}
\min\left\{ \frac{\overline{\varGamma}_{n}}{\overline{V}_{n}},\overline{\kappa}_{n}\right\}  & \text{ if }0<\overline{V}_{n}<\infty\\
\overline{\kappa}_{n} & \text{ otherwise}
\end{cases}\label{eq:def-Kn}
\end{equation}
as the \emph{optimal empirical convergence rate}, where $\overline{\kappa}_{n}\coloneqq\frac{\overline{\gamma}_{n}}{\overline{\nu}_{n}}$.
Under the event $\mathscr{E}_{n}$, we have that $0<\frac{\overline{\varGamma}_{n}}{\overline{V}_{n}}\le\overline{\kappa}_{n}$, hence $\overline{K}_{n}=\frac{\overline{\varGamma}_{n}}{\overline{V}_{n}}$.
And by definition $0<\overline{K}_{n}\le\overline{\kappa}_{n}$.

\subsubsection{The Optimal Empirical Convergence Theorem}

The following proposition gives the \emph{optimal empirical contraction
inequality}, which lies in the core of our empirical convergence theory.
\begin{prop}
\label{prop:opt-ineq}Let $\delta\in(0,1)$ and $\{Y_{k}\}_{k=1}^{n}$
be a set of i.i.d. copies of $Y\sim\mathbb{P}_{\theta^{*}}$. Suppose
$0<r\le R$ are radii of contraction and $\left(\overline{\gamma},\overline{\nu}\right)\in\mathcal{C}(r,R)$
is the optimal pair. If assumptions (\ref{A1}), (\ref{A2}) and
(\ref{A3}) hold true and the sample size $n$ is sufficiently large
such that $\overline{\nu}_{n}>0$, then for any $\theta\in B_{r}(\theta^{*})$
and $\theta'\in B_{R}(\theta^{*})$ such that
\[
Q_{n}(\theta'|\theta;\{Y_{k}\})\ge Q_{n}(\theta^{*}|\theta;\{Y_{k}\}),
\]
there holds the inequality
\begin{equation}
\left\Vert \theta'-\theta^{*}\right\Vert \le\overline{K}_{n}\left\Vert \theta-\theta^{*}\right\Vert +\frac{\overline{E}_{n}}{\overline{V}_{n}}\label{eq:opt-contr-ineq}
\end{equation}
with probability at least $1-\delta$.
\end{prop}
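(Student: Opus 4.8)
The plan is to mirror the proof of \prettyref{prop:ora-ineq}, carrying the same chain of inequalities out at the \emph{empirical} level and then restricting everything to the event $\mathscr{E}_n$ of \prettyref{eq:evt-E_n}, on which the three random quantities $\overline{\varGamma}_n,\overline{V}_n,\overline{E}_n$ are under control by \prettyref{lem:bounds}. First I would dispose of the trivial case $\theta'=\theta^{*}$: the left-hand side of \prettyref{eq:opt-contr-ineq} then vanishes while the right-hand side is nonnegative (on the event $\mathscr{E}_n$ one has $\overline{V}_n\ge\overline{\nu}_n>0$), so the inequality holds automatically. Thus I may assume $\theta'\in B_R^{\times}(\theta^{*})$.

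Next, starting from the hypothesis $Q_n(\theta'|\theta;\{Y_k\})\ge Q_n(\theta^{*}|\theta;\{Y_k\})$ and using the definition of the empirical concavity random variable, I would write
\[
0\le Q_n(\theta'|\theta;\{Y_k\})-Q_n(\theta^{*}|\theta;\{Y_k\})=V_n(\theta'|\theta;\{Y_k\})+\left\langle \nabla_{1}Q_n(\theta^{*}|\theta;\{Y_k\}),\theta'-\theta^{*}\right\rangle ,
\]
then split the gradient via the definitions of the empirical GRV and SEV as $\nabla_{1}Q_n(\theta^{*}|\theta;\{Y_k\})=\Gamma_n(\theta;\{Y_k\})+\mathcal{E}_n(\{Y_k\})$, and apply Cauchy--Schwarz to obtain
\[
0\le V_n(\theta'|\theta;\{Y_k\})+\bigl(\left\Vert \Gamma_n(\theta;\{Y_k\})\right\Vert +\left\Vert \mathcal{E}_n(\{Y_k\})\right\Vert \bigr)\left\Vert \theta'-\theta^{*}\right\Vert .
\]

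The crux is then to pass to the random variables in \prettyref{eq:opt-emp-con-paras}. By the very definition of $\overline{\varGamma}_n$ as a supremum (and since $\Gamma_n(\theta^{*};\{Y_k\})=0$) one has $\left\Vert \Gamma_n(\theta;\{Y_k\})\right\Vert \le\overline{\varGamma}_n\left\Vert \theta-\theta^{*}\right\Vert $ for all $\theta\in B_r(\theta^{*})$; by the definition of $\overline{V}_n$ as an infimum one has $V_n(\theta'|\theta;\{Y_k\})\le-\overline{V}_n\left\Vert \theta'-\theta^{*}\right\Vert ^{2}$; and $\left\Vert \mathcal{E}_n(\{Y_k\})\right\Vert =\overline{E}_n$ by definition. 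Substituting, dividing by $\left\Vert \theta'-\theta^{*}\right\Vert >0$, and working on $\mathscr{E}_n$ (where $0<\overline{\nu}_n\le\overline{V}_n<\infty$, so division by $\overline{V}_n$ is legitimate) yields $\left\Vert \theta'-\theta^{*}\right\Vert \le\frac{\overline{\varGamma}_n}{\overline{V}_n}\left\Vert \theta-\theta^{*}\right\Vert +\frac{\overline{E}_n}{\overline{V}_n}$. Finally, on $\mathscr{E}_n$ we have $\overline{\varGamma}_n/\overline{V}_n\le\overline{\gamma}_n/\overline{\nu}_n=\overline{\kappa}_n$, so \prettyref{eq:def-Kn} gives $\overline{\varGamma}_n/\overline{V}_n=\overline{K}_n$, which is exactly \prettyref{eq:opt-contr-ineq}; and \prettyref{lem:bounds}, applied to the optimal pair $(\overline{\gamma},\overline{\nu})$ together with the assumption $\overline{\nu}_n>0$, guarantees $\Pr\mathscr{E}_n\ge1-\delta$.

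I do not expect a genuine obstacle: this proposition is the empirical transcription of \prettyref{prop:ora-ineq}, and all the concentration effort has already been packaged into \prettyref{lem:bounds}. The only point needing care is the event bookkeeping — one must verify that the derived inequality genuinely holds on \emph{all} of $\mathscr{E}_n$ (in particular that $\overline{V}_n$ is finite and strictly positive there, so that both the division and the identity $\overline{K}_n=\overline{\varGamma}_n/\overline{V}_n$ are valid on that event), so that the probability bound from \prettyref{lem:bounds} transfers verbatim rather than only to some smaller event.
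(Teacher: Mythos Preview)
Your proposal is correct and follows essentially the same argument as the paper: expand $Q_n(\theta'|\theta)-Q_n(\theta^{*}|\theta)$ via the empirical CRV, split the gradient into $\Gamma_n+\mathcal{E}_n$, apply Cauchy--Schwarz, replace by the sup/inf quantities $\overline{\varGamma}_n,\overline{V}_n,\overline{E}_n$, and divide through on the event $\mathscr{E}_n$ where $\overline{V}_n\ge\overline{\nu}_n>0$. Your write-up is in fact slightly more careful than the paper's (explicitly handling $\theta'=\theta^{*}$ and spelling out why $\overline{K}_n=\overline{\varGamma}_n/\overline{V}_n$ on $\mathscr{E}_n$), but the route is identical.
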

\begin{proof}
By a similar argument to that of \prettyref{prop:ora-ineq}, we have
\begin{align*}
0 & \le Q_{n}(\theta'|\theta;\{Y_{k}\})-Q_{n}(\theta^{*}|\theta;\{Y_{k}\})\\
 & \stackrel{(a)}{=}V_{n}(\theta'|\theta;\{Y_{k}\})+\left\langle \nabla_{1}Q_{n}(\theta^{*}|\theta;\{Y_{k}\}),\theta'-\theta^{*}\right\rangle \\
 & \stackrel{(b)}{\le}V_{n}(\theta'|\theta;\{Y_{k}\})+\left\Vert \nabla_{1}Q_{n}(\theta^{*}|\theta;\{Y_{k}\})\right\Vert \cdot\left\Vert \theta'-\theta^{*}\right\Vert \\
 & \stackrel{(c)}{=}V_{n}(\theta'|\theta;\{Y_{k}\})+\left\Vert \Gamma_{n}(\theta;\{Y_{k}\})+\mathcal{E}_{n}(\{Y_{k}\})\right\Vert \cdot\left\Vert \theta'-\theta^{*}\right\Vert \\
 & \stackrel{(d)}{\le}-\overline{V}_{n}\left\Vert \theta'-\theta^{*}\right\Vert ^{2}+\left(\overline{\Gamma}_{n}\left\Vert \theta-\theta^{*}\right\Vert +\overline{E}_{n}\right)\cdot\left\Vert \theta'-\theta^{*}\right\Vert
\end{align*}
where $(a)$ follows from the definition of $V_{n}(\theta'|\theta;\{Y_{k}\})$;
$(b)$ follows from the Cauchy-Schwartz inequality; $(c)$ follows
from the definitions of $\Gamma_{n}(\theta;\{Y_{k}\})$ and $\mathcal{E}_{n}(\{Y_{k}\})$; and $(d)$ follows from the definition of the random variables $\overline{\Gamma}_{n}$,
$\overline{V}_{n}$ and $\overline{E}_{n}$. Conditioning on the event
$\mathscr{E}_{n}$, we have $\overline{V}_{n}\ge\overline{\nu}_{n}>0$
, then we can perform the division by $\overline{V}_{n}$ on both sides of above inequality, and obtain the desired result.
\end{proof}
\begin{rem*}
Since $\varepsilon_{2}(\delta,r,R,n,p)\to0$ as $n\to\infty$, we
have $\overline{\nu}_{n}=\overline{\nu}-\varepsilon_{2}(\delta,r,R,n,p)>0$
when $n$ is sufficiently large.
\end{rem*}
Before we state the main theorem, we prove one more technical lemma
for an event bound.
\begin{lem}
\label{lem:evt-bound}Let $\delta\in(0,1)$ and $\{Y_{k}\}_{k=1}^{n}$
be a set of i.i.d. copies of $Y\sim\mathbb{P}_{\theta^{*}}$. Suppose
$0<r\le R$ are radii of contraction and $\left(\overline{\gamma},\overline{\nu}\right)\in\mathcal{C}(r,R)$
is the optimal pair. If assumptions (\ref{A1}), (\ref{A2}) and
(\ref{A3}) hold true and the sample size $n$ is sufficiently large
such that
\begin{equation}
\varepsilon_{s}(\delta,r,R,n,p)+r\varepsilon_{1}(\delta,r,n,p)+r\varepsilon_{2}(\delta,r,R,n,p)<r\left(\overline{\nu}-\overline{\gamma}\right),\label{eq:cond-n-evt-bnd}
\end{equation}
then $\mathscr{E}_{n}\subseteq\left\{ \varpi\mid\overline{E}_{n}<r\left(\overline{V}_{n}-\overline{\Gamma}_{n}\right)\right\} $.
\end{lem}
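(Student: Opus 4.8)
The plan is to unwind the definition \eqref{eq:evt-E_n} of the event $\mathscr{E}_n$ together with the definitions $\overline{\gamma}_n=\overline{\gamma}+\varepsilon_1(\delta,r,n,p)$ and $\overline{\nu}_n=\overline{\nu}-\varepsilon_2(\delta,r,R,n,p)$, and then string together a single chain of inequalities. Fix $\varpi\in\mathscr{E}_n$. By \eqref{eq:evt-E_n} we have simultaneously $\overline{\Gamma}_n\le\overline{\gamma}_n$, $\overline{V}_n\ge\overline{\nu}_n$, and $\overline{E}_n\le\varepsilon_s(\delta,r,R,n,p)$.

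First I would extract the purely deterministic consequence of the hypothesis \eqref{eq:cond-n-evt-bnd}: since $r>0$, dividing through and regrouping gives $\overline{\nu}_n-\overline{\gamma}_n=(\overline{\nu}-\overline{\gamma})-\varepsilon_1-\varepsilon_2>\varepsilon_s/r\ge 0$, so in particular $\overline{\nu}_n>\overline{\gamma}_n>0$ (using $\varepsilon_s>0$ from assumption (\ref{A3})) and, multiplying back by $r$, $\varepsilon_s(\delta,r,R,n,p)<r(\overline{\nu}_n-\overline{\gamma}_n)$. Next, on $\mathscr{E}_n$ the one-sided bounds $\overline{V}_n\ge\overline{\nu}_n$ and $\overline{\Gamma}_n\le\overline{\gamma}_n$ combine to give $\overline{V}_n-\overline{\Gamma}_n\ge\overline{\nu}_n-\overline{\gamma}_n$, and multiplying by $r>0$ preserves this. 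Chaining these facts with $\overline{E}_n\le\varepsilon_s(\delta,r,R,n,p)$ yields
\[
\overline{E}_n\;\le\;\varepsilon_s(\delta,r,R,n,p)\;<\;r\bigl(\overline{\nu}_n-\overline{\gamma}_n\bigr)\;\le\;r\bigl(\overline{V}_n-\overline{\Gamma}_n\bigr),
\]
so $\varpi\in\{\varpi\mid\overline{E}_n<r(\overline{V}_n-\overline{\Gamma}_n)\}$. Since $\varpi\in\mathscr{E}_n$ was arbitrary, the claimed inclusion follows.

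I do not expect any genuine obstacle here: the argument is a routine inequality manipulation once the definitions of $\mathscr{E}_n$, $\overline{\gamma}_n$, $\overline{\nu}_n$ are spelled out. The only point needing a little care is to keep the inequality directions consistent — one must use the lower bound $\overline{V}_n\ge\overline{\nu}_n$ but the upper bounds $\overline{\Gamma}_n\le\overline{\gamma}_n$ and $\overline{E}_n\le\varepsilon_s$ that $\mathscr{E}_n$ supplies, and the strictness in the conclusion comes directly from the strict hypothesis \eqref{eq:cond-n-evt-bnd}. It is also worth remarking in passing that \eqref{eq:cond-n-evt-bnd} forces $\overline{\nu}_n>0$, so the ``$n$ sufficiently large such that $\overline{\nu}_n>0$'' proviso needed to invoke \prettyref{prop:opt-ineq} is automatically met under the same hypothesis, which is exactly how this lemma will be used in the proof of the optimal empirical convergence theorem.
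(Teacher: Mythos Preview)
Your proof is correct and follows essentially the same chain of inequalities as the paper: on $\mathscr{E}_n$ you use $\overline{E}_n\le\varepsilon_s<r(\overline{\nu}_n-\overline{\gamma}_n)\le r(\overline{V}_n-\overline{\Gamma}_n)$, which is exactly the paper's argument. Your closing remark that \eqref{eq:cond-n-evt-bnd} already forces $\overline{\nu}_n>0$ also matches the paper's observation immediately following the lemma.
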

\begin{proof}
By definition \prettyref{eq:evt-E_n}, under the event $\mathscr{E}_{n}$,
we have $\overline{\varGamma}_{n}\le\overline{\gamma}_{n}=\overline{\gamma}+\varepsilon_{1}(\delta,r,n,p)$,
$\overline{V}_{n}\ge\overline{\nu}_{n}=\overline{\nu}-\varepsilon_{2}(\delta,r,R,n,p)$
and $\overline{E}_{n}\le\varepsilon_{s}(\delta,r,R,n,p)$, then simple
calculation yields that
\begin{align*}
\overline{E}_{n} & \le\varepsilon_{s}(\delta,r,R,n,p)\stackrel{(a)}{<}r\left(\overline{\nu}-\overline{\gamma}\right)-r\varepsilon_{1}(\delta,r,n,p)-r\varepsilon_{2}(\delta,r,R,n,p)\\
 & =r\left(\overline{\nu}-\varepsilon_{2}(\delta,r,R,n,p)-\left(\overline{\gamma}+\varepsilon_{1}(\delta,r,n,p)\right)\right)\\
 & =r\left(\overline{\nu}_{n}-\overline{\gamma}_{n}\right)\le r\left(\overline{V}_{n}-\overline{\varGamma}_{n}\right)
\end{align*}
where $(a)$ follows from assumption \prettyref{eq:cond-n-evt-bnd}
and the result is proved.
\end{proof}
\begin{rem*}
We note \prettyref{eq:cond-n-evt-bnd} implies that $\overline{\nu}_{n}=\overline{\nu}-\varepsilon_{2}(\delta,r,R,n,p)>\overline{\gamma}_{n}+\frac{1}{r}\varepsilon_{s}(\delta,r,R,n,p)>0$.
\end{rem*}
Now we state and prove the main theorem.
\begin{thm}[Optimal Empirical Convergence Theorem]
\label{thm:oect}Let $\delta\in(0,1)$ and $\{Y_{k}\}_{k=1}^{n}$
be a set of i.i.d. copies of $Y\sim\mathbb{P}_{\theta^{*}}$. Suppose
$0<r\le R$ are radii of contraction and $\left(\overline{\gamma},\overline{\nu}\right)\in\mathcal{C}(r,R)$
is the optimal pair. If assumptions (\ref{A1}), (\ref{A2}) and
(\ref{A3}) hold true and the sample size $n$ is sufficiently large
such that
\begin{equation}
\varepsilon_{s}(\delta,r,R,n,p)+r\varepsilon_{1}(\delta,r,n,p)+r\varepsilon_{2}(\delta,r,R,n,p)<r(\overline{\nu}-\overline{\gamma}),\label{eq:cond-for-n}
\end{equation}
then given an initial point $\Theta_{n}^{0}\in B_{r}(\theta^{*})$,
the empirical EM sequence $\{\Theta_{n}^{t}\}_{t\ge0}$ such that
\[
\Theta_{n}^{t+1}\in\arg\max\{Q_{n}(\Theta'|\Theta_{n}^{t};\{Y_{k}\})\mid\Theta'\in B_{R}(\theta^{*})\}\text{ for }t\ge0
\]
satisfies the inequality
\begin{equation}
\left\Vert \Theta_{n}^{t}-\theta^{*}\right\Vert \le\left(\overline{K}_{n}\right)^{t}\left\Vert \Theta_{n}^{0}-\theta^{*}\right\Vert +\frac{\overline{E}_{n}}{\overline{V}_{n}-\overline{\Gamma}_{n}}\label{eq:opt-conv}
\end{equation}
with probability at least $1-\delta$.
\end{thm}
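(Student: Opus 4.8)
The plan is to condition throughout on the event $\mathscr{E}_{n}$ of \eqref{eq:evt-E_n}, which by Lemma~\ref{lem:bounds} has probability at least $1-\delta$, and to establish \eqref{eq:opt-conv} as a deterministic statement on this event by induction on $t$, carrying along the auxiliary invariant that the iterate $\Theta_{n}^{t}$ never leaves $B_{r}(\theta^{*})$. First I would record the facts available on $\mathscr{E}_{n}$. Set $\kappa\coloneqq\overline{K}_{n}$ and $\rho\coloneqq\overline{E}_{n}/(\overline{V}_{n}-\overline{\Gamma}_{n})$. On $\mathscr{E}_{n}$ one has $\overline{V}_{n}\ge\overline{\nu}_{n}>0$ and, as observed after \eqref{eq:def-Kn}, $\overline{K}_{n}=\overline{\Gamma}_{n}/\overline{V}_{n}\le\overline{\kappa}_{n}$; since hypothesis \eqref{eq:cond-for-n} forces $\overline{\nu}_{n}>\overline{\gamma}_{n}>0$ (the remark after Lemma~\ref{lem:evt-bound}), we get $\overline{\kappa}_{n}=\overline{\gamma}_{n}/\overline{\nu}_{n}<1$, hence $\kappa\in(0,1)$. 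From $\kappa=\overline{\Gamma}_{n}/\overline{V}_{n}$ one derives the identity $\overline{E}_{n}/\overline{V}_{n}=(1-\kappa)\rho$, and Lemma~\ref{lem:evt-bound} gives $\overline{E}_{n}<r(\overline{V}_{n}-\overline{\Gamma}_{n})$, i.e. $\rho<r$. It therefore suffices to prove \eqref{eq:opt-conv} pointwise on $\mathscr{E}_{n}$.

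The core of the argument is a single induction on $t\ge0$ establishing two assertions simultaneously: (i) $\|\Theta_{n}^{t}-\theta^{*}\|<r$, and (ii) $\|\Theta_{n}^{t}-\theta^{*}\|\le\kappa^{t}\|\Theta_{n}^{0}-\theta^{*}\|+(1-\kappa^{t})\rho$. Writing $a_{t}\coloneqq\|\Theta_{n}^{t}-\theta^{*}\|$, the base case $t=0$ is immediate since $\Theta_{n}^{0}\in B_{r}(\theta^{*})$ and (ii) is an equality there. For the inductive step, assume (i)--(ii) at level $t$. By (i) we have $\Theta_{n}^{t}\in B_{r}(\theta^{*})$; since also $\theta^{*}\in B_{R}(\theta^{*})$, $\Theta_{n}^{t+1}\in B_{R}(\theta^{*})$, and $\Theta_{n}^{t+1}$ maximizes $Q_{n}(\cdot|\Theta_{n}^{t};\{Y_{k}\})$ over $B_{R}(\theta^{*})$ so that $Q_{n}(\Theta_{n}^{t+1}|\Theta_{n}^{t};\{Y_{k}\})\ge Q_{n}(\theta^{*}|\Theta_{n}^{t};\{Y_{k}\})$, \prettyref{prop:opt-ineq} applies and gives $a_{t+1}\le\overline{K}_{n}a_{t}+\overline{E}_{n}/\overline{V}_{n}=\kappa a_{t}+(1-\kappa)\rho$. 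Substituting (ii) and simplifying, $a_{t+1}\le\kappa(\kappa^{t}a_{0}+(1-\kappa^{t})\rho)+(1-\kappa)\rho=\kappa^{t+1}a_{0}+(1-\kappa^{t+1})\rho$, which is (ii) at $t+1$; and since $\kappa^{t+1}$ and $1-\kappa^{t+1}$ are nonnegative with sum $1$ while $a_{0}<r$ and $\rho<r$, the right-hand side is at most $\max(a_{0},\rho)<r$, which is (i) at $t+1$. Finally, \eqref{eq:opt-conv} follows from (ii) via $(1-\kappa^{t})\rho\le\rho$ and unfolding the definition of $\rho$.

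I expect the only genuinely delicate point to be the propagation of the invariant (i). \prettyref{prop:opt-ineq} is a one-step contraction estimate valid solely when the current iterate lies in $B_{r}(\theta^{*})$, so the induction would collapse if some $\Theta_{n}^{t}$ escaped this ball. The sample-size hypothesis \eqref{eq:cond-for-n} is precisely what prevents this: through Lemma~\ref{lem:evt-bound} it yields $\rho<r$, and together with $\kappa<1$ and $a_{0}<r$ this keeps each successive $a_{t+1}$ a convex combination of quantities strictly below $r$. Everything else reduces to routine geometric-series bookkeeping for the affine recursion $a_{t+1}\le\kappa a_{t}+c$.
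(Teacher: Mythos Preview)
Your proof is correct and follows essentially the same approach as the paper: condition on $\mathscr{E}_{n}$, use \prettyref{lem:evt-bound} to secure $\rho<r$ and $\overline{K}_{n}<1$, and run induction invoking \prettyref{prop:opt-ineq} at each step while checking the iterate stays in $B_{r}(\theta^{*})$. The only cosmetic difference is that the paper carries out two separate inductions (first for the one-step recursion together with containment in $B_{r}$, then for the telescoped bound), whereas you merge them into one and track the slightly sharper quantity $(1-\kappa^{t})\rho$, using the convex-combination observation $\max(a_{0},\rho)<r$ for the containment step.
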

\begin{proof}
By \prettyref{lem:bounds} we have $\Pr\mathscr{E}_{n}\ge1-\delta$,
where
\begin{equation}
\mathscr{E}_{n}=\left\{ \varpi\mid\overline{\varGamma}_{n}\le\overline{\gamma}_{n},\overline{V}_{n}\ge\overline{\nu}_{n}\text{ and }\overline{E}_{n}\le\varepsilon_{s}(\delta,r,R,n,p)\right\} .\label{eq:cond-event}
\end{equation}
Conditioning on this event and by \prettyref{lem:evt-bound}, we have
$\overline{K}_{n}=\frac{\overline{\Gamma}_{n}}{\overline{V}_{n}}\le\overline{\kappa}_{n}<1$
and $\overline{E}_{n}<r\left(\overline{V}_{n}-\overline{\Gamma}_{n}\right)$.
Now we claim that:\emph{ for $t\in\mathbb{N}$, the empirical EM sequence
satisfies}
\begin{equation}
\left\Vert \Theta_{n}^{t+1}-\theta^{*}\right\Vert \le\overline{K}_{n}\left\Vert \Theta_{n}^{t}-\theta^{*}\right\Vert +\frac{\overline{E}_{n}}{\overline{V}_{n}}.\label{eq:int-1-oect}
\end{equation}

We prove this claim by induction. Note $\Theta_{n}^{0}\in B_{r}(\theta^{*})$
and $\Theta_{n}^{1}\in B_{R}(\theta^{*})$ by definition, and since
$Q_{n}(\Theta_{n}^{1}|\Theta_{n}^{0};\{Y_{k}\})\ge Q_{n}(\theta^{*}|\Theta_{n}^{0};\{Y_{k}\})$
and $\overline{\nu}_{n}>0$, it follows from \prettyref{prop:opt-ineq}
that
\[
\left\Vert \Theta_{n}^{1}-\theta^{*}\right\Vert \le\overline{K}_{n}\left\Vert \Theta_{n}^{0}-\theta^{*}\right\Vert +\frac{\overline{E}_{n}}{\overline{V}_{n}}\text{ and }\left\Vert \Theta_{n}^{1}-\theta^{*}\right\Vert <\frac{\overline{\Gamma}_{n}}{\overline{V}_{n}}\cdot r+\frac{\overline{E}_{n}}{\overline{V}_{n}}<r
\]
under the event $\mathscr{E}_{n}$. Hence \prettyref{eq:int-1-oect}
holds for $t=0$ and $\Theta_{n}^{1}\in B_{r}(\theta^{*})$.

Now assume \prettyref{eq:int-1-oect} holds for $t\ge0$ and $\Theta_{n}^{t+1}\in B_{r}(\theta^{*})$,
then for
\[
\Theta_{n}^{t+2}\in\arg\max\{Q_{n}(\Theta'|\Theta_{n}^{t+1};\{Y_{k}\})\mid\Theta'\in B_{R}(\theta^{*})\},
\]
we have $\Theta_{n}^{t+2}\in B_{R}(\theta^{*})$ and $Q_{n}(\Theta_{n}^{t+2}|\Theta_{n}^{t+1};\{Y_{k}\})\ge Q_{n}(\theta^{*}|\Theta_{n}^{t+1};\{Y_{k}\})$.
Then by \prettyref{prop:opt-ineq},
\[
\left\Vert \Theta_{n}^{t+2}-\theta^{*}\right\Vert \le\overline{K}_{n}\left\Vert \Theta_{n}^{t+1}-\theta^{*}\right\Vert +\frac{\overline{E}_{n}}{\overline{V}_{n}}\text{ and }\left\Vert \Theta_{n}^{t+2}-\theta^{*}\right\Vert <\frac{\overline{\Gamma}_{n}}{\overline{V}_{n}}\cdot r+\frac{\overline{E}_{n}}{\overline{V}_{n}}<r
\]
under the event $\mathscr{E}_{n}$. Hence \prettyref{eq:int-1-oect}
holds for $t+1$ and $\Theta_{n}^{t+2}\in B_{r}(\theta^{*})$. We
conclude that \prettyref{eq:int-1-oect} holds for all $t\in\mathbb{N}$
and the claim is proved.

Now it remains to show \prettyref{eq:opt-conv}. We proceed by induction
again. It clearly holds for $t=0$; assume it holds for $t\ge0$,
then by \prettyref{eq:int-1-oect} and the induction hypothesis,
\begin{align*}
\left\Vert \Theta_{n}^{t+1}-\theta^{*}\right\Vert  & \le\overline{K}_{n}\left\Vert \Theta_{n}^{t}-\theta^{*}\right\Vert +\frac{\overline{E}_{n}}{\overline{V}_{n}}\\
 & \le\overline{K}_{n}\left(\left(\overline{K}_{n}\right)^{t}\left\Vert \Theta_{n}^{0}-\theta^{*}\right\Vert +\frac{\overline{E}_{n}}{\overline{V}_{n}-\overline{\Gamma}_{n}}\right)+\frac{\overline{E}_{n}}{\overline{V}_{n}}\\
 & =\left(\overline{K}_{n}\right)^{t+1}\left\Vert \Theta_{n}^{0}-\theta^{*}\right\Vert +\frac{\overline{E}_{n}}{\overline{V}_{n}-\overline{\Gamma}_{n}}.
\end{align*}
Hence it holds for $t+1$ and by induction it holds for all $t\in\mathbb{N}$
and the proof is complete.
\end{proof}
\begin{rem*}
In view of definition \prettyref{eq:opt-emp-con-paras}, the random
variables $\overline{\varGamma}_{n},\overline{V}_{n}$ and hence $\overline{K}_{n}$
are data-adaptive. For each realization $\{y_{k}\}_{k=1}^{n}$ of
i.i.d. copies $\{Y_{k}\}_{k=1}^{n}$ of $Y\sim\mathbb{P}_{\theta^{*}}$,
the above theorem produces a realization $k_{n}$ of the optimal empirical
convergence rate $\overline{K}_{n}$. The sample EM sequence constructed
from the realization $Q_{n}(\theta'|\theta;\{y_{k}\})$ converges
geometrically at the rate of $k_{n}$. Hence $\overline{K}_{n}$ quantitatively
characterizes the propagation of the randomness from the underlying
data generating distribution $\mathbb{P}_{\theta^{*}}$ to the convergence
rate of the empirical EM sequence.
\end{rem*}
\begin{rem*}
Under the event $\mathscr{E}_{n}$, we have $\overline{\varGamma}_{n}\le\overline{\gamma}_{n}<\gamma_{n}$,
$\overline{V}_{n}\ge\overline{\nu}_{n}>\nu_{n}$ and $\overline{E}_{n}\le\varepsilon_{s}(\delta,r,R,n,p)$,
hence $\frac{\overline{E}_{n}}{\overline{V}_{n}-\overline{\Gamma}_{n}}\le\frac{\varepsilon_{s}(\delta,r,R,n,p)}{\nu_{n}-\gamma_{n}}$.
Then \prettyref{eq:opt-conv} implies that
\begin{equation}
\left\Vert \Theta_{n}^{t}-\theta^{*}\right\Vert \le\left(\overline{K}_{n}\right)^{t}\left\Vert \Theta_{n}^{0}-\theta^{*}\right\Vert +\frac{\varepsilon_{s}(\delta,r,R,n,p)}{\nu_{n}-\gamma_{n}}\label{eq:emp-conv}
\end{equation}
with probability at least $1-\delta$. This inequality is sometimes
more convenient when we apply the optimal empirical convergence theorem.
\end{rem*}
We note that since $\nu_{n}\to\nu$, $\gamma_{n}\to\gamma$, $\varepsilon_{s}(\delta,r,R,n,p)\to0$ and $\overline{K}_{n}\le\overline{\kappa}_{n}\ssearrow\overline{\kappa}$
as the sample size $n\to\infty$, then intuitively, the empirical
inequality \prettyref{eq:emp-conv} ``converges'' to the oracle
inequality in the form \prettyref{eq:ora-conv}, hence the limit of
an empirical EM sequence should give a consistent estimate to $\theta^{*}$.
Indeed, as a consequence of the self-consistency of the oracle $Q$-function
\prettyref{eq:exp-stat-err} and the above observation, we have the
following result.
\begin{thm}[Consistency of the EM algorithm]
\label{thm:cema}Suppose $\delta\in(0,1)$ and $0<r\le R$ are radii
of contraction, $(\gamma,\nu)\in\mathcal{C}(r,R)$. If assumptions
(\ref{A1}), (\ref{A2}) and (\ref{A3}) hold true, then there exists
an $N\in\mathbb{N}$ such that whenever the sample size $n>N$, for
each set $\{Y_{k}\}_{k=1}^{n}$ of i.i.d. copies of $Y\sim\mathbb{P}_{\theta^{*}}$
and the empirical EM sequence $\{\Theta_{n}^{t}\}_{t\ge0}$ therefrom,
if $\lim_{t\to\infty}\Theta_{n}^{t}\eqqcolon\widetilde{\Theta}_{n}\in\Omega$
for each $n>N$, then
\begin{equation}
\left\Vert \widetilde{\Theta}_{n}-\theta^{*}\right\Vert \le\frac{\varepsilon_{s}(\delta,r,R,n,p)}{\nu_{n}-\gamma_{n}}\label{eq:const-bnd}
\end{equation}
with probability at least $1-\delta$. Hence $\lim_{n\to\infty}\widetilde{\Theta}_{n}=\theta^{*}$
in probability.
\end{thm}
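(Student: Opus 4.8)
The plan is to apply the Optimal Empirical Convergence Theorem (\prettyref{thm:oect}) in the convenient form \prettyref{eq:emp-conv}, and then pass to the limit in the iteration index $t$ first and in the sample size $n$ afterwards. First I would fix the threshold $N$: since $\overline{\gamma}=\min\mathcal{G}(r)$ and $\overline{\nu}=\max\mathcal{V}(r,R)$ while $(\gamma,\nu)\in\mathcal{C}(r,R)$, we have $\overline{\gamma}\le\gamma<\nu\le\overline{\nu}$, so in particular $\overline{\nu}-\overline{\gamma}\ge\nu-\gamma>0$. Since $\varepsilon_{1}(\delta,r,n,p)$, $\varepsilon_{2}(\delta,r,R,n,p)$ and $\varepsilon_{s}(\delta,r,R,n,p)$ all tend to $0$ as $n\to\infty$, I can choose $N\in\mathbb{N}$ so that for every $n>N$ the smallness condition \prettyref{eq:cond-for-n} holds (so that \prettyref{thm:oect} applies with the optimal pair $(\overline{\gamma},\overline{\nu})$) and also $\nu_{n}=\nu-\varepsilon_{2}(\delta,r,R,n,p)>\gamma+\varepsilon_{1}(\delta,r,n,p)=\gamma_{n}>0$.

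Next, fix $n>N$ and a set $\{Y_{k}\}_{k=1}^{n}$ of i.i.d. copies of $Y\sim\mathbb{P}_{\theta^{*}}$. By \prettyref{lem:bounds} the event $\mathscr{E}_{n}$ of \prettyref{eq:evt-E_n} satisfies $\Pr\mathscr{E}_{n}\ge 1-\delta$, and on $\mathscr{E}_{n}$ the conclusion of \prettyref{thm:oect}, in the form \prettyref{eq:emp-conv}, gives for every $t\in\mathbb{N}$
\[
\left\Vert \Theta_{n}^{t}-\theta^{*}\right\Vert \le\left(\overline{K}_{n}\right)^{t}\left\Vert \Theta_{n}^{0}-\theta^{*}\right\Vert +\frac{\varepsilon_{s}(\delta,r,R,n,p)}{\nu_{n}-\gamma_{n}}.
\]
On $\mathscr{E}_{n}$ we have $\overline{K}_{n}\le\overline{\kappa}_{n}<1$ and $\left\Vert \Theta_{n}^{0}-\theta^{*}\right\Vert<r<\infty$, so $\left(\overline{K}_{n}\right)^{t}\left\Vert \Theta_{n}^{0}-\theta^{*}\right\Vert\to 0$ as $t\to\infty$; by the hypothesis $\Theta_{n}^{t}\to\widetilde{\Theta}_{n}$ and continuity of $x\mapsto\left\Vert x-\theta^{*}\right\Vert$, the left-hand side converges to $\left\Vert \widetilde{\Theta}_{n}-\theta^{*}\right\Vert$. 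Letting $t\to\infty$ therefore yields $\left\Vert \widetilde{\Theta}_{n}-\theta^{*}\right\Vert\le\varepsilon_{s}(\delta,r,R,n,p)/(\nu_{n}-\gamma_{n})$ on $\mathscr{E}_{n}$, and since $\Pr\mathscr{E}_{n}\ge 1-\delta$ this is exactly \prettyref{eq:const-bnd}.

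For the ``hence'' clause I would run the preceding argument with $\delta$ replaced by an arbitrary $\eta\in(0,1)$, obtaining a threshold $N_{\eta}$ and, for $n>N_{\eta}$, the bound \prettyref{eq:const-bnd} with probability at least $1-\eta$. Because $\varepsilon_{s}(\eta,r,R,n,p)\to 0$ while the denominator $\nu_{n}-\gamma_{n}\to\nu-\gamma>0$, the right-hand side of \prettyref{eq:const-bnd} tends to $0$; so for any $\epsilon>0$ there is $N'_{\eta}\ge N_{\eta}$ with $\varepsilon_{s}(\eta,r,R,n,p)/(\nu_{n}-\gamma_{n})<\epsilon$ whenever $n>N'_{\eta}$, whence $\Pr\{\left\Vert \widetilde{\Theta}_{n}-\theta^{*}\right\Vert<\epsilon\}\ge 1-\eta$ for all such $n$. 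As $\eta\in(0,1)$ was arbitrary, $\limsup_{n\to\infty}\Pr\{\left\Vert \widetilde{\Theta}_{n}-\theta^{*}\right\Vert\ge\epsilon\}=0$, i.e. $\widetilde{\Theta}_{n}\to\theta^{*}$ in probability.

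The argument is essentially bookkeeping once \prettyref{thm:oect} is in hand; the one point requiring care is to carry out the $t\to\infty$ passage \emph{inside} the high-probability event $\mathscr{E}_{n}$, where $\overline{K}_{n}<1$ guarantees that the geometric term genuinely vanishes, and only afterwards to invoke $\Pr\mathscr{E}_{n}\ge 1-\delta$; a secondary subtlety is that the residual term is driven to $0$ not by $\varepsilon_{s}\to 0$ by itself but by combining it with the fact that its denominator $\nu_{n}-\gamma_{n}$ stays bounded away from $0$ (it converges to $\nu-\gamma>0$), which is why the choice of $N$ in the first step must also ensure $\nu_{n}>\gamma_{n}$.
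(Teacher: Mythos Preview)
Your proof is correct and follows essentially the same route as the paper: invoke \prettyref{thm:oect} in the form \prettyref{eq:emp-conv}, let $t\to\infty$ on the high-probability event to obtain \prettyref{eq:const-bnd}, then let $n\to\infty$ using $\varepsilon_{s}\to 0$ and $\nu_{n}-\gamma_{n}\to\nu-\gamma>0$. Your version is in fact more careful than the paper's on two points: you explicitly ensure $N$ is chosen so that $\nu_{n}>\gamma_{n}$ (not just so that \prettyref{eq:cond-for-n} holds), and you spell out the $\eta\to 0$ argument needed to upgrade ``probability at least $1-\delta$ for each fixed $\delta$'' to genuine convergence in probability, which the paper leaves implicit.
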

\begin{proof}
Let $N$ be the smallest $n$ such that condition \prettyref{eq:cond-for-n}
holds, then for each $n>N$, by \prettyref{thm:oect} and the fact
that $\overline{K}_{n}\le\overline{\kappa}_{n}<1$, the empirical
EM sequence $\{\Theta_{n}^{t}\}_{t\ge0}$ constructed from $\{Y_{k}\}_{k=1}^{n}$
satisfies
\[
\left\Vert \Theta_{n}^{t}-\theta^{*}\right\Vert \le\overline{\kappa}_{n}^{t}\left\Vert \Theta_{n}^{0}-\theta^{*}\right\Vert +\frac{\varepsilon_{s}(\delta,r,R,n,p)}{\nu_{n}-\gamma_{n}}\text{ for }t\in\mathbb{N},
\]
with probability at least $1-\delta$. Then let $t\to\infty$ in the
above inequality and notice $\Theta_{n}^{t}\to\widetilde{\Theta}_{n}$
as $t\to\infty$, we obtain \prettyref{eq:const-bnd}. Since for $\delta>0$,
$\varepsilon_{s}(\delta,r,R,n,p)\to0$ and $\nu_{n}-\gamma_{n}\to\nu-\gamma>0$
as $n\to\infty$, it follows from \prettyref{eq:const-bnd} that $\lim_{n\to\infty}\widetilde{\Theta}_{n}=\theta^{*}$
in probability.
\end{proof}
\begin{rem*}
The classical work of Wu \cite{Wu} proved that, under the unimodal assumption
and other regularity conditions on the log-likelihood function, the
sample EM sequence converges to the MLE. In this case, the statistical
consistency of the EM algorithm can be guaranteed by that of the MLE.
Balakrishnan et al. \cite{Balakrishnan} obtained convergence results
of the sample EM sequence to the statistical error ball of the true
population parameter $\theta^{*}$ in canonical models, which implies
the statistical consistency of the sample EM sequences in these cases.
In their work, the statistical error is characterized by the following
deviation bound
\[
\sup_{\theta\in B_{r}(\theta^{*})}\left\Vert M_{n}(\theta)-M(\theta)\right\Vert \le\varepsilon_{M}^{\text{unif}}(n,\delta).
\]
In the general case, we do not know whether this uniform deviation can be bounded
by an infinitesimal $\varepsilon_{M}^{\text{unif}}(n,\delta)$ as
the sample size $n\to\infty$, since it may not necessarily be true that
the sample $M$-operator $M_{n}(\theta)$ is the empirical mean and
the population $M$-operator $M(\theta)$ is the corresponding population
mean.

In our theory, the statistical error is characterized by the norm
of the empirical mean $\mathcal{E}_{n}(\{Y_{k}\})$ of $\mathcal{E}(Y_{k})=\nabla_{1}Q(\theta^{*}|\theta^{*};Y_{k})$
for $1\le k\le n$. Since $\mathbb{E}_{\theta^{*}}\mathcal{E}(Y)=\nabla_{1}Q_{*}(\theta^{*}|\theta^{*})=0$
by the self-consistency \prettyref{eq:self-const}, it is then guaranteed
that $\left\Vert \mathcal{E}_{n}(\{Y_{k}\})\right\Vert \le\varepsilon_{s}(\delta,r,R,n,p)\to0$
as $n\to\infty$. Hence the above theorem gives a theoretical guarantee for the consistency of the limit point $\widetilde{\Theta}_{n}$ of the empirical EM sequence not only for canonical models but also for the general case, and $\varepsilon_{s}(\delta,r,R,n,p)$ is exactly the convergence rate of the statistical error of the empirical EM sequence.
\end{rem*}
\begin{rem*}
We do not claim $\widetilde{\Theta}_{n}$ as an MLE or stationary
point of a log-likelihood function. Instead, we believe any point
within the statistical error ball of $\theta^{*}$ serves equivalently
as a consistent estimate. In practical applications, we do not even
need the \emph{well-defined} convergence of the empirical EM sequence
$\{\Theta_{n}^{t}\}_{t\ge0}$ to some point $\widetilde{\Theta}_{n}\in\Omega$.
Indeed, by \prettyref{eq:emp-conv} when the number of iterations
$T$ is sufficiently large, the optimization error would be so small
that any point $\Theta_{n}^{t}$ for $t\ge T$ is almost within the
statistical precision to $\theta^{*}$.
\end{rem*}

\subsubsection{\label{subsec:opt-emp-conv}The Optimal Rate Convergence Theorem}

Now we prove a non-asymptotic concentration bound for the optimal
empirical convergence rate $\overline{K}_{n}$ on the optimal oracle
convergence rate $\overline{\kappa}$, which then implies that $\overline{K}_{n}\to\overline{\kappa}$
in probability as the sample size $n\to\infty$.

We first characterize the concentration property of the empirical
contraction parameters on their population versions, which is the following
result on \emph{concentration of contraction parameters.}
\begin{prop}
\label{prop:gamma-nu-bound}Suppose assumptions (\ref{A1}), (\ref{A2})
and (\ref{A3}) hold true, $\delta\in(0,1)$ and that $\mathcal{G}(r)\times\mathcal{V}(r,R)\neq\varnothing$,
then
\[
\left|\overline{\varGamma}_{n}-\overline{\gamma}\right|\le\varepsilon_{1}(\delta,r,n,p)\text{ and }\left|\overline{V}_{n}-\overline{\nu}\right|\le\varepsilon_{2}(\delta,r,R,n,p)
\]
with probability at least $1-\delta$.
\end{prop}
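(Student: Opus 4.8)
The plan is to transfer the uniform, parameter-dependent concentration bounds of Assumptions (\ref{A1}) and (\ref{A2}) onto the extremal quantities $\overline{\varGamma}_{n}$ and $\overline{V}_{n}$, exploiting the fact that passing to a supremum or an infimum over a fixed index set is $1$-Lipschitz with respect to the uniform norm. Note that $\mathcal{G}(r)\times\mathcal{V}(r,R)\neq\varnothing$ means both factors are non-empty, so \prettyref{lem:G=00003D00003D000026V} guarantees that $\overline{\gamma}$ and $\overline{\nu}$ are finite real numbers, which is exactly what makes the Lipschitz comparison with their empirical counterparts meaningful.

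Concretely, I would first condition on the event on which (\ref{A1}) holds. For $\theta\in B_{r}^{\times}(\theta^{*})$, dividing the bound in (\ref{A1}) by $\left\Vert\theta-\theta^{*}\right\Vert$ and using the reverse triangle inequality shows that the ratio functions $f_{n}(\theta)\coloneqq\left\Vert\Gamma_{n}(\theta;\{Y_{k}\})\right\Vert/\left\Vert\theta-\theta^{*}\right\Vert$ and $f(\theta)\coloneqq\left\Vert\mathbb{E}_{\theta^{*}}\Gamma(\theta;Y)\right\Vert/\left\Vert\theta-\theta^{*}\right\Vert$ satisfy $\left|f_{n}(\theta)-f(\theta)\right|\le\varepsilon_{1}(\delta,r,n,p)$ for every $\theta\in B_{r}^{\times}(\theta^{*})$. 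Since $\overline{\varGamma}_{n}=\sup_{\theta}f_{n}(\theta)$ and $\overline{\gamma}=\sup_{\theta}f(\theta)$, both over the \emph{same} index set $B_{r}^{\times}(\theta^{*})$, the elementary estimate $\left|\sup_{x}g_{1}(x)-\sup_{x}g_{2}(x)\right|\le\sup_{x}\left|g_{1}(x)-g_{2}(x)\right|$ yields $\left|\overline{\varGamma}_{n}-\overline{\gamma}\right|\le\varepsilon_{1}(\delta,r,n,p)$ on this event (and in particular $\overline{\varGamma}_{n}<\infty$ there). The argument for $\overline{V}_{n}$ is entirely symmetric: on the event of (\ref{A2}), dividing the bound there by $\left\Vert\theta'-\theta^{*}\right\Vert^{2}$ shows that the functions $(\theta',\theta)\mapsto-V_{n}(\theta'|\theta;\{Y_{k}\})/\left\Vert\theta'-\theta^{*}\right\Vert^{2}$ and $(\theta',\theta)\mapsto-\mathbb{E}_{\theta^{*}}V(\theta'|\theta;Y)/\left\Vert\theta'-\theta^{*}\right\Vert^{2}$ differ by at most $\varepsilon_{2}(\delta,r,R,n,p)$ uniformly over $B_{R}^{\times}(\theta^{*})\times B_{r}(\theta^{*})$, so the companion estimate $\left|\inf_{x}g_{1}(x)-\inf_{x}g_{2}(x)\right|\le\sup_{x}\left|g_{1}(x)-g_{2}(x)\right|$ gives $\left|\overline{V}_{n}-\overline{\nu}\right|\le\varepsilon_{2}(\delta,r,R,n,p)$. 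A union bound over the two exceptional events of (\ref{A1}) and (\ref{A2}) then makes both inequalities hold simultaneously with probability at least $1-\delta$.

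I do not expect a genuine obstacle here; the whole proof is driven by the reverse triangle inequality and the $1$-Lipschitz property of $\sup$ and $\inf$. The points that need a moment's care are purely organizational: that $\overline{\varGamma}_{n}$ and $\overline{\gamma}$ (resp.\ $\overline{V}_{n}$ and $\overline{\nu}$) are extrema over \emph{exactly the same} index set, so that the Lipschitz estimate applies verbatim; that $\overline{\gamma},\overline{\nu}$ are finite via \prettyref{lem:G=00003D00003D000026V}, and hence so are $\overline{\varGamma}_{n},\overline{V}_{n}$ on the good events; the allocation of the failure probability $\delta$ between the two concentration assumptions (as in the proof of \prettyref{lem:bounds}); and the measurability of the suprema and infima, which is handled as in the measurability subsection.
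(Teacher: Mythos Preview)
Your proposal is correct and follows essentially the same approach as the paper: divide the concentration bounds of (\ref{A1}) and (\ref{A2}) by the appropriate powers of $\left\Vert\theta-\theta^{*}\right\Vert$, invoke the reverse triangle inequality, and then apply the $1$-Lipschitz property of $\sup$ and $\inf$ (packaged in the paper as \prettyref{lem:fun-bounds}). The only cosmetic difference is that the paper secures finiteness of $\overline{\varGamma}_{n}$ and $\overline{V}_{n}$ by citing \prettyref{lem:bounds} up front, whereas you deduce it directly from the finiteness of $\overline{\gamma},\overline{\nu}$ together with the uniform bound on the good event; both are equivalent.
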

\begin{proof}
In view of \prettyref{lem:bounds}, for given $(\gamma,\nu)\in\mathcal{G}(r)\times\mathcal{V}(r,R)\neq\varnothing$,
we have
\[
\overline{\varGamma}_{n}\le\gamma_{n}<+\infty\text{ and }\overline{V}_{n}\ge\nu_{n}>-\infty
\]
with probability at least $1-\delta$. Conditioning on this event
and by \prettyref{eq:gam-bar} and \prettyref{eq:opt-emp-con-paras}, we have
\begin{align*}
\left|\overline{\varGamma}_{n}-\overline{\gamma}\right| & \stackrel{(a)}{\le}\sup\left\{ \left|\frac{\left\Vert \Gamma_{n}(\theta;\{Y_{k}\})\right\Vert }{\left\Vert \theta-\theta^{*}\right\Vert }-\frac{\left\Vert \mathbb{E}_{\theta^{*}}\Gamma(\theta;Y)\right\Vert }{\left\Vert \theta-\theta^{*}\right\Vert }\right|\mid\theta\in B_{r}^{\times}(\theta^{*})\right\} \\
 & \le\sup\left\{ \frac{\left\Vert \Gamma_{n}(\theta;\{Y_{k}\})-\mathbb{E}_{\theta^{*}}\Gamma(\theta;Y)\right\Vert }{\left\Vert \theta-\theta^{*}\right\Vert }\mid\theta\in B_{r}^{\times}(\theta^{*})\right\} \\
 & \le\varepsilon_{1}(\delta,r,n,p),
\end{align*}
where $(a)$ follows from \prettyref{lem:fun-bounds}$\prettyref{enu:fun-bnd-a}$. Similarly, by \prettyref{eq:nu-bar} and \prettyref{eq:opt-emp-con-paras}, we have
\begin{align*}
\left|\overline{V}_{n}-\overline{\nu}\right| & \stackrel{(a)}{\le}\sup\left\{ \left|\frac{\mathbb{E}_{\theta^{*}}V(\theta'|\theta)}{\left\Vert \theta'-\theta^{*}\right\Vert ^{2}}-\frac{V_{n}(\theta'|\theta)}{\left\Vert \theta'-\theta^{*}\right\Vert ^{2}}\right|\mid\left(\theta',\theta\right)\in B_{R}^{\times}(\theta^{*})\times B_{r}(\theta^{*})\right\} \\
 & =\sup\left\{ \frac{\left|\mathbb{E}_{\theta^{*}}V(\theta'|\theta)-V_{n}(\theta'|\theta)\right|}{\left\Vert \theta'-\theta^{*}\right\Vert ^{2}}\mid\left(\theta',\theta\right)\in B_{R}^{\times}(\theta^{*})\times B_{r}(\theta^{*})\right\} \\
 & \le\varepsilon_{2}(\delta,r,R,n,p),
\end{align*}
where $(a)$ follows from \prettyref{lem:fun-bounds}$\prettyref{enu:fun-bnd-b}$, and the proof is complete.
\end{proof}
Now we state and prove the concentration theorem.
\begin{thm}[Optimal Rate Convergence Theorem]
\label{thm:orct}Suppose assumptions (\ref{A1}), (\ref{A2})
and (\ref{A3}) hold true, $\delta\in(0,1)$ and $0<r\le R$ are radii
of contraction, let $(\overline{\gamma},\overline{\nu})\in\mathcal{C}(r,R)$
be the optimal pair of the oracle convergence. If the sample size
$n$ is sufficiently large such that $\varepsilon_{2}(\delta,r,R,n,p)<\frac{1}{2}\overline{\nu}$,
then
\begin{equation}
\left|\overline{K}_{n}-\overline{\kappa}\right|\le\frac{2}{\overline{\nu}}\left(\varepsilon_{1}(\delta,r,n,p)+\overline{\kappa}\varepsilon_{2}(\delta,r,R,n,p)\right)\label{eq:kn-k}
\end{equation}
with probability at least $1-\delta$. Hence $\overline{K}_{n}\to\overline{\kappa}$
in probability as $n\to\infty$.
\end{thm}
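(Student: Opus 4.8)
The plan is to reduce the statement to the concentration bounds for the empirical contraction parameters in \prettyref{prop:gamma-nu-bound} and then to dispose of the $\min$ in the definition \prettyref{eq:def-Kn} of $\overline{K}_n$ by an elementary squeeze argument. Throughout I abbreviate $\varepsilon_1=\varepsilon_1(\delta,r,n,p)$ and $\varepsilon_2=\varepsilon_2(\delta,r,R,n,p)$.

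First I would invoke \prettyref{prop:gamma-nu-bound}: with probability at least $1-\delta$ the event $\mathscr{F}_n:=\{|\overline{\varGamma}_n-\overline{\gamma}|\le\varepsilon_1\}\cap\{|\overline{V}_n-\overline{\nu}|\le\varepsilon_2\}$ holds, and I would condition on $\mathscr{F}_n$ for the remainder. Since $\varepsilon_2<\tfrac12\overline{\nu}$, on $\mathscr{F}_n$ we have $\tfrac12\overline{\nu}<\overline{\nu}-\varepsilon_2\le\overline{V}_n\le\overline{\nu}+\varepsilon_2<\infty$; hence $0<\overline{V}_n<\infty$, so the first branch of \prettyref{eq:def-Kn} is in force and $\overline{K}_n=\min\{\overline{\varGamma}_n/\overline{V}_n,\ \overline{\kappa}_n\}$, and moreover $1/\overline{V}_n<2/\overline{\nu}$.

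Next I would estimate the two arguments of this $\min$ against $\overline{\kappa}=\overline{\gamma}/\overline{\nu}$. Writing $\overline{\varGamma}_n\overline{\nu}-\overline{\gamma}\,\overline{V}_n=(\overline{\varGamma}_n-\overline{\gamma})\overline{\nu}-\overline{\gamma}(\overline{V}_n-\overline{\nu})$, dividing by $\overline{V}_n\overline{\nu}$ and using $\mathscr{F}_n$ together with $1/\overline{V}_n<2/\overline{\nu}$ gives $|\overline{\varGamma}_n/\overline{V}_n-\overline{\kappa}|\le(\varepsilon_1\overline{\nu}+\overline{\gamma}\varepsilon_2)/(\overline{V}_n\overline{\nu})\le\tfrac{2}{\overline{\nu}}(\varepsilon_1+\overline{\kappa}\varepsilon_2)=:\rho_n$. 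The same manipulation on $\overline{\kappa}_n-\overline{\kappa}=(\varepsilon_1\overline{\nu}+\overline{\gamma}\varepsilon_2)/((\overline{\nu}-\varepsilon_2)\overline{\nu})$, which is nonnegative, yields $0\le\overline{\kappa}_n-\overline{\kappa}\le\rho_n$ as well (using $\overline{\nu}-\varepsilon_2>\tfrac12\overline{\nu}$ in the denominator). Now I squeeze: on $\mathscr{F}_n$ both arguments of the $\min$ are $\ge\overline{\kappa}-\rho_n$ — the first by the display above, the second since $\overline{\kappa}_n\ge\overline{\kappa}$ — so $\overline{K}_n\ge\overline{\kappa}-\rho_n$; and $\overline{K}_n\le\overline{\kappa}_n\le\overline{\kappa}+\rho_n$. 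Hence $|\overline{K}_n-\overline{\kappa}|\le\rho_n$ on $\mathscr{F}_n$, which is precisely \prettyref{eq:kn-k} and holds with probability at least $1-\delta$.

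Finally, for convergence in probability I would use that $\varepsilon_1\to0$ and $\varepsilon_2\to0$ as $n\to\infty$ (the Law-of-Large-Numbers remark following the Assumptions), so $\rho_n\to0$. Fixing $\epsilon>0$ and $\delta\in(0,1)$, for all $n$ large enough both $\varepsilon_2<\tfrac12\overline{\nu}$ and $\rho_n<\epsilon$ hold, whence $\Pr\{|\overline{K}_n-\overline{\kappa}|\le\epsilon\}\ge1-\delta$; letting $n\to\infty$ and then $\delta\downarrow0$ gives $\overline{K}_n\to\overline{\kappa}$ in probability. I do not anticipate a genuine obstacle; the only point requiring care is the bookkeeping around the two-branch definition \prettyref{eq:def-Kn} — one must check that conditioning on the event of \prettyref{prop:gamma-nu-bound} (with $\varepsilon_2<\tfrac12\overline{\nu}$) forces $0<\overline{V}_n<\infty$ so that $\overline{K}_n$ is actually $\min\{\overline{\varGamma}_n/\overline{V}_n,\overline{\kappa}_n\}$, and that the resulting bound $\rho_n$ is uniform over which of the two arguments attains the minimum.
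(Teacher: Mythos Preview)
Your proposal is correct and follows essentially the same route as the paper: invoke \prettyref{prop:gamma-nu-bound}, use the algebraic identity $\overline{\varGamma}_n/\overline{V}_n-\overline{\kappa}=\bigl((\overline{\varGamma}_n-\overline{\gamma})\overline{\nu}-\overline{\gamma}(\overline{V}_n-\overline{\nu})\bigr)/(\overline{V}_n\overline{\nu})$, and bound $1/\overline{V}_n$ by $2/\overline{\nu}$ via $\varepsilon_2<\tfrac12\overline{\nu}$. The only difference is that your squeeze argument over the two branches of the $\min$ is more careful than strictly necessary: on the event $\mathscr{F}_n$ one already has $\overline{\varGamma}_n\le\overline{\gamma}+\varepsilon_1=\overline{\gamma}_n$ and $\overline{V}_n\ge\overline{\nu}-\varepsilon_2=\overline{\nu}_n$, so $\overline{\varGamma}_n/\overline{V}_n\le\overline{\kappa}_n$ and hence $\overline{K}_n=\overline{\varGamma}_n/\overline{V}_n$ outright (the paper uses this implicitly, relying on the remark after \prettyref{eq:def-Kn}); your version simply makes this bookkeeping explicit.
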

\begin{proof}
In view of \prettyref{prop:gamma-nu-bound}, we have $\left|\overline{\varGamma}_{n}-\overline{\gamma}\right|\le\varepsilon_{1}(\delta,r,n,p)$
and $\left|\overline{V}_{n}-\overline{\nu}\right|\le\varepsilon_{2}(\delta,r,R,n,p)$
with probability at least $1-\delta$. Conditioning on this event,
we have
\begin{align*}
\left|\overline{K}_{n}-\overline{\kappa}\right| & =\frac{\left|\left(\overline{\varGamma}_{n}-\overline{\gamma}\right)\overline{\nu}+\left(\overline{\nu}-\overline{V}_{n}\right)\overline{\gamma}\right|}{\overline{V}_{n}\overline{\nu}}\\
 & \le\frac{1}{\overline{V}_{n}}\left(\left|\overline{\varGamma}_{n}-\overline{\gamma}\right|+\frac{\overline{\gamma}}{\overline{\nu}}\left|\overline{\nu}-\overline{V}_{n}\right|\right)\\
 & \le\frac{1}{\overline{V}_{n}}\left(\varepsilon_{1}(\delta,r,n,p)+\overline{\kappa}\varepsilon_{2}(\delta,r,R,n,p)\right),
\end{align*}
and since $\overline{V}_{n}\ge\overline{\nu}-\varepsilon_{2}(\delta,r,R,n,p)>\frac{1}{2}\overline{\nu}$,
the bound \prettyref{eq:kn-k} follows. Moreover for $\delta>0$,
we have $\varepsilon_{1}(\delta,r,n,p)\to0$ and $\varepsilon_{2}(\delta,r,R,n,p)\to0$
as $n\to\infty$, it follows from \prettyref{eq:kn-k} that $\lim_{n\to\infty}\overline{K}_{n}=\overline{\kappa}$
in probability.
\end{proof}
In view of definition \prettyref{eq:def-Kn} and the theorem above,
$\overline{K}_{n}$ is upper bounded by $\overline{\kappa}_{n}$ and
concentrated on the optimal oracle convergence rate $\overline{\kappa}<\overline{\kappa}_{n}$.
The relationship of the real numbers $\overline{\kappa}_{n}$, $\overline{\kappa}$
and the random variable $\overline{K}_{n}$ can be illustrated in
the following schematic diagram,

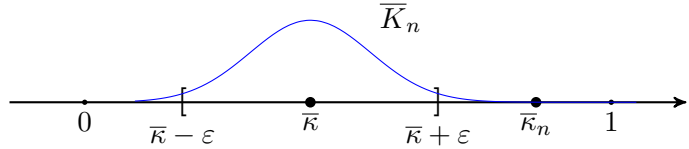
\begin{figure}[H]
\begin{tikzpicture}[scale=1]

\draw[->,thick] (-1,0)--(8,0);

\fill (7,0) circle (1pt) node[below]{$1$};
\fill (0,0) circle (1pt) node[below]{$0$};
\fill (6,0) circle (2pt) node[below]{$\overline{\kappa}_n$}; \fill (3,0) circle (2pt) node[below]{$\overline{\kappa}$}; \node  at (1.3,0) {$\boldsymbol{[}$};
\node  at (1.3,0) [below=5pt] {$\overline{\kappa}-\varepsilon$};
\node  at (4.7,0) [below=5pt] {$\overline{\kappa}+\varepsilon$};
\node  at (4.7,0) {$\boldsymbol{]}$};
\node[above] at (4.2,0.8) {$\overline{K}_n$};

%\draw[->] (2.7,0.6)--(2,0.2);
%\draw[->] (3.3,0.6)--(4,0.2);
\begin{axis}
[every axis plot post/.append style={mark=none,domain=0:5,samples=50,smooth},
height=1.2cm,
width=8cm,
scale only axis,
axis x line*=bottom,
% no box around the plot, only x and y axis
axis y line*=left,
% the * suppresses the arrow tips
%enlargelimits=upper,
% extend the axes a bit to the right and top
hide axis,
ymin=0,
%ymax=0.4
]

\addplot {gauss(1.75,0.6)};
%\addplot {gauss(1,0.75)};
\end{axis}
%%%%%%%%%%%%%%%%%%

\end{tikzpicture} \centering \caption{Concentration and upper bound of $\overline{K}_{n}$}
\label{figure:K-kap}
\end{figure}
\noindent where $\varepsilon\to0$ and $\overline{\kappa}_{n}\ssearrow\overline{\kappa}$
as $n\to\infty$, hence the distribution of the random variable $\overline{K}_{n}$
collapses on $\overline{\kappa}$ when the sample size $n$ is sufficiently
large.

\section{\label{sec:app-cla-mod}Applications to Canonical Models}

In this section, we apply our theory to the EM algorithm on three
canonical models: the Gaussian Mixture Model, the Mixture of Linear
Regressions and the Regression with Missing Covariates to obtain specific
results for these models.
\begin{description}
\item [{Notations}] The following notations are used throughout this section:
\begin{itemize}
\item We use $c,C,C_{1},C_{2},C_{3}\cdots$ to denote a numerical constant.
\item For $\theta^{*}\neq0$, let $\eta\coloneqq\frac{\left\Vert \theta^{*}\right\Vert }{\sigma}$
be the \emph{signal to noise ratio} (SNR). Let $\omega\coloneqq\frac{r}{\left\Vert \theta^{*}\right\Vert }$
be the \emph{relative contraction radius} (RCR) and let $K\coloneqq\sigma+\left\Vert \theta^{*}\right\Vert =\sigma\left(1+\eta\right)$.
\item Let $L=\mathcal{N}_{\frac{1}{2}}\left(\mathbb{S}^{p-1}\right)<5^{p}$
be the $\frac{1}{2}$-covering number of $\mathbb{S}^{p-1}$. (see
\prettyref{subsec:cov-num})
\item Let $\phi\left(x;\mu,\Sigma\right)$ be the density function of the
multivariate normal distribution $\mathcal{N}\left(\mu,\Sigma\right)$,
where $\mu\in\mathbb{R}^{p}$ and $\Sigma\in\mathbb{R}^{p\times p}$.
\end{itemize}
\end{description}

\subsection{\label{subsec:gau-mix}Gaussian Mixture Model}

Consider the balanced symmetric Gaussian mixture model
\[
Y=Z\cdot\theta^{*}+W,
\]
where $Z$ is a Rademacher random variable, $W\sim\mathcal{N}(0,\sigma^{2}I_{p})$
is the Gaussian noise with variance $\sigma^{2}$ and $\theta^{*}\in\mathbb{R}^{p}\ (p\ge1)$.
Suppose $Y$ is observed and $Z$ is a latent variable, the complete
joint density of $(Y,Z)$ is
\[
f_{\theta^{*}}(y,z)=\frac{1}{2}\phi(y-z\cdot\theta^{*};0,\sigma^{2}I_{p}),
\]
and marginalization over $Z$ gives the density of $Y$ as a Gaussian
mixture
\[
g_{\theta^{*}}(y)=\frac{1}{2}\phi(y-\theta^{*};0,\sigma^{2}I_{p})+\frac{1}{2}\phi(y+\theta^{*};0,\sigma^{2}I_{p}).
\]

Suppose a set of i.i.d. realizations $\{y_{k}\}_{k=1}^{n}$ of $Y$
are observed from the mixture density, the goal is to estimate
the unknown true population parameter $\theta^{*}\in\Omega\subseteq\mathbb{R}^{p}$,
while the variance $\sigma^{2}$ is assumed known.

Standard calculation of the EM algorithm yields the stochastic $Q$-function
\[
Q(\theta'|\theta;y)=-\frac{1}{2\sigma^{2}}\left(w_{\theta}(y)\left\Vert y-\theta'\right\Vert ^{2}+(1-w_{\theta}(y))\left\Vert y+\theta'\right\Vert ^{2}\right)-\log2\left(\sqrt{2\pi}\sigma\right)^{p}
\]
where $w_{\theta}(y)\coloneqq\varsigma\left(\frac{2\theta^{\intercal}y}{\sigma^{2}}\right)$,
and $\varsigma(t)\coloneqq\frac{1}{1+e^{-t}}$ is the logistic function.
Then the gradient
\[
\nabla_{1}Q(\theta'|\theta;y)=\frac{1}{\sigma^{2}}\left[(2w_{\theta}(y)-1)y-\theta'\right],
\]
and hence the GRV
\begin{align}
\Gamma(\theta;Y) & =\nabla_{1}Q(\theta^{*}|\theta;Y)-\nabla_{1}Q(\theta^{*}|\theta^{*};Y)\nonumber \\
 & =\frac{2}{\sigma^{2}}\left[w_{\theta}(Y)-w_{\theta^{*}}(Y)\right]Y.\label{eq:Gam-gmm}
\end{align}
Since $Q(\theta'|\theta;y)$ is quadratic in $\theta'$, the CRV can
be computed as
\begin{equation}
V(\theta'|\theta;Y)=-\frac{1}{2\sigma^{2}}\left\Vert \theta'-\theta^{*}\right\Vert ^{2}\label{eq:V-gmm}
\end{equation}
by \prettyref{lem:quad-taylor}. Then the SEV
\begin{equation}
\mathcal{E}(Y)=\frac{1}{\sigma^{2}}\left[(2w_{\theta^{*}}(Y)-1)Y-\theta^{*}\right].\label{eq:E-gmm}
\end{equation}

\subsubsection{\label{subsec:ora-cov-gmm}Oracle Convergence}

We first characterize the sets $\mathcal{G}(r)$ and $\mathcal{V}(r,R)$.
It is clear from \prettyref{eq:V-gmm} that
\[
\mathbb{E}_{\theta^{*}}V(\theta'|\theta;Y)=-\frac{1}{2\sigma^{2}}\left\Vert \theta'-\theta^{*}\right\Vert ^{2},
\]
and hence $\mathcal{V}(r,R)=\left(0,\overline{\nu}\right]$, where
$\overline{\nu}=\frac{1}{2\sigma^{2}}$ for any $0<r\le R$. As for
the set $\mathcal{G}(r)$ we need to bound
\[
\mathbb{E}_{\theta^{*}}\Gamma(\theta;Y)=\frac{2}{\sigma^{2}}\mathbb{E}_{\theta^{*}}\left[(w_{\theta}(Y)-w_{\theta^{*}}(Y))Y\right].
\]
To this end, we cite the following technical result from \cite{Balakrishnan}
(Lemma 2).
\begin{lem}
If $\theta^{*}\neq0$ and the signal to noise ratio $\eta$ is sufficiently
large, then
\[
\left\Vert \frac{2}{\sigma^{2}}\mathbb{E}_{\theta^{*}}\left[(w_{\theta}(Y)-w_{\theta^{*}}(Y))Y\right]\right\Vert \le\gamma\left(\eta\right)\left\Vert \theta-\theta^{*}\right\Vert \text{ for }\theta\in B_{r}(\theta^{*}),
\]
where $r=\frac{\left\Vert \theta^{*}\right\Vert }{4}$ and $\gamma\left(\eta\right)\coloneqq\frac{1}{\sigma^{2}}e^{-c\eta^{2}}$.
\end{lem}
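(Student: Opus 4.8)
The plan is to read the left-hand side as a rescaling of the increment of the population $M$-operator and to control that increment by a mean-value argument. Set $M(\theta)\coloneqq\mathbb{E}_{\theta^{*}}\left[(2w_{\theta}(Y)-1)Y\right]$; the gradient formula $\nabla_{1}Q(\theta'|\theta;y)=\frac{1}{\sigma^{2}}\left[(2w_{\theta}(y)-1)y-\theta'\right]$ gives $\nabla_{1}Q_{*}(\theta^{*}|\theta)=\frac{1}{\sigma^{2}}(M(\theta)-\theta^{*})$, so self-consistency \eqref{eq:self-const} forces $M(\theta^{*})=\theta^{*}$ and hence
\[
\frac{2}{\sigma^{2}}\,\mathbb{E}_{\theta^{*}}\left[(w_{\theta}(Y)-w_{\theta^{*}}(Y))Y\right]=\frac{1}{\sigma^{2}}\left(M(\theta)-M(\theta^{*})\right).
\]
Since $2w_{\theta}(y)-1=\tanh\!\left(\langle\theta,y\rangle/\sigma^{2}\right)$, the map $M$ is smooth with symmetric positive semidefinite Jacobian $\nabla M(\theta)=\frac{1}{\sigma^{2}}\,\mathbb{E}_{\theta^{*}}\!\left[\operatorname{sech}^{2}\!\left(\langle\theta,Y\rangle/\sigma^{2}\right)YY^{\intercal}\right]$ (differentiation under the expectation is legitimate because $\operatorname{sech}^{2}\le1$ and $\mathbb{E}_{\theta^{*}}\|Y\|^{2}<\infty$). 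As the segment from $\theta^{*}$ to any $\theta\in B_{r}(\theta^{*})$ stays inside $B_{r}(\theta^{*})$, the fundamental theorem of calculus reduces the claim to the operator-norm estimate $\sup_{\theta'\in B_{r}(\theta^{*})}\|\nabla M(\theta')\|_{\mathrm{op}}\le e^{-c\eta^{2}}$ for $r=\tfrac14\|\theta^{*}\|$ and $\eta$ large; then $\|\frac{2}{\sigma^{2}}\,\mathbb{E}_{\theta^{*}}[(w_{\theta}(Y)-w_{\theta^{*}}(Y))Y]\|=\frac{1}{\sigma^{2}}\|M(\theta)-M(\theta^{*})\|\le\frac{1}{\sigma^{2}}e^{-c\eta^{2}}\|\theta-\theta^{*}\|$, i.e. the asserted inequality with $\gamma(\eta)=\frac{1}{\sigma^{2}}e^{-c\eta^{2}}$.

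To bound the Jacobian I would fix a unit vector $v$ and estimate $\mathbb{E}_{\theta^{*}}\!\left[\operatorname{sech}^{2}\!\left(\langle\theta',Y\rangle/\sigma^{2}\right)\langle v,Y\rangle^{2}\right]$, whose supremum over $v$ equals $\sigma^{2}\|\nabla M(\theta')\|_{\mathrm{op}}$. The geometric input is that for $\theta'\in B_{r}(\theta^{*})$ one has $\tfrac34\|\theta^{*}\|\le\|\theta'\|\le\tfrac54\|\theta^{*}\|$ and $\langle\theta',\theta^{*}\rangle\ge\tfrac34\|\theta^{*}\|^{2}$, so, conditionally on the Rademacher label $Z$, $\langle\theta',Y\rangle$ is Gaussian with mean of magnitude $\ge\tfrac34\|\theta^{*}\|^{2}$ and standard deviation $\le\tfrac54\sigma\|\theta^{*}\|$; hence $|\langle\theta',Y\rangle|/\sigma^{2}$ is of order $\eta^{2}$ with overwhelming probability. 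I would then split the expectation according to whether $|\langle\theta',Y\rangle|\le\tfrac12\|\theta^{*}\|^{2}$ or not. On the first event bound $\operatorname{sech}^{2}\le1$ and apply Cauchy--Schwarz with the Gaussian tail bound $\Pr\{|\langle\theta',Y\rangle|\le\tfrac12\|\theta^{*}\|^{2}\}\le e^{-\eta^{2}/50}$ and the fourth-moment estimate $\mathbb{E}_{\theta^{*}}\langle v,Y\rangle^{4}\le C\sigma^{4}(1+\eta^{2})^{2}$; on the second event use $\operatorname{sech}^{2}(u)\le 4e^{-2|u|}$, which there contributes a factor $4e^{-\eta^{2}}$, together with $\mathbb{E}_{\theta^{*}}\langle v,Y\rangle^{2}\le\sigma^{2}(1+\eta^{2})$. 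Adding the two pieces yields $\mathbb{E}_{\theta^{*}}\!\left[\operatorname{sech}^{2}\!\left(\langle\theta',Y\rangle/\sigma^{2}\right)\langle v,Y\rangle^{2}\right]\le C\sigma^{2}(1+\eta^{2})\,e^{-\eta^{2}/100}$, hence $\|\nabla M(\theta')\|_{\mathrm{op}}\le C(1+\eta^{2})e^{-\eta^{2}/100}$, which is $\le e^{-c\eta^{2}}$ once $\eta$ is large enough (any $c<1/100$ works).

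The step I expect to be the main obstacle is exactly this last estimate, and the subtle point is the choice of tool: bounding $\operatorname{sech}^{2}$ by an exponential \emph{everywhere} and then invoking the Gaussian moment generating function of $\langle\theta',Y\rangle/\sigma^{2}$ does not work, because that MGF grows like $e^{\Theta(\eta^{2})}$ and overwhelms the $e^{-\Theta(\eta^{2})}$ decay of $\operatorname{sech}^{2}$. The truncation above avoids this by replacing the MGF on the ``bad'' small-argument event by a Gaussian \emph{tail probability}, which is what makes the $e^{-c\eta^{2}}$ contraction factor survive; tracking the polynomial-in-$\eta$ moment factors and checking they are absorbed for $\eta$ large is then routine bookkeeping. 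If desired one may first rotate coordinates so that $\theta^{*}=\|\theta^{*}\|e_{1}$ and $\theta'$ lies in the $e_{1}e_{2}$-plane, which trims the computation, but it is not essential.
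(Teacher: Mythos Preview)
The paper does not supply its own proof of this lemma; it simply cites it as Lemma~2 of Balakrishnan et al.~\cite{Balakrishnan}. Your argument---rewriting the quantity as $\tfrac{1}{\sigma^{2}}(M(\theta)-M(\theta^{*}))$, reducing via the mean-value theorem to a uniform operator-norm bound on $\nabla M(\theta')=\tfrac{1}{\sigma^{2}}\mathbb{E}_{\theta^{*}}[\operatorname{sech}^{2}(\langle\theta',Y\rangle/\sigma^{2})YY^{\intercal}]$, and then controlling $\mathbb{E}_{\theta^{*}}[\operatorname{sech}^{2}(\langle\theta',Y\rangle/\sigma^{2})\langle v,Y\rangle^{2}]$ by splitting on the event $\{|\langle\theta',Y\rangle|\le\tfrac12\|\theta^{*}\|^{2}\}$---is exactly the strategy used in that reference, and your sketch (including the correct diagnosis that a naive MGF bound fails and a tail-probability truncation is needed) is sound.
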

Hence for $r=\frac{\left\Vert \theta^{*}\right\Vert }{4}$, we have
$\mathcal{G}(r)=\left[\overline{\gamma},+\infty\right)$ where $\overline{\gamma}\le\gamma\left(\eta\right)$.
It is clear that $\gamma\left(\eta\right)<\overline{\nu}=\frac{1}{2\sigma^{2}}$
when $\eta$ is sufficiently large. In that case, $0<r<+\infty$ are
radii of contraction and $\left(\gamma\left(\eta\right),\overline{\nu}\right)\in\mathcal{C}\left(r,+\infty\right)\neq\varnothing$.
We then apply the oracle convergence theorem to get the following
result for the Gaussian Mixture Model.
\begin{cor}
For the Gaussian Mixture Model, if $\eta$ is sufficiently large such
that $\kappa\coloneqq2e^{-c\eta^{2}}<1$, then $0<r<+\infty$ where
$r=\frac{\left\Vert \theta^{*}\right\Vert }{4}$, are radii of contraction.
For each pair $\left(\gamma\left(\eta\right),\frac{1}{2\sigma^{2}}\right)\in\mathcal{C}\left(r,+\infty\right)\neq\varnothing$
and initial point $\theta^{0}\in B_{r}(\theta^{*})$, any oracle EM
sequence $\{\theta^{t}\}_{t\ge0}$ such that
\[
\theta^{t+1}\in\arg\max_{\theta'\in\Omega}Q_{*}(\theta'|\theta^{t})\text{ for }t\ge0
\]
satisfies the inequality
\begin{equation}
\left\Vert \theta^{t}-\theta^{*}\right\Vert \le\overline{\kappa}^{t}\left\Vert \theta^{0}-\theta^{*}\right\Vert ,\label{eq:ora-conv-1}
\end{equation}
where $\overline{\kappa}\coloneqq\frac{\overline{\gamma}}{\overline{\nu}}\le\kappa<1$,
is the optimal oracle convergence rate.
\end{cor}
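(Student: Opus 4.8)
The plan is to read this corollary as the specialization of the Optimal Oracle Convergence Theorem (\prettyref{thm:oct}) to the Gaussian Mixture Model, taking $R=+\infty$ so that $B_R(\theta^*)=\Omega$ and the $M$-step maximization coincides with $\arg\max_{\theta'\in\Omega}Q_*(\theta'|\theta)$. All that is needed is to identify the sets $\mathcal{V}(r,+\infty)$ and $\mathcal{G}(r)$ for the choice $r=\|\theta^*\|/4$, check that $(\gamma(\eta),\tfrac{1}{2\sigma^2})$ falls into the contraction triangle $\mathcal{T}$, and then invoke \prettyref{thm:oct} together with \prettyref{lem:G=00003D00003D000026V}.

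First I would dispatch the concavity side. Since $Q(\theta'|\theta;y)$ is quadratic in $\theta'$, \prettyref{eq:V-gmm} already gives $V(\theta'|\theta;Y)=-\tfrac{1}{2\sigma^2}\|\theta'-\theta^*\|^2$ with no dependence on $Y$, whence $\mathbb{E}_{\theta^*}V(\theta'|\theta;Y)=-\tfrac{1}{2\sigma^2}\|\theta'-\theta^*\|^2$ for all $\theta',\theta$; thus $\tfrac{1}{2\sigma^2}\in\mathcal{V}(r,+\infty)\neq\varnothing$, and by \prettyref{lem:G=00003D00003D000026V}(b), $\mathcal{V}(r,+\infty)=(0,\overline{\nu}]$ with $\overline{\nu}=\tfrac{1}{2\sigma^2}$ for every $r>0$. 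For the gradient side, with $r=\|\theta^*\|/4$ the cited estimate (Lemma 2 of \cite{Balakrishnan}) applied to the population mean of the GRV \prettyref{eq:Gam-gmm} gives $\|\mathbb{E}_{\theta^*}\Gamma(\theta;Y)\|\le\gamma(\eta)\|\theta-\theta^*\|$ on $B_r(\theta^*)$ with $\gamma(\eta)=\tfrac{1}{\sigma^2}e^{-c\eta^2}$, so $\gamma(\eta)\in\mathcal{G}(r)\neq\varnothing$, and by \prettyref{lem:G=00003D00003D000026V}(a), $\mathcal{G}(r)=[\overline{\gamma},+\infty)$ with $\overline{\gamma}=\sup_{\theta\in B_{r}^{\times}(\theta^*)}\tfrac{\|\mathbb{E}_{\theta^*}\Gamma(\theta;Y)\|}{\|\theta-\theta^*\|}\le\gamma(\eta)$. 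The hypothesis $\kappa:=2e^{-c\eta^2}<1$ is precisely $\gamma(\eta)<\tfrac{1}{2\sigma^2}=\overline{\nu}$, so $(\gamma(\eta),\overline{\nu})\in\mathcal{T}$; together with $\gamma(\eta)\in\mathcal{G}(r)$ and $\overline{\nu}\in\mathcal{V}(r,+\infty)$ this puts $(\gamma(\eta),\overline{\nu})\in\mathcal{C}(r,+\infty)\neq\varnothing$, so $0<r<+\infty$ are radii of contraction.

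It then remains to apply \prettyref{thm:oct} with these radii: for any $\theta^0\in B_r(\theta^*)$, every oracle EM sequence with $\theta^{t+1}\in\arg\max_{\theta'\in\Omega}Q_*(\theta'|\theta^t)$ satisfies $\|\theta^t-\theta^*\|\le\overline{\kappa}^t\|\theta^0-\theta^*\|$, where $\overline{\kappa}=\overline{\gamma}/\overline{\nu}$ is the optimal value of $\gamma/\nu$ over $(\gamma,\nu)\in\mathcal{C}(r,+\infty)$; using $\overline{\gamma}\le\gamma(\eta)$ and $\overline{\nu}=\tfrac{1}{2\sigma^2}$ gives $\overline{\kappa}\le\gamma(\eta)/\overline{\nu}=2e^{-c\eta^2}=\kappa<1$, which is the asserted bound. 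The only genuinely substantive ingredient is the cited gradient estimate of \cite{Balakrishnan}; everything else is bookkeeping, and the only points needing mild care are that $\Omega=\mathbb{R}^p$ is exactly the $R=+\infty$ instance of \prettyref{thm:oct} (so that the closed ball $B_R(\theta^*)$ never constrains the $M$-step iterate), and that $\mathbb{E}_{\theta^*}\Gamma(\theta;Y)$ is well defined, which holds since \prettyref{eq:Gam-gmm} is a bounded coefficient times a Gaussian vector and hence integrable.
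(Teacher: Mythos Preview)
Your proposal is correct and follows essentially the same approach as the paper: the paper's argument is exactly the setup in \prettyref{subsec:ora-cov-gmm} (computing $\overline{\nu}=\tfrac{1}{2\sigma^2}$ from \prettyref{eq:V-gmm}, invoking the cited lemma from \cite{Balakrishnan} to get $\gamma(\eta)\in\mathcal{G}(r)$ with $r=\|\theta^*\|/4$, and noting $\gamma(\eta)<\overline{\nu}$ under the SNR hypothesis) followed by a direct appeal to \prettyref{thm:oct}; the corollary is stated without a separate proof block precisely because it is this specialization. Your added remarks on $R=+\infty$ and the integrability of $\mathbb{E}_{\theta^*}\Gamma(\theta;Y)$ are sound but not needed beyond what the paper already assumes.
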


\subsubsection{Empirical Convergence}

For empirical convergence results, we need to find specific forms
of the $\varepsilon$-bounds in the Assumptions.
\begin{lem}
\label{lem:epsilon-1-gmm}For $\delta\in(0,1)$ and $r>0$, if $n>c\log\left(L/\delta\right)$,
then\footnote{Note $\log(L/\delta)\le O(p)$, see \prettyref{subsec:cov-num}.}
\begin{equation}
\left\Vert \Gamma_{n}(\theta;\{Y_{k}\})-\mathbb{E}_{\theta^{*}}\Gamma(\theta;Y)\right\Vert \le C\frac{K^{2}}{\sigma^{2}}\sqrt{\frac{\log(L/\delta)}{n}}\left\Vert \theta-\theta^{*}\right\Vert \text{ for }\theta\in B_{r}(\theta^{*})\label{eq:epsilon-1-gmm}
\end{equation}
with probability at least $1-\delta$.
\end{lem}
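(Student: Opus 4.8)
The plan is to bound the empirical process $\theta\mapsto\Gamma_{n}(\theta;\{Y_{k}\})-\mathbb{E}_{\theta^{*}}\Gamma(\theta;Y)$ uniformly over $\theta\in B_{r}(\theta^{*})$ after dividing by $\left\Vert \theta-\theta^{*}\right\Vert $, reading off the rate from Bernstein's inequality applied at the points of a net of $\mathbb{S}^{p-1}$. The key preliminary step is to write $\Gamma(\theta;Y)/\left\Vert \theta-\theta^{*}\right\Vert $ in a form that is \emph{not} singular as $\theta\to\theta^{*}$ (recall $\Gamma_{n}(\theta^{*};\{Y_{k}\})=0=\mathbb{E}_{\theta^{*}}\Gamma(\theta^{*};Y)$, so only $\theta\in B_{r}^{\times}(\theta^{*})$ is at issue). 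Setting $v\coloneqq(\theta-\theta^{*})/\left\Vert \theta-\theta^{*}\right\Vert \in\mathbb{S}^{p-1}$ and using the fundamental theorem of calculus together with $\varsigma'=\varsigma(1-\varsigma)\in[0,\tfrac{1}{4}]$,
\[
w_{\theta}(y)-w_{\theta^{*}}(y)=\left\Vert \theta-\theta^{*}\right\Vert \frac{2\,v^{\intercal}y}{\sigma^{2}}\,a_{\theta}(y),\qquad a_{\theta}(y)\coloneqq\int_{0}^{1}\varsigma'\!\left(\tfrac{2(\theta^{*}+s(\theta-\theta^{*}))^{\intercal}y}{\sigma^{2}}\right)ds\in\left[0,\tfrac{1}{4}\right],
\]
so that by \prettyref{eq:Gam-gmm} one has $\Gamma(\theta;Y)/\left\Vert \theta-\theta^{*}\right\Vert =\frac{4}{\sigma^{4}}a_{\theta}(Y)(v^{\intercal}Y)Y$, which tends to $\frac{4}{\sigma^{4}}\varsigma'(2\theta^{*\intercal}Y/\sigma^{2})(v^{\intercal}Y)Y$ as $\theta\to\theta^{*}$. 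Thus the normalized quantity is a genuinely bounded (in the Orlicz sense) family of random vectors.

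Next I would establish the pointwise sub-exponential tail. Since $Y=Z\theta^{*}+W$ with $W\sim\mathcal{N}(0,\sigma^{2}I_{p})$, every $u^{\intercal}Y$ with $u\in\mathbb{S}^{p-1}$ is sub-Gaussian with $\psi_{2}$-norm $\lesssim\left\Vert \theta^{*}\right\Vert +\sigma=K$; hence for fixed $u,v\in\mathbb{S}^{p-1}$ and fixed $\theta$, the scalar $\langle u,\Gamma(\theta;Y)\rangle/\left\Vert \theta-\theta^{*}\right\Vert =\frac{4}{\sigma^{4}}a_{\theta}(Y)(v^{\intercal}Y)(u^{\intercal}Y)$ is bounded in magnitude by $\frac{1}{\sigma^{4}}|v^{\intercal}Y|\,|u^{\intercal}Y|$, a product of two sub-Gaussians, hence sub-exponential with $\psi_{1}$-norm of order $K^{2}/\sigma^{2}$ (up to the precise power of $\sigma$). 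Its mean is $\langle u,\mathbb{E}_{\theta^{*}}\Gamma(\theta;Y)\rangle/\left\Vert \theta-\theta^{*}\right\Vert $, so Bernstein's inequality for i.i.d.\ sub-exponential variables gives, for each fixed $(u,\theta)$,
\[
\left|\big\langle u,\ \Gamma_{n}(\theta;\{Y_{k}\})-\mathbb{E}_{\theta^{*}}\Gamma(\theta;Y)\big\rangle\right|\le C\,\frac{K^{2}}{\sigma^{2}}\sqrt{\frac{\log(1/\delta')}{n}}\,\left\Vert \theta-\theta^{*}\right\Vert
\]
with probability at least $1-\delta'$, provided $n\gtrsim\log(1/\delta')$ so that the Gaussian term $\sqrt{\log/n}$ of the two-sided Bernstein bound dominates the heavier $\log/n$ term; this is exactly what the hypothesis $n>c\log(L/\delta)$ ensures.

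It remains to make the estimate uniform over $\theta\in B_{r}(\theta^{*})$ and over the test direction. For the direction, take a $\tfrac{1}{2}$-net $N$ of $\mathbb{S}^{p-1}$ of cardinality $L<5^{p}$ and use $\left\Vert x\right\Vert \le2\max_{u\in N}\langle u,x\rangle$. Parametrizing $\theta=\theta^{*}+tv$ with $(v,t)\in\mathbb{S}^{p-1}\times(0,r]$, the only nontrivial $\theta$-dependence of $\Gamma(\theta;y)/\left\Vert \theta-\theta^{*}\right\Vert $ sits in $v$ and in $a_{\theta}(y)$; both $v\mapsto v^{\intercal}y$ and $t\mapsto a_{\theta^{*}+tv}(y)$ are Lipschitz with envelope $\lesssim\left\Vert y\right\Vert /\sigma^{2}$ (differentiate under the integral, using $|\varsigma''|\le\tfrac{1}{4}$). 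So I would cover $\mathbb{S}^{p-1}$ (for $v$) by a second $\tfrac{1}{2}$-net and $[0,r]$ (for $t$) by a net of polynomially small mesh, apply the Bernstein bound above at each net triple with confidence level $\delta/|\text{net}|$, take a union bound, and interpolate via the Lipschitz estimate — for which one controls $(P_{n}-P)$ of the envelope $\frac{1}{\sigma^{4}}|v^{\intercal}Y|\,\left\Vert Y\right\Vert $, whose expectation is finite. Since $\log|\text{net}|\lesssim p+\log L+\log(rn)=O(\log(L/\delta))$, one obtains \prettyref{eq:epsilon-1-gmm}.

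The main obstacle is the uniformity step, and it dictates the shape of the first two steps: to keep the leading constant free of any \emph{polynomial} dependence on $p$ one must project onto unit vectors $u$ and use the divided-difference representation — never the crude bound $\left\Vert \Gamma(\theta;y)\right\Vert \le\left\Vert \theta-\theta^{*}\right\Vert \left\Vert y\right\Vert ^{2}/\sigma^{4}$, which would feed $\mathbb{E}\left\Vert Y\right\Vert ^{2}\asymp p\sigma^{2}$ into the rate — and one must track the net mesh against the Lipschitz envelope and the Bernstein confidence so that the extra logarithmic factors collapse into the single $\log(L/\delta)$ of the statement.
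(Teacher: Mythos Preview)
Your core argument coincides with the paper's: apply the mean-value representation to $w_\theta-w_{\theta^*}$, use $\varsigma'\le\tfrac14$ to bound $|u^{\intercal}\Gamma(\theta;Y)|/\|\theta-\theta^{*}\|$ by $\tfrac{1}{\sigma^{4}}|v^{\intercal}Y|\,|u^{\intercal}Y|$ with $v=(\theta-\theta^{*})/\|\theta-\theta^{*}\|$, note this is a product of two sub-Gaussians with $\psi_{2}$-norm at most $K$ (hence sub-exponential with $\psi_{1}$-norm $\lesssim K^{2}/\sigma^{4}$), and then invoke the Bernstein-type vector concentration of \prettyref{lem:constr-sub-exp}, which already contains the $\tfrac12$-net over the test direction $u$. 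That is exactly the paper's proof, modulo your integral form of the mean-value theorem versus the paper's pointwise one.

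Where you diverge is the second half: you additionally net over $(v,t)$ and interpolate via a Lipschitz envelope to get the bound \emph{uniformly} in $\theta\in B_{r}(\theta^{*})$. The paper does not do this---its proof ends after invoking \prettyref{lem:constr-sub-exp} and so, as written, delivers the inequality only for each fixed $\theta$, even though the downstream use in \prettyref{lem:bounds} (taking a supremum over $\theta\in B_{r}^{\times}(\theta^{*})$) requires uniformity. Your extra work is therefore not superfluous; it addresses a point the paper glosses over. Just be careful that your Lipschitz envelope involves $\|Y\|$, whose second moment scales with $p$: you need to pick the mesh in $(v,t)$ to decay like an inverse power of $n$ so the interpolation remainder stays below the Bernstein term, and then absorb the resulting $\log n+p$ factors into $\log(L/\delta)$.
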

\begin{proof}
See \prettyref{subsec:pf-eps-1-gmm}.
\end{proof}
Secondly, in view of \prettyref{eq:V-gmm} we have
\begin{equation}
V(\theta'|\theta;Y)=V_{n}(\theta'|\theta;\{Y_{k}\})=\mathbb{E}_{\theta^{*}}V(\theta'|\theta;Y)=-\frac{1}{2\sigma^{2}}\left\Vert \theta'-\theta^{*}\right\Vert ^{2},\label{eq:epsilon-2-gmm}
\end{equation}
and hence $\varepsilon_{2}(\delta,r,R,n,p)=0$. Thirdly, for the $\varepsilon$-bound
on statistical error, we have the following result.
\begin{lem}
\label{lem:epsilon-s-gmm}For $\delta\in(0,1)$, there holds the inequality
\begin{equation}
\left\Vert \mathcal{E}_{n}(\{Y_{k}\})\right\Vert \le C\frac{K}{\sigma^{2}}\sqrt{\frac{\log(L/\delta)}{n}}\label{eq:epsilon-s-gmm}
\end{equation}
with probability at least $1-\delta$.
\end{lem}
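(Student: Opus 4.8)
The plan is to view $\mathcal{E}_{n}(\{Y_{k}\})=\frac{1}{n}\sum_{k=1}^{n}\mathcal{E}(Y_{k})$ as the empirical mean of i.i.d. centered random vectors --- centered because $\mathbb{E}_{\theta^{*}}\mathcal{E}(Y)=\nabla_{1}Q_{*}(\theta^{*}|\theta^{*})=0$ by \prettyref{eq:exp-stat-err} --- and to bound its Euclidean norm by a covering argument over the sphere. Since $\left\Vert\mathcal{E}_{n}\right\Vert=\sup_{u\in\mathbb{S}^{p-1}}\langle\mathcal{E}_{n},u\rangle$, the standard $\tfrac12$-net reduction gives $\left\Vert\mathcal{E}_{n}(\{Y_{k}\})\right\Vert\le 2\max_{u\in\mathcal{N}_{\frac{1}{2}}(\mathbb{S}^{p-1})}\langle\mathcal{E}_{n}(\{Y_{k}\}),u\rangle$, so it suffices to establish a sub-Gaussian tail for the scalar empirical average $\frac{1}{n}\sum_{k=1}^{n}\langle\mathcal{E}(Y_{k}),u\rangle$ at each fixed unit vector $u$, and then take a union bound over the $L=\mathcal{N}_{\frac{1}{2}}(\mathbb{S}^{p-1})<5^{p}$ net points.

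The key step is a uniform-in-$u$ sub-Gaussian estimate for $\langle\mathcal{E}(Y),u\rangle$. From \prettyref{eq:E-gmm} and the identity $2w_{\theta^{*}}(Y)-1=\tanh(\langle\theta^{*},Y\rangle/\sigma^{2})\in[-1,1]$, I would write $\langle\mathcal{E}(Y),u\rangle=\frac{1}{\sigma^{2}}\big(\tanh(\langle\theta^{*},Y\rangle/\sigma^{2})\langle Y,u\rangle-\langle\theta^{*},u\rangle\big)$ and, using $|\tanh|\le1$ and the model decomposition $Y=Z\theta^{*}+W$ with $Z$ Rademacher and $W\sim\mathcal{N}(0,\sigma^{2}I_{p})$, bound $|\langle\mathcal{E}(Y),u\rangle|\le\frac{1}{\sigma^{2}}(|\langle Y,u\rangle|+\|\theta^{*}\|)\le\frac{1}{\sigma^{2}}(2\|\theta^{*}\|+|\langle W,u\rangle|)$. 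Because $\langle W,u\rangle\sim\mathcal{N}(0,\sigma^{2})$ has sub-Gaussian parameter $\sigma$, this shows $\langle\mathcal{E}(Y),u\rangle$ is sub-Gaussian with $\psi_{2}$-norm of order $\frac{\|\theta^{*}\|+\sigma}{\sigma^{2}}=\frac{K}{\sigma^{2}}$, uniformly over $u\in\mathbb{S}^{p-1}$.

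Assembling the pieces: for fixed $u$, the Hoeffding-type bound for sums of i.i.d. centered sub-Gaussians gives $\Pr\{|\frac{1}{n}\sum_{k=1}^{n}\langle\mathcal{E}(Y_{k}),u\rangle|>t\}\le 2\exp(-cnt^{2}\sigma^{4}/K^{2})$; a union bound over the $L$ net points together with the net-to-norm inequality above yields $\Pr\{\|\mathcal{E}_{n}(\{Y_{k}\})\|>2t\}\le 2L\exp(-cnt^{2}\sigma^{4}/K^{2})$; setting the right-hand side to $\delta$ and solving for $t$ produces $\|\mathcal{E}_{n}(\{Y_{k}\})\|\le C\frac{K}{\sigma^{2}}\sqrt{\log(L/\delta)/n}$ with probability at least $1-\delta$ after absorbing the numerical factors and using $\log(2L/\delta)\le 2\log(L/\delta)$. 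The main obstacle I expect is the bookkeeping in the second step: making sure the uniform sub-Gaussian parameter of $\langle\mathcal{E}(Y),u\rangle$ comes out as exactly $O(K/\sigma^{2})$ rather than picking up an extra factor (e.g.\ of $\eta$) from the product $\tanh(\cdot)\langle Y,u\rangle$ --- the crude bound $|\tanh|\le1$ is what keeps the exponent quadratic in $t$ and the parameter at the right order; everything else parallels the proof of \prettyref{lem:epsilon-1-gmm}.
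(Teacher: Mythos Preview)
Your proposal is correct and follows essentially the same route as the paper: show that $u^{\intercal}\mathcal{E}(Y)$ is sub-Gaussian with $\psi_{2}$-norm $O(K/\sigma^{2})$ uniformly in $u\in\mathbb{S}^{p-1}$ via the bound $|2w_{\theta^{*}}(Y)-1|\le1$, then apply the concentration-plus-covering argument (which the paper packages as \prettyref{lem:constr-sub-gau}). The only cosmetic difference is that the paper works with the uncentered variable $A=\frac{1}{\sigma^{2}}(2w_{\theta^{*}}(Y)-1)Y$, bounds $|u^{\intercal}A|\le\frac{1}{\sigma^{2}}|u^{\intercal}Y|$, and invokes \prettyref{lem:u.Y-sub-gau} directly rather than expanding $Y=Z\theta^{*}+W$ by hand as you do.
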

\begin{proof}
See \prettyref{subsec:proof-of-epsilon-s}.
\end{proof}
From \prettyref{eq:epsilon-1-gmm} and \prettyref{eq:epsilon-s-gmm}
in the above lemmas, it is clear that the Assumptions (\ref{A1}$\sim$\ref{A3})
are satisfied with the following
\[
\varepsilon_{1}(\delta,r,n,p)=C_{1}\frac{K^{2}}{\sigma^{2}}\sqrt{\frac{\log(L/\delta)}{n}},\quad\varepsilon_{2}(\delta,r,R,n,p)=0\quad\text{and}
\]
\begin{equation}
\varepsilon_{s}(\delta,r,R,n,p)=C_{2}\frac{K}{\sigma^{2}}\sqrt{\frac{\log(L/\delta)}{n}}.\label{eq:all-eps-gmm}
\end{equation}
We obtain the following data-adaptive empirical convergence result
for the Gaussian Mixture Model.
\begin{cor}
Suppose $\{Y_{k}\}_{k=1}^{n}$ is a set of i.i.d. copies of the Gaussian
mixture $Y\sim g_{\theta^{*}}(y)$ and $\delta\in(0,1)$. If $\eta$
is sufficiently large such that $\kappa\coloneqq2e^{-c\eta^{2}}<1$
and the sample size $n$ satisfies
\begin{equation}
n>\frac{\log(L/\delta)}{\left(1-\kappa\right)^{2}}\left(C_{1}K^{2}+C_{2}\left(1+\frac{1}{\eta}\right)\right)^{2},\label{eq:cond-for-n-gmm}
\end{equation}
then for $r=\frac{\left\Vert \theta^{*}\right\Vert }{4}$ and an initial
point $\theta^{0}\in B_{r}(\theta^{*})$, with probability at least
$1-\delta$, the empirical EM sequence $\{\Theta_{n}^{t}\}_{t\ge0}$
such that
\[
\Theta_{n}^{t+1}\in\arg\max\{Q_{n}(\Theta'|\Theta_{n}^{t};\{y_{k}\}\mid\Theta'\in\mathbb{R}^{p})\text{ for }t\ge0,
\]
satisfies the inequality
\begin{equation}
\left\Vert \Theta_{n}^{t}-\theta^{*}\right\Vert \le\left(\overline{K}_{n}\right)^{t}\left\Vert \Theta_{n}^{0}-\theta^{*}\right\Vert +\frac{C_{3}K}{1-\kappa_{n}}\sqrt{\frac{\log(L/\delta)}{n}},\label{eq:emp-conv-gmm}
\end{equation}
where the optimal empirical convergence rate $\overline{K}_{n}$ satisfies
that
\[
\overline{K}_{n}\le\kappa+C_{4}K^{2}\sqrt{\frac{\log(L/\delta)}{n}}<1\text{ and }\left|\overline{K}_{n}-\overline{\kappa}\right|\le CK^{2}\sqrt{\frac{\log(L/\delta)}{n}}.
\]
\end{cor}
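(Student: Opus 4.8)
The plan is to derive the corollary as a direct specialization of the two general results \prettyref{thm:oect} (the optimal empirical convergence theorem) and \prettyref{thm:orct} (the optimal rate convergence theorem), feeding in the model-specific quantities already computed for the Gaussian Mixture Model. First I would assemble the oracle-side data from \prettyref{subsec:ora-cov-gmm}: there one has $\overline{\nu}=\frac{1}{2\sigma^{2}}$ (valid for every $0<r\le R$), $\overline{\gamma}\le\gamma(\eta)=\frac{1}{\sigma^{2}}e^{-c\eta^{2}}$, and hence $\overline{\kappa}=\overline{\gamma}/\overline{\nu}\le 2e^{-c\eta^{2}}=\kappa<1$, so that $r=\|\theta^{*}\|/4$ and $R=+\infty$ are radii of contraction with $(\gamma(\eta),\overline{\nu})\in\mathcal{C}(r,+\infty)\ne\varnothing$. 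Then I would record that Assumptions (\ref{A1}), (\ref{A2}) and (\ref{A3}) hold with the explicit bounds $\varepsilon_{1}=C_{1}\frac{K^{2}}{\sigma^{2}}\sqrt{\frac{\log(L/\delta)}{n}}$, $\varepsilon_{2}=0$ and $\varepsilon_{s}=C_{2}\frac{K}{\sigma^{2}}\sqrt{\frac{\log(L/\delta)}{n}}$ of \prettyref{eq:all-eps-gmm}, which is exactly the content of \prettyref{lem:epsilon-1-gmm}, \prettyref{eq:epsilon-2-gmm}, and \prettyref{lem:epsilon-s-gmm}.

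The next step is to check that the sample-size hypothesis \prettyref{eq:cond-for-n-gmm} implies the abstract sufficient condition \prettyref{eq:cond-for-n} demanded by \prettyref{thm:oect}. Since $\varepsilon_{2}=0$ and $\overline{\nu}-\overline{\gamma}\ge\overline{\nu}(1-\kappa)=\frac{1-\kappa}{2\sigma^{2}}$, the inequality $\varepsilon_{s}+r\varepsilon_{1}+r\varepsilon_{2}<r(\overline{\nu}-\overline{\gamma})$ reduces, after clearing denominators and using $K/\|\theta^{*}\|=1+1/\eta$, to an inequality of the form $\big(C_{1}K^{2}+C_{2}(1+1/\eta)\big)\sqrt{\frac{\log(L/\delta)}{n}}<1-\kappa$ up to renaming the numerical constants; solving for $n$ gives precisely \prettyref{eq:cond-for-n-gmm}. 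In particular this condition also forces $\overline{\nu}_{n}=\overline{\nu}>0$ (because $\varepsilon_{2}=0$) and $\overline{\nu}_{n}>\overline{\gamma}_{n}$, hence $\overline{\kappa}_{n}=\overline{\gamma}_{n}/\overline{\nu}_{n}<1$ and, by \prettyref{eq:def-Kn}, $\overline{K}_{n}\le\overline{\kappa}_{n}<1$.

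Applying \prettyref{thm:oect} with the optimal pair $(\overline{\gamma},\overline{\nu})$ then yields, on the event $\mathscr{E}_{n}$ of probability at least $1-\delta$ furnished by \prettyref{lem:bounds}, the bound $\|\Theta_{n}^{t}-\theta^{*}\|\le(\overline{K}_{n})^{t}\|\Theta_{n}^{0}-\theta^{*}\|+\overline{E}_{n}/(\overline{V}_{n}-\overline{\Gamma}_{n})$. On $\mathscr{E}_{n}$ the additive term is at most $\varepsilon_{s}/(\overline{\nu}_{n}-\overline{\gamma}_{n})$, and since $\overline{\nu}_{n}=\frac{1}{2\sigma^{2}}$ this equals $\frac{2\sigma^{2}\varepsilon_{s}}{1-\overline{\kappa}_{n}}=\frac{2C_{2}K}{1-\overline{\kappa}_{n}}\sqrt{\frac{\log(L/\delta)}{n}}$, which is the error term of \prettyref{eq:emp-conv-gmm} with $C_{3}=2C_{2}$ and $\kappa_{n}=\overline{\kappa}_{n}$. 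For the two bounds on the optimal empirical convergence rate: the estimate $\overline{K}_{n}\le\overline{\kappa}_{n}=2\sigma^{2}(\overline{\gamma}+\varepsilon_{1})\le\kappa+2\sigma^{2}\varepsilon_{1}=\kappa+C_{4}K^{2}\sqrt{\frac{\log(L/\delta)}{n}}$ (with $C_{4}=2C_{1}$) is immediate, and $\overline{K}_{n}<1$ was noted above; while since $\varepsilon_{2}=0<\frac{1}{2}\overline{\nu}$ trivially, \prettyref{thm:orct} gives $|\overline{K}_{n}-\overline{\kappa}|\le\frac{2}{\overline{\nu}}\varepsilon_{1}=4\sigma^{2}\varepsilon_{1}=CK^{2}\sqrt{\frac{\log(L/\delta)}{n}}$ with $C=4C_{1}$, all valid on the same event $\mathscr{E}_{n}$.

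I expect no genuine obstacle here: the corollary is essentially a substitution of the model-specific $\varepsilon$-estimates into machinery already in place. The one point that needs care is the constant bookkeeping --- checking that \prettyref{eq:cond-for-n-gmm} is strong enough to imply simultaneously \prettyref{eq:cond-for-n}, the positivity $\overline{\nu}_{n}>\overline{\gamma}_{n}$, and the strict inequality $\overline{K}_{n}<1$ --- together with tracking how $\sigma$, $\|\theta^{*}\|$, $\eta$ and $K$ enter each estimate so that the universal constants $C,C_{1},\dots,C_{4}$ can be chosen consistently throughout.
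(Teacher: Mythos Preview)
Your proposal is correct and follows essentially the same approach as the paper: verify that the sample-size hypothesis \prettyref{eq:cond-for-n-gmm} implies the abstract condition \prettyref{eq:cond-for-n} (the paper phrases this with $\gamma(\eta)$ in place of $\overline{\gamma}$, which is equivalent since $\overline{\gamma}\le\gamma(\eta)$), then invoke \prettyref{thm:oect} for the convergence inequality and \prettyref{thm:orct} for the concentration bound on $\overline{K}_{n}$. The only small item the paper mentions explicitly that you omit is that \prettyref{eq:cond-for-n-gmm} also covers the requirement $n>c\log(L/\delta)$ needed in \prettyref{lem:epsilon-1-gmm}; you should note this, but it is immediate from the form of \prettyref{eq:cond-for-n-gmm}.
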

\begin{proof}
We first note condition \prettyref{eq:cond-for-n-gmm} implies the
lower bound of $n$ in \prettyref{lem:epsilon-1-gmm}. Hence to apply
the empirical convergence theorem, we only need to verify that
\[
\varepsilon_{s}(\delta,r,R,n,p)+r\varepsilon_{1}(\delta,r,n,p)+r\varepsilon_{2}(\delta,r,R,n,p)<r\left(\overline{\nu}-\gamma\left(\eta\right)\right)
\]
holds true whenever $n$ satisfies \prettyref{eq:cond-for-n-gmm},
but this is trivial.

Then note $\varepsilon_{2}(\delta,r,R,n,p)=0$ and $\overline{\nu}_{n}=\frac{1}{2\sigma^{2}}$,
hence the concentration bound of $\overline{K}_{n}$ follows from
the optimal rate convergence theorem.
\end{proof}

\subsection{\label{subsec:mix-lin-reg}Mixture of Linear Regressions}

Consider the Mixture of Linear Regressions model with two balanced
symmetric components in which the covariate-response $(Y,X)$ are
linked via
\begin{equation}
Y=\left\langle X,Z\cdot\theta^{*}\right\rangle +W,\label{eq:mlr-model}
\end{equation}
where $Z$ is a Rademacher variable, $W\sim\mathcal{N}(0,\sigma^{2})$
is the Gaussian noise and $X\sim\mathcal{N}(0,I_{p})$ is a Gaussian
covariate. Given a set of i.i.d realizations $\{(y_{k},x_{k})\}_{k=1}^{n}$
generated by \prettyref{eq:mlr-model}, the goal is to estimate
the unknown population parameter $\theta^{*}\in\mathbb{R}^{p}$.

In above setting, we observe the covariate-response pair $(Y,X)$
while $Z$ is a latent variable. The complete joint density is
\[
f_{\theta}(y,x,z)=\frac{1}{2}\phi(y-\left\langle x,z\cdot\theta\right\rangle ;0,\sigma^{2})\phi(x;0,I_{p}),
\]
where $(y,x)\in\mathbb{R}\times\mathbb{R}^{p}$ and $z\in\{-1,1\}$.
Then it is a standard procedure to obtain the stochastic $Q$-function
\begin{align*}
Q(\theta'|\theta;(y,x)) & =-\frac{1}{2\sigma^{2}}\left(w_{\theta}(y,x)\left(y-\left\langle x,\theta'\right\rangle \right)^{2}+(1-w_{\theta}(y,x))\left(y+\left\langle x,\theta'\right\rangle \right)^{2}\right)\\
 & -\frac{1}{2}\left\Vert x\right\Vert ^{2}-\log2\left(\sqrt{2\pi}\sigma\right)^{p+1},
\end{align*}
where $w_{\theta}(y,x)\coloneqq\varsigma\left(\frac{2y\left\langle x,\theta\right\rangle }{\sigma^{2}}\right)$
and $\varsigma(t)\coloneqq\frac{1}{1+e^{-t}}$ is the logistic function.
Then the gradient
\[
\nabla_{1}Q(\theta'|\theta;(y,x))=\frac{1}{\sigma^{2}}\left[(2w_{\theta}(y,x)-1)y-\left\langle x,\theta'\right\rangle \right]x,
\]
and hence the GRV
\begin{align}
\Gamma(\theta;(Y,X)) & =\nabla_{1}Q(\theta^{*}|\theta;(Y,X))-\nabla_{1}Q(\theta^{*}|\theta^{*};(Y,X))\nonumber \\
 & =\frac{2}{\sigma^{2}}\left[w_{\theta}(Y,X)-w_{\theta^{*}}(Y,X)\right]YX.\label{eq:Gam-mlr}
\end{align}
Since $Q(\theta'|\theta;(y,x))$ is quadratic in $\theta'$, the CRV
can be computed as
\begin{equation}
V(\theta'|\theta;(Y,X))=-\frac{1}{2\sigma^{2}}(\theta'-\theta^{*})^{\intercal}XX^{\intercal}(\theta'-\theta^{*})\label{eq:V-mlr}
\end{equation}
by \prettyref{lem:quad-taylor}. Then the SEV
\begin{equation}
\mathcal{E}(Y,X)=\frac{1}{\sigma^{2}}\left[(2w_{\theta^{*}}(Y,X)-1)Y-\left\langle X,\theta^{*}\right\rangle \right]X.\label{eq:E-mlr}
\end{equation}

\subsubsection{\label{subsec:ora-cov-mlr}Oracle Convergence}

We first characterize the sets $\mathcal{G}(r)$ and $\mathcal{V}(r,R)$.
Since $X\sim\mathcal{N}(0,I_{p})$ and by \prettyref{eq:V-mlr},
\begin{equation}
\mathbb{E}_{\theta^{*}}V(\theta'|\theta;(Y,X))=-\frac{1}{2\sigma^{2}}\left\Vert \theta'-\theta^{*}\right\Vert ^{2},\label{eq:EV-mlr}
\end{equation}
hence $\mathcal{V}(r,R)=\left(0,\overline{\nu}\right]$, where $\overline{\nu}=\frac{1}{2\sigma^{2}}$
for any $0<r\le R$. As for the set $\mathcal{G}(r)$ we need to bound
\[
\mathbb{E}_{\theta^{*}}\Gamma(\theta;(Y,X))=\frac{2}{\sigma^{2}}\mathbb{E}_{\theta^{*}}\left[\left(w_{\theta}(Y,X)-w_{\theta^{*}}(Y,X)\right)YX\right].
\]
To this end, we cite a technical result from \cite{Yi} (Lemma 7 in
the Supplement), see also Lemma 3 in \cite{Balakrishnan} for an alternative.
\begin{lem}
If $\omega\in\left(0,\frac{1}{4}\right]$ and $r=\omega\left\Vert \theta^{*}\right\Vert $,
then
\[
\left\Vert \mathbb{E}_{\theta^{*}}\Gamma(\theta;(Y,X))\right\Vert \le\gamma\left(\omega,\eta\right)\left\Vert \theta-\theta^{*}\right\Vert \text{ for }\theta\in B_{r}(\theta^{*}),
\]
where $\gamma\left(\omega,\eta\right)\coloneqq\frac{1}{\sigma^{2}}\left(7.3\omega+\frac{17}{\eta}\right)$.
\end{lem}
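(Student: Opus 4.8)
The plan is to prove the bound one direction at a time. Since $\|\mathbb{E}_{\theta^{*}}\Gamma(\theta;(Y,X))\|=\sup_{u\in\mathbb{S}^{p-1}}\langle u,\mathbb{E}_{\theta^{*}}\Gamma(\theta;(Y,X))\rangle$, it suffices to bound, for an arbitrary fixed unit vector $u$ and with $v\coloneqq\theta-\theta^{*}$, the scalar
\[
T_{u}\coloneqq\langle u,\mathbb{E}_{\theta^{*}}\Gamma(\theta;(Y,X))\rangle=\frac{2}{\sigma^{2}}\,\mathbb{E}_{\theta^{*}}\!\left[\bigl(w_{\theta}(Y,X)-w_{\theta^{*}}(Y,X)\bigr)\,Y\,\langle X,u\rangle\right]
\]
(using the closed form \prettyref{eq:Gam-mlr} of the GRV) by $\tfrac{1}{\sigma^{2}}\bigl(7.3\,\omega+17/\eta\bigr)\|v\|$. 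First I would cut the effective dimension down to three. By the symmetry of the model under $(Z,X)\mapsto(-Z,-X)$, conditioning on $Z=1$ leaves the expectation unchanged, so we may take $Y=\langle X,\theta^{*}\rangle+W$ with $W\sim\mathcal{N}(0,\sigma^{2})$ independent of $X\sim\mathcal{N}(0,I_{p})$. Write $\mu\coloneqq\langle X,\theta^{*}\rangle$ and $\tau\coloneqq\langle X,v\rangle$. Decomposing $u=u_{\parallel}+u_{\perp}$ with $u_{\parallel}\in\operatorname{span}(\theta^{*},v)$, the component $\langle X,u_{\perp}\rangle$ is a centered Gaussian independent of $(\mu,\tau,W)$ and hence contributes $0$ to $T_{u}$, so we may assume $u\in\operatorname{span}(\theta^{*},v)$ and the problem becomes an explicit Gaussian integral in $(\mu,\tau,W)$. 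Moreover $\tau=\|v\|\,\langle X,\hat{v}\rangle$ with $\hat{v}\coloneqq v/\|v\|$, so the factor $\|v\|$ pulls out of every term and it remains to bound a constant that depends only on the unit directions $\theta^{*}/\|\theta^{*}\|,\hat{v},u$ and on $\omega,\eta$.

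The second ingredient is the mean value theorem for the logistic function $\varsigma$: since $w_{\theta}(Y,X)=\varsigma\!\bigl(2Y\langle X,\theta\rangle/\sigma^{2}\bigr)$ and $w_{\theta^{*}}(Y,X)=\varsigma\!\bigl(2Y\mu/\sigma^{2}\bigr)$,
\[
w_{\theta}(Y,X)-w_{\theta^{*}}(Y,X)=\varsigma'(\xi)\cdot\frac{2Y\tau}{\sigma^{2}}
\]
for some $\xi$ between $2Y\mu/\sigma^{2}$ and $2Y(\mu+\tau)/\sigma^{2}$, so that $T_{u}=\tfrac{4}{\sigma^{4}}\,\mathbb{E}\bigl[\varsigma'(\xi)\,Y^{2}\,\tau\,\langle X,u\rangle\bigr]$. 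I would then use the two elementary facts $\varsigma'(\xi)=\varsigma(\xi)\bigl(1-\varsigma(\xi)\bigr)\le\tfrac14$ and $\varsigma'(\xi)\le e^{-|\xi|}$, together with the lower bound $|\xi|\ge\tfrac{2}{\sigma^{2}}\bigl(|Y\mu|-|Y\tau|\bigr)$ valid whenever $Y\mu>0$. The heuristic is that $|Y\mu|=|\mu|\,|\mu+W|$ is typically of order $\mu^{2}$ and $\mu\sim\mathcal{N}(0,\eta^{2}\sigma^{2})$ with $\eta$ large, so $\varsigma'(\xi)$ is exponentially small off a set of small probability; the crude estimate $|T_{u}|\le\tfrac{1}{2\sigma^{2}}\mathbb{E}\bigl[Y^{2}|\tau|\,|\langle X,u\rangle|\bigr]$ scales like $\eta^{2}\|v\|$ and yields no contraction, so it must be refined.

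Concretely, after taking absolute values inside the expectation, I would split the domain into a \emph{bad} event and a \emph{bulk} event. The bad event collects the cases where the mean value theorem gives no gain: $|\mu|\le\sigma$ (probability $\le\sqrt{2/\pi}/\eta$), or $Y$ and $\mu$ of opposite signs (probability $O(1/\eta)$), or $|\tau|$ comparable to $|\mu|$ (probability $O(\omega)$ by a standard Cauchy tail estimate, since $\tau=\|v\|\langle X,\hat{v}\rangle$). The key observation is that the bad event is designed precisely so that on it the large signal component $\mu$ of $Y$ is itself bounded by $\sigma$ or by $\|v\|$-scale quantities, so $Y^{2}$ never carries the dangerous $\eta^{2}$ factor; combined with the $O(\omega+1/\eta)$ probability bound, Cauchy--Schwarz on $\mathbb{E}\bigl[Y^{2}|\tau|\,|\langle X,u\rangle|\,\ones_{\mathrm{bad}}\bigr]$ then gives a contribution of order $\tfrac{1}{\sigma^{2}}(\omega+1/\eta)\|v\|$. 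On the complementary bulk event $|\xi|\gtrsim\mu^{2}/\sigma^{2}$, so $\varsigma'(\xi)\le e^{-c\mu^{2}/\sigma^{2}}$, and the integral $\tfrac{1}{\sigma^{4}}\mathbb{E}\bigl[e^{-c\mu^{2}/\sigma^{2}}Y^{2}|\tau|\,|\langle X,u\rangle|\bigr]$ is an explicit Gaussian integral in which the exponential weight neutralizes the $\mathbb{E}[Y^{2}]\asymp\sigma^{2}(1+\eta^{2})$ growth, again contributing order $\tfrac{1}{\sigma^{2}}(\omega+1/\eta)\|v\|$. Summing the two pieces and tracking the numerical constants through the Gaussian moment computations produces the claimed bound $\tfrac{1}{\sigma^{2}}\bigl(7.3\,\omega+17/\eta\bigr)\|v\|$, and taking $\sup_{u}$ completes the proof. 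I expect the main obstacle to be exactly this constant-tracking: one has to design the bad/bulk split and carry out the weighted two-dimensional Gaussian integrals tightly enough to land on the explicit $7.3$ and $17$ rather than on a crude multiple that destroys the contraction $\gamma<\overline{\nu}=\tfrac{1}{2\sigma^{2}}$. An equivalent and often tidier route, which I would keep in reserve, uses the identity $\mathbb{E}_{\theta^{*}}\Gamma(\theta;(Y,X))=\tfrac{1}{\sigma^{2}}\bigl(M(\theta)-\theta^{*}\bigr)$ with $M(\theta)\coloneqq\mathbb{E}_{\theta^{*}}\bigl[(2w_{\theta}(Y,X)-1)YX\bigr]$ and $M(\theta^{*})=\theta^{*}$, turning the claim into the first-order stability bound for the population $M$-operator of the mixture of linear regressions, which is precisely the content of Lemma 7 in \cite{Yi} and Lemma 3 in \cite{Balakrishnan}, proved by essentially the computation sketched above.
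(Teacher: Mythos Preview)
The paper does not prove this lemma: it is quoted verbatim as ``a technical result from \cite{Yi} (Lemma 7 in the Supplement), see also Lemma 3 in \cite{Balakrishnan} for an alternative,'' with no argument given. Your closing paragraph already identifies this, and your reserve route---rewriting $\mathbb{E}_{\theta^{*}}\Gamma(\theta;(Y,X))=\tfrac{1}{\sigma^{2}}(M(\theta)-M(\theta^{*}))$ and invoking those lemmas---is exactly what the paper does.

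As for the sketch itself, it is the right shape and matches the strategy of the cited proofs: symmetry reduction to $Z=1$, projection onto $\operatorname{span}(\theta^{*},v)$, the mean-value form $\varsigma'(\xi)\cdot 2Y\tau/\sigma^{2}$, and a bulk/bad decomposition driven by $\varsigma'(\xi)\le e^{-|\xi|}$. One point to tighten: your assertion that on the bad event ``$Y^{2}$ never carries the dangerous $\eta^{2}$ factor'' is not correct for the piece $\{|\tau|\gtrsim|\mu|\}$, where $\mu$ can still be of order $\eta\sigma$. In the actual arguments that piece is handled differently: one exploits that the integrand already contains the factor $|\tau|$, so on $\{|\tau|\ge c|\mu|\}$ the expectation $\mathbb{E}[\,\varsigma'(\xi)Y^{2}\tau\langle X,u\rangle\,]$ can be controlled by trading the small probability $O(\omega)$ against a moment bound that still carries one power of $\eta$, and then absorbing that power into the $17/\eta$ term after a further split on $|W|$. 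This is precisely the constant-tracking you flag as the main obstacle, and it is the reason the published proofs run to several pages; your outline would need that extra sub-split to close.
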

Hence $\mathcal{G}(r)=\left[\overline{\gamma},+\infty\right)$, where
$\overline{\gamma}\le\gamma\left(\omega,\eta\right)$ for $r=\omega\left\Vert \theta^{*}\right\Vert $
and $\omega\in\left(0,\frac{1}{4}\right]$. It is clear that $\gamma\left(\omega,\eta\right)<\overline{\nu}=\frac{1}{2\sigma^{2}}$,
when $\eta$ is sufficiently large and $\omega$ is sufficiently small.
In that case, $0<\omega\left\Vert \theta^{*}\right\Vert <+\infty$
are radii of contraction and $\left(\gamma\left(\omega,\eta\right),\overline{\nu}\right)\in\mathcal{C}(\omega\left\Vert \theta^{*}\right\Vert ,+\infty)\neq\varnothing$.
We then apply the oracle convergence theorem to get the following
corollary.
\begin{cor}
For the Mixture of Linear Regressions model, if $\eta$ is sufficiently
large and $\omega$ is sufficiently small such that $7.3\omega+\frac{17}{\eta}<\frac{1}{2}$,
then $0<r<+\infty$ where $r=\omega\left\Vert \theta^{*}\right\Vert $
are radii of contraction. For each pair $\left(\gamma\left(\omega,\eta\right),\frac{1}{2\sigma^{2}}\right)\in\mathcal{C}(r,+\infty)$
and initial point $\theta^{0}\in B_{r}(\theta^{*})$, any oracle EM
sequence $\{\theta^{t}\}_{t\ge0}$ such that
\[
\theta^{t+1}\in\arg\max_{\theta'\in\Omega}Q_{*}(\theta'|\theta^{t})\text{ for }t\ge0,
\]
satisfies the inequality
\begin{equation}
\left\Vert \theta^{t}-\theta^{*}\right\Vert \le\overline{\kappa}^{t}\left\Vert \theta^{0}-\theta^{*}\right\Vert ,\label{eq:ora-conv-mlr}
\end{equation}
where $\overline{\kappa}\coloneqq\frac{\overline{\gamma}}{\overline{\nu}}\le2\left(7.3\omega+\frac{17}{\eta}\right)<1$,
is the optimal oracle convergence rate.
\end{cor}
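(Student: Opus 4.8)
The plan is to verify the hypotheses of the Optimal Oracle Convergence Theorem (\prettyref{thm:oct}) for this model and then invoke it directly; all of the analytic work has already been done in the two preceding lemmas and in \prettyref{eq:EV-mlr}, so what remains is essentially bookkeeping.

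First I would pin down the two contraction-parameter sets for the ball $B_{r}(\theta^{*})$ with $r=\omega\left\Vert \theta^{*}\right\Vert$. From \prettyref{eq:EV-mlr}, $\mathbb{E}_{\theta^{*}}V(\theta'|\theta;(Y,X))=-\frac{1}{2\sigma^{2}}\left\Vert \theta'-\theta^{*}\right\Vert ^{2}$ for all $(\theta',\theta)$, so $\nu=\frac{1}{2\sigma^{2}}\in\mathcal{V}(r,+\infty)$ for every $r>0$; hence part $(b)$ of \prettyref{lem:G=00003D00003D000026V} gives $\mathcal{V}(r,+\infty)=(0,\overline{\nu}]$ with $\overline{\nu}=\frac{1}{2\sigma^{2}}$. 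Taking $\omega\in(0,\frac14]$, the preceding lemma (cited from \cite{Yi}) gives $\left\Vert \mathbb{E}_{\theta^{*}}\Gamma(\theta;(Y,X))\right\Vert \le\gamma(\omega,\eta)\left\Vert \theta-\theta^{*}\right\Vert$ on $B_{r}(\theta^{*})$, so $\gamma(\omega,\eta)\in\mathcal{G}(r)\neq\varnothing$, and part $(a)$ of \prettyref{lem:G=00003D00003D000026V} gives $\mathcal{G}(r)=[\overline{\gamma},+\infty)$ with $\overline{\gamma}\le\gamma(\omega,\eta)$.

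Next I would check that $r$ and $R=+\infty$ are radii of contraction. The scalar hypothesis $7.3\omega+\frac{17}{\eta}<\frac12$ forces in particular $7.3\omega<\frac12$, hence $\omega<\frac14$, so the cited bound on $\mathbb{E}_{\theta^{*}}\Gamma$ does apply; and it yields $\gamma(\omega,\eta)=\frac{1}{\sigma^{2}}\left(7.3\omega+\frac{17}{\eta}\right)<\frac{1}{2\sigma^{2}}=\overline{\nu}$. Therefore $(\gamma(\omega,\eta),\overline{\nu})\in\left(\mathcal{G}(r)\times\mathcal{V}(r,+\infty)\right)\cap\mathcal{T}=\mathcal{C}(r,+\infty)$, so $\mathcal{C}(r,+\infty)\neq\varnothing$ and $0<r<+\infty$ are radii of contraction. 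Note also that with $R=+\infty$ we have $B_{R}(\theta^{*})=\mathbb{R}^{p}=\Omega$, so the maximization over $\Omega$ in the statement coincides with the one over $B_{R}(\theta^{*})$ in \prettyref{thm:oct}.

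Finally, applying \prettyref{thm:oct} with these radii: for any initial point $\theta^{0}\in B_{r}(\theta^{*})$, every oracle EM sequence satisfies $\left\Vert \theta^{t}-\theta^{*}\right\Vert \le\overline{\kappa}^{t}\left\Vert \theta^{0}-\theta^{*}\right\Vert$ with $\overline{\kappa}=\overline{\gamma}/\overline{\nu}$, and since $\overline{\gamma}\le\gamma(\omega,\eta)$ we obtain $\overline{\kappa}\le\gamma(\omega,\eta)/\overline{\nu}=2\left(7.3\omega+\frac{17}{\eta}\right)<1$, the claimed optimal oracle convergence rate. The only step demanding a little care is the one in the previous paragraph: a single scalar condition, $7.3\omega+\frac{17}{\eta}<\frac12$, is asked to deliver simultaneously the admissibility range $\omega\le\frac14$ needed to invoke the \cite{Yi} bound and the strict inequality $\overline{\gamma}<\overline{\nu}$ that makes $\mathcal{C}(r,+\infty)$ nonempty; otherwise there is no genuine obstacle, since the hard perturbation estimate for $\mathbb{E}_{\theta^{*}}\Gamma$ is imported from \cite{Yi} wholesale.
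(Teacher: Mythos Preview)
Your proposal is correct and follows exactly the approach the paper takes: the paper simply states that the corollary follows by applying the Optimal Oracle Convergence Theorem (\prettyref{thm:oct}) once the preceding lemma and \prettyref{eq:EV-mlr} have identified $\mathcal{G}(r)$ and $\mathcal{V}(r,+\infty)$. Your write-up is in fact more explicit than the paper's, including the nice observation that the scalar hypothesis $7.3\omega+\frac{17}{\eta}<\frac12$ already forces $\omega<\frac14$ so that the cited bound from \cite{Yi} applies.
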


\subsubsection{Empirical Convergence}

For empirical convergence results, we need to find specific forms
of the $\varepsilon$-bounds in the Assumptions.
\begin{lem}
\label{lem:epsilon-1-mlr}For $\delta\in(0,1)$ and $r>0$, there
holds the inequality
\begin{equation}
\left\Vert \Gamma_{n}(\theta;\{(Y_{k},X_{k})\})-\mathbb{E}_{\theta^{*}}\Gamma(\theta;(Y,X))\right\Vert \le C\frac{\log(L/\delta)}{\sigma^{2}n^{\frac{1}{2}-\epsilon}}\left\Vert \theta-\theta^{*}\right\Vert \text{ for }\theta\in B_{r}(\theta^{*})\label{eq:epsilon-1-mlr}
\end{equation}
with probability at least $1-\delta$.
\end{lem}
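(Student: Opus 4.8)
The plan is to exploit the bilinear structure of the gradient difference random vector so as to pull the factor $\left\Vert \theta-\theta^{*}\right\Vert $ out explicitly, thereby reducing the statement to a uniform operator-norm deviation bound for a random positive semidefinite matrix, and then to establish that bound along the lines of \prettyref{lem:epsilon-1-gmm}, the new feature being the heavier tails produced by the Gaussian covariate $X$.

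First I would rewrite $\Gamma$ as a matrix acting on $\theta-\theta^{*}$. Put $a\coloneqq a(y,x)=\frac{2yx}{\sigma^{2}}$, which does not depend on $\theta$, so that $w_{\theta}(y,x)=\varsigma(\left\langle a,\theta\right\rangle )$; applying the integral form of the mean value theorem to the logistic function $\varsigma$ gives
\[
\varsigma(\left\langle a,\theta\right\rangle )-\varsigma(\left\langle a,\theta^{*}\right\rangle )=\left(\int_{0}^{1}\varsigma'\left(\left\langle a,\theta^{*}+s(\theta-\theta^{*})\right\rangle \right)ds\right)\left\langle a,\theta-\theta^{*}\right\rangle ,
\]
so that $\Gamma(\theta;(y,x))=H(\theta;(y,x))\,(\theta-\theta^{*})$ with
\[
H(\theta;(y,x))\coloneqq\frac{4y^{2}}{\sigma^{4}}\left(\int_{0}^{1}\varsigma'\left(\left\langle a,\theta^{*}+s(\theta-\theta^{*})\right\rangle \right)ds\right)xx^{\intercal}.
\]
The matrix $H(\theta;(y,x))$ is positive semidefinite, satisfies $\left\Vert H(\theta;(y,x))\right\Vert _{\mathrm{op}}\le\frac{y^{2}\left\Vert x\right\Vert ^{2}}{\sigma^{4}}$ since $0\le\varsigma'\le\frac{1}{4}$, and is Lipschitz in $\theta$ with a data-dependent constant of order $\frac{\left|y\right|^{3}\left\Vert x\right\Vert ^{3}}{\sigma^{6}}$ since $\left|\varsigma''\right|$ is bounded. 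Writing $\overline{H}_{n}(\theta)\coloneqq\frac{1}{n}\sum_{k=1}^{n}H(\theta;(Y_{k},X_{k}))$, we get
\[
\Gamma_{n}(\theta;\{(Y_{k},X_{k})\})-\mathbb{E}_{\theta^{*}}\Gamma(\theta;(Y,X))=\left(\overline{H}_{n}(\theta)-\mathbb{E}_{\theta^{*}}H(\theta)\right)(\theta-\theta^{*}),
\]
so it suffices to prove $\sup_{\theta\in B_{r}(\theta^{*})}\left\Vert \overline{H}_{n}(\theta)-\mathbb{E}_{\theta^{*}}H(\theta)\right\Vert _{\mathrm{op}}\le C\frac{\log(L/\delta)}{\sigma^{2}n^{\frac{1}{2}-\epsilon}}$ with probability at least $1-\delta$.

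Next I would prove this uniform matrix deviation bound. At a fixed $\theta$, reduce the operator norm to a maximum of scalar empirical averages $\frac{1}{n}\sum_{k}\left\langle u,H(\theta;(Y_{k},X_{k}))v\right\rangle $ over a product of two $\frac{1}{2}$-nets $u,v$ of $\mathbb{S}^{p-1}$, of total cardinality at most $L^{2}<5^{2p}$ (see \prettyref{subsec:cov-num}); each scalar summand is bounded in absolute value by $M_{k}\coloneqq\frac{Y_{k}^{2}\left\Vert X_{k}\right\Vert ^{2}}{\sigma^{4}}$. Because $Y\mid(X,Z)\sim\mathcal{N}(\left\langle X,Z\theta^{*}\right\rangle ,\sigma^{2})$ while $\left\Vert X\right\Vert ^{2}\sim\chi_{p}^{2}$, the variable $M\coloneqq\frac{Y^{2}\left\Vert X\right\Vert ^{2}}{\sigma^{4}}$ is a product of (dependent) sub-exponential variables, hence has Weibull-type tails $\Pr(M>t)\le\exp(-c\sqrt{t})$ with parameters controlled by $K$ and $\sigma$, and finite $\mathbb{E}M^{2}$ of a controlled size. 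Truncating each $M_{k}$ at a level $\tau$: on the event $\{M_{k}\le\tau\text{ for all }k\}$, which fails with probability at most $n\Pr(M>\tau)$, the truncated summands are bounded by $\tau$ and a Bernstein inequality for bounded mean-zero variables applies, the discarded part being absorbed into the same union bound. Uniformity over $\theta\in B_{r}(\theta^{*})$ is then obtained from an $\alpha$-net of $B_{r}(\theta^{*})$ of cardinality $(1+3r/\alpha)^{p}$ together with the Lipschitz-in-$\theta$ property of $H$ (whose data-dependent Lipschitz constant is bounded on a slightly enlarged truncation event) and of $\theta\mapsto\mathbb{E}_{\theta^{*}}H(\theta)$ (Lipschitz with a finite dimension-free constant), the discretization error being controlled by taking $\alpha$ polynomially small in $n$.

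Finally I would calibrate the free parameters. Choosing $\tau$ poly-logarithmic in $n/\delta$ (or a small power such as $\tau=n^{\epsilon}$) makes $n\Pr(M>\tau)\le\delta/3$, makes the Bernstein ``variance'' term of order $\sqrt{\log(L/\delta)/n}$ up to a factor depending on $K/\sigma$, and makes the Bernstein ``linear'' term of order $\tau\log(L/\delta)/n$; this linear term, together with the $O(p\log n)$ contributed by the cardinalities of the $\alpha$-net and of the $\frac{1}{2}$-nets, is absorbed into $C(\epsilon)\frac{\log(L/\delta)}{\sigma^{2}n^{\frac{1}{2}-\epsilon}}$ once $n$ is large, and for the remaining finitely many small $n$ one enlarges $C$ (recall $\log(L/\delta)\le O(p)$, so the dimension dependence is captured there). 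This last step is the main obstacle: since $Y^{2}\left\Vert X\right\Vert ^{2}$ is genuinely heavy-tailed --- a product of sub-exponential variables rather than a sub-exponential variable --- no clean $n^{-1/2}$ rate is available, and the truncation bias is exactly what forces the $\epsilon$-relaxation in the exponent. Keeping explicit track of the $\sigma$- and $K$-dependence, and verifying the Weibull-type tail bound for $Y^{2}\left\Vert X\right\Vert ^{2}$ while accounting for the dependence between $Y$ and $X$, is the one place requiring genuine care; the rest of the argument runs parallel to the proof of \prettyref{lem:epsilon-1-gmm}, which is the lighter sub-exponential special case without the covariate factor.
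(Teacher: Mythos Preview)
Your approach is correct and closely parallel to the paper's at the structural level --- both apply the mean value theorem to the logistic function, use $\varsigma'\le\tfrac14$, and arrive at the $\theta$-independent envelope $\lvert u^{\intercal}\Gamma(\theta;(Y,X))\rvert\le\tfrac{1}{2\sigma^{2}}\lVert\theta-\theta^{*}\rVert\cdot\lVert YX\rVert^{2}$, whose Weibull-$\tfrac12$ tail (since $\lVert YX\rVert$ is sub-exponential) is what forces the relaxed rate $n^{\frac12-\epsilon}$. The differences are in execution: the paper normalises to $A\coloneqq\frac{2\sigma^{2}\Gamma}{\lVert\theta-\theta^{*}\rVert}$, reads off the tail $\Pr\{\lvert u^{\intercal}A\rvert\ge t\}\le C\exp(-ct^{1/2})$ from the envelope, and then invokes a black-box heavy-tailed concentration result (Proposition~2.1.9 of \cite{Tao} and its extensions) to obtain $\Pr\{\lvert u^{\intercal}B_{n}\rvert\ge t\}\le C\exp(-ctn^{\frac12-\epsilon})$ in one stroke, followed by a single $\tfrac12$-net on $\mathbb{S}^{p-1}$; it does not address uniformity in $\theta$. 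You instead factor $\Gamma=H(\theta)(\theta-\theta^{*})$ through a random matrix, reduce to an operator-norm bound, and build the concentration by hand via truncation plus Bernstein --- this is essentially how results like Tao's are proved --- while also treating uniformity in $\theta$ explicitly through an $\alpha$-net and a data-dependent Lipschitz bound on $H(\theta)$. Your route is longer but more self-contained and more honest about the uniform-in-$\theta$ claim; the paper's route is much shorter by outsourcing the heavy-tailed sum to \cite{Tao}.
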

\begin{proof}
See \prettyref{subsec:pf-eps-1-mlr}.
\end{proof}
\begin{lem}
\label{lem:epsilon-2-mlr}For $\delta\in(0,1)$ and $r>0$, if $n>c\log\left(1/\delta\right)$
then
\begin{equation}
\left|V_{n}(\theta'|\theta;\{(Y_{k},X_{k})\})-\mathbb{E}_{\theta^{*}}V(\theta'|\theta;(Y,X))\right|\le\frac{C}{\sigma^{2}}\sqrt{\frac{\log(1/\delta)}{n}}\left\Vert \theta'-\theta^{*}\right\Vert ^{2}\text{ for }\theta',\theta\in B_{r}(\theta^{*})\label{eq:epsilon-2-mlr}
\end{equation}
with probability at least $1-\delta$.
\end{lem}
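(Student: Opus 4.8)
The plan is to reduce the claim to a deviation bound for the sample second-moment matrix of the covariates. The key observation is that, by \prettyref{eq:V-mlr}, the concavity random variable $V(\theta'|\theta;(Y,X))=-\frac{1}{2\sigma^{2}}(\theta'-\theta^{*})^{\intercal}XX^{\intercal}(\theta'-\theta^{*})$ depends on neither $\theta$ nor $Y$: it is a pure quadratic form in $\theta'-\theta^{*}$ with random matrix $-\frac{1}{2\sigma^{2}}XX^{\intercal}$. Writing $\widehat{\Sigma}_{n}\coloneqq\frac{1}{n}\sum_{k=1}^{n}X_{k}X_{k}^{\intercal}$ and recalling from \prettyref{eq:EV-mlr} that $\mathbb{E}_{\theta^{*}}[XX^{\intercal}]=I_{p}$, we get for all $\theta',\theta$
\begin{align*}
\left|V_{n}(\theta'|\theta;\{(Y_{k},X_{k})\})-\mathbb{E}_{\theta^{*}}V(\theta'|\theta;(Y,X))\right| & =\frac{1}{2\sigma^{2}}\left|(\theta'-\theta^{*})^{\intercal}(\widehat{\Sigma}_{n}-I_{p})(\theta'-\theta^{*})\right|\\
 & \le\frac{1}{2\sigma^{2}}\left\Vert \widehat{\Sigma}_{n}-I_{p}\right\Vert _{\mathrm{op}}\left\Vert \theta'-\theta^{*}\right\Vert ^{2}.
\end{align*}
Hence it suffices to prove that $\left\Vert \widehat{\Sigma}_{n}-I_{p}\right\Vert _{\mathrm{op}}\le C\sqrt{\log(L/\delta)/n}$ with probability at least $1-\delta$ once $n\ge c\log(L/\delta)$, where $L=\mathcal{N}_{\frac{1}{2}}(\mathbb{S}^{p-1})<5^{p}$; since $\log(L/\delta)=O(p+\log(1/\delta))$, this yields the stated form after absorbing the dimension dependence into the constant.

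Next I would run the standard $\varepsilon$-net argument for the operator norm. Fix a $\frac{1}{2}$-net $\mathcal{N}$ of $\mathbb{S}^{p-1}$ with $|\mathcal{N}|=L$ (see \prettyref{subsec:cov-num}); a routine discretization estimate bounds $\left\Vert \widehat{\Sigma}_{n}-I_{p}\right\Vert _{\mathrm{op}}$ by a fixed numerical multiple of $\max_{u\in\mathcal{N}}\left|u^{\intercal}(\widehat{\Sigma}_{n}-I_{p})u\right|$. For a fixed direction $u\in\mathbb{S}^{p-1}$, $u^{\intercal}(\widehat{\Sigma}_{n}-I_{p})u=\frac{1}{n}\sum_{k=1}^{n}\left(\left\langle X_{k},u\right\rangle ^{2}-1\right)$ is an average of i.i.d. centered variables distributed as $\chi_{1}^{2}-1$, which are \emph{sub-exponential} with absolute-constant parameters because $X_{k}\sim\mathcal{N}(0,I_{p})$. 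Bernstein's inequality then gives
\[
\Pr\left\{ \left|u^{\intercal}(\widehat{\Sigma}_{n}-I_{p})u\right|>t\right\} \le2\exp\left(-cn\min\left\{ t^{2},t\right\} \right),
\]
and a union bound over $\mathcal{N}$ with per-point level $\delta/L$ gives, with probability at least $1-\delta$, the estimate $\max_{u\in\mathcal{N}}\left|u^{\intercal}(\widehat{\Sigma}_{n}-I_{p})u\right|\le C\left(\sqrt{\frac{\log(L/\delta)}{n}}+\frac{\log(L/\delta)}{n}\right)$.

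Finally, under the hypothesis $n>c\log(L/\delta)$ (written $n>c\log(1/\delta)$ in the statement), the linear term $\log(L/\delta)/n$ is dominated by the square-root term, so $\left\Vert \widehat{\Sigma}_{n}-I_{p}\right\Vert _{\mathrm{op}}\le C'\sqrt{\log(L/\delta)/n}$; combined with the reduction of the first paragraph this gives the asserted inequality, the factor $\frac{1}{\sigma^{2}}$ coming from the $-\frac{1}{2\sigma^{2}}$ in front of the quadratic form. The main technical point — and the reason the clean $n^{-1/2}$ rate only appears once $n$ exceeds a multiple of $\log(L/\delta)$ — is the sub-exponential (rather than sub-Gaussian) concentration of the quadratic forms $\frac{1}{n}\sum_{k}\left\langle X_{k},u\right\rangle ^{2}$; the $\varepsilon$-net and operator-norm bookkeeping for the symmetric, indefinite matrix $\widehat{\Sigma}_{n}-I_{p}$ is routine but is where the (suppressed) dependence on $p$ enters through the covering number $L$ of the sphere.
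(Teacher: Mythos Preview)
Your approach is essentially the same as the paper's: both exploit that $V(\theta'|\theta;(Y,X))=-\frac{1}{2\sigma^{2}}\left[(\theta'-\theta^{*})^{\intercal}X\right]^{2}$ is a sub-exponential random variable (a centered $\chi_{1}^{2}$ up to scaling) and apply Bernstein-type concentration. The paper's proof is terser: it simply records the Orlicz-norm bound $\left\Vert V(\theta'|\theta;(Y,X))\right\Vert_{\psi_{1}}\le\frac{C}{\sigma^{2}}\left\Vert\theta'-\theta^{*}\right\Vert^{2}$ and invokes \prettyref{lem:constr-sub-exp} for the \emph{scalar} $V$, which yields $\log(1/\delta)$ because for a one-dimensional ``vector'' the covering number of $\mathbb{S}^{0}$ is $O(1)$.

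Your detour through the operator norm of $\widehat{\Sigma}_{n}-I_{p}$ is the same calculation organized differently, with the added virtue that it makes explicit why the bound holds \emph{uniformly} in $\theta'$: after dividing by $\left\Vert\theta'-\theta^{*}\right\Vert^{2}$ the quantity depends only on the direction $u=(\theta'-\theta^{*})/\left\Vert\theta'-\theta^{*}\right\Vert$, and your $\varepsilon$-net over $\mathbb{S}^{p-1}$ controls all directions at once. The cost is that $\log(L/\delta)$ appears rather than $\log(1/\delta)$; your remark about ``absorbing the dimension dependence into the constant'' is loose, since the paper reserves $C$ for numerical constants, but the paper's own proof is silent on the uniformity step, so the discrepancy reflects a point the paper leaves implicit rather than an error on your part.
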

\begin{proof}
See \prettyref{subsec:pf-eps-2-mlr}.
\end{proof}
\begin{lem}
\label{lem:epsilon-s-mlr}For $\delta\in(0,1)$ and $n>c\log\left(L/\delta\right)$,
there holds the inequality
\begin{equation}
\left\Vert \mathcal{E}_{n}(\{(Y_{k},X_{k})\})\right\Vert \le\frac{C}{\sigma}\left(1+2\eta\right)\sqrt{\frac{\log(L/\delta)}{n}}\label{eq:epsilon-s-mlr}
\end{equation}
with probability at least $1-\delta$.
\end{lem}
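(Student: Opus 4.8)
The plan is to follow the same covering‑and‑concentration strategy used in the proof of \prettyref{lem:epsilon-s-gmm} (see \prettyref{subsec:proof-of-epsilon-s}). Since $\mathcal{E}_{n}(\{(Y_{k},X_{k})\})=\frac{1}{n}\sum_{k=1}^{n}\mathcal{E}(Y_{k},X_{k})$ is an empirical mean of i.i.d. copies of the SEV $\mathcal{E}(Y,X)$ in \prettyref{eq:E-mlr}, and $\mathbb{E}_{\theta^{*}}\mathcal{E}(Y,X)=\nabla_{1}Q_{*}(\theta^{*}|\theta^{*})=0$ by the self‑consistency identity \prettyref{eq:exp-stat-err}, it suffices to produce a tail bound for a centered average of scalar sub‑exponential variables and then pass to the Euclidean norm by discretization. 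First I would invoke the standard $\tfrac12$-net bound: letting $\mathcal{N}_{1/2}$ be a $\tfrac12$-cover of $\mathbb{S}^{p-1}$ of cardinality $L<5^{p}$ (see \prettyref{subsec:cov-num}), one has $\|\mathcal{E}_{n}(\{(Y_{k},X_{k})\})\|\le 2\max_{u\in\mathcal{N}_{1/2}}\langle u,\mathcal{E}_{n}(\{(Y_{k},X_{k})\})\rangle$, so it remains to control $\frac{1}{n}\sum_{k=1}^{n}\langle u,\mathcal{E}(Y_{k},X_{k})\rangle$ for each fixed $u\in\mathbb{S}^{p-1}$ and then apply a union bound over $\mathcal{N}_{1/2}$.

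Second, I would establish that for fixed $u\in\mathbb{S}^{p-1}$ the scalar $\xi\coloneqq\langle u,\mathcal{E}(Y,X)\rangle=\frac{1}{\sigma^{2}}\bigl((2w_{\theta^{*}}(Y,X)-1)Y-\langle X,\theta^{*}\rangle\bigr)\langle X,u\rangle$ is mean‑zero and sub‑exponential with parameter of order $\frac{1}{\sigma}(1+2\eta)$. Writing $Y=Z\langle X,\theta^{*}\rangle+W$ with $Z$ Rademacher and $W\sim\mathcal{N}(0,\sigma^{2})$ independent of $X\sim\mathcal{N}(0,I_{p})$, one gets the decomposition $(2w_{\theta^{*}}(Y,X)-1)Y-\langle X,\theta^{*}\rangle=\bigl((2w_{\theta^{*}}(Y,X)-1)Z-1\bigr)\langle X,\theta^{*}\rangle+(2w_{\theta^{*}}(Y,X)-1)W$; since $|2w_{\theta^{*}}(Y,X)-1|<1$ (so the two coefficients are bounded by $2$ and $1$ respectively), $\langle X,\theta^{*}\rangle\sim\mathcal{N}(0,\|\theta^{*}\|^{2})$ and $W\sim\mathcal{N}(0,\sigma^{2})$, this bracket is sub‑Gaussian with parameter of order $2\|\theta^{*}\|+\sigma$, while $\langle X,u\rangle\sim\mathcal{N}(0,1)$ is sub‑Gaussian with parameter of order $1$. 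Hence, by the product‑of‑sub‑Gaussians bound, $\|\xi\|_{\psi_{1}}\lesssim\frac{2\|\theta^{*}\|+\sigma}{\sigma^{2}}=\frac{1}{\sigma}(1+2\eta)$, and $\mathbb{E}\xi=\langle u,\mathbb{E}_{\theta^{*}}\mathcal{E}(Y,X)\rangle=0$.

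Third, I would apply Bernstein's inequality for i.i.d. centered sub‑exponential variables: for fixed $u$, with $K_{0}\asymp\frac{1}{\sigma}(1+2\eta)$, $\Pr\bigl\{|\tfrac1n\sum_{k=1}^{n}\xi_{k}|>t\bigr\}\le 2\exp\bigl(-c\,n\min(t^{2}/K_{0}^{2},\,t/K_{0})\bigr)$. Taking $t=CK_{0}\sqrt{\log(L/\delta)/n}$ and using the hypothesis $n>c\log(L/\delta)$ puts us in the quadratic (sub‑Gaussian) regime $t/K_{0}\le 1$, so the right‑hand side is at most $\delta/L$ once $C$ is chosen large enough; a union bound over the $L$ points of $\mathcal{N}_{1/2}$ together with the discretization factor $2$ from the first step then yields $\|\mathcal{E}_{n}(\{(Y_{k},X_{k})\})\|\le\frac{C}{\sigma}(1+2\eta)\sqrt{\log(L/\delta)/n}$ with probability at least $1-\delta$ after renaming constants, which is \prettyref{eq:epsilon-s-mlr}.

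The step I expect to be the main obstacle is the second one: carefully verifying that $\xi$ is sub‑exponential and, more importantly, tracking the numerical constants so that its sub‑exponential parameter is indeed of order $\frac{1}{\sigma}(1+2\eta)$ and not something larger — this hinges on the explicit decomposition of $Y$, the boundedness of the responsibility weight $2w_{\theta^{*}}-1$, and the product‑of‑sub‑Gaussians estimate. Everything else — the net bound, the mean‑zero property inherited from self‑consistency, and the passage from Bernstein's inequality to the final $\sqrt{\log(L/\delta)/n}$ rate under the condition $n>c\log(L/\delta)$ — is routine and parallels the corresponding argument for the Gaussian Mixture Model in \prettyref{subsec:proof-of-epsilon-s}.
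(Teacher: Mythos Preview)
Your proposal is correct and follows essentially the same route as the paper: show that $u^{\intercal}\mathcal{E}(Y,X)$ is sub-exponential with $\psi_{1}$-norm of order $\frac{1}{\sigma}(1+2\eta)$ via a product-of-sub-Gaussians bound, then invoke the concentration lemma for sub-exponential vectors (which packages exactly your Bernstein-plus-covering argument). The only cosmetic difference is that the paper bounds $|u^{\intercal}\mathcal{E}(Y,X)|\le\frac{1}{\sigma^{2}}(|u^{\intercal}X||Y|+|X^{\intercal}\theta^{*}||u^{\intercal}X|)$ directly from $|2w_{\theta^{*}}-1|<1$ and the triangle inequality, whereas you first substitute $Y=Z\langle X,\theta^{*}\rangle+W$; both yield the same constant $\frac{1}{\sigma^{2}}(K+\|\theta^{*}\|)=\frac{1}{\sigma}(1+2\eta)$.
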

\begin{proof}
See \prettyref{subsec:pf-eps-s-mlr}.
\end{proof}
From \prettyref{eq:epsilon-1-mlr}, \prettyref{eq:epsilon-2-mlr}
and \prettyref{eq:epsilon-s-mlr} in above lemmas, it is clear that
the Assumptions (\ref{A1}$\sim$\ref{A3}) are satisfied with the
following
\[
\varepsilon_{1}(\delta,r,n,p)=\frac{C_{1}}{\sigma^{2}}\frac{\log(L/\delta)}{n^{\frac{1}{2}-\epsilon}},\quad\varepsilon_{2}(\delta,r,R,n,p)=\frac{C_{2}}{\sigma^{2}}\sqrt{\frac{\log(1/\delta)}{n}}\quad\text{and}
\]
\begin{equation}
\varepsilon_{s}(\delta,r,R,n,p)=\frac{C_{3}}{\sigma}\left(1+2\eta\right)\sqrt{\frac{\log(L/\delta)}{n}}.\label{eq:all-eps-mlr}
\end{equation}
We obtain the following data-adaptive empirical convergence result
for the Mixture of Linear Regressions model.
\begin{cor}
Suppose $\{(Y_{k},X_{k})\}_{k=1}^{n}$ is a set of i.i.d. copies of
$(Y,X)\sim\mathbb{P}_{\theta^{*}}$ and $\delta\in(0,1)$. If $\eta$
is sufficiently large and $\omega$ is sufficiently small such that
$7.3\omega+\frac{17}{\eta}<\frac{1}{2}$ and the sample size $n$
satisfies
\begin{equation}
n>\frac{\log(1/\delta)}{\left(1-\kappa\right)^{2}}\left(C_{1}\sqrt{\log(1/\delta)}\cdot pn^{\epsilon}+C_{2}+C_{3}\frac{\left(1+2\eta\right)}{\omega\eta}\sqrt{p}\right)^{2}\label{eq:cond-for-n-mlr}
\end{equation}
where $\kappa\coloneqq\frac{\gamma\left(\omega,\eta\right)}{\overline{\nu}}=2\left(7.3\omega+\frac{17}{\eta}\right)$
and $r=\omega\left\Vert \theta^{*}\right\Vert $. Then given an initial
point $\Theta_{n}^{0}\in B_{r}(\theta^{*})$, with probability at
least $1-\delta$, any empirical EM sequence $\{\Theta_{n}^{t}\}_{t\ge0}$
such that
\[
\Theta_{n}^{t+1}\in\arg\max_{\theta'\in\Omega}Q_{n}(\Theta'|\Theta_{n}^{t};\{(Y_{k},X_{k})\})\text{ for }t\ge0,
\]
satisfies the inequality
\begin{equation}
\left\Vert \Theta_{n}^{t}-\theta^{*}\right\Vert \le\left(\overline{K}_{n}\right)^{t}\left\Vert \Theta_{n}^{0}-\theta^{*}\right\Vert +\frac{C_{3}\left(1+2\eta\right)}{\sigma\left(\nu_{n}-\gamma_{n}\right)}\sqrt{\frac{\log(L/\delta)}{n}}\label{eq:emp-conv-mlr}
\end{equation}
where
\[
\gamma_{n}\coloneqq\gamma\left(\omega,\eta\right)+\frac{C_{1}}{\sigma^{2}}\frac{\log(L/\delta)}{n^{\frac{1}{2}-\epsilon}},\quad\nu_{n}\coloneqq\frac{1}{2\sigma^{2}}-\frac{C_{2}}{\sigma^{2}}\sqrt{\frac{\log(1/\delta)}{n}},
\]
and the optimal empirical convergence rate $\overline{K}_{n}$ satisfies
that
\[
\overline{K}_{n}\le\frac{\gamma_{n}}{\nu_{n}}<1\text{ and }\left|\overline{K}_{n}-\overline{\kappa}\right|\le\left(C_{1}\sqrt{\log(1/\delta)}\cdot pn^{\epsilon}+C_{2}\kappa\right)\sqrt{\frac{\log(1/\delta)}{n}}.
\]
\end{cor}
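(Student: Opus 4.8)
The plan is to follow the same route as the proof of the empirical convergence corollary for the Gaussian Mixture Model: feed the explicit $\varepsilon$-bounds \prettyref{eq:all-eps-mlr} (assembled from Lemmas \ref{lem:epsilon-1-mlr}, \ref{lem:epsilon-2-mlr} and \ref{lem:epsilon-s-mlr}) into the Optimal Empirical Convergence Theorem \prettyref{thm:oect} and the Optimal Rate Convergence Theorem \prettyref{thm:orct}, using the oracle-level facts established in \prettyref{subsec:ora-cov-mlr} that $\overline{\nu}=\frac{1}{2\sigma^{2}}$ and $\overline{\gamma}\le\gamma(\omega,\eta)=\frac{1}{\sigma^{2}}\bigl(7.3\omega+\tfrac{17}{\eta}\bigr)$, together with the normalization $\|\theta^{*}\|=\sigma\eta$ and the covering number estimate $\log(L/\delta)\le O(p)$.

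First I would record the elementary bound, valid for $r=\omega\|\theta^{*}\|$,
\[
r\bigl(\overline{\nu}-\overline{\gamma}\bigr)\ \ge\ r\bigl(\overline{\nu}-\gamma(\omega,\eta)\bigr)\ =\ \omega\sigma\eta\cdot\frac{1}{2\sigma^{2}}\Bigl(1-2\bigl(7.3\omega+\tfrac{17}{\eta}\bigr)\Bigr)\ =\ \frac{\omega\eta(1-\kappa)}{2\sigma},
\]
so that the sample-size hypothesis \prettyref{eq:cond-for-n} of \prettyref{thm:oect} is implied by $\varepsilon_{s}+r\varepsilon_{1}+r\varepsilon_{2}<\frac{\omega\eta(1-\kappa)}{2\sigma}$. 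Substituting the three bounds of \prettyref{eq:all-eps-mlr} and dividing by $\frac{\omega\eta}{\sigma}$, this reduces to
\[
\frac{C_{3}(1+2\eta)}{\omega\eta}\sqrt{\tfrac{\log(L/\delta)}{n}}\ +\ C_{1}\frac{\log(L/\delta)}{n^{1/2-\epsilon}}\ +\ C_{2}\sqrt{\tfrac{\log(1/\delta)}{n}}\ <\ \frac{1-\kappa}{2};
\]
replacing $\log(L/\delta)$ by $O(p)$ and solving for $\sqrt{n}$, I would check that this is exactly what \prettyref{eq:cond-for-n-mlr} guarantees (with the numerical constants $C_{i}$ of the statement chosen larger than those of the lemmas), and that \prettyref{eq:cond-for-n-mlr} also subsumes the thresholds $n>c\log(1/\delta)$ and $n>c\log(L/\delta)$ required in the three lemmas. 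Then \prettyref{thm:oect}, in its convenient form \prettyref{eq:emp-conv}, immediately gives $\|\Theta_{n}^{t}-\theta^{*}\|\le(\overline{K}_{n})^{t}\|\Theta_{n}^{0}-\theta^{*}\|+\frac{\varepsilon_{s}}{\nu_{n}-\gamma_{n}}$ with probability at least $1-\delta$, which is \prettyref{eq:emp-conv-mlr} once $\varepsilon_{s}=\frac{C_{3}(1+2\eta)}{\sigma}\sqrt{\log(L/\delta)/n}$ is inserted and $\gamma_{n},\nu_{n}$ are read off from the pair $(\gamma(\omega,\eta),\overline{\nu})$.

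For the two bounds on $\overline{K}_{n}$, I would use $\overline{\gamma}_{n}=\overline{\gamma}+\varepsilon_{1}\le\gamma(\omega,\eta)+\varepsilon_{1}=\gamma_{n}$ and $\overline{\nu}_{n}=\overline{\nu}-\varepsilon_{2}=\nu_{n}$, so that $\overline{K}_{n}\le\overline{\kappa}_{n}=\frac{\overline{\gamma}_{n}}{\overline{\nu}_{n}}\le\frac{\gamma_{n}}{\nu_{n}}$; the strict inequality $\frac{\gamma_{n}}{\nu_{n}}<1$ is just the $r$-free weakening $\varepsilon_{1}+\varepsilon_{2}<\frac{1-\kappa}{2\sigma^{2}}$ of the inequality already verified. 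For the concentration estimate, \prettyref{eq:cond-for-n-mlr} forces $n$ large enough that $\varepsilon_{2}<\tfrac{1}{2}\overline{\nu}=\frac{1}{4\sigma^{2}}$, so \prettyref{thm:orct} applies and yields $|\overline{K}_{n}-\overline{\kappa}|\le\frac{2}{\overline{\nu}}(\varepsilon_{1}+\overline{\kappa}\,\varepsilon_{2})=4\sigma^{2}(\varepsilon_{1}+\overline{\kappa}\,\varepsilon_{2})$; inserting $\varepsilon_{1},\varepsilon_{2}$ from \prettyref{eq:all-eps-mlr}, bounding $\overline{\kappa}\le\kappa$ and $\log(L/\delta)\le O(p)$, produces the claimed $|\overline{K}_{n}-\overline{\kappa}|\le\bigl(C_{1}\sqrt{\log(1/\delta)}\,pn^{\epsilon}+C_{2}\kappa\bigr)\sqrt{\log(1/\delta)/n}$.

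The main obstacle I anticipate is bookkeeping rather than any new idea: the sample-size condition \prettyref{eq:cond-for-n-mlr} is \emph{implicit}, since the exponent $n^{\epsilon}$ with $\epsilon<\tfrac{1}{2}$ also appears on its right-hand side, so I must argue that, because $n^{1/2-\epsilon}\to\infty$, a finite threshold $N=N(\delta,\eta,\omega,p)$ nonetheless exists beyond which all three of the $\varepsilon$-terms drop below their targets simultaneously. Keeping the $p$-dependence consistent between the lemma constants (where it enters through $\log(L/\delta)$) and the $\sqrt{p}$ and $pn^{\epsilon}$ factors that surface in \prettyref{eq:cond-for-n-mlr} and in the final estimate, and likewise the $\eta$-dependence, is the only delicate part; it is absorbed by allowing the $C_{i}$ in the statement to differ from those in the lemmas.
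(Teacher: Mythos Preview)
Your proposal is correct and follows essentially the same route as the paper: verify that the sample-size condition \prettyref{eq:cond-for-n-mlr} implies the hypothesis \prettyref{eq:cond-for-n} of \prettyref{thm:oect} (using $\log(L/\delta)\le Cp\log(1/\delta)$ and $r(\overline{\nu}-\gamma(\omega,\eta))=\frac{\omega\eta(1-\kappa)}{2\sigma}$), then read off \prettyref{eq:emp-conv-mlr} from \prettyref{eq:emp-conv} and the concentration bound from \prettyref{thm:orct} after checking $\varepsilon_{2}<\frac{1}{2}\overline{\nu}$. The paper's own proof is considerably more terse than yours---it simply declares the verification ``not difficult''---and does not comment on the implicit nature of \prettyref{eq:cond-for-n-mlr}, which you rightly flag but correctly note is harmless since $n^{1/2-\epsilon}\to\infty$.
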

\begin{proof}
Note condition \prettyref{eq:cond-for-n-mlr} implies the lower bounds
of $n$ in \prettyref{lem:epsilon-2-mlr} and \prettyref{lem:epsilon-s-mlr}.
Hence to apply the empirical convergence theorem, we only need to
verify that
\[
\varepsilon_{s}(\delta,r,R,n,p)+r\varepsilon_{1}(\delta,r,n,p)+r\varepsilon_{2}(\delta,r,R,n,p)<r\left(\overline{\nu}-\gamma\left(\omega,\eta\right)\right)
\]
holds true whenever $n$ satisfies \prettyref{eq:cond-for-n-mlr},
which is not difficult noticing that $\log(L/\delta)<Cp\log(1/\delta)$.

Moreover, \prettyref{eq:cond-for-n-mlr} also implies that $\varepsilon_{2}(\delta,r,R,n,p)=\frac{C_{2}}{\sigma^{2}}\sqrt{\frac{\log(1/\delta)}{n}}<\frac{1}{2}\overline{\nu}=\frac{1}{4\sigma^{2}}$,
hence the concentration bound of $\overline{K}_{n}$ follows from
the optimal rate convergence theorem.
\end{proof}
\begin{rem*}
In view of \prettyref{eq:V-mlr} and the definitions in \prettyref{eq:opt-emp-con-paras},
we find $\overline{V}_{n}=\frac{1}{2\sigma^{2}}\lambda_{n}^{\min}$,
where $\lambda_{n}^{\min}$ is the smallest eigenvalue of the empirical
mean of the Gaussian covariance matrix $XX^{\intercal}$, while we
do not know a closed form for $\overline{\Gamma}_{n}$ or $\overline{K}_{n}$
yet in this model.
\end{rem*}

\subsection{\label{subsec:lin-reg-mis-cov}Linear Regression with Missing Covariates}

Consider the linear regression in which the covariate-response $(Y,X)$
are linked via
\begin{equation}
Y=\left\langle \theta^{*},X\right\rangle +W,\label{eq:lr-model}
\end{equation}
where $W\sim\mathcal{N}(0,\sigma^{2})$ is the observational noise
and the covariate $X\sim\mathcal{N}(0,I_{p})$ under Gaussian design.
Instead of observing the complete data $(Y,X)$, we have each coordinate
$X^{j}\ (j=1,\cdots,p)$ of the covariate missing completely at random
with a probability $\epsilon\in[0,1)$.

It is not difficult to see that there is a 1-to-1 correspondence between
the set of missing patterns and the set of binary vectors $\{\circ,1\}^{p}$,
where $\circ$ is just $0$ written differently and hence $\circ\cdot a=\circ$
and $\circ+a=a$ for $a\in\mathbb{R}$.\footnote{This notational variation is necessary to distinguish missing coordinates
from those having value $0$.} Indeed, given $\tau\in\{\circ,1\}^{p}$ we say $X^{j}$ is missing
iff $\tau^{j}=\circ$. The missing pattern $\tau$ is a discrete random
variable with distribution
\begin{equation}
\psi(\tau)=\epsilon^{p-\left|\tau\right|}(1-\epsilon)^{\left|\tau\right|}\text{ for }\tau\in\{\circ,1\}^{p}\label{eq:dist-psi},
\end{equation}
where $\left|\cdot\right|$ denotes the number of $1$'s in $\tau$.
Let $s\coloneqq\ensuremath{\ones}-\tau$ be the \emph{complement}
of $\tau$ in $\{\circ,1\}^{p}$, where $\ones$ denotes the vector
with all coordinates $1$, and we also introduce the following notation
for convenience: for a (random) vector $x\in\mathbb{R}^{p}$ and $\tau\in\{\circ,1\}^{p}$,
denote by $x_{\tau}=x\odot\tau$ the Hadamard product of $x$ and
$\tau$, hence $x=x_{s}+x_{\tau}$.

Then in the missing covariates regression model, the observed variable
is $(Y,X_{s})$. Note the missing pattern $\tau=\ones-s$ is determined
by the observed $X_{s}$ by checking the coordinates marked as $\circ$.

Suppose a set of i.i.d samples $\{(y_{k},x_{k})\}_{k=1}^{n}$ and $\{\tau_k\}_{k=1}^n$ are generated by \prettyref{eq:lr-model} and \prettyref{eq:dist-psi} respectively, while we only observe $\left\{ (y_{k},(x_{k})_{s_{k}})\right\} _{k=1}^{n}$ where $s_k=\ones-\tau_k$, and wish to estimate the true population parameter $\theta^{*}$.

It is not hard to write down the complete joint density
\[
f_{\theta}(y,x_{s},x_{\tau})=\phi(y-\theta_{s}^{\intercal}x_{s}-\theta_{\tau}^{\intercal}x_{\tau};0,\sigma^{2})\phi(x_{s}+x_{\tau};0,I_{p})\psi(s),
\]
where $(y,x_{s},x_{\tau})\in\mathbb{R}\times\mathbb{R}^{p}\times\mathbb{R}^{p}$.

The density of the observed pair $(Y,X_{s})$ is the marginalization
\begin{eqnarray*}
g_{\theta}(y,x_{s}) & = & \int_{\mathbb{R}^{p}}f_{\theta}(y,x_{s},x_{\tau})dx_{\tau}\\
 & = & \phi(y-\theta_{s}^{\intercal}x_{s};0,\sigma^{2}+\left\Vert \theta_{\tau}\right\Vert ^{2})\phi(x_{s};0,\text{diag}\{s\})\psi(s),
\end{eqnarray*}
where the integration is over coordinates of $x_{\tau}$ \emph{not}
marked as $\circ$.

The conditional density of the latent variable $X_{\tau}$ is
\[
k_{\theta}(x_{\tau}|y,x_{s})=f_{\theta}(y,x_{s},x_{\tau})/g_{\theta}(y,x_{s})=\phi\left(x_{\tau};b_{\theta},A_{\theta}\right),
\]
which is Gaussian with mean vector
\[
b_{\theta}(y,x_{s})\coloneqq\mathbb{E}_{\theta}\left[X_{\tau}|y,x_{s}\right]=\frac{y-\theta_{s}^{\intercal}x_{s}}{\sigma^{2}+\left\Vert \theta_{\tau}\right\Vert ^{2}}\theta_{\tau},
\]
and covariance matrix
\begin{equation}
A_{\theta}(\tau)\coloneqq\mathbb{E}_{\theta}\left[\left(X_{\tau}-b_{\theta}(y,x_{s})\right)\left(X_{\tau}-b_{\theta}(y,x_{s})\right)^{\intercal}|y,x_{s}\right]=\text{diag}\{\tau\}-\frac{1}{\sigma^{2}+\left\Vert \theta_{\tau}\right\Vert ^{2}}\theta_{\tau}\theta_{\tau}^{\intercal},\label{eq:A-theta-rmc}
\end{equation}
which is also the conditional covariance matrix of the vector $X$
given $x_{s}$ and $y$, since
\[
X_{\tau}-b_{\theta}(y,x_{s})=X-\mathbb{E}_{\theta}\left[X|y,x_{s}\right].
\]

Denote the conditional mean of the covariate $X$ by
\begin{equation}
\mu_{\theta}(y,x_{s})\coloneqq\mathbb{E}_{\theta}\left[X|y,x_{s}\right]=\mathbb{E}_{\theta}\left[x_{s}+X_{\tau}|y,x_{s}\right]=x_{s}+b_{\theta}(y,x_{s}),\label{eq:mu-theta-rmc}
\end{equation}
and the conditional mean of the matrix $XX^{\intercal}$ by
\begin{equation}
\Sigma_{\theta}(y,x_{s})\coloneqq\mathbb{E}_{\theta}\left[XX^{\intercal}|y,x_{s}\right]=\mu_{\theta}(y,x_{s})\mu_{\theta}(y,x_{s})^{\intercal}+A_{\theta}(\tau).\label{eq:sigma-theta-rmc}
\end{equation}

It is a routine procedure to calculate the stochastic $Q$-function
\begin{eqnarray*}
Q(\theta'|\theta;(y,x_{s})) & = & \mathbb{E}_{\theta}\left[\log f_{\theta'}(y,x_{s},X_{\tau})|y,x_{s}\right]\\
 & = & -\frac{1}{2\sigma^{2}}\mathbb{E}_{\theta}\left[y^{2}-2y\theta'^{\intercal}X+\theta'^{\intercal}XX^{\intercal}\theta'|y,x_{s}\right]\\
 &  & -\frac{1}{2}\mathbb{E}_{\theta}\left[X^{\intercal}X|y,x_{s}\right]-p\log\sqrt{2\pi}+\log\psi(s)\\
 & = & -\frac{1}{2\sigma^{2}}\left(y^{2}-2\theta'^{\intercal}\mu_{\theta}(y,x_{s})y+\theta'^{\intercal}\Sigma_{\theta}(y,x_{s})\theta'\right)\\
 &  & -\frac{1}{2}\text{tr}\Sigma_{\theta}(y,x_{s})-p\log\sqrt{2\pi}+\log\psi(s)
\end{eqnarray*}
and the gradient
\[
\nabla_{1}Q(\theta'|\theta;(y,x_{s}))=\frac{1}{\sigma^{2}}\left[y\mu_{\theta}(y,x_{s})-\Sigma_{\theta}(y,x_{s})\theta'\right].
\]
Hence the GRV
\begin{equation}
\Gamma(\theta;(Y,X_{s}))=\frac{1}{\sigma^{2}}\left[Y\left(\mu_{\theta}(Y,X_{s})-\mu_{\theta^{*}}(Y,X_{s})\right)-\left(\Sigma_{\theta}(Y,X_{s})-\Sigma_{\theta^{*}}(Y,X_{s})\right)\theta^{*}\right].\label{eq:Gam-rmc}
\end{equation}
Since $Q(\theta'|\theta;(y,x_{s}))$ is quadratic in $\theta'$, the
CRV is
\begin{equation}
V(\theta'|\theta;(Y,X_{s}))=-\frac{1}{2\sigma^{2}}(\theta'-\theta^{*})^{\intercal}\Sigma_{\theta}(Y,X_{s})(\theta'-\theta^{*})\label{eq:V-rmc}
\end{equation}
by \prettyref{lem:quad-taylor}. Then the SEV
\begin{equation}
\mathcal{E}(Y,X_{s})=\frac{1}{\sigma^{2}}\left[Y\mu_{\theta^{*}}(Y,X_{s})-\Sigma_{\theta^{*}}(Y,X_{s})\theta^{*}\right].\label{eq:E-rmc}
\end{equation}

\subsubsection{\label{subsec:ora-cov-rmc}Oracle Convergence}

For $r>0$ and $\theta^{*}\neq0$, let $\xi\coloneqq\left(1+\omega\right)\eta^{2}$,
where the RCR $\omega$ and SNR $\eta$ are defined in the notations.
To characterize the sets $\mathcal{G}(r)$ and $\mathcal{V}(r,R)$,
we have the following bounds for the population mean of GRV and CRV.
\begin{lem}
\label{lem:bound-Gam-rmc}For the Linear Regression with Missing Covariates
model,
\[
\left\Vert \mathbb{E}\Gamma(\theta;(Y,X_{s}))\right\Vert \le\gamma\left(\omega,\eta\right)\left\Vert \theta-\theta^{*}\right\Vert \text{ for }\theta\in B_{r}\left(\theta^{*}\right)
\]
where
\begin{equation}
\gamma\left(\omega,\eta\right)\coloneqq\frac{1}{\sigma^{2}}\left(\left(\omega\xi^{2}+\left(3\omega+2\right)\xi+1\right)\epsilon+\xi\sqrt{\epsilon\left(1-\epsilon\right)}\right),\label{eq:gam-rmc}
\end{equation}
and the expectation is taken over $(Y,X_{s})$ and the missing pattern
$\tau=\ones-s$.
\end{lem}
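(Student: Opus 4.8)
The plan is to condition on the missing pattern $\tau$ first. Writing $\mathbb{E}\,\Gamma(\theta;(Y,X_s))=\mathbb{E}_{\tau}\big[\mathbb{E}(\Gamma(\theta;(Y,X_s))\mid\tau)\big]$ and applying the triangle inequality, it suffices to bound $\big\|\mathbb{E}(\Gamma(\theta;(Y,X_s))\mid\tau)\big\|$ for each fixed $\tau$ and then average the resulting scalar bound against $\psi$. With $\tau$ fixed, abbreviate $\alpha_{\theta}\coloneqq\sigma^{2}+\|\theta_{\tau}\|^{2}$, $u_{\theta}\coloneqq Y-\theta_{s}^{\intercal}X_{s}$ and $\beta\coloneqq\theta-\theta^{*}$ with $\|\beta\|<r=\omega\|\theta^{*}\|$; under $\mathbb{P}_{\theta^{*}}$ the observed block $X_{s}$ is a standard Gaussian on its coordinates and is independent of $u_{\theta^{*}}\sim\mathcal{N}(0,\alpha_{\theta^{*}})$, while $u_{\theta}=u_{\theta^{*}}-\langle\beta_{s},X_{s}\rangle$ and $Y=\langle\theta^{*}_{s},X_{s}\rangle+u_{\theta^{*}}$. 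Note also that when the $\tau$-block is empty we have $\theta_{\tau}=0$, hence $\mu_{\theta}=X_{s}$ and $\Sigma_{\theta}=X_{s}X_{s}^{\intercal}$ do not depend on $\theta$ and $\Gamma\equiv 0$; only patterns with a nonempty $\tau$-block contribute, and this is ultimately the source of the overall factor $\epsilon$.

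Second, I would expand $\sigma^{2}\Gamma=Y(\mu_{\theta}-\mu_{\theta^{*}})-(\Sigma_{\theta}-\Sigma_{\theta^{*}})\theta^{*}$, using $\mu_{\theta}-\mu_{\theta^{*}}=\frac{u_{\theta}}{\alpha_{\theta}}\theta_{\tau}-\frac{u_{\theta^{*}}}{\alpha_{\theta^{*}}}\theta^{*}_{\tau}$ and, since the $\mathrm{diag}\{\tau\}$ parts of $A_{\theta}-A_{\theta^{*}}$ cancel, $\Sigma_{\theta}-\Sigma_{\theta^{*}}=(\mu_{\theta}\mu_{\theta}^{\intercal}-\mu_{\theta^{*}}\mu_{\theta^{*}}^{\intercal})+\frac{1}{\alpha_{\theta^{*}}}\theta^{*}_{\tau}\theta^{*\intercal}_{\tau}-\frac{1}{\alpha_{\theta}}\theta_{\tau}\theta_{\tau}^{\intercal}$. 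After substituting $\mu_{\theta}=X_{s}+\frac{u_{\theta}}{\alpha_{\theta}}\theta_{\tau}$, every summand becomes a conditional Gaussian expectation of a monomial of degree at most $3$ in the independent Gaussians $X_{s}$ and $u_{\theta^{*}}$; the odd-degree monomials vanish and the remaining ones evaluate in closed form to polynomials in the scalars $\|\theta_{\tau}\|^{2}$, $\|\theta^{*}_{\tau}\|^{2}$, $\langle\theta_{\tau},\theta^{*}_{\tau}\rangle$, $\|\beta_{s}\|^{2}$, $\langle\beta_{s},\theta^{*}_{s}\rangle$ and in $\alpha_{\theta}^{-1},\alpha_{\theta^{*}}^{-1}$. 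The structural fact I would exploit is that $\mathbb{E}(\Gamma(\theta;(Y,X_s))\mid\tau)$ vanishes at $\theta=\theta^{*}$, so that when the surviving terms are collected each one retains an explicit factor linear in $\beta$ — one of $\beta_{\tau}$, $\beta_{s}$, $\alpha_{\theta}-\alpha_{\theta^{*}}=2\langle\theta^{*}_{\tau},\beta_{\tau}\rangle+\|\beta_{\tau}\|^{2}$, or $\frac{\theta_{\tau}}{\alpha_{\theta}}-\frac{\theta^{*}_{\tau}}{\alpha_{\theta^{*}}}$ — which can be pulled out to leave $\big\|\mathbb{E}(\Gamma(\theta;(Y,X_s))\mid\tau)\big\|\le c(\tau,\theta)\,\|\beta\|$.

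Third, I would bound $c(\tau,\theta)$ uniformly over $\theta\in B_{r}(\theta^{*})$. On this ball $\|\theta\|\le(1+\omega)\|\theta^{*}\|$ and $\alpha_{\theta},\alpha_{\theta^{*}}\ge\sigma^{2}$, so every dimensionless ratio occurring in $c$ — for instance $\|\theta_{\tau}\|^{2}/\alpha_{\theta}$, $\|\theta^{*}_{\tau}\|^{2}/\alpha_{\theta^{*}}$, $\|\theta_{\tau}\|\,\|\theta^{*}_{\tau}\|/\sigma^{2}$, $\|\theta_{\tau}\|\,\|\theta^{*}_{s}\|/\sigma^{2}$ — is at most $\xi=(1+\omega)\eta^{2}$. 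Grouping by powers of $\xi$ and keeping track of how much of $\theta^{*}$ lies in the $\tau$-block, this should yield a per-pattern bound of the shape $\big\|\mathbb{E}(\Gamma(\theta;(Y,X_s))\mid\tau)\big\|\le\frac{1}{\sigma^{2}}\big[(\omega\xi^{2}+(3\omega+2)\xi+1)\,\|\theta^{*}_{\tau}\|^{2}\|\theta^{*}\|^{-2}+\xi\,\|\theta^{*}_{\tau}\|\,\|\theta^{*}_{s}\|\,\|\theta^{*}\|^{-2}\big]\,\|\beta\|$, the first bracket collecting the terms carrying two factors from the (nonempty) $\tau$-block and the last term being the cross-block contribution.

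Finally I would average over $\tau\sim\psi$. Since $\|\theta^{*}_{\tau}\|^{2}$ is the sum over the $\tau$-block of the $(\theta^{*j})^{2}$ and each coordinate lies in that block independently with probability $\epsilon$, one has $\mathbb{E}_{\tau}\|\theta^{*}_{\tau}\|^{2}=\epsilon\|\theta^{*}\|^{2}$ and $\mathbb{E}_{\tau}\|\theta^{*}_{s}\|^{2}=(1-\epsilon)\|\theta^{*}\|^{2}$; the first identity turns the first bracket into the $\epsilon$-terms, and Cauchy--Schwarz gives $\mathbb{E}_{\tau}\big[\|\theta^{*}_{\tau}\|\,\|\theta^{*}_{s}\|\big]\le\big(\mathbb{E}_{\tau}\|\theta^{*}_{\tau}\|^{2}\big)^{1/2}\big(\mathbb{E}_{\tau}\|\theta^{*}_{s}\|^{2}\big)^{1/2}=\sqrt{\epsilon(1-\epsilon)}\,\|\theta^{*}\|^{2}$, which produces the $\xi\sqrt{\epsilon(1-\epsilon)}$ term. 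Altogether $\|\mathbb{E}\,\Gamma(\theta;(Y,X_s))\|\le\frac{1}{\sigma^{2}}\big((\omega\xi^{2}+(3\omega+2)\xi+1)\epsilon+\xi\sqrt{\epsilon(1-\epsilon)}\big)\|\theta-\theta^{*}\|=\gamma(\omega,\eta)\,\|\theta-\theta^{*}\|$, as required. I expect the main obstacle to be the second step: the degree-$\le 3$ Gaussian-moment computation has a fair number of terms, and for the estimate to be genuinely Lipschitz rather than merely bounded one must isolate in each term the explicit factor of $\beta$, while simultaneously keeping the numerical constants tight enough to reproduce exactly the polynomial $\omega\xi^{2}+(3\omega+2)\xi+1$ and the coefficient of $\sqrt{\epsilon(1-\epsilon)}$; the conditioning setup and the averaging over $\tau$ are, by comparison, routine.
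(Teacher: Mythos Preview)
Your conditioning-then-averaging structure matches the paper's, and the conditional expectation you propose to compute is exactly what the paper records in closed form: for fixed $\tau$,
\[
\sigma^{2}\,\mathbb{E}_{\theta^{*}}\bigl[\Gamma(\theta;(Y,X_{s}))\,\big|\,\tau\bigr]
=\beta_{\tau}
+\frac{\theta_{\tau}^{\intercal}\theta_{\tau}^{*}}{\sigma^{2}+\|\theta_{\tau}\|^{2}}\,(-\beta_{s})
+\frac{\zeta\,\theta_{\tau}}{(\sigma^{2}+\|\theta_{\tau}\|^{2})^{2}},
\]
with $\zeta$ a scalar carrying the remaining $\beta$-dependence. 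The gap is in your third step. The per-pattern bound you write has the shape $(\ldots)\,\|\theta_{\tau}^{*}\|^{2}\|\theta^{*}\|^{-2}\cdot\|\beta\|$ for the bracket that contains the ``$+1$'', but that ``$+1$'' comes from the first summand $\beta_{\tau}$, and $\|\beta_{\tau}\|$ is simply not controlled by $\|\theta_{\tau}^{*}\|$: take any pattern with $\theta_{\tau}^{*}=0$ but $\beta_{\tau}\neq 0$. More fundamentally, the strategy of bounding $\|\mathbb{E}(\Gamma\mid\tau)\|$ pointwise in $\tau$ and \emph{then} averaging the scalar bound cannot recover the factor $\epsilon$ on this term: the best scalar-averaging estimate is $\mathbb{E}_{\tau}\|\beta_{\tau}\|\le(\mathbb{E}_{\tau}\|\beta_{\tau}\|^{2})^{1/2}=\sqrt{\epsilon}\,\|\beta\|$, one power of $\sqrt{\epsilon}$ short. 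Your remark that only nonempty $\tau$ contribute does not close the gap either, since the mass of nonempty patterns is $1-(1-\epsilon)^{p}$, not $\epsilon$.

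The paper fixes this by \emph{not} taking norms before the $\tau$-average on the leading term: it first computes $\mathbb{E}_{\tau}[\beta_{\tau}]=\epsilon\beta$ as a vector, writes $\sigma^{2}\,\mathbb{E}\Gamma=\epsilon\beta+T_{1}+T_{2}$, and only then bounds $\|T_{1}\|,\|T_{2}\|$ by pulling the norm inside $\mathbb{E}_{\tau}$ and applying Cauchy--Schwarz over $\tau$ (this last step is exactly your fourth step, and produces the $\sqrt{\epsilon(1-\epsilon)}$ and the remaining $\epsilon$-terms just as you describe). So your plan is correct in outline and your averaging identities are the right ones; you only need to postpone the triangle inequality for the $\beta_{\tau}$ piece and average the vector first.
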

\begin{proof}
See \prettyref{subsec:pf-bnd-Gam-rmc}.
\end{proof}
\begin{rem*}
It follows that $\gamma\left(\omega,\eta\right)\in\mathcal{G}(r)\neq\varnothing$
for $r>0$.
\end{rem*}
\begin{lem}
\label{lem:bound-V-rmc}For the Linear Regression with Missing Covariates
model,
\[
\mathbb{E}V(\theta'|\theta;(Y,X_{s}))\le-\nu\left(\omega,\eta\right)\left\Vert \theta'-\theta^{*}\right\Vert ^{2}\text{ for }\left(\theta',\theta\right)\in\mathbb{R}^{p}\times B_{r}\left(\theta^{*}\right)
\]
where
\begin{equation}
\nu\left(\omega,\eta\right)\coloneqq\frac{1}{2\sigma^{2}}\left(1-2\omega\xi\sqrt{\epsilon\left(1-\epsilon\right)}-\left(1+\omega\right)\xi\epsilon\right),\label{eq:nu-rmc}
\end{equation}
and the expectation is taken over $(Y,X_{s})$ and the missing pattern
$\tau=\ones-s$.
\end{lem}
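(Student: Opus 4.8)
The plan is to reduce the claim to a uniform spectral lower bound on the averaged conditional second moment matrix $\overline{\Sigma}_{\theta}:=\mathbb{E}[\Sigma_{\theta}(Y,X_{s})]$, the expectation being over $(Y,X_{s})$ and the missing pattern $\tau$. By \prettyref{eq:V-rmc},
\[
\mathbb{E}V(\theta'|\theta;(Y,X_{s}))=-\frac{1}{2\sigma^{2}}(\theta'-\theta^{*})^{\intercal}\overline{\Sigma}_{\theta}(\theta'-\theta^{*}),
\]
so it suffices to prove that $u^{\intercal}\overline{\Sigma}_{\theta}u\ge\bigl(1-2\omega\xi\sqrt{\epsilon(1-\epsilon)}-(1+\omega)\xi\epsilon\bigr)\|u\|^{2}$ for every $u\in\mathbb{R}^{p}$, uniformly over $\theta\in B_{r}(\theta^{*})$. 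Because this is a bound on the matrix $\overline{\Sigma}_{\theta}$ itself, taking $u=\theta'-\theta^{*}$ gives the stated inequality for \emph{all} $\theta'\in\mathbb{R}^{p}$ at once.

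First I would compute the conditional quadratic form $\mathbb{E}[u^{\intercal}\Sigma_{\theta}(Y,X_{s})u\mid\tau]$ in closed form. Writing $\Sigma_{\theta}=\mu_{\theta}\mu_{\theta}^{\intercal}+A_{\theta}(\tau)$ with $\mu_{\theta}=x_{s}+\frac{y-\theta_{s}^{\intercal}x_{s}}{m}\theta_{\tau}$ and $m:=\sigma^{2}+\|\theta_{\tau}\|^{2}$, and using that under $\mathbb{P}_{\theta^{*}}$ one has $X_{s}\sim\mathcal{N}(0,\mathrm{diag}\{s\})$ while $Y-\theta_{s}^{\intercal}X_{s}=(\theta^{*}-\theta)_{s}^{\intercal}X_{s}+(\theta^{*}_{\tau})^{\intercal}X_{\tau}+W$ is centered with variance $\|(\theta-\theta^{*})_{s}\|^{2}+\sigma^{2}+\|\theta^{*}_{\tau}\|^{2}$, a short Gaussian moment computation gives
\[
\mathbb{E}[u^{\intercal}\Sigma_{\theta}u\mid\tau]=\|u\|^{2}+\frac{2\langle(\theta^{*}-\theta)_{s},u\rangle\,\langle\theta_{\tau},u\rangle}{m}+d(\tau)\,\langle\theta_{\tau},u\rangle^{2},
\]
where $d(\tau):=\frac{\|(\theta-\theta^{*})_{s}\|^{2}+\|\theta^{*}_{\tau}\|^{2}-\|\theta_{\tau}\|^{2}}{m^{2}}$. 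Both fluctuation terms vanish at $\theta=\theta^{*}$, in accordance with the identity $\Sigma_{\theta^{*}}(Y,X_{s})=\mathbb{E}_{\theta^{*}}[XX^{\intercal}\mid Y,X_{s}]$, which forces $\overline{\Sigma}_{\theta^{*}}=\mathbb{E}[XX^{\intercal}]=I_{p}$.

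Next I would take $\mathbb{E}_{\tau}$ over the coordinatewise Bernoulli($\epsilon$) missingness and bound the two fluctuation terms from below. For the cross term, use $|\langle(\theta^{*}-\theta)_{s},u\rangle\langle\theta_{\tau},u\rangle|\le\|(\theta^{*}-\theta)_{s}\|\,\|\theta_{\tau}\|\,\|u\|^{2}$, the bound $m\ge\sigma^{2}$, and Cauchy--Schwarz over $\tau$: $\mathbb{E}_{\tau}[\|(\theta^{*}-\theta)_{s}\|\,\|\theta_{\tau}\|]\le\sqrt{\mathbb{E}_{\tau}\|(\theta^{*}-\theta)_{s}\|^{2}}\,\sqrt{\mathbb{E}_{\tau}\|\theta_{\tau}\|^{2}}=\sqrt{\epsilon(1-\epsilon)}\,\|\theta-\theta^{*}\|\,\|\theta\|$; with $\|\theta-\theta^{*}\|<r=\omega\|\theta^{*}\|$, $\|\theta\|\le(1+\omega)\|\theta^{*}\|$ and $\xi=(1+\omega)\eta^{2}$ this produces the $-2\omega\xi\sqrt{\epsilon(1-\epsilon)}\|u\|^{2}$ contribution. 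For the $d(\tau)$ term, split on its sign: on $\{d(\tau)\ge0\}$ the contribution is nonnegative and is discarded; on $\{d(\tau)<0\}$ one has $|d(\tau)|\le\frac{\|(\theta-\theta^{*})_{\tau}\|^{2}+2\|\theta^{*}_{\tau}\|\,\|(\theta-\theta^{*})_{\tau}\|}{m^{2}}$ (the $\|(\theta-\theta^{*})_{s}\|^{2}$ in the numerator is dropped in our favour), and combining $\langle\theta_{\tau},u\rangle^{2}\le\|\theta_{\tau}\|^{2}\|u\|^{2}$ with $\|\theta_{\tau}\|^{2}/m^{2}\le1/(4\sigma^{2})$ and one more Cauchy--Schwarz over $\tau$ (which turns $\mathbb{E}_{\tau}\|(\theta-\theta^{*})_{\tau}\|^{2}$ into $\epsilon\|\theta-\theta^{*}\|^{2}$ and $\mathbb{E}_{\tau}[\|\theta^{*}_{\tau}\|\,\|(\theta-\theta^{*})_{\tau}\|]$ into $\le\epsilon\|\theta^{*}\|\,\|\theta-\theta^{*}\|$) gives, after the elementary estimate $\omega(\omega+2)\le(1+\omega)^{2}$, the $-(1+\omega)\xi\epsilon\|u\|^{2}$ contribution. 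Adding the three pieces and dividing by $2\sigma^{2}$ yields exactly $\mathbb{E}V(\theta'|\theta;(Y,X_{s}))\le-\nu(\omega,\eta)\|\theta'-\theta^{*}\|^{2}$.

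The hard part is not the Gaussian algebra but getting the $\sqrt{\epsilon(1-\epsilon)}$-versus-$\epsilon$ split with these constants. A crude operator-norm/triangle-inequality estimate of $\overline{\Sigma}_{\theta}-I_{p}$ loses the square root and, more seriously, is forced to control the $\|(\theta-\theta^{*})_{s}\|^{2}$ piece of the numerator of $d(\tau)$, which is of order $\omega^{2}\|\theta^{*}\|^{2}$ and is \emph{not} bounded by any power of $\epsilon$. The two devices that make the argument go through are: (i) applying Cauchy--Schwarz over the missingness at the level of $\mathbb{E}_{\tau}[\|(\cdot)_{s}\|\,\|(\cdot)_{\tau}\|]$, so that $\mathbb{E}_{\tau}\|(\cdot)_{s}\|^{2}=(1-\epsilon)\|\cdot\|^{2}$ and $\mathbb{E}_{\tau}\|(\cdot)_{\tau}\|^{2}=\epsilon\|\cdot\|^{2}$ combine into $\sqrt{\epsilon(1-\epsilon)}$; and (ii) splitting on the sign of $d(\tau)$ so that its uncontrolled numerator term is thrown away rather than estimated, leaving only quantities proportional to $\|(\theta-\theta^{*})_{\tau}\|$ whose $\tau$-average carries a factor $\epsilon$. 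The awkward nonlinear $\tau$-dependence through $m=\sigma^{2}+\|\theta_{\tau}\|^{2}$ is handled throughout by the crude bounds $m\ge\sigma^{2}$ and $\|\theta_{\tau}\|^{2}/m^{2}\le1/(4\sigma^{2})$.
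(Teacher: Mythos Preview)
Your argument is correct and is essentially the paper's proof: compute $\mathbb{E}_{\theta^{*}}[\Sigma_{\theta}\mid\tau]$ in closed form (this is exactly \eqref{eq:ora-Sigma}), bound the cross term by Cauchy--Schwarz over the missingness to extract the factor $\sqrt{\epsilon(1-\epsilon)}$, and discard the positive-semidefinite part of the rank-one term so that the problematic $\|(\theta-\theta^{*})_{s}\|^{2}$ contribution never has to be controlled in terms of $\epsilon$. The only difference is in how that PSD part is isolated. The paper uses the add--subtract trick
\[
\frac{\|\theta^{*}_{\tau}\|^{2}-\|\theta_{\tau}\|^{2}+\|(\theta-\theta^{*})_{s}\|^{2}}{m^{2}}
=\frac{\sigma^{2}+\|\theta^{*}_{\tau}\|^{2}+\|(\theta-\theta^{*})_{s}\|^{2}}{m^{2}}-\frac{1}{m},
\]
which makes the first piece manifestly PSD ($\Sigma_{2}\succeq 0$, dropped) and leaves the simple $\Sigma_{3}=\tfrac{1}{m}\theta_{\tau}\theta_{\tau}^{\intercal}$ to bound via $m\ge\sigma^{2}$; you instead split on the sign of $d(\tau)$ and bound the negative part directly. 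Your route is a touch more work but in fact sharper: the bound $\|\theta_{\tau}\|^{2}/m^{2}\le 1/(4\sigma^{2})$ gives $\tfrac{1}{4}(1+\omega)\xi\epsilon$ rather than $(1+\omega)\xi\epsilon$, so you prove strictly more than the stated $\nu(\omega,\eta)$, not ``exactly'' it.
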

\begin{proof}
See \prettyref{subsec:pf-bnd-V-rmc}.
\end{proof}
\begin{rem*}
It follows that $\nu\left(\omega,\eta\right)\in\mathcal{V}(r,+\infty)\neq\varnothing$
for $r>0$.
\end{rem*}
In view of above lemmas, for $\theta^{*}\neq0$, if the probability
of missingness $\epsilon$ and the RCR $\omega$ are sufficiently
small and the SNR $\eta$ is bounded above, then $\gamma\left(\omega,\eta\right)\ll\frac{1}{2\sigma^{2}}$
and $\nu\left(\omega,\eta\right)\approx\frac{1}{2\sigma^{2}}$ and
in that case, $0<r<+\infty$, where $r=\omega\left\Vert \theta^{*}\right\Vert $
are radii of contraction and $\left(\gamma\left(\omega,\eta\right),\nu\left(\omega,\eta\right)\right)\in\mathcal{C}\left(r,+\infty\right)\neq\varnothing$
is a pair of contraction parameters. By imposing conditions that ensure
$\gamma\left(\omega,\eta\right)<\nu\left(\omega,\eta\right)$, we
can obtain various forms of oracle convergence results via the oracle
convergence theorem. Among them we state and prove the following corollary.
\begin{cor}
For the Linear Regression with Missing Covariates model, if $\theta^{*}\neq0$
and
\begin{equation}
\frac{1}{\sqrt{1+\omega}}<\eta<\frac{1}{3\left(1+\omega\right)\sqrt[4]{\epsilon}},\label{eq:eta-oct-rmc}
\end{equation}
then $0<r<+\infty$ where $r=\omega\left\Vert \theta^{*}\right\Vert $
are radii of contraction and $\left(\gamma\left(\omega,\eta\right),\nu\left(\omega,\eta\right)\right)\in\mathcal{C}(r,+\infty)$.
Then given initial point $\theta^{0}\in B_{r}(\theta^{*})$, any oracle
EM sequence $\{\theta^{t}\}_{t\ge0}$ such that
\[
\theta^{t+1}\in\arg\max_{\theta'\in\Omega}Q_{*}(\theta'|\theta^{t})\text{ for }t\ge0,
\]
satisfies the inequality
\begin{equation}
\left\Vert \theta^{t}-\theta^{*}\right\Vert \le\overline{\kappa}^{t}\left\Vert \theta^{0}-\theta^{*}\right\Vert \label{eq:ora-conv-rmc}
\end{equation}
where $\overline{\kappa}\coloneqq\frac{\overline{\gamma}}{\overline{\nu}}\le\frac{\gamma\left(\omega,\eta\right)}{\nu\left(\omega,\eta\right)}<1$,
is the optimal oracle convergence rate with respect to $r<+\infty$.
\end{cor}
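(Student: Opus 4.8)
The plan is to reduce this corollary to the \emph{Optimal Oracle Convergence Theorem} (\prettyref{thm:oct}). First I would set $r=\omega\left\Vert \theta^{*}\right\Vert$ and invoke \prettyref{lem:bound-Gam-rmc} and \prettyref{lem:bound-V-rmc}, which give $\gamma(\omega,\eta)\in\mathcal{G}(r)$ and $\nu(\omega,\eta)\in\mathcal{V}(r,+\infty)$, so both sets are non-empty. Then \prettyref{lem:G=00003D00003D000026V} yields $\mathcal{G}(r)=[\overline{\gamma},+\infty)$ and $\mathcal{V}(r,+\infty)=(0,\overline{\nu}]$ with $\overline{\gamma}\le\gamma(\omega,\eta)$ and $\overline{\nu}\ge\nu(\omega,\eta)$. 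Consequently, the whole corollary hinges on one scalar inequality, $\gamma(\omega,\eta)<\nu(\omega,\eta)$: once it holds, $\left(\gamma(\omega,\eta),\nu(\omega,\eta)\right)\in\mathcal{T}\cap\bigl(\mathcal{G}(r)\times\mathcal{V}(r,+\infty)\bigr)=\mathcal{C}(r,+\infty)$, so $\mathcal{C}(r,+\infty)\neq\varnothing$, i.e. $0<r<+\infty$ are radii of contraction, and $\overline{\kappa}=\overline{\gamma}/\overline{\nu}\le\gamma(\omega,\eta)/\nu(\omega,\eta)<1$. Feeding this into \prettyref{thm:oct} with any initial point $\theta^{0}\in B_{r}(\theta^{*})$ and $R=+\infty$ produces $\left\Vert \theta^{t}-\theta^{*}\right\Vert\le\overline{\kappa}^{t}\left\Vert \theta^{0}-\theta^{*}\right\Vert$ with $\overline{\kappa}$ the optimal rate, which is exactly \prettyref{eq:ora-conv-rmc}.

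To verify $\gamma(\omega,\eta)<\nu(\omega,\eta)$ I would introduce $\xi=(1+\omega)\eta^{2}$, cancel the common factor $1/\sigma^{2}$ in \prettyref{eq:gam-rmc} and \prettyref{eq:nu-rmc}, and reduce the claim to
\[
\bigl(\omega\xi^{2}+(3\omega+2)\xi+1\bigr)\epsilon+\xi\sqrt{\epsilon(1-\epsilon)}<\tfrac{1}{2}\Bigl(1-2\omega\xi\sqrt{\epsilon(1-\epsilon)}-(1+\omega)\xi\epsilon\Bigr).
\]
Next I would extract from \prettyref{eq:eta-oct-rmc} three facts: the lower bound gives $\xi=(1+\omega)\eta^{2}>1$; the upper bound gives $\eta^{2}<1/\bigl(9(1+\omega)^{2}\sqrt{\epsilon}\bigr)$, hence $\xi\sqrt{\epsilon}<1/\bigl(9(1+\omega)\bigr)$; and non-emptiness of the prescribed interval for $\eta$ forces $9(1+\omega)\sqrt{\epsilon}<1$, so $\sqrt{\epsilon}<1/\bigl(9(1+\omega)\bigr)$ as well. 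Using $\xi>1$ to replace $\omega\xi^{2}+(3\omega+2)\xi+1$ by $(4\omega+3)\xi^{2}$, together with $\xi^{2}\epsilon=(\xi\sqrt{\epsilon})^{2}$, $\xi\epsilon=(\xi\sqrt{\epsilon})\sqrt{\epsilon}$ and $\sqrt{\epsilon(1-\epsilon)}\le\sqrt{\epsilon}$, every summand on the left becomes a bounded multiple of $(\xi\sqrt{\epsilon})^{2}$, $(\xi\sqrt{\epsilon})\sqrt{\epsilon}$, $\xi\sqrt{\epsilon}$ or $\epsilon$ -- each at most $1/\bigl(9(1+\omega)\bigr)$ or its square -- while on the right $2\omega\xi\sqrt{\epsilon(1-\epsilon)}$ and $(1+\omega)\xi\epsilon$ are similarly tiny. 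After bounding the $\omega$-dependent factors by absolute constants ($\frac{4\omega+3}{(1+\omega)^{2}}\le3$, $\frac{\omega}{1+\omega}<1$) the inequality collapses to a comparison of two explicit numbers, e.g. the left side is below $\tfrac{1}{6}$ while the right side exceeds $\tfrac{1}{3}$, which is clear.

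I do not anticipate any conceptual obstacle here: \prettyref{lem:bound-Gam-rmc}, \prettyref{lem:bound-V-rmc}, \prettyref{lem:G=00003D00003D000026V} and \prettyref{thm:oct} do all the substantive work, and the only labour is the numerical bookkeeping in the second paragraph. The one subtlety worth flagging is that, unlike the Gaussian Mixture and Mixture of Linear Regressions cases where a single large-SNR condition suffices, here contraction is controlled by the composite quantity $\xi\sqrt{\epsilon}=(1+\omega)\eta^{2}\sqrt{\epsilon}$: it is the \emph{upper} bound on $\eta$ (smallness of $\xi\sqrt{\epsilon}$, and hence of the missingness probability $\epsilon$ relative to $\eta$ and $\omega$) that forces $\gamma(\omega,\eta)<\nu(\omega,\eta)$, whereas the \emph{lower} bound on $\eta$ plays the purely cosmetic role of letting one collapse the degree-two polynomial in $\xi$ on the left-hand side into a single $\xi^{2}$ term.
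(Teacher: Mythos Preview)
Your approach is exactly the paper's: reduce to \prettyref{thm:oct} via \prettyref{lem:bound-Gam-rmc} and \prettyref{lem:bound-V-rmc}, and then check the scalar inequality $\gamma(\omega,\eta)<\nu(\omega,\eta)$ under \prettyref{eq:eta-oct-rmc}. The paper dismisses that verification as ``trivial'' without details, whereas you actually sketch the numerical bookkeeping; your extraction of $\xi>1$, $\xi\sqrt{\epsilon}<1/(9(1+\omega))$ and $\sqrt{\epsilon}<1/(9(1+\omega))$ from the two-sided bound on $\eta$ is correct and does the job.
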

\begin{proof}
We only need to verify that \prettyref{eq:eta-oct-rmc} implies $\gamma\left(\omega,\eta\right)<\nu\left(\omega,\eta\right)$,
which is trivial, and the corollary follows from the oracle convergence
theorem.
\end{proof}
\begin{rem*}
The condition \prettyref{eq:eta-oct-rmc} imposes an upper bound on
the probability of missingness $\epsilon$, namely $\sqrt{\epsilon}<\frac{1}{9\left(1+\omega\right)}<\frac{1}{9}$,
hence $\epsilon<\frac{1}{81}$.
\end{rem*}

\subsubsection{Empirical Convergence}

For empirical convergence results, we need to find the specific forms
of the $\varepsilon$-bounds in the Assumptions. To ease notations,
we use $Z_{k}\coloneqq(Y_{k},(X_{k})_{s_{k}})$ to denote an i.i.d.
copy of $\left(Y,X_{s}\right)$ throughout this section.
\begin{lem}
\label{lem:epsilon-1-rmc}For $\delta\in(0,1)$ and $r>0$, if $n>c\log\left(L/\delta\right)$
then
\begin{equation}
\left\Vert \Gamma_{n}\left(\theta;\left\{ Z_{k}\right\} \right)-\mathbb{E}\Gamma\left(\theta;\left(Y,X_{s}\right)\right)\right\Vert \le\frac{C\left(\omega,\eta\right)}{\sigma^{2}}\sqrt{\frac{\log(L/\delta)}{n}}\left\Vert \theta-\theta^{*}\right\Vert \text{ for }\theta\in B_{r}(\theta^{*})\label{eq:epsilon-1-rmc}
\end{equation}
with probability at least $1-\delta$, where
\begin{align*}
C\left(\omega,\eta\right) & =C_{1}\left(\eta\left(1+\eta\right)\left(2+\omega\right)+1\right)\eta\left(1+\eta\right)\left(1+\omega\right)\\
 & +C_{2}\left(\eta\left(1+\eta\right)\left(2+\omega\right)+1\right)\left(1+\omega\right)\eta^{2}\\
 & +C_{3}\left(\left(1+\omega\right)\eta^{2}+1\right)\left(2+\omega\right)\eta^{2}\\
 & =O\left(\left(1+\omega\right)^{2}\left(1+\eta\right)^{4}\right).
\end{align*}
\end{lem}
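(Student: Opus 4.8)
The plan is to verify Assumption (\ref{A1}) for this model by decomposing the empirical gradient difference random vector, reducing the claim to a concentration inequality for empirical means of i.i.d. sub-exponential scalars through a covering of $\mathbb{S}^{p-1}$, in the spirit of the proofs of \prettyref{lem:epsilon-1-gmm} and \prettyref{lem:epsilon-1-mlr}, but tracking all constants in terms of $\omega$ and $\eta$. Write $Z=(Y,X_{s})$ and $Z_{k}=(Y_{k},(X_{k})_{s_{k}})$, and for a matrix-valued statistic $M(\theta;\cdot)$ set $M_{n}(\theta)=\frac{1}{n}\sum_{k=1}^{n}M(\theta;Z_{k})$. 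Starting from \prettyref{eq:Gam-rmc} and using $\mu_{\theta}-\mu_{\theta^{*}}=b_{\theta}-b_{\theta^{*}}$ together with $\Sigma_{\theta}-\Sigma_{\theta^{*}}=\bigl(\mu_{\theta}\mu_{\theta}^{\intercal}-\mu_{\theta^{*}}\mu_{\theta^{*}}^{\intercal}\bigr)+\bigl(A_{\theta}(\tau)-A_{\theta^{*}}(\tau)\bigr)$, which come from \prettyref{eq:mu-theta-rmc}, \prettyref{eq:sigma-theta-rmc} and \prettyref{eq:A-theta-rmc}, I would split (suppressing the argument $(Y,X_{s})$ in $b_{\theta},\mu_{\theta}$)
\[
\sigma^{2}\,\Gamma(\theta;Z)=\underbrace{Y\bigl(b_{\theta}-b_{\theta^{*}}\bigr)}_{(\mathrm{I})}-\underbrace{\bigl(\mu_{\theta}\mu_{\theta}^{\intercal}-\mu_{\theta^{*}}\mu_{\theta^{*}}^{\intercal}\bigr)\theta^{*}}_{(\mathrm{II})}-\underbrace{\bigl(A_{\theta}(\tau)-A_{\theta^{*}}(\tau)\bigr)\theta^{*}}_{(\mathrm{III})}.
\]
Each of $(\mathrm{I})$, $(\mathrm{II})$, $(\mathrm{III})$ vanishes identically at $\theta=\theta^{*}$ (so $\Gamma(\theta^{*};Z)=0$) and is smooth in $\theta$ on $\overline{B}_{r}(\theta^{*})$, since the weight $\bigl(\sigma^{2}+\|\theta_{\tau}\|^{2}\bigr)^{-1}$ stays in $\bigl[\sigma^{-2}(1+\xi)^{-1},\sigma^{-2}\bigr]$ there, with $\xi=(1+\omega)\eta^{2}$. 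Hence by the fundamental theorem of calculus I can write $\Gamma(\theta;Z)=\bigl(\int_{0}^{1}D_{\theta}\Gamma(\theta^{*}+\lambda(\theta-\theta^{*});Z)\,d\lambda\bigr)(\theta-\theta^{*})\eqqcolon M(\theta;Z)(\theta-\theta^{*})$, where $D_{\theta}\Gamma$ is the Jacobian of $\theta\mapsto\Gamma(\theta;Z)$; this turns the claim into a concentration bound for the random matrix $M_{n}(\theta)-\mathbb{E}[M(\theta;Z)]$, uniform over $\theta\in B_{r}(\theta^{*})$.

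Next I would bound the three blocks of $D_{\theta}\Gamma$ uniformly on $\overline{B}_{r}(\theta^{*})$ by expanding the differences bilinearly, e.g. $\mu_{\theta}\mu_{\theta}^{\intercal}-\mu_{\theta^{*}}\mu_{\theta^{*}}^{\intercal}=\mu_{\theta}(\mu_{\theta}-\mu_{\theta^{*}})^{\intercal}+(\mu_{\theta}-\mu_{\theta^{*}})\mu_{\theta^{*}}^{\intercal}$ and similarly for the rank-one term $\theta_{\tau}\theta_{\tau}^{\intercal}/(\sigma^{2}+\|\theta_{\tau}\|^{2})$ in $A_{\theta}$, and using the uniform estimates $\|\theta_{\tau}\|\le(1+\omega)\|\theta^{*}\|$, $\sup_{\widetilde{\theta}\in\overline{B}_{r}(\theta^{*})}\|\mu_{\widetilde{\theta}}\|\lesssim(1+\omega)^{2}(1+\eta)^{2}\bigl(|Y|/\sigma+\|X_{s}\|\bigr)$, and the bounded $\theta$-Lipschitz constants of $b_{\theta}$ and of the reciprocal weight. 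Block $(\mathrm{III})$ depends on $Z$ only through the missing pattern $\tau$ and is deterministically of order $\sigma^{-2}(\xi+1)(2+\omega)\eta^{2}\|\theta-\theta^{*}\|$, giving the third summand of $C(\omega,\eta)$, while blocks $(\mathrm{I})$ and $(\mathrm{II})$ carry one further $\|\mu\|$-type factor of order $\eta(1+\eta)(2+\omega)+1$, producing the first two summands. Collecting these reproduces $C(\omega,\eta)=O\bigl((1+\omega)^{2}(1+\eta)^{4}\bigr)$ and shows that for all unit vectors $v,v'$ and all $\theta\in\overline{B}_{r}(\theta^{*})$ the scalar $v^{\intercal}M(\theta;Z)v'$ is sub-exponential with $\psi_{1}$-norm $O\bigl(\sigma^{-2}C(\omega,\eta)\bigr)$, being a fixed linear combination of products of the $O(1+\eta)$-sub-Gaussian scalars $Y/\sigma$, $\langle v,X_{s}\rangle$, $\langle v',X_{s}\rangle$, $\langle v,\theta^{*}\rangle/\sigma$ under \prettyref{eq:lr-model} and \prettyref{eq:dist-psi}.

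The concentration step is then routine. First, $\|M_{n}(\theta)-\mathbb{E}[M(\theta;Z)]\|_{\mathrm{op}}\le 4\max_{v,v'\in\mathcal{N}}v^{\intercal}\bigl(M_{n}(\theta)-\mathbb{E}[M(\theta;Z)]\bigr)v'$ for a $\frac{1}{2}$-net $\mathcal{N}$ of $\mathbb{S}^{p-1}$ with $|\mathcal{N}|=L<5^{p}$, which is where the $\log L$ enters (see \prettyref{subsec:cov-num}). Second, for each fixed pair $v,v'$ and fixed $\theta$, Bernstein's inequality applied to the empirical mean of the centered sub-exponential $v^{\intercal}M(\theta;Z_{k})v'-\mathbb{E}\bigl[v^{\intercal}M(\theta;Z)v'\bigr]$ yields a deviation $O\bigl(\sigma^{-2}C(\omega,\eta)(\sqrt{t/n}+t/n)\bigr)$ with probability $1-e^{-t}$, and the hypothesis $n>c\log(L/\delta)$ makes the $t/n$ term negligible. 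Third, uniformity over $\theta\in B_{r}^{\times}(\theta^{*})$ follows from a dyadic peeling in $\|\theta-\theta^{*}\|$: on the shell $\|\theta-\theta^{*}\|\in[2^{-j-1}r,2^{-j}r]$ one uses a net at scale proportional to $2^{-j}r$, whose cardinality does not depend on $j$; concentration on that net bounds $\|M_{n}(\theta)-\mathbb{E}[M(\theta;Z)]\|_{\mathrm{op}}\,\|\theta-\theta^{*}\|$ by $O\bigl(2^{-j}r\,\sigma^{-2}C(\omega,\eta)\sqrt{\log(L/\delta)/n}\bigr)$ on the shell, and dividing by $\|\theta-\theta^{*}\|\ge 2^{-j-1}r$ removes the shell radius, the $O(\log n)$ shells contributing only a lower-order term to the failure probability that is absorbed into $c$. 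A union bound over the net pairs and shells, together with the deterministic Lipschitz estimates to pass from the nets to all of $B_{r}(\theta^{*})\times\mathbb{S}^{p-1}$, then delivers \prettyref{eq:epsilon-1-rmc} with probability at least $1-\delta$.

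The hard part is the constant bookkeeping in the second step. Each bilinear expansion of blocks $(\mathrm{I})$ and $(\mathrm{II})$ forces a re-estimation of $\|\mu_{\widetilde{\theta}}\|$, of the reciprocal weight $(\sigma^{2}+\|\theta_{\tau}\|^{2})^{-1}$, and of their $\theta$-Lipschitz constants, uniformly over $\overline{B}_{r}(\theta^{*})$, and one must keep precise track of how the powers of $\eta$, of $1+\omega$, and of $\xi=(1+\omega)\eta^{2}$ accumulate; this is exactly what forces the $O\bigl((1+\omega)^{2}(1+\eta)^{4}\bigr)$ growth and the particular three-summand form of $C(\omega,\eta)$ in the statement. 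By contrast the probabilistic ingredients — sub-Gaussianity of $Y/\sigma$ and of the coordinates of $X_{s}$, the covering reduction on $\mathbb{S}^{p-1}$, Bernstein's inequality, the peeling over shells, and the union bound — are standard and parallel the corresponding steps in the proofs of \prettyref{lem:epsilon-1-gmm} and \prettyref{lem:epsilon-1-mlr}.
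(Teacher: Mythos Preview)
Your decomposition into $(\mathrm{I})$, $(\mathrm{II})$, $(\mathrm{III})$ is exactly the paper's split $D_{1},D_{2},D_{3}$, and the probabilistic endgame (covering of $\mathbb{S}^{p-1}$, sub-exponential concentration via \prettyref{lem:constr-sub-exp}) is the same. The genuine difference is in how the factor $\|\theta-\theta^{*}\|$ is extracted. You pass through the Jacobian integral $M(\theta;Z)=\int_{0}^{1}D_{\theta}\Gamma(\theta^{*}+\lambda(\theta-\theta^{*});Z)\,d\lambda$ and then concentrate the operator norm of $M_{n}-\mathbb{E}M$, which forces you to control $v^{\intercal}M(\theta;Z)v'$ uniformly in $(v,v',\theta)$ and to introduce the dyadic peeling over shells. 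The paper avoids all of this: in \prettyref{lem:emp-pre-4-rmc} and \prettyref{lem:emp-pre-5-rmc} it rewrites $\mu_{\theta}-\mu_{\theta^{*}}$, $\mu_{\theta}\mu_{\theta}^{\intercal}-\mu_{\theta^{*}}\mu_{\theta^{*}}^{\intercal}$ and $A_{\theta}-A_{\theta^{*}}$ \emph{algebraically} (telescoping the rank-one terms and the reciprocal weights) so that each difference is a finite sum of products of sub-gaussian factors times an explicit scalar bounded by $C(\omega,\eta)\|\theta-\theta^{*}\|$. This gives directly $\|u^{\intercal}\Gamma(\theta;Z)\|_{\psi_{1}}\le\sigma^{-2}C(\omega,\eta)\|\theta-\theta^{*}\|$ for every $u\in\mathbb{S}^{p-1}$, and one invocation of \prettyref{lem:constr-sub-exp} finishes the proof with no Jacobian, no matrix covering in two directions, and no peeling. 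Your route is correct and has the merit of handling the uniformity over $\theta$ explicitly (which the paper's proof leaves implicit), but the paper's algebraic route is substantially shorter and makes the three-summand structure of $C(\omega,\eta)$ transparent: each summand is precisely the Orlicz-norm bound coming from $D_{1}$, $D_{2}$, $D_{3}$ respectively.
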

\begin{proof}
See \prettyref{subsec:pf-eps-1-rmc}.
\end{proof}
\begin{lem}
\label{lem:epsilon-2-rmc}For $\delta\in(0,1)$ and $r>0$, if $n>c\log\left(L/\delta\right)$,
then
\begin{equation}
\left|V_{n}\left(\theta'|\theta;\left\{ Z_{k}\right\} \right)-\mathbb{E}V(\theta'|\theta;(Y,X_{s}))\right|\le\frac{C}{\sigma^{2}}\sqrt{\frac{\log(L/\delta)}{n}}\left\Vert \theta'-\theta^{*}\right\Vert ^{2}\text{ for }\theta',\theta\in B_{r}(\theta^{*})\label{eq:epsilon-2-rmc}
\end{equation}
with probability at least $1-\delta$.
\end{lem}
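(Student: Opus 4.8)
The plan is to exploit that, by \prettyref{eq:V-rmc}, the CRV is a quadratic form in $\theta'-\theta^{*}$, so the fluctuation $V_{n}-\mathbb{E}V$ is a centered quadratic form whose magnitude is governed by the operator norm of an empirical-mean deviation of a matrix-valued random variable. Precisely, writing $\Sigma_{\theta}(y,x_{s})=\mu_{\theta}(y,x_{s})\mu_{\theta}(y,x_{s})^{\intercal}+A_{\theta}(\tau)$ as in \prettyref{eq:sigma-theta-rmc},
\[
V_{n}(\theta'|\theta;\{Z_{k}\})-\mathbb{E}V(\theta'|\theta;(Y,X_{s}))=-\frac{1}{2\sigma^{2}}(\theta'-\theta^{*})^{\intercal}D_{n}(\theta)(\theta'-\theta^{*}),\qquad D_{n}(\theta)\coloneqq\frac{1}{n}\sum_{k=1}^{n}\Sigma_{\theta}(Z_{k})-\mathbb{E}\Sigma_{\theta}(Y,X_{s}).
\]
Since $D_{n}(\theta)$ is symmetric, $|V_{n}-\mathbb{E}V|\le\frac{1}{2\sigma^{2}}\Vert D_{n}(\theta)\Vert_{\mathrm{op}}\Vert\theta'-\theta^{*}\Vert^{2}$, and $\sup_{\theta'\in B_{r}^{\times}(\theta^{*})}|V_{n}-\mathbb{E}V|/\Vert\theta'-\theta^{*}\Vert^{2}=\frac{1}{2\sigma^{2}}\Vert D_{n}(\theta)\Vert_{\mathrm{op}}$, so the lemma reduces to showing
\[
\sup_{\theta\in B_{r}(\theta^{*})}\Vert D_{n}(\theta)\Vert_{\mathrm{op}}\le C\sqrt{\frac{\log(L/\delta)}{n}}\qquad\text{with probability at least }1-\delta
\]
once $n>c\log(L/\delta)$.

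For the matrix concentration at a fixed $\theta$, I would first pass from the operator norm to a maximum over a covering net $\mathcal{N}$ of $\mathbb{S}^{p-1}$ with $|\mathcal{N}|=L<5^{p}$ (cf.\ \prettyref{subsec:cov-num}), so that $\Vert D_{n}(\theta)\Vert_{\mathrm{op}}\le c'\max_{u\in\mathcal{N}}|u^{\intercal}D_{n}(\theta)u|$. For fixed $u$ and $\theta$, split $u^{\intercal}\Sigma_{\theta}(Z)u=\langle u,\mu_{\theta}(Z)\rangle^{2}+u^{\intercal}A_{\theta}(\tau)u$. The second summand is bounded, since $A_{\theta}(\tau)$ is a conditional covariance matrix with $0\preceq A_{\theta}(\tau)\preceq\mathrm{diag}\{\tau\}\preceq I_{p}$. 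For the first, because $X\sim\mathcal{N}(0,I_{p})$, $W\sim\mathcal{N}(0,\sigma^{2})$, and the denominator $\sigma^{2}+\Vert\theta_{\tau}\Vert^{2}\ge\sigma^{2}>0$, the scalar $\langle u,\mu_{\theta}(Z)\rangle=\langle u,X_{s}\rangle+\frac{Y-\theta_{s}^{\intercal}X_{s}}{\sigma^{2}+\Vert\theta_{\tau}\Vert^{2}}\langle u,\theta_{\tau}\rangle$ is sub-Gaussian with parameter bounded uniformly in $u\in\mathbb{S}^{p-1}$ and $\theta\in B_{r}(\theta^{*})$ by a quantity of order $(1+\omega)(1+\eta)$; hence $u^{\intercal}\Sigma_{\theta}(Z)u$ is sub-exponential with uniformly controlled parameter of order $(1+\omega)^{2}(1+\eta)^{4}$. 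Bernstein's inequality then gives $|u^{\intercal}D_{n}(\theta)u|\le C(\omega,\eta)\sqrt{\log(1/\delta')/n}$ with probability at least $1-\delta'$ whenever $n\gtrsim\log(1/\delta')$.

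To upgrade this to a uniform bound over $\theta\in B_{r}(\theta^{*})$, I would mirror the discretization already used in the proof of \prettyref{lem:epsilon-1-rmc}: for each realization $Z$ the map $\theta\mapsto u^{\intercal}\Sigma_{\theta}(Z)u$ is Lipschitz on $B_{r}(\theta^{*})$ with a constant $\Lambda(Z)$ that is again a (sub-exponential) function of $Z$ — differentiate $b_{\theta}$ and $A_{\theta}$, all denominators being $\ge\sigma^{2}$ — so one covers $B_{r}(\theta^{*})$ by a net of spacing $\asymp n^{-1/2}$, union-bounds the Bernstein estimate over this net and over $\mathcal{N}$, and controls the in-between fluctuation by $\frac{1}{n}\sum_{k}\Lambda(Z_{k})$, which concentrates on its finite mean. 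Taking $\delta'$ proportional to $\delta$ divided by the product of the two net cardinalities, the hypothesis $n>c\log(L/\delta)$ is exactly what keeps us in the sub-exponential (Bernstein) regime and yields the rate $\sqrt{\log(L/\delta)/n}$, with the dependence on $\omega$ and $\eta$ absorbed into $C$ exactly as in \prettyref{lem:epsilon-1-rmc}. Combining with the first display gives the stated inequality.

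The main obstacle is this last, uniform-in-$\theta$ step: one must discretize twice at once — over $\mathbb{S}^{p-1}$ to recover the operator norm and over $B_{r}(\theta^{*})$ to handle the parameter dependence of $\Sigma_{\theta}$ — and verify that the sub-exponential norms of $u^{\intercal}\Sigma_{\theta}(Z)u$ and the Lipschitz constants $\Lambda(Z)$ are bounded uniformly over $u\in\mathbb{S}^{p-1}$ and $\theta\in B_{r}(\theta^{*})$, using the Gaussianity of $X,W$ and the bound $\sigma^{2}+\Vert\theta_{\tau}\Vert^{2}\ge\sigma^{2}$; once those tail estimates are in place the remainder is a routine union bound.
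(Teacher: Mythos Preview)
Your reduction to $\|D_{n}(\theta)\|_{\mathrm{op}}$ and the use of sub-exponential tails for $u^{\intercal}\Sigma_{\theta}(Z)u$ is exactly the paper's approach. The paper's proof is two sentences: it cites \prettyref{lem:emp-pre-3-rmc}, which gives $\|u^{\intercal}\Sigma_{\theta}v\|_{\psi_{1}}\le C\|u\|\|v\|$ with a \emph{universal} constant $C$ (independent of $\theta$, $\tau$, $\omega$, $\eta$; this follows from \prettyref{lem:emp-pre-2-rmc}, where $\|u^{\intercal}\mu_{\theta}\|_{\psi_{2}}\le\sqrt{3}\|u\|$ for every $\theta$), and then invokes \prettyref{lem:constr-sub-exp}. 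So your sub-exponential parameter of order $(1+\omega)^{2}(1+\eta)^{4}$ is correct but unnecessarily loose: unlike in \prettyref{lem:epsilon-1-rmc}, no dependence on $\omega$ or $\eta$ enters here, which is why the final constant in \prettyref{eq:epsilon-2-rmc} is just $C$.

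Your second discretization, over $\theta\in B_{r}(\theta^{*})$, is absent from the paper's proof: the paper simply records that the Orlicz-norm bound is uniform in $\theta$ and applies the concentration lemma once, without addressing simultaneity in $\theta$. If one insists on the uniform reading (as the use in \prettyref{lem:bounds} suggests), your Lipschitz-plus-net plan is a legitimate way to fill that in, but note that covering $B_{r}(\theta^{*})$ at scale $n^{-1/2}$ yields a net of cardinality $\asymp(r\sqrt{n})^{p}$, so the union bound adds a $p\log n$ term under the square root and does not quite reproduce the stated rate. In short: your argument is sound and morally the same as the paper's, sharper on the uniformity issue, but looser on the constant.
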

\begin{proof}
See \prettyref{subsec:pf-esp-2-rmc}.
\end{proof}
\begin{lem}
\label{lem:epsilon-s-rmc}For $\delta\in(0,1)$ and $n>c\log\left(L/\delta\right)$,
then
\begin{equation}
\left\Vert \mathcal{E}_{n}\left(\left\{ Z_{k}\right\} \right)\right\Vert \le C\frac{\left(1+\eta\right)}{\sigma}\sqrt{\frac{\log(L/\delta)}{n}}\label{eq:epsilon-s-rmc}
\end{equation}
with probability at least $1-\delta$.
\end{lem}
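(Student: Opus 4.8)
The plan is to establish the concentration of the empirical statistical error vector $\mathcal{E}_n(\{Z_k\}) = \frac{1}{n}\sum_{k=1}^{n}\mathcal{E}(Z_k)$ around its mean by a covering-plus-Bernstein argument, exactly parallel to the proofs of \prettyref{lem:epsilon-s-gmm} and \prettyref{lem:epsilon-s-mlr}. By self-consistency \prettyref{eq:exp-stat-err}, $\mathbb{E}\,\mathcal{E}(Y,X_s) = \nabla_1 Q_*(\theta^*|\theta^*) = 0$, so $\mathcal{E}_n(\{Z_k\})$ is a centered empirical average of i.i.d.\ random vectors in $\mathbb{R}^p$, and it suffices to control its Euclidean norm. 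First I would reduce the vector norm to a maximum of scalars: using the $\tfrac12$-net $\mathcal{N}_{\frac12}(\mathbb{S}^{p-1})$ of cardinality $L < 5^p$ (see \prettyref{subsec:cov-num}), one has $\|\mathcal{E}_n(\{Z_k\})\| \le 2\max_{u\in\mathcal{N}_{\frac12}(\mathbb{S}^{p-1})}\langle u,\mathcal{E}_n(\{Z_k\})\rangle$, and for each fixed $u$ the scalar $\langle u,\mathcal{E}_n(\{Z_k\})\rangle = \frac{1}{n}\sum_{k=1}^n \langle u,\mathcal{E}(Z_k)\rangle$ is a sum of i.i.d.\ mean-zero terms.

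The crux is a sub-exponential norm estimate: I claim $\|\langle u,\mathcal{E}(Y,X_s)\rangle\|_{\psi_1} \le C\,\frac{1+\eta}{\sigma}$ uniformly in $u\in\mathbb{S}^{p-1}$. Conditioning on the missing pattern $\tau = \ones - s$, the formula \prettyref{eq:E-rmc} together with \prettyref{eq:mu-theta-rmc}, \prettyref{eq:sigma-theta-rmc} and \prettyref{eq:A-theta-rmc} shows that $\sigma^2\langle u,\mathcal{E}(Y,X_s)\rangle$ is a polynomial of degree at most two in the jointly Gaussian vector $(W,X)$ with coefficients depending only on $(\theta^*,\tau)$ (note $b_{\theta^*}$ is affine in $(Y,X_s)$, so $\mu_{\theta^*}\mu_{\theta^*}^\intercal\theta^*$ is quadratic, and $A_{\theta^*}(\tau)$ is deterministic given $\tau$). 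A degree-$\le 2$ polynomial of a Gaussian vector is sub-exponential with $\psi_1$-norm controlled by its $L^2$-norm, and a direct moment computation (using $\operatorname{Var}(Y) = \sigma^2(1+\eta^2)$, $\|\theta^*\| = \sigma\eta$) bounds that $L^2$-norm by $C\sigma(1+\eta)$ with no dependence on $p$ or $\epsilon$; multiplying by the prefactor $\sigma^{-2}$ gives the claimed $\psi_1$-bound, which holds uniformly over the finitely many values of $\tau$, hence unconditionally.

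With the sub-exponential bound in hand, Bernstein's inequality for sums of i.i.d.\ sub-exponential random variables yields, for each fixed $u$, $\Pr\{|\langle u,\mathcal{E}_n(\{Z_k\})\rangle| > t\} \le 2\exp\!\big(-c\,n\min(t^2\sigma^2/(1+\eta)^2,\ t\sigma/(1+\eta))\big)$. Choosing $t = C\,\frac{1+\eta}{\sigma}\sqrt{\frac{\log(L/\delta)}{n}}$ and invoking the hypothesis $n > c\log(L/\delta)$ to guarantee $t\sigma/(1+\eta) \lesssim 1$ so that the sub-Gaussian branch of Bernstein is active, the right-hand side is at most $\delta/L$. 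A union bound over the $L$ points of $\mathcal{N}_{\frac12}(\mathbb{S}^{p-1})$, followed by the factor $2$ from the net reduction, gives $\|\mathcal{E}_n(\{Z_k\})\| \le C\,\frac{1+\eta}{\sigma}\sqrt{\frac{\log(L/\delta)}{n}}$ with probability at least $1-\delta$, as asserted.

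The main obstacle is the second-moment computation for $\langle u,\mathcal{E}(Y,X_s)\rangle$ with the correct $\eta$-dependence — in particular controlling the quadratic term $\langle u,\Sigma_{\theta^*}(Y,X_s)\theta^*\rangle$, and verifying that neither the dimension $p$ nor the missingness probability $\epsilon$ leaks into the $\psi_1$-norm beyond the stated $(1+\eta)/\sigma$, with the bound genuinely uniform in the missing pattern $\tau$. Everything else is the standard covering argument already used for the other two canonical models.
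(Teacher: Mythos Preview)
Your proposal is correct and follows essentially the same approach as the paper: show $\|u^\intercal\mathcal{E}(Y,X_s)\|_{\psi_1}\le C(1+\eta)/\sigma$ uniformly in $u\in\mathbb{S}^{p-1}$ and in the missing pattern $\tau$, then invoke \prettyref{lem:constr-sub-exp}. The only difference is that the paper obtains the $\psi_1$-bound by splitting $\sigma^2\, u^\intercal\mathcal{E} = u^\intercal\mu_{\theta^*}Y - u^\intercal\Sigma_{\theta^*}\theta^*$ and applying the product-of-sub-Gaussians bounds already established in \prettyref{lem:emp-pre-2-rmc} and \prettyref{lem:emp-pre-3-rmc}, rather than appealing to hypercontractivity for degree-$2$ Gaussian polynomials and an $L^2$-moment computation.
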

\begin{proof}
See \prettyref{subsec:pf-esp-s-rmc}.
\end{proof}
From \prettyref{eq:epsilon-1-rmc}, \prettyref{eq:epsilon-2-rmc}
and \prettyref{eq:epsilon-s-rmc} in above lemmas,  it is clear that
the Assumptions (\ref{A1}$\sim$\ref{A3}) are satisfied with the
following
\[
\varepsilon_{1}(\delta,r,n,p)=\frac{C\left(\omega,\eta\right)}{\sigma^{2}}\sqrt{\frac{\log(L/\delta)}{n}},\quad\varepsilon_{2}(\delta,r,R,n,p)=\frac{C_{1}}{\sigma^{2}}\sqrt{\frac{\log(L/\delta)}{n}}\quad\text{and}
\]
\[
\varepsilon_{s}(\delta,r,R,n,p)=C_{2}\frac{\left(1+\eta\right)}{\sigma}\sqrt{\frac{\log(L/\delta)}{n}}.
\]
We obtain the following data-adaptive empirical convergence result
for the Linear Regression with Missing Covariates model.
\begin{cor}
Suppose $\{(Y_{k},(X_{k})_{s_{k}})\}_{k=1}^{n}$ is a set of i.i.d.
copies of $\left(Y,X_{s}\right)$ and $\delta\in(0,1)$. If $\theta^{*}\neq0$,
\begin{equation}
\frac{1}{\sqrt{1+\omega}}<\eta<\frac{1}{3\left(1+\omega\right)\sqrt[4]{\epsilon}},\label{eq:eta-orct-rmc}
\end{equation}
and the sample size $n$ is sufficiently large such that
\begin{equation}
n>\left(C_{1}+C_{2}\frac{1+\eta}{\omega\eta}+C\left(\omega,\eta\right)\right)^{2}\log(L/\delta).\label{eq:cond-for-n-rmc}
\end{equation}
Then given an initial point $\Theta_{n}^{0}\in B_{r}(\theta^{*})$,
with probability at least $1-\delta$, the empirical EM sequence $\{\Theta_{n}^{t}\}_{t\ge0}$
such that
\[
\Theta_{n}^{t+1}\in\arg\max_{\Theta'\in\Omega}Q_{n}(\Theta'|\Theta_{n}^{t};\{(Y_{k},(X_{k})_{s_{k}})\})\text{ for }t\ge0,
\]
satisfies the inequality
\begin{equation}
\left\Vert \Theta_{n}^{t}-\theta^{*}\right\Vert \le\left(\overline{K}_{n}\right)^{t}\left\Vert \Theta_{n}^{0}-\theta^{*}\right\Vert +\frac{C_{2}K}{\sigma^{2}\left(\nu_{n}-\gamma_{n}\right)}\sqrt{\frac{\log(L/\delta)}{n}}\label{eq:emp-conv-rmc}
\end{equation}
where
\[
\gamma_{n}:=\gamma\left(\omega,\eta\right)+\frac{C\left(\omega,\eta\right)}{\sigma^{2}}\sqrt{\frac{\log(L/\delta)}{n}}\text{ and }\nu_{n}:=\nu\left(\omega,\eta\right)-\frac{C_{1}}{\sigma^{2}}\sqrt{\frac{\log(L/\delta)}{n}}
\]
and the optimal empirical convergence rate $\overline{K}_{n}$ satisfies
that
\[
\overline{K}_{n}\le\frac{\gamma_{n}}{\nu_{n}}<1\text{ and }\left|\overline{K}_{n}-\overline{\kappa}\right|\le\left(C\left(\omega,\eta\right)+C_{1}\kappa\left(\omega,\eta\right)\right)\sqrt{\frac{\log(L/\delta)}{n}}.
\]
\end{cor}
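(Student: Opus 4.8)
Here is how I would prove this corollary; the argument follows the same template as the proofs of the corollaries for the Gaussian Mixture Model (\prettyref{subsec:gau-mix}) and the Mixture of Linear Regressions model (\prettyref{subsec:mix-lin-reg}). The plan is to assemble the $\varepsilon$-bounds furnished by Lemmas \prettyref{lem:epsilon-1-rmc}, \prettyref{lem:epsilon-2-rmc} and \prettyref{lem:epsilon-s-rmc}, to check that the sample-size hypothesis \prettyref{eq:cond-for-n-rmc} triggers both the Optimal Empirical Convergence Theorem \prettyref{thm:oect} and the Optimal Rate Convergence Theorem \prettyref{thm:orct}, and then to read off the convergence inequality \prettyref{eq:emp-conv-rmc} together with the two estimates on $\overline{K}_{n}$.

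First I would invoke the oracle analysis of \prettyref{subsec:ora-cov-rmc}: under \prettyref{eq:eta-orct-rmc} (which is identical to \prettyref{eq:eta-oct-rmc}), $r=\omega\left\Vert \theta^{*}\right\Vert$ and $R=+\infty$ are radii of contraction, with $\gamma\left(\omega,\eta\right)\in\mathcal{G}(r)$ by \prettyref{lem:bound-Gam-rmc} and $\nu\left(\omega,\eta\right)\in\mathcal{V}(r,+\infty)$ by \prettyref{lem:bound-V-rmc}, so the optimal pair satisfies $\overline{\gamma}\le\gamma\left(\omega,\eta\right)$, $\overline{\nu}\ge\nu\left(\omega,\eta\right)$, hence $\overline{\nu}-\overline{\gamma}\ge\nu\left(\omega,\eta\right)-\gamma\left(\omega,\eta\right)>0$; the estimates behind \prettyref{eq:nu-rmc} also show that $\nu\left(\omega,\eta\right)$ exceeds a fixed positive multiple of $\sigma^{-2}$ on the SNR window. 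Then I would substitute the explicit bounds $\varepsilon_{1}=\sigma^{-2}C\left(\omega,\eta\right)\sqrt{\log(L/\delta)/n}$, $\varepsilon_{2}=\sigma^{-2}C_{1}\sqrt{\log(L/\delta)/n}$ and $\varepsilon_{s}=\sigma^{-1}C_{2}(1+\eta)\sqrt{\log(L/\delta)/n}=\sigma^{-2}C_{2}K\sqrt{\log(L/\delta)/n}$ from the three lemmas, together with $\left\Vert \theta^{*}\right\Vert =\sigma\eta$, so that $r=\sigma\omega\eta$. The contraction condition \prettyref{eq:cond-for-n} of \prettyref{thm:oect}, namely $\varepsilon_{s}+r\varepsilon_{1}+r\varepsilon_{2}<r(\overline{\nu}-\overline{\gamma})$, is then implied by the stronger $\varepsilon_{s}+r\varepsilon_{1}+r\varepsilon_{2}<r(\nu\left(\omega,\eta\right)-\gamma\left(\omega,\eta\right))$; dividing this through by $\sigma^{-2}r\sqrt{\log(L/\delta)/n}$ and cancelling the $\sigma$-factors reduces it to an inequality of the shape $C_{2}\frac{1+\eta}{\omega\eta}+C_{1}+C\left(\omega,\eta\right)<c\sqrt{n/\log(L/\delta)}$, which is exactly \prettyref{eq:cond-for-n-rmc} once the oracle-gap and numerical factors are absorbed into $C_{1},C_{2}$. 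The same computation forces $n>c\log(L/\delta)$, so the prerequisites of Lemmas \prettyref{lem:epsilon-1-rmc}, \prettyref{lem:epsilon-2-rmc} and \prettyref{lem:epsilon-s-rmc} hold, and it forces $\varepsilon_{2}<\tfrac{1}{2}\nu\left(\omega,\eta\right)\le\tfrac{1}{2}\overline{\nu}$, the hypothesis of \prettyref{thm:orct}.

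With these verifications done, \prettyref{thm:oect} in the convenient form \prettyref{eq:emp-conv} gives $\left\Vert \Theta_{n}^{t}-\theta^{*}\right\Vert \le(\overline{K}_{n})^{t}\left\Vert \Theta_{n}^{0}-\theta^{*}\right\Vert +\varepsilon_{s}/(\nu_{n}-\gamma_{n})$ with $\gamma_{n}=\gamma\left(\omega,\eta\right)+\varepsilon_{1}$ and $\nu_{n}=\nu\left(\omega,\eta\right)-\varepsilon_{2}$, and substituting $\varepsilon_{s}=\sigma^{-2}C_{2}K\sqrt{\log(L/\delta)/n}$ produces \prettyref{eq:emp-conv-rmc}. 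For $\overline{K}_{n}\le\gamma_{n}/\nu_{n}<1$ I would use the definition \prettyref{eq:def-Kn}, which always gives $\overline{K}_{n}\le\overline{\kappa}_{n}=\overline{\gamma}_{n}/\overline{\nu}_{n}$, together with $\overline{\gamma}_{n}=\overline{\gamma}+\varepsilon_{1}\le\gamma_{n}$, $\overline{\nu}_{n}=\overline{\nu}-\varepsilon_{2}\ge\nu_{n}>0$, and $\gamma_{n}<\nu_{n}$ (a consequence of the $n$-condition). Finally \prettyref{thm:orct} gives $\left|\overline{K}_{n}-\overline{\kappa}\right|\le\frac{2}{\overline{\nu}}(\varepsilon_{1}+\overline{\kappa}\varepsilon_{2})$; bounding $\frac{2}{\overline{\nu}}\le\frac{2}{\nu\left(\omega,\eta\right)}\le c\sigma^{2}$ and $\overline{\kappa}\le\kappa\left(\omega,\eta\right)=\gamma\left(\omega,\eta\right)/\nu\left(\omega,\eta\right)$ and inserting the forms of $\varepsilon_{1},\varepsilon_{2}$, the $\sigma^{2}$-factors cancel and we obtain $\left|\overline{K}_{n}-\overline{\kappa}\right|\le(C\left(\omega,\eta\right)+C_{1}\kappa\left(\omega,\eta\right))\sqrt{\log(L/\delta)/n}$.

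The step I expect to be the main obstacle is the bookkeeping inside the second paragraph: one has to track how the model quantities $r=\sigma\omega\eta$, $K=\sigma(1+\eta)$ and the oracle gap $\nu\left(\omega,\eta\right)-\gamma\left(\omega,\eta\right)$ — the latter being positive and comparable to $\sigma^{-2}$ only thanks to the SNR window \prettyref{eq:eta-orct-rmc} — combine so that every $\sigma$ cancels and the single dimensionless hypothesis \prettyref{eq:cond-for-n-rmc} simultaneously implies the lemma prerequisites, the contraction condition \prettyref{eq:cond-for-n}, and the smallness condition $\varepsilon_{2}<\tfrac{1}{2}\overline{\nu}$. Everything past that point is a direct application of \prettyref{thm:oect} and \prettyref{thm:orct}.
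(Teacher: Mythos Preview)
Your proposal is correct and follows essentially the same approach as the paper: verify that \prettyref{eq:cond-for-n-rmc} implies the sample-size prerequisites of Lemmas \ref{lem:epsilon-1-rmc}--\ref{lem:epsilon-s-rmc}, the contraction condition \prettyref{eq:cond-for-n} of \prettyref{thm:oect} (via the stronger inequality with $\nu(\omega,\eta)-\gamma(\omega,\eta)$ in place of $\overline{\nu}-\overline{\gamma}$), and the smallness condition $\varepsilon_{2}<\tfrac{1}{2}\overline{\nu}$ of \prettyref{thm:orct}, then read off the conclusions. The paper's proof is terser but structurally identical; in particular it makes explicit the bound $\nu(\omega,\eta)>\tfrac{1}{3\sigma^{2}}$ under \prettyref{eq:eta-orct-rmc} where you say only that $\nu(\omega,\eta)$ exceeds a fixed positive multiple of $\sigma^{-2}$.
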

\begin{proof}
Note condition \prettyref{eq:cond-for-n-rmc} implies the lower bounds
of $n$ in \prettyref{lem:epsilon-1-rmc}, \prettyref{lem:epsilon-2-rmc}
and \prettyref{lem:epsilon-s-rmc}. Hence to apply the empirical convergence
theorem, we only need to verify that
\[
\varepsilon_{s}(\delta,r,R,n,p)+r\varepsilon_{1}(\delta,r,n,p)+r\varepsilon_{2}(\delta,r,R,n,p)<r\left(\nu\left(\omega,\eta\right)-\gamma\left(\omega,\eta\right)\right)
\]
holds true whenever $n$ satisfies \prettyref{eq:cond-for-n-rmc},
but this is trivial.

Moreover, it is not difficult to see that $\nu\left(\omega,\eta\right)>\frac{1}{3\sigma^{2}}$
under the condition \prettyref{eq:eta-orct-rmc}, and condition \prettyref{eq:cond-for-n-rmc}
implies that $\varepsilon_{2}(\delta,r,R,n,p)=\frac{C_{1}}{\sigma^{2}}\sqrt{\frac{\log(L/\delta)}{n}}<\frac{1}{6\sigma^{2}}<\frac{1}{2}\nu\left(\omega,\eta\right)\le\frac{1}{2}\overline{\nu}$,
hence the concentration bound of $\overline{K}_{n}$ follows from
the optimal rate convergence theorem.
\end{proof}

\section{\label{sec:disc}Discussion}

In this paper, we have proved that for given radii of contraction $0<r\le R$,
the oracle EM sequence $\{\theta^{t}\}_{t\ge0}$ converges geometrically
to the true population parameter $\theta^{*}$ at the optimal rate
$\overline{\kappa}$ with respect to $r\le R$. This is a \emph{deterministic}
result.

As illustrated in \prettyref{sec:app-cla-mod}, we can often obtain
some upper bounds $\kappa$ for the optimal rate in concrete models.
Although we may not be able to calculate $\overline{\kappa}$ in closed
form, the oracle EM sequence is smart enough to converge optimally.

Similar remarks apply to the empirical convergence, where we showed
that given oracle convergence with respect to radii of contraction
$r\le R$, an empirical EM sequence converges geometrically at the
rate $k_{n}$ as a realization of the optimal empirical convergence
rate $\overline{K}_{n}$, which is a random variable upper bounded
by $\overline{\kappa}_{n}$ and concentrated on $\overline{\kappa}$,
see \prettyref{figure:K-kap}. This is a \emph{probabilistic} result.

The concentration inequality \prettyref{eq:kn-k} is how we find a
reconciliation of our theory with the classical theories on the asymptotic
convergence rate of the EM algorithm, i.e. when the sample size $n$
is sufficiently large, the random fluctuations of $\overline{K}_{n}$
are so small that it behaves almost like the constant $\overline{\kappa}$.

The idea of considering an MLE as a maximizer of a realization of
the empirical log-likelihood functional of i.i.d. random variables
and the EM algorithm as a realization of an iterative process for
approximating the true population parameter $\theta^{*}$ can be further
applied in optimization problems involving iterative procedures, in
which the data generative model is probabilistic. In such a scenario,
by exploiting the oracle deterministic convergence results and the
concentration of measure phenomena of random variables, it is foreseeable
that one can obtain similar convergence results as in this paper.

%It is not surprising that our theory works in the low dimension regime
%since the concentration properties entails that one must have $n\gg p$.

\pdfbookmark[1]{Acknowledgments}{sec:ack}

\section*{Acknowledgments}

This work was partially supported by grant NO. 61501389 from National
Natural Science Foundation of China (NSFC), grants HKBU-22302815 and
HKBU-12316116 from Hong Kong Research Grant Council, and grant FRG2/15-16/011
from Hong Kong Baptist University.

\appendix

\section{Proofs for the Gaussian Mixture Model}

We give proofs for the Gaussian Mixture model in this section.

\subsection{Preliminaries}

We first prove the sub-gaussianity of the random vector $Y$ defined
in the model.
\begin{lem}
\label{lem:u.Y-sub-gau}Let $W\sim\mathcal{N}(0,\sigma^{2}I_{p})$
and $Y=Z\cdot\theta^{*}+W$ be defined in the Gaussian Mixture model,
then for any $u\in\mathbb{R}^{p}$ the random variable $u^{\intercal}Y$
is sub-gaussian with Orlicz norm $\left\Vert u^{\intercal}Y\right\Vert _{\psi_{2}}\le K\left\Vert u\right\Vert $.
\end{lem}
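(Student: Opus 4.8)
The plan is to split the linear functional of $Y$ into a bounded ``signal'' part and a Gaussian ``noise'' part and bound the sub-gaussian norm of each. Writing $Y = Z\cdot\theta^{*} + W$, for any $u\in\mathbb{R}^{p}$ we have
\[
u^{\intercal}Y = Z\,(u^{\intercal}\theta^{*}) + u^{\intercal}W .
\]
The first summand is bounded: since $Z\in\{-1,+1\}$, the variable $Z\,(u^{\intercal}\theta^{*})$ takes values in $\{\pm|u^{\intercal}\theta^{*}|\}$, and by the Cauchy--Schwarz inequality $|u^{\intercal}\theta^{*}|\le\left\Vert \theta^{*}\right\Vert \left\Vert u\right\Vert $. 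A symmetric random variable supported in $[-M,M]$ is sub-gaussian with $\left\Vert \cdot\right\Vert _{\psi_{2}}\le M$ (the standard estimate $\mathbb{E}\,e^{\lambda Z\,u^{\intercal}\theta^{*}}=\cosh(\lambda\,u^{\intercal}\theta^{*})\le e^{\lambda^{2}(u^{\intercal}\theta^{*})^{2}/2}$), so $\left\Vert Z\,(u^{\intercal}\theta^{*})\right\Vert _{\psi_{2}}\le\left\Vert \theta^{*}\right\Vert \left\Vert u\right\Vert $.

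Next, because $W\sim\mathcal{N}(0,\sigma^{2}I_{p})$, the second summand $u^{\intercal}W$ is a centered Gaussian with variance $\sigma^{2}\left\Vert u\right\Vert ^{2}$, hence sub-gaussian with $\left\Vert u^{\intercal}W\right\Vert _{\psi_{2}}\le\sigma\left\Vert u\right\Vert $. Combining the two bounds via the triangle inequality for the Orlicz $\psi_{2}$-norm gives
\[
\left\Vert u^{\intercal}Y\right\Vert _{\psi_{2}} \le \left\Vert Z\,(u^{\intercal}\theta^{*})\right\Vert _{\psi_{2}} + \left\Vert u^{\intercal}W\right\Vert _{\psi_{2}} \le \left\Vert \theta^{*}\right\Vert \left\Vert u\right\Vert + \sigma\left\Vert u\right\Vert = \left(\sigma+\left\Vert \theta^{*}\right\Vert \right)\left\Vert u\right\Vert = K\left\Vert u\right\Vert ,
\]
which is the asserted inequality.

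The only delicate point is the normalization of $\left\Vert \cdot\right\Vert _{\psi_{2}}$: one should use the convention under which a standard normal has unit sub-gaussian norm and a Rademacher variable has sub-gaussian norm at most $1$ — equivalently, track the sub-gaussian variance proxy, for which the contributions of two independent terms combine as $\sqrt{\sigma_{1}^{2}+\sigma_{2}^{2}}\le\sigma_{1}+\sigma_{2}$. With that convention no stray absolute constant appears and the constant comes out exactly as $K$. This is bookkeeping rather than a genuine obstacle; there is no hard step in the argument, so I would keep the proof short and simply cite Hoeffding's lemma for the bounded part and the Gaussian MGF for the noise part.
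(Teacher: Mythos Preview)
Your proof is correct and follows essentially the same approach as the paper: decompose $u^{\intercal}Y$ into the bounded Rademacher term $Z\,u^{\intercal}\theta^{*}$ and the Gaussian term $u^{\intercal}W$, bound each Orlicz $\psi_{2}$-norm by $\left\Vert \theta^{*}\right\Vert \left\Vert u\right\Vert$ and $\sigma\left\Vert u\right\Vert$ respectively, and add. The paper's version is terser (it simply asserts $\left\Vert Z\cdot u^{\intercal}\theta^{*}\right\Vert _{\psi_{2}}\le\left\Vert \theta^{*}\right\Vert \left\Vert u\right\Vert$ without the Hoeffding/cosh justification you supply), but the argument is the same.
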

\begin{proof}
It is clear that $u^{\intercal}W$ is a zero-mean Gaussian random
variable with variance
\[
\text{Var}\left(u^{\intercal}W\right)=\sum_{j=1}^{p}\left|u^{j}\right|^{2}\sigma^{2}=\left\Vert u\right\Vert ^{2}\sigma^{2}.
\]
Hence $\left\Vert u^{\intercal}W\right\Vert _{\psi_{2}}\le\sigma\left\Vert u\right\Vert $
and since $\left\Vert u^{\intercal}Y\right\Vert _{\psi_{2}}\le\left\Vert Z\cdot u^{\intercal}\theta^{*}\right\Vert _{\psi_{2}}+\left\Vert u^{\intercal}W\right\Vert _{\psi_{2}}\le\left\Vert \theta^{*}\right\Vert \left\Vert u\right\Vert +\sigma\left\Vert u\right\Vert $,
the lemma follows.
\end{proof}

\subsection{\label{subsec:pf-eps-1-gmm}Proof of \prettyref{lem:epsilon-1-gmm}}
\begin{proof}
By the Mean Value Theorem, we have
\[
w_{\theta}(Y)-w_{\theta^{*}}(Y)=\varsigma\left(\frac{2\theta^{\intercal}Y}{\sigma^{2}}\right)-\varsigma\left(\frac{2\theta^{*\intercal}Y}{\sigma^{2}}\right)=\frac{2}{\sigma^{2}}\varsigma'\left(\frac{2\vartheta^{\intercal}Y}{\sigma^{2}}\right)Y^{\intercal}\left(\theta-\theta^{*}\right)
\]
where $\vartheta$ is a point on the line segment joining $\theta$
and $\theta^{*}$. In view of \prettyref{eq:Gam-gmm}, for any $u\in\mathbb{S}^{p-1}$,
\[
\left|u^{\intercal}\Gamma(\theta;Y)\right|=\frac{2}{\sigma^{2}}\left|\varsigma'\left(\frac{2\vartheta^{\intercal}Y}{\sigma^{2}}\right)\left(\theta-\theta^{*}\right)^{\intercal}YY^{\intercal}u\right|\stackrel{(a)}{\le}\frac{1}{2\sigma^{2}}\left|\left(\theta-\theta^{*}\right)^{\intercal}Y\cdot u^{\intercal}Y\right|,
\]
where $(a)$ follows from the fact that $\varsigma'(t)=\varsigma(t)(1-\varsigma(t))\le\frac{1}{4}$
for $t\in\mathbb{R}$. Then by \prettyref{lem:u.Y-sub-gau} and \prettyref{lem:orl-norm}\prettyref{enu:prod-sub-gau},
$u^{\intercal}\Gamma(\theta;Y)$ is sub-exponential with Orlicz norm
\begin{align*}
\left\Vert u^{\intercal}\Gamma(\theta;Y)\right\Vert _{\psi_{1}} & \le\frac{1}{2\sigma^{2}}\left\Vert \left(\theta-\theta^{*}\right)^{\intercal}Y\cdot u^{\intercal}Y\right\Vert _{\psi_{1}}\\
 & \le\frac{C}{2\sigma^{2}}\left\Vert \left(\theta-\theta^{*}\right)^{\intercal}Y\right\Vert _{\psi_{2}}\cdot\left\Vert u^{\intercal}Y\right\Vert _{\psi_{2}}\\
 & \le\frac{CK^{2}}{2\sigma^{2}}\left\Vert \theta-\theta^{*}\right\Vert
\end{align*}
and the result follows from \prettyref{lem:constr-sub-exp} on the
concentration of sub-exponential random vectors.
\end{proof}

\subsection{\label{subsec:proof-of-epsilon-s}Proof of \prettyref{lem:epsilon-s-gmm}}
\begin{proof}
Let $A=\frac{1}{\sigma^{2}}(2w_{\theta^{*}}(Y)-1)Y$, then simple
calculation yields that $\mathbb{E}_{\theta^{*}}A=\frac{1}{\sigma^{2}}\theta^{*}$.
Hence $\mathcal{E}(Y)=A-\mathbb{E}_{\theta^{*}}A$ is the centered
random vector and $\mathcal{E}_{n}(\{Y_{k}\})$ is the empirical mean
of $\mathcal{E}(Y)$. Since $0<w_{\theta^{*}}(Y)<1$, for $u\in\mathbb{S}^{p-1}$,
we have
\[
\left|u^{\intercal}A\right|=\left|\frac{1}{\sigma^{2}}(2w_{\theta^{*}}(Y)-1)u^{\intercal}Y\right|\le\frac{1}{\sigma^{2}}\left|u^{\intercal}Y\right|.
\]
Hence $u^{\intercal}A$ is sub-gaussian with $\left\Vert u^{\intercal}A\right\Vert _{\psi_{2}}\le\frac{K}{\sigma^{2}}$
by \prettyref{lem:u.Y-sub-gau}, and the result follows from \prettyref{lem:constr-sub-gau}
on the concentration of sub-gaussian random vectors.
\end{proof}

\section{Proofs for Mixture of Linear Regressions}

We give proofs for the Mixture of Linear Regressions model in this
section.

\subsection{Preliminaries}

We first prove some properties of the random variates defined in the
model.
\begin{lem}
\label{lem:Y-X-props-mlr}Let $X\sim\mathcal{N}(0,I_{p})$ and $Y=Z\cdot X^{\intercal}\theta^{*}+W$
be defined in the Mixture of Linear Regressions model, then
\begin{itemize}
\item[$(a)$ ] $u^{\intercal}X$ is Gaussian with Orlicz norm $\left\Vert u^{\intercal}X\right\Vert _{\psi_{2}}\le\left\Vert u\right\Vert $
for $u\in\mathbb{R}^{p}$;
\item[$(b)$ ] $\left\Vert X\right\Vert $ is sub-gaussian with Orlicz norm at most
$\sqrt{2p}$;
\item[$(c)$ ] $Y$ is sub-gaussian with Orlicz norm $\left\Vert Y\right\Vert _{\psi_{2}}\le K$
where $K\coloneqq\left\Vert \theta^{*}\right\Vert +\sigma$.
\end{itemize}
\end{lem}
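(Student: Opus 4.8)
The plan is to establish $(a)$, $(b)$, $(c)$ separately, each by reducing to a standard Gaussian fact together with the sub-additivity of the $\psi_2$-Orlicz norm (\prettyref{lem:orl-norm}). For $(a)$: since $X\sim\mathcal{N}(0,I_p)$ has i.i.d.\ standard Gaussian coordinates, for fixed $u\in\mathbb{R}^p$ the variable $u^\intercal X=\sum_{j=1}^p u^j X^j$ is a linear combination of independent centered Gaussians, hence $u^\intercal X\sim\mathcal{N}(0,\|u\|^2)$ with variance $\sum_j(u^j)^2=\|u\|^2$; in the normalization used here a centered Gaussian $\mathcal{N}(0,s^2)$ has $\psi_2$-norm at most $s$, so $\|u^\intercal X\|_{\psi_2}\le\|u\|$. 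This is the same computation already carried out for $u^\intercal W$ in \prettyref{subsec:pf-eps-1-gmm}.

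For $(c)$: write $Y=Z\cdot X^\intercal\theta^*+W$. Because $|Z|=1$ we have $|Z\,X^\intercal\theta^*|=|X^\intercal\theta^*|$, so multiplication by $Z$ does not enlarge the Orlicz norm and part $(a)$ with $u=\theta^*$ gives $\|Z\,X^\intercal\theta^*\|_{\psi_2}\le\|X^\intercal\theta^*\|_{\psi_2}\le\|\theta^*\|$; likewise $W\sim\mathcal{N}(0,\sigma^2)$ satisfies $\|W\|_{\psi_2}\le\sigma$. The triangle inequality for $\|\cdot\|_{\psi_2}$ then yields $\|Y\|_{\psi_2}\le\|Z\,X^\intercal\theta^*\|_{\psi_2}+\|W\|_{\psi_2}\le\|\theta^*\|+\sigma=K$, so $Y$ is sub-gaussian. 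This mirrors the bound $\|u^\intercal Y\|_{\psi_2}\le K\|u\|$ obtained in \prettyref{lem:u.Y-sub-gau}.

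For $(b)$: since $\|X\|^2\sim\chi_p^2$ has mean $p$, one has $\mathbb{E}\|X\|\le(\mathbb{E}\|X\|^2)^{1/2}=\sqrt p$, and $x\mapsto\|x\|$ is $1$-Lipschitz on $\mathbb{R}^p$, so $\|X\|$ concentrates around its mean at Gaussian rate. I would combine these two facts — either through the Gaussian Lipschitz concentration inequality or through a direct computation of $\mathbb{E}\exp(\|X\|^2/t^2)=\prod_{j=1}^p\mathbb{E}\exp((X^j)^2/t^2)=(1-2/t^2)^{-p/2}$ for $t^2>2$ — to conclude $\|\,\|X\|\,\|_{\psi_2}\le\sqrt{2p}$. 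The hard part will be precisely this step: pinning the numerical constant $\sqrt 2$ in front of $\sqrt p$ and keeping the bound uniform in $p\ge1$, whereas $(a)$ and $(c)$ are immediate from the linearity of Gaussians and the triangle inequality for the Orlicz norm.
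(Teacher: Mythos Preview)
Your arguments for $(a)$ and $(c)$ are essentially identical to the paper's: compute the variance of the Gaussian linear form, then use the triangle inequality for $\|\cdot\|_{\psi_2}$ together with $|Z|=1$.

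For $(b)$ you take a genuinely different route. The paper does not invoke Lipschitz concentration or compute the moment generating function of $\|X\|^2$; instead it uses the one-line Orlicz-norm chain
\[
\bigl\|\,\|X\|\,\bigr\|_{\psi_2}^{2}\;\le\;\bigl\|\,\|X\|^{2}\,\bigr\|_{\psi_1}
\;=\;\Bigl\|\sum_{j=1}^{p}(X^{j})^{2}\Bigr\|_{\psi_1}
\;\le\;\sum_{j=1}^{p}\bigl\|(X^{j})^{2}\bigr\|_{\psi_1}
\;\le\;\sum_{j=1}^{p}2\,\|X^{j}\|_{\psi_2}^{2}\;\le\;2p,
\]
exploiting the generic relation $\|A\|_{\psi_2}^2\le\|A^2\|_{\psi_1}$ and \prettyref{lem:orl-norm}\prettyref{enu:prod-sub-gau} with $X=Y=X^j$. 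This immediately yields the exact constant $\sqrt{2p}$ in the paper's normalization (where $\|X^j\|_{\psi_2}\le 1$), with no case analysis in $p$. Your approaches are correct in spirit and would give an $O(\sqrt{p})$ bound, but---as you yourself flag---pinning the constant $\sqrt{2}$ via Lipschitz concentration requires absorbing the mean term $\mathbb{E}\|X\|$ and an additive absolute constant, and the direct MGF computation $\mathbb{E}\exp(\|X\|^2/t^2)=(1-2/t^2)^{-p/2}$ is tied to the ``$\le 2$'' Orlicz definition, which is a different normalization from the one the paper uses for Gaussians. The paper's $\psi_2$-to-$\psi_1$ trick sidesteps both issues.
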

\begin{proof}
$(a)$ It is easy to see that $\text{Var}\left(u^{\intercal}X\right)=\text{Var}\left(\sum_{j=1}^{p}u^{j}X^{j}\right)=\sum_{j=1}^{p}\left|u^{j}\right|^{2}=\left\Vert u\right\Vert ^{2}$
and the result follows. $(b)$ Denote $A=\left\Vert X\right\Vert $,
then we have
\[
\left\Vert A\right\Vert _{\psi_{2}}^{2}\le\left\Vert A^{2}\right\Vert _{\psi_{1}}=\left\Vert \sum_{j=1}^{p}\left(X^{j}\right)^{2}\right\Vert _{\psi_{1}}\le\sum_{j=1}^{p}\left\Vert \left(X^{j}\right)^{2}\right\Vert _{\psi_{1}}\le\sum_{j=1}^{p}2\left\Vert X^{j}\right\Vert _{\psi_{2}}^{2}\le2p.
\]
$(c)$ By definition, $\left\Vert Y\right\Vert _{\psi_{2}}\le\left\Vert Z\cdot X^{\intercal}\theta^{*}\right\Vert _{\psi_{2}}+\left\Vert W\right\Vert _{\psi_{2}}=\left\Vert X^{\intercal}\theta^{*}\right\Vert _{\psi_{2}}+\sigma$
and since $X^{\intercal}\theta^{*}$ is a zero-mean Gaussian with
variance $\text{Var}\left(X^{\intercal}\theta^{*}\right)=\left\Vert \theta^{*}\right\Vert ^{2}$,
the result follows.
\end{proof}

\subsection{\label{subsec:pf-eps-1-mlr}Proof of \prettyref{lem:epsilon-1-mlr}}
\begin{proof}
By the Mean Value Theorem, we have
\[
w_{\theta}(Y,X)-w_{\theta^{*}}(Y,X)=\varsigma\left(\frac{2\theta^{\intercal}XY}{\sigma^{2}}\right)-\varsigma\left(\frac{2\theta^{*\intercal}XY}{\sigma^{2}}\right)=\frac{2}{\sigma^{2}}\varsigma'\left(\frac{2\vartheta^{\intercal}XY}{\sigma^{2}}\right)\left(\theta-\theta^{*}\right)^{\intercal}XY,
\]
where $\vartheta$ is a point on the line segment joining $\theta$
and $\theta^{*}$. Then in view of \prettyref{eq:Gam-mlr}, we have
for any $u\in\mathbb{S}^{p-1}$,
\begin{equation}
\left|u^{\intercal}\Gamma(\theta;(Y,X))\right|=\frac{2}{\sigma^{2}}\left|w_{\theta}(Y,X)-w_{\theta^{*}}(Y,X)\right|\cdot\left|u^{\intercal}XY\right|\stackrel{(a)}{\le}\frac{1}{2\sigma^{2}}\left\Vert \theta-\theta^{*}\right\Vert \cdot\left\Vert YX\right\Vert ^{2},\label{eq:int-1-mlr}
\end{equation}
where $(a)$ follows from the Cauchy-Schwartz inequality and the fact
that $\varsigma'(t)=\varsigma(t)(1-\varsigma(t))\le\frac{1}{4}$ for
$t\in\mathbb{R}$.

Define the random vector $A=\frac{2\sigma^{2}\Gamma(\theta;(Y,X))}{\left\Vert \theta-\theta^{*}\right\Vert }$
for $\theta\in B_{r}^{\times}(\theta^{*})$; let $A_{k}$ be the i.i.d.
copy of $A$ corresponding to $(Y_{k},X_{k})$ and let \textbf{$B_{n}=\frac{1}{n}\sum_{k=1}^{n}A_{k}-\mathbb{E}_{\theta^{*}}A$}.

In view of \prettyref{eq:int-1-mlr}, for $u\in\mathbb{S}^{p-1}$
and $t>0$, we have
\[
\Pr\left\{ \left|u^{\intercal}A\right|\ge t\right\} \le\Pr\left\{ \left\Vert YX\right\Vert ^{2}\ge t\right\} =\Pr\left\{ \left\Vert YX\right\Vert \ge\sqrt{t}\right\} \le C\exp\left(-ct^{\frac{1}{2}}\right),
\]
since $\left\Vert YX\right\Vert $ is sub-exponential with Orlicz
norm at most $CK\sqrt{2p}$ by \prettyref{lem:Y-X-props-mlr}. It
follows from Proposition 2.1.9 and its extensions in \cite{Tao} that
\[
\Pr\left\{ \left|u^{\intercal}B_{n}\right|\ge t\right\} \le C\exp\left(-ctn^{\frac{1}{2}-\epsilon}\right),
\]
where $0<\epsilon\ll\frac{1}{2}$ is a small constant. Then by discretization
of norm, for a $\frac{1}{2}$-net $\{u_{i}\}_{i=1}^{L}$ of $\mathbb{S}^{p-1}$,
\[
\left\Vert B_{n}\right\Vert \le2\max_{1\le i\le L}u_{i}^{\intercal}B_{n},
\]
and by the union bound and pigeonhole principle, we have
\[
\Pr\left\{ \left\Vert B_{n}\right\Vert \ge t\right\} \le\sum_{i=1}^{L}\Pr\left\{ \left|u_{i}^{\intercal}B_{n}\right|\ge\frac{t}{2}\right\} \le CL\exp\left(-\frac{1}{2}ctn^{\frac{1}{2}-\epsilon}\right).
\]
Then by equating the right hand side to $\delta$ and solving for
$t$, we obtain
\[
\left\Vert \Gamma_{n}(\theta;\{(Y_{k},X_{k})\})-\mathbb{E}_{\theta^{*}}\Gamma(\theta;(Y,X))\right\Vert \le C\frac{\log(L/\delta)}{\sigma^{2}n^{\frac{1}{2}-\epsilon}}\left\Vert \theta-\theta^{*}\right\Vert
\]
for $\theta\in B_{r}(\theta^{*})$ with probability at least $1-\delta$.
\end{proof}

\subsection{\label{subsec:pf-eps-2-mlr}Proof of \prettyref{lem:epsilon-2-mlr}}
\begin{proof}
In view of \prettyref{eq:V-mlr}, we have
\[
V(\theta'|\theta;\{(Y,X)\})=-\frac{1}{2\sigma^{2}}\left[\left(\theta'-\theta^{*}\right)^{\intercal}X\right]^{2}.
\]
By \prettyref{lem:Y-X-props-mlr}, the random variable $\left(\theta'-\theta^{*}\right)^{\intercal}X$
is Gaussian with Orlicz norm $\left\Vert \left(\theta'-\theta^{*}\right)^{\intercal}X\right\Vert _{\psi_{2}}\le\left\Vert \theta'-\theta^{*}\right\Vert $
and hence $V(\theta;\{(Y,X)\})$ is sub-exponential with Orlicz norm
\[
\left\Vert V(\theta'|\theta;\{(Y,X)\})\right\Vert _{\psi_{1}}\le\frac{C}{\sigma^{2}}\left\Vert \theta'-\theta^{*}\right\Vert ^{2}.
\]
The result follows from \prettyref{lem:constr-sub-exp} on concentration
of sub-exponential random variables.
\end{proof}

\subsection{\label{subsec:pf-eps-s-mlr}Proof of \prettyref{lem:epsilon-s-mlr}}
\begin{proof}
In view of \prettyref{eq:E-mlr}, for $u\in\mathbb{S}^{p-1}$, we
have
\[
\left|u^{\intercal}\mathcal{E}(Y,X)\right|\le\frac{1}{\sigma^{2}}\left(\left|u^{\intercal}X\right|\left|Y\right|+\left|X^{\intercal}\theta^{*}\right|\left|X^{\intercal}u\right|\right),
\]
since $\left|2w_{\theta^{*}}(Y,X)-1\right|<1$. Then by \prettyref{lem:Y-X-props-mlr},
$u^{\intercal}\mathcal{E}(Y,X)$ is sub-exponential with Orlicz norm
\[
\left\Vert u^{\intercal}\mathcal{E}(Y,X)\right\Vert _{\psi_{1}}\le\frac{C}{\sigma^{2}}\left(K+\left\Vert \theta^{*}\right\Vert \right)=\frac{C}{\sigma}\left(1+2\eta\right).
\]
The result follows from \prettyref{lem:constr-sub-exp} on concentration
of sub-exponential random variables.
\end{proof}

\section{Proofs for Linear Regression with Missing Covariates}

We give proofs for the Linear Regression with Missing Covariates model
in this section.

\subsection{Proofs for Oracle Convergence}

We prove lemmas for oracle convergence of the model, and we start
with some basic facts.

\subsubsection{Preliminaries}

We denote the expectation with respect to the random vector $\tau$
and its measurable functions by $\mathbb{E}_{\epsilon}\left[\cdot\right]$.
It is easy to see that $\mathbb{E}_{\epsilon}\left[\tau\right]=\epsilon\ones$,
$\mathbb{E}_{\epsilon}\left[s\right]=(1-\epsilon)\ones$ and more
generally, we have the following lemma.
\begin{lem}
\label{lem:mean-tau}$\mathbb{E}_{\epsilon}\left[x_{\tau}\right]=\epsilon x$,
$\mathbb{E}_{\epsilon}\left[\left\Vert x_{\tau}\right\Vert ^{2}\right]\le\epsilon\left\Vert x\right\Vert ^{2}$
and $\mathbb{E}_{\epsilon}\left[\left|x_{\tau}^{\intercal}y_{\tau}\right|^{2}\right]\le\epsilon\left|x^{\intercal}y\right|^{2}$
for $x,y\in\mathbb{R}^{p}$.
\end{lem}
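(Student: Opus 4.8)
The plan is to push everything down to the level of individual coordinates of the missing-pattern vector $\tau$. First I would record the structural fact underlying the whole lemma: the measure $\psi$ in \prettyref{eq:dist-psi} makes the coordinates $\tau^{1},\dots,\tau^{p}$ independent, and since $\mathbb{E}_{\epsilon}[\tau]=\epsilon\ones$ was noted just above, each $\tau^{j}$ is a Bernoulli$(\epsilon)$ variable. Two consequences are all I will need: because $\tau^{j}\in\{0,1\}$ one has $(\tau^{j})^{2}=\tau^{j}$, hence $\mathbb{E}_{\epsilon}[(\tau^{j})^{2}]=\epsilon$; and by independence $\mathbb{E}_{\epsilon}[\tau^{j}\tau^{k}]=\epsilon^{2}$ for $j\neq k$.

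With this in hand the first two assertions are immediate. Writing $x_{\tau}=x\odot\tau$ coordinate-wise, $(x_{\tau})^{j}=x^{j}\tau^{j}$, so $\mathbb{E}_{\epsilon}[(x_{\tau})^{j}]=\epsilon x^{j}$ and therefore $\mathbb{E}_{\epsilon}[x_{\tau}]=\epsilon x$. Likewise $\Vert x_{\tau}\Vert^{2}=\sum_{j}(x^{j})^{2}(\tau^{j})^{2}=\sum_{j}(x^{j})^{2}\tau^{j}$, and taking expectations gives $\mathbb{E}_{\epsilon}[\Vert x_{\tau}\Vert^{2}]=\epsilon\sum_{j}(x^{j})^{2}=\epsilon\Vert x\Vert^{2}$, so the claimed inequality in fact holds with equality.

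The third bound is the one that takes a line of computation. Using $(\tau^{j})^{2}=\tau^{j}$ again, $x_{\tau}^{\intercal}y_{\tau}=\sum_{j}x^{j}y^{j}\tau^{j}$; I would square this, take expectations, and split the result into the diagonal terms, which contribute $\epsilon\sum_{j}(x^{j}y^{j})^{2}$, and the off-diagonal terms, which contribute $\epsilon^{2}\sum_{j\neq k}x^{j}y^{j}x^{k}y^{k}$. Using $\epsilon^{2}\le\epsilon$ to enlarge the weight on the off-diagonal part and then recombining the two sums into a single square, the bound $\mathbb{E}_{\epsilon}[|x_{\tau}^{\intercal}y_{\tau}|^{2}]\le\epsilon|x^{\intercal}y|^{2}$ drops out. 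The only mildly delicate point in the whole lemma is this last bookkeeping of the cross terms; everything else follows directly from the independent-Bernoulli structure of $\tau$ together with the identity $(\tau^{j})^{2}=\tau^{j}$.
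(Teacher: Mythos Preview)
Your treatment of the first two claims is correct and is exactly the coordinate-wise computation one would expect; the paper's own proof is just the sentence ``These results follow from simple calculations,'' so there is nothing further to compare there. (As you observe, the second inequality is in fact an equality.)

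The third claim, however, has a genuine gap. With $a_j\coloneqq x^{j}y^{j}$ your computation gives
\[
\mathbb{E}_{\epsilon}\bigl[|x_{\tau}^{\intercal}y_{\tau}|^{2}\bigr]
=\epsilon\sum_{j}a_{j}^{2}+\epsilon^{2}\sum_{j\neq k}a_{j}a_{k},
\]
and you then replace $\epsilon^{2}$ by $\epsilon$ in the cross term. That step yields an \emph{upper} bound only when $\sum_{j\neq k}a_{j}a_{k}\ge 0$; if this off-diagonal sum is negative, multiplying it by the larger number $\epsilon$ instead of $\epsilon^{2}$ makes the expression smaller, not larger, and your chain of inequalities breaks. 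Equivalently, rewriting the expectation as $(\epsilon-\epsilon^{2})\sum_{j}a_{j}^{2}+\epsilon^{2}\bigl(\sum_{j}a_{j}\bigr)^{2}$, the desired bound reduces to $\sum_{j}a_{j}^{2}\le\bigl(\sum_{j}a_{j}\bigr)^{2}$, which fails in general.

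In fact the inequality as stated cannot hold for all $x,y\in\mathbb{R}^{p}$: with $p=2$, $x=(1,1)$, $y=(1,-1)$ one has $x^{\intercal}y=0$, yet $x_{\tau}^{\intercal}y_{\tau}=\tau^{1}-\tau^{2}$ and $\mathbb{E}_{\epsilon}[(\tau^{1}-\tau^{2})^{2}]=2\epsilon(1-\epsilon)>0$. So the gap in your argument is not repairable without an extra hypothesis (for instance that all products $x^{j}y^{j}$ share a sign, which would force the off-diagonal sum nonnegative and make your last step valid). The paper gives no details here, so one cannot tell what restriction, if any, was intended.
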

\begin{proof}
These results follow from simple calculations.
\end{proof}
Now for a \emph{fixed} missing pattern $\tau\in\{\circ,1\}^{p}$ and
$s=\ones-\tau$, taking expectation with respect to $(Y,X_{s})$ and
by some calculation of multivariate Gaussian distribution, we have
\[
\mathbb{E}_{\theta^{*}}\left[Y\mu_{\theta}(Y,X_{s})\right]=\theta_{s}^{*}+\frac{\sigma^{2}+\left\Vert \theta_{\tau}^{*}\right\Vert ^{2}+\theta_{s}^{*\intercal}(\theta_{s}^{*}-\theta_{s})}{\sigma^{2}+\left\Vert \theta_{\tau}\right\Vert ^{2}}\theta_{\tau}\quad\text{and}
\]
\begin{align*}
\mathbb{E}_{\theta^{*}}\left[\mu_{\theta}(Y,X_{s})\mu_{\theta}(Y,X_{s})^{\intercal}\right] & =\text{diag}\{s\}\\
 & +\frac{1}{\sigma^{2}+\left\Vert \theta_{\tau}\right\Vert ^{2}}\left(\theta_{\tau}(\theta_{s}^{*}-\theta_{s})^{\intercal}+(\theta_{s}^{*}-\theta_{s})\theta_{\tau}^{\intercal}\right)\\
 & +\frac{\sigma^{2}+\left\Vert \theta_{\tau}^{*}\right\Vert ^{2}+\left\Vert \theta_{s}^{*}-\theta_{s}\right\Vert ^{2}}{\left(\sigma^{2}+\left\Vert \theta_{\tau}\right\Vert ^{2}\right)^{2}}\theta_{\tau}\theta_{\tau}^{\intercal},
\end{align*}
and it follows that
\begin{align}
\mathbb{E}_{\theta^{*}}\Sigma_{\theta}(Y,X_{s}) & =I_{p}\nonumber \\
 & +\frac{1}{\sigma^{2}+\left\Vert \theta_{\tau}\right\Vert ^{2}}\left(\theta_{\tau}(\theta_{s}^{*}-\theta_{s})^{\intercal}+(\theta_{s}^{*}-\theta_{s})\theta_{\tau}^{\intercal}\right)\nonumber \\
 & +\frac{\left\Vert \theta_{\tau}^{*}\right\Vert ^{2}-\left\Vert \theta_{\tau}\right\Vert ^{2}+\left\Vert \theta_{s}^{*}-\theta_{s}\right\Vert ^{2}}{\left(\sigma^{2}+\left\Vert \theta_{\tau}\right\Vert ^{2}\right)^{2}}\theta_{\tau}\theta_{\tau}^{\intercal}.\label{eq:ora-Sigma}
\end{align}
Since $\mathbb{E}_{\theta^{*}}\left[Y\mu_{\theta^{*}}(Y,X_{s})\right]=\theta^{*}$
and $\mathbb{E}_{\theta^{*}}\Sigma_{\theta^{*}}(Y,X_{s})=I_{p}$,
we have
\begin{align}
\sigma^{2}\cdot\mathbb{E}_{\theta^{*}}\Gamma(\theta;(Y,X_{s})) & =\mathbb{E}_{\theta^{*}}\left[Y\mu_{\theta}(Y,X_{s})-\Sigma_{\theta}(Y,X_{s})\theta^{*}\right]\nonumber \\
 & =\theta_{\tau}-\theta_{\tau}^{*}+\frac{\theta_{\tau}^{\intercal}\theta_{\tau}^{*}}{\sigma^{2}+\left\Vert \theta_{\tau}\right\Vert ^{2}}\left(\theta_{s}-\theta_{s}^{*}\right)+\frac{\zeta\cdot\theta_{\tau}}{\left(\sigma^{2}+\left\Vert \theta_{\tau}\right\Vert ^{2}\right)^{2}}\label{eq:ora-Gam-rmc}
\end{align}
where
\begin{equation}
\zeta\coloneqq\left(\sigma^{2}+\left\Vert \theta_{\tau}\right\Vert ^{2}-\theta_{\tau}^{\intercal}\theta_{\tau}^{*}\right)\left(\left\Vert \theta_{\tau}^{*}\right\Vert ^{2}-\left\Vert \theta_{\tau}\right\Vert ^{2}\right)-\theta_{\tau}^{\intercal}\theta_{\tau}^{*}\left\Vert \theta_{s}^{*}-\theta_{s}\right\Vert ^{2}.\label{eq:zeta-rmc}
\end{equation}
Further, we have the following bound for $\zeta$.
\begin{lem}
$\left|\zeta\right|\le\left(\sigma^{2}+\left\Vert \theta_{\tau}\right\Vert ^{2}\right)\left\Vert \theta_{\tau}-\theta_{\tau}^{*}\right\Vert ^{2}+2\sigma^{2}\left\Vert \theta_{\tau}\right\Vert \left\Vert \theta_{\tau}-\theta_{\tau}^{*}\right\Vert +\left\Vert \theta_{\tau}^{*}\right\Vert \left\Vert \theta_{\tau}\right\Vert \left\Vert \theta-\theta^{*}\right\Vert ^{2}$.
\end{lem}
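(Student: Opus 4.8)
The plan is to split $\zeta$ into two pieces and bound each separately, treating $\theta_\tau-\theta_\tau^*$ as the ``small'' quantity and $\|\theta_\tau\|,\|\theta_\tau^*\|$ as the ``large'' ones. Write $\zeta=\zeta_1-\zeta_2$ with
\[
\zeta_1:=\bigl(\sigma^2+\|\theta_\tau\|^2-\theta_\tau^\intercal\theta_\tau^*\bigr)\bigl(\|\theta_\tau^*\|^2-\|\theta_\tau\|^2\bigr),\qquad \zeta_2:=\bigl(\theta_\tau^\intercal\theta_\tau^*\bigr)\|\theta_s^*-\theta_s\|^2,
\]
so that $|\zeta|\le|\zeta_1|+|\zeta_2|$. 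The term $\|\theta_\tau^*\|\|\theta_\tau\|\|\theta-\theta^*\|^2$ in the claimed bound will emerge by merging a contribution from $\zeta_1$ with the estimate for $\zeta_2$, using the disjoint-support identity $\|\theta_\tau-\theta_\tau^*\|^2+\|\theta_s-\theta_s^*\|^2=\|\theta-\theta^*\|^2$, valid since $\theta-\theta^*=(\theta_\tau-\theta_\tau^*)+(\theta_s-\theta_s^*)$ and the two summands are supported on the disjoint coordinate sets $\tau$ and $s=\ones-\tau$.

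For $\zeta_1$, put $d:=\theta_\tau-\theta_\tau^*$ and $b:=\theta_\tau^\intercal d$. Two elementary identities drive the estimate: $\sigma^2+\|\theta_\tau\|^2-\theta_\tau^\intercal\theta_\tau^*=\sigma^2+b$, and, expanding $\|d\|^2=\|\theta_\tau\|^2-2\theta_\tau^\intercal\theta_\tau^*+\|\theta_\tau^*\|^2$, also $\|\theta_\tau^*\|^2-\|\theta_\tau\|^2=\|d\|^2-2b$. Hence
\[
\zeta_1=(\sigma^2+b)(\|d\|^2-2b)=\sigma^2\|d\|^2-2\sigma^2 b+b\bigl(\|d\|^2-2b\bigr).
\]
By Cauchy--Schwarz $|b|\le\|\theta_\tau\|\|d\|$, which handles $-2\sigma^2 b$ via $2\sigma^2|b|\le 2\sigma^2\|\theta_\tau\|\|d\|$. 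For the last summand one must \emph{keep the factored form} $\|d\|^2-2b=\|\theta_\tau^*\|^2-\|\theta_\tau\|^2=(\|\theta_\tau^*\|-\|\theta_\tau\|)(\|\theta_\tau^*\|+\|\theta_\tau\|)$ and apply the reverse triangle inequality $\bigl|\|\theta_\tau^*\|-\|\theta_\tau\|\bigr|\le\|d\|$, giving $\bigl|\,\|d\|^2-2b\,\bigr|\le\|d\|(\|\theta_\tau^*\|+\|\theta_\tau\|)$; combined with $|b|\le\|\theta_\tau\|\|d\|$ this yields $\bigl|b(\|d\|^2-2b)\bigr|\le\|\theta_\tau\|\|d\|^2(\|\theta_\tau^*\|+\|\theta_\tau\|)$. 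Collecting the three bounds,
\[
|\zeta_1|\le(\sigma^2+\|\theta_\tau\|^2)\|d\|^2+2\sigma^2\|\theta_\tau\|\|d\|+\|\theta_\tau\|\|\theta_\tau^*\|\|d\|^2.
\]

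Finally, Cauchy--Schwarz on $\zeta_2$ gives $|\zeta_2|\le\|\theta_\tau\|\|\theta_\tau^*\|\|\theta_s^*-\theta_s\|^2$. Adding this to the bound for $|\zeta_1|$, the two terms $\|\theta_\tau\|\|\theta_\tau^*\|\|d\|^2$ and $\|\theta_\tau\|\|\theta_\tau^*\|\|\theta_s-\theta_s^*\|^2$ combine through the disjoint-support identity into a single $\|\theta_\tau\|\|\theta_\tau^*\|\|\theta-\theta^*\|^2$, and since $\|d\|=\|\theta_\tau-\theta_\tau^*\|$ we recover exactly the asserted inequality. The one real obstacle is algebraic bookkeeping: bounding $b(\|d\|^2-2b)$ naively by $|b|(\|d\|^2+2|b|)$ produces a stray cubic term $\|\theta_\tau\|\|d\|^3$ together with an extra $\|\theta_\tau\|^2\|d\|^2$ that cannot be absorbed, whereas retaining the factorization $\|d\|^2-2b=\|\theta_\tau^*\|^2-\|\theta_\tau\|^2$ makes the reverse triangle inequality applicable and makes the cross term reassemble cleanly; one must also verify that this $\zeta_1$ cross term and the $\zeta_2$ bound merge into one $\|\theta-\theta^*\|^2$ term rather than leaving a spurious factor $2$.
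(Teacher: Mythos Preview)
Your proof is correct and follows essentially the same route as the paper: the same split $\zeta=\zeta_1-\zeta_2$, the same Cauchy--Schwarz bound $|\theta_\tau^\intercal d|\le\|\theta_\tau\|\|d\|$, the same factorization $\|\theta_\tau^*\|^2-\|\theta_\tau\|^2=(\|\theta_\tau^*\|-\|\theta_\tau\|)(\|\theta_\tau^*\|+\|\theta_\tau\|)$ with the reverse triangle inequality, and the same merge of the $\|\theta_\tau-\theta_\tau^*\|^2$ and $\|\theta_s-\theta_s^*\|^2$ pieces via the disjoint-support identity. The only cosmetic difference is that you expand $(\sigma^2+b)(\|d\|^2-2b)$ additively before bounding, so that the $\sigma^2\|d\|^2$ term appears directly, whereas the paper bounds the two factors of $\zeta_1$ as a product first and then recovers $\sigma^2\|d\|^2$ through one extra application of $\|\theta_\tau^*\|\le\|\theta_\tau\|+\|d\|$.
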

\begin{proof}
By the triangle inequality and Cauchy-Schwartz inequality, we have
\begin{align*}
\left|\zeta\right| & \le\left(\sigma^{2}+\left\Vert \theta_{\tau}\right\Vert \left\Vert \theta_{\tau}-\theta_{\tau}^{*}\right\Vert \right)\left(\left\Vert \theta_{\tau}^{*}\right\Vert +\left\Vert \theta_{\tau}\right\Vert \right)\left\Vert \theta_{\tau}-\theta_{\tau}^{*}\right\Vert +\left\Vert \theta_{\tau}\right\Vert \left\Vert \theta_{\tau}^{*}\right\Vert \left\Vert \theta_{s}^{*}-\theta_{s}\right\Vert ^{2}\\
 & =\sigma^{2}\left\Vert \theta_{\tau}-\theta_{\tau}^{*}\right\Vert \left(\left\Vert \theta_{\tau}\right\Vert +\left\Vert \theta_{\tau}^{*}\right\Vert \right)+\left\Vert \theta_{\tau}\right\Vert ^{2}\left\Vert \theta_{\tau}-\theta_{\tau}^{*}\right\Vert ^{2}+\left\Vert \theta_{\tau}\right\Vert \left\Vert \theta_{\tau}^{*}\right\Vert \left\Vert \theta^{*}-\theta\right\Vert ^{2}\\
 & \le\sigma^{2}\left\Vert \theta_{\tau}-\theta_{\tau}^{*}\right\Vert \left(2\left\Vert \theta_{\tau}\right\Vert +\left\Vert \theta_{\tau}^{*}-\theta_{\tau}\right\Vert \right)+\left\Vert \theta_{\tau}\right\Vert ^{2}\left\Vert \theta_{\tau}-\theta_{\tau}^{*}\right\Vert ^{2}+\left\Vert \theta_{\tau}\right\Vert \left\Vert \theta_{\tau}^{*}\right\Vert \left\Vert \theta^{*}-\theta\right\Vert ^{2}\\
 & =\left(\sigma^{2}+\left\Vert \theta_{\tau}\right\Vert ^{2}\right)\left\Vert \theta_{\tau}-\theta_{\tau}^{*}\right\Vert ^{2}+2\sigma^{2}\left\Vert \theta_{\tau}\right\Vert \left\Vert \theta_{\tau}-\theta_{\tau}^{*}\right\Vert +\left\Vert \theta_{\tau}^{*}\right\Vert \left\Vert \theta_{\tau}\right\Vert \left\Vert \theta-\theta^{*}\right\Vert ^{2},
\end{align*}
and the result follows.
\end{proof}

\subsubsection{\label{subsec:pf-bnd-Gam-rmc}Proof of \prettyref{lem:bound-Gam-rmc}}
\begin{proof}
In view of \prettyref{eq:ora-Gam-rmc} and \prettyref{lem:mean-tau},
we have
\[
\sigma^{2}\cdot\mathbb{E}\Gamma(\theta;(Y,X_{s}))=\sigma^{2}\cdot\mathbb{E}_{\epsilon}\left[\mathbb{E}_{\theta^{*}}\Gamma(\theta;(Y,X_{s}))\right]=\epsilon\left(\theta-\theta^{*}\right)+T_{1}+T_{2},
\]
where
\[
T_{1}\coloneqq\mathbb{E}_{\epsilon}\left[\frac{\theta_{\tau}^{\intercal}\theta_{\tau}^{*}}{\sigma^{2}+\left\Vert \theta_{\tau}\right\Vert ^{2}}\left(\theta_{s}-\theta_{s}^{*}\right)\right]\quad\text{and}\quad T_{2}\coloneqq\mathbb{E}_{\epsilon}\left[\frac{\zeta\cdot\theta_{\tau}}{\left(\sigma^{2}+\left\Vert \theta_{\tau}\right\Vert ^{2}\right)^{2}}\right],
\]
and we can bound $T_{1}$ as
\begin{align}
\left\Vert T_{1}\right\Vert  & \le\frac{1}{\sigma^{2}}\mathbb{E}_{\epsilon}\left[\left|\theta_{\tau}^{\intercal}\theta_{\tau}^{*}\right|\left\Vert \theta_{s}-\theta_{s}^{*}\right\Vert \right]\le\frac{1}{\sigma^{2}}\mathbb{E}_{\epsilon}\left[\left|\theta_{\tau}^{\intercal}\theta_{\tau}^{*}\right|^{2}\right]^{\frac{1}{2}}\mathbb{E}_{\epsilon}\left[\left\Vert \theta_{s}-\theta_{s}^{*}\right\Vert ^{2}\right]^{\frac{1}{2}}\nonumber \\
 & \le\frac{1}{\sigma^{2}}\sqrt{\epsilon\left(1-\epsilon\right)}\left|\theta^{\intercal}\theta^{*}\right|\left\Vert \theta-\theta^{*}\right\Vert \le\left(1+\omega\right)\eta^{2}\sqrt{\epsilon\left(1-\epsilon\right)}\left\Vert \theta-\theta^{*}\right\Vert ,\label{eq:T-1}
\end{align}
and for $T_{2}$ we have
\[
\left\Vert T_{2}\right\Vert \le\mathbb{E}_{\epsilon}\left[\frac{\left|\zeta\right|\left\Vert \theta_{\tau}\right\Vert }{\left(\sigma^{2}+\left\Vert \theta_{\tau}\right\Vert ^{2}\right)^{2}}\right]\le S_{1}+S_{2}+S_{3}\text{ where }S_{1}\coloneqq\mathbb{E}_{\epsilon}\left[\frac{\left\Vert \theta_{\tau}-\theta_{\tau}^{*}\right\Vert ^{2}\left\Vert \theta_{\tau}\right\Vert }{\sigma^{2}+\left\Vert \theta_{\tau}\right\Vert ^{2}}\right]
\]
\[
S_{2}\coloneqq\mathbb{E}_{\epsilon}\left[\frac{2\sigma^{2}\left\Vert \theta_{\tau}\right\Vert ^{2}\left\Vert \theta_{\tau}-\theta_{\tau}^{*}\right\Vert }{\left(\sigma^{2}+\left\Vert \theta_{\tau}\right\Vert ^{2}\right)^{2}}\right],\quad S_{3}\coloneqq\mathbb{E}_{\epsilon}\left[\frac{\left\Vert \theta_{\tau}^{*}\right\Vert \left\Vert \theta_{\tau}\right\Vert ^{2}\left\Vert \theta-\theta^{*}\right\Vert ^{2}}{\left(\sigma^{2}+\left\Vert \theta_{\tau}\right\Vert ^{2}\right)^{2}}\right]
\]
and in view of \prettyref{lem:mean-tau} and Cauchy-Schwartz inequality,
we have
\[
S_{1}\le\frac{1}{\sigma^{2}}\mathbb{E}_{\epsilon}\left[\left\Vert \theta_{\tau}-\theta_{\tau}^{*}\right\Vert \left\Vert \theta_{\tau}\right\Vert \right]\left\Vert \theta-\theta^{*}\right\Vert \le\frac{\epsilon}{\sigma^{2}}\left\Vert \theta\right\Vert \left\Vert \theta-\theta^{*}\right\Vert ^{2}\le\omega\left(1+\omega\right)\eta^{2}\epsilon\left\Vert \theta-\theta^{*}\right\Vert ,
\]
\[
S_{2}\le\frac{2}{\sigma^{2}}\mathbb{E}_{\epsilon}\left[\left\Vert \theta_{\tau}-\theta_{\tau}^{*}\right\Vert \left\Vert \theta_{\tau}\right\Vert \right]\left\Vert \theta\right\Vert \le\frac{2\epsilon}{\sigma^{2}}\left\Vert \theta\right\Vert ^{2}\left\Vert \theta-\theta^{*}\right\Vert \le2\left(1+\omega\right)^{2}\eta^{2}\epsilon\left\Vert \theta-\theta^{*}\right\Vert \text{ and }
\]
\[
S_{3}\le\frac{1}{\sigma^{4}}\mathbb{E}_{\epsilon}\left[\left\Vert \theta_{\tau}^{*}\right\Vert \left\Vert \theta_{\tau}\right\Vert \right]\left\Vert \theta\right\Vert \left\Vert \theta-\theta^{*}\right\Vert ^{2}\le\frac{\epsilon}{\sigma^{4}}\left\Vert \theta^{*}\right\Vert \left\Vert \theta\right\Vert ^{2}\left\Vert \theta-\theta^{*}\right\Vert ^{2}\le\omega\left(1+\omega\right)^{2}\eta^{4}\epsilon\left\Vert \theta-\theta^{*}\right\Vert .
\]
Hence we have
\begin{equation}
\left\Vert T_{2}\right\Vert \le\left(\omega+2\left(1+\omega\right)+\omega\left(1+\omega\right)\eta^{2}\right)\left(1+\omega\right)\eta^{2}\epsilon\left\Vert \theta-\theta^{*}\right\Vert ,\label{eq:T-2}
\end{equation}
and therefore for $\theta\in B_{r}\left(\theta^{*}\right)$, we have
\[
\left\Vert \mathbb{E}\Gamma(\theta;(Y,X_{s}))\right\Vert \le\frac{1}{\sigma^{2}}\left(\epsilon\left\Vert \theta-\theta^{*}\right\Vert +\left\Vert T_{1}\right\Vert +\left\Vert T_{2}\right\Vert \right)\le\gamma\left(\omega,\eta\right)\left\Vert \theta-\theta^{*}\right\Vert
\]
where by \prettyref{eq:T-1} and \prettyref{eq:T-2},
\[
\gamma\left(\omega,\eta\right)=\frac{1}{\sigma^{2}}\left(\epsilon\left(\omega\xi^{2}+\left(3\omega+2\right)\xi+1\right)+\xi\sqrt{\epsilon\left(1-\epsilon\right)}\right),
\]
and the result follows.
\end{proof}

\subsubsection{\label{subsec:pf-bnd-V-rmc}Proof of \prettyref{lem:bound-V-rmc}}
\begin{proof}
In view of \prettyref{eq:V-rmc}, we have
\[
\mathbb{E}V(\theta'|\theta;(Y,X_{s}))=-\frac{1}{2\sigma^{2}}(\theta'-\theta^{*})^{\intercal}\mathbb{E}\Sigma_{\theta}(Y,X_{s})(\theta'-\theta^{*})
\]
and by \prettyref{eq:ora-Sigma}, it follows that
\[
\mathbb{E}\Sigma_{\theta}(Y,X_{s})=\mathbb{E}_{\epsilon}\left[\mathbb{E}_{\theta^{*}}\Sigma_{\theta}(Y,X_{s})\right]=I_{p}+\Sigma_{1}+\Sigma_{2}-\Sigma_{3},
\]
where
\[
\Sigma_{1}\coloneqq\mathbb{E}_{\epsilon}\left[\frac{1}{\sigma^{2}+\left\Vert \theta_{\tau}\right\Vert ^{2}}\left(\theta_{\tau}(\theta_{s}^{*}-\theta_{s})^{\intercal}+(\theta_{s}^{*}-\theta_{s})\theta_{\tau}^{\intercal}\right)\right],
\]
\[
\Sigma_{2}\coloneqq\mathbb{E}_{\epsilon}\left[\frac{\sigma^{2}+\left\Vert \theta_{\tau}^{*}\right\Vert ^{2}+\left\Vert \theta_{s}^{*}-\theta_{s}\right\Vert ^{2}}{\left(\sigma^{2}+\left\Vert \theta_{\tau}\right\Vert ^{2}\right)^{2}}\theta_{\tau}\theta_{\tau}^{\intercal}\right]\text{ and}
\]
\[
\Sigma_{3}\coloneqq\mathbb{E}_{\epsilon}\left[\frac{1}{\sigma^{2}+\left\Vert \theta_{\tau}\right\Vert ^{2}}\theta_{\tau}\theta_{\tau}^{\intercal}\right].
\]
For $u\in\mathbb{R}^{p}$ and $\theta\in B_{r}\left(\theta^{*}\right)$,
by \prettyref{lem:mean-tau} and Cauchy-Schwartz inequality, we have
\begin{align*}
\left|u^{\intercal}\Sigma_{1}u\right| & \le\frac{2}{\sigma^{2}}\mathbb{E}_{\epsilon}\left[\left|u_{\tau}^{\intercal}\theta_{\tau}\right|\left|\left(\theta_{s}^{*}-\theta_{s}\right)^{\intercal}u_{s}\right|\right]\le\frac{2}{\sigma^{2}}\mathbb{E}_{\epsilon}\left[\left|u_{\tau}^{\intercal}\theta_{\tau}\right|^{2}\right]^{\frac{1}{2}}\mathbb{E}_{\epsilon}\left[\left|\left(\theta_{s}^{*}-\theta_{s}\right)^{\intercal}u_{s}\right|^{2}\right]^{\frac{1}{2}}\\
 & \le\frac{2}{\sigma^{2}}\sqrt{\epsilon\left(1-\epsilon\right)}\left|u^{\intercal}\theta\right|\left|\left(\theta^{*}-\theta\right)^{\intercal}u\right|\le2\omega\left(1+\omega\right)\eta^{2}\sqrt{\epsilon\left(1-\epsilon\right)}\left\Vert u\right\Vert ^{2},
\end{align*}
\[
\left|u^{\intercal}\Sigma_{3}u\right|\le\frac{1}{\sigma^{2}}\mathbb{E}_{\epsilon}\left[\left|u_{\tau}^{\intercal}\theta_{\tau}\right|^{2}\right]\le\frac{\epsilon}{\sigma^{2}}\left|u^{\intercal}\theta\right|^{2}\le\left(1+\omega\right)^{2}\eta^{2}\epsilon\left\Vert u\right\Vert ^{2},
\]
and $u^{\intercal}\Sigma_{2}u\ge0$, since $\Sigma_{2}$ is positive
semi-definite. Then
\begin{align*}
u^{\intercal}\mathbb{E}\Sigma_{\theta}(Y,X_{s})u & =u^{\intercal}\left(I_{p}+\Sigma_{1}+\Sigma_{2}-\Sigma_{3}\right)u\\
 & \ge\left(1-2\omega\left(1+\omega\right)\eta^{2}\sqrt{\epsilon\left(1-\epsilon\right)}-\left(1+\omega\right)^{2}\eta^{2}\epsilon\right)\left\Vert u\right\Vert ^{2},
\end{align*}
and it follows that
\[
\mathbb{E}V(\theta'|\theta;(Y,X_{s}))\le-\nu\left(\omega,\eta\right)\left\Vert \theta'-\theta^{*}\right\Vert ^{2}\text{ for }\theta\in B_{r}\left(\theta^{*}\right)\text{ and }\theta'\in\mathbb{R}^{p},
\]
where
\[
\nu\left(\omega,\eta\right)=\frac{1}{2\sigma^{2}}\left(1-2\omega\left(1+\omega\right)\eta^{2}\sqrt{\epsilon\left(1-\epsilon\right)}-\left(1+\omega\right)^{2}\eta^{2}\epsilon\right),
\]
and the lemma is proved.
\end{proof}

\subsection{Proofs for Empirical Convergence}

We prove lemmas for empirical convergence of the model, we begin with
some basic facts.

\subsubsection{Preliminaries}

To ease notations, we omit the dependence of $(Y,X_{s})$ in $\mu_{\theta}$,
$b_{\theta}$ and $\tau$ in $A_{\theta}$ etc. in this section. For
$\tau\in\{\circ,1\}^{p}$, denote $[\tau]\coloneqq\left\{ j\in\mathbb{N}\mid\tau^{j}=1\right\} $.
We first prove some technical results for related random variables.
Recall in this model, $Y=\left\langle \theta^{*},X\right\rangle +W$
with $X\sim\mathcal{N}\left(0,I_{p}\right)$ and $W\sim\mathcal{N}\left(0,\sigma^{2}\right)$.
\begin{lem}
\label{lem:emp-pre-1-rmc}For $u\in\mathbb{R}^{p}$, missing pattern
$\tau\in\{\circ,1\}^{p}$ and $s=\ones-\tau$, the random variable
$u_{s}^{\intercal}X_{s}$ is Gaussian with Orlicz norm $\left\Vert u_{s}^{\intercal}X_{s}\right\Vert _{\psi_{2}}=\left\Vert u_{s}\right\Vert $
and $Y$ is Gaussian with Orlicz norm $\left\Vert Y\right\Vert _{\psi_{2}}\le\sqrt{\left\Vert \theta^{*}\right\Vert ^{2}+\sigma^{2}}$,
while $Y-u_{s}^{\intercal}X_{s}$ is sub-gaussian with Orlicz norm
$\left\Vert Y-u_{s}^{\intercal}X_{s}\right\Vert _{\psi_{2}}\le\left\Vert u_{\tau}\right\Vert +\sigma.$
\end{lem}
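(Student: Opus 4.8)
The plan is to observe that every random variable occurring in the lemma is an explicit linear combination of the jointly independent centered Gaussians $X^{1},\dots,X^{p},W$, with $X^{j}\sim\mathcal{N}(0,1)$ and $W\sim\mathcal{N}(0,\sigma^{2})$; hence each is itself a centered Gaussian, and its $\psi_{2}$ Orlicz norm is its standard deviation under the same normalization convention already used in \prettyref{lem:Y-X-props-mlr}$(a)$. The whole statement then reduces to three elementary variance computations together with the subadditivity of $\left\Vert \cdot\right\Vert _{\psi_{2}}$.

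First I would handle $u_{s}^{\intercal}X_{s}=\sum_{j:\,s^{j}=1}u^{j}X^{j}$, a linear functional of the i.i.d.\ standard Gaussians indexed by the observed coordinates; it is centered Gaussian with variance $\sum_{j:\,s^{j}=1}|u^{j}|^{2}=\left\Vert u_{s}\right\Vert ^{2}$, so $\left\Vert u_{s}^{\intercal}X_{s}\right\Vert _{\psi_{2}}=\left\Vert u_{s}\right\Vert$ (the degenerate case $u_{s}=0$ being trivial). Since $X$ and $W$ are independent, $Y=\langle\theta^{*},X\rangle+W$ is centered Gaussian with variance $\left\Vert \theta^{*}\right\Vert ^{2}+\sigma^{2}$, whence $\left\Vert Y\right\Vert _{\psi_{2}}\le\sqrt{\left\Vert \theta^{*}\right\Vert ^{2}+\sigma^{2}}$.

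For the last assertion I would exploit the partition of the coordinate set into observed indices and missing indices: writing $\theta^{*}=\theta^{*}_{s}+\theta^{*}_{\tau}$ and $X=X_{s}+X_{\tau}$ with disjoint supports, the cross terms vanish and $\langle\theta^{*},X\rangle=\langle\theta^{*}_{s},X_{s}\rangle+\langle\theta^{*}_{\tau},X_{\tau}\rangle$, so that
\[
Y-u_{s}^{\intercal}X_{s}=\langle\theta^{*}_{s}-u_{s},X_{s}\rangle+\langle\theta^{*}_{\tau},X_{\tau}\rangle+W
\]
is a sum of three independent centered Gaussians. Applying the triangle inequality for $\left\Vert \cdot\right\Vert _{\psi_{2}}$ and the variance computation of the first step term by term gives $\left\Vert Y-u_{s}^{\intercal}X_{s}\right\Vert _{\psi_{2}}\le\left\Vert \theta^{*}_{s}-u_{s}\right\Vert +\left\Vert \theta^{*}_{\tau}\right\Vert +\sigma$; for the parameter value $u=\theta^{*}$ relevant to the subsequent estimates this reduces to $\left\Vert \theta^{*}_{\tau}\right\Vert +\sigma=\left\Vert u_{\tau}\right\Vert +\sigma$, the stated bound, and in general one simply retains the extra summand $\left\Vert \theta^{*}_{s}-u_{s}\right\Vert$.

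There is no genuine analytic obstacle here: the only care needed is in the bookkeeping — keeping the observed and missing index sets straight so that cross terms such as $\langle\theta^{*}_{s},X_{\tau}\rangle$ correctly drop out — and in fixing the $\psi_{2}$-normalization so that ``Orlicz norm'' denotes the sub-Gaussian variance proxy, equal to the standard deviation for a Gaussian, consistently with its use elsewhere in the paper.
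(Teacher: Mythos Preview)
Your approach is the same as the paper's: compute the variances of the Gaussian linear combinations and use subadditivity of $\left\Vert\cdot\right\Vert_{\psi_2}$ for the last claim. You are also right that the stated bound $\left\Vert u_\tau\right\Vert+\sigma$ holds as written only when $u_s=\theta^{*}_s$; the paper's own proof tacitly makes this substitution (writing $Y-\theta_s^{*\intercal}X_s=\theta_\tau^{*\intercal}X_\tau+W$ and then reverting to $u$ in the conclusion), so your extra summand $\left\Vert\theta^{*}_s-u_s\right\Vert$ is the honest general bound and your bookkeeping is cleaner than the original.
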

\begin{proof}
By rotation invariance of Gaussian variables, we have
\[
\text{Var}\left(u_{s}^{\intercal}X_{s}\right)=\text{Var}\left(\sum_{j\in[s]}u^{j}X^{j}\right)=\sum_{j\in[s]}\text{Var}\left(u^{j}X^{j}\right)=\sum_{j\in[s]}\left|u^{j}\right|^{2}=\left\Vert u_{s}\right\Vert ^{2},
\]
hence $\left\Vert u_{s}^{\intercal}X_{s}\right\Vert _{\psi_{2}}=\left\Vert u_{s}\right\Vert $.
Since $Y=\theta^{*\intercal}X+W$, it is clearly Gaussian and
\[
\text{Var}\left(Y\right)=\text{Var}\left(\theta^{*\intercal}X\right)+\text{Var}\left(W\right)=\left\Vert \theta^{*}\right\Vert ^{2}+\sigma^{2},
\]
hence $\left\Vert Y\right\Vert _{\psi_{2}}=\sqrt{\left\Vert \theta^{*}\right\Vert ^{2}+\sigma^{2}}$.
Now since $Y-\theta_{s}^{*\intercal}X_{s}=\theta_{\tau}^{*\intercal}X_{\tau}+W$,
then
\[
\left\Vert Y-u_{s}^{\intercal}X_{s}\right\Vert _{\psi_{2}}\le\left\Vert u_{\tau}^{\intercal}X_{\tau}\right\Vert _{\psi_{2}}+\left\Vert W\right\Vert _{\psi_{2}}=\left\Vert u_{\tau}\right\Vert +\sigma.
\]
\end{proof}
\begin{lem}
\label{lem:emp-pre-2-rmc}For $u\in\mathbb{R}^{p}$, missing pattern
$\tau\in\{\circ,1\}^{p}$ and $s=\ones-\tau$, the random variable
$u^{\intercal}\mu_{\theta}$ is sub-gaussian with Orlicz norm $\left\Vert u^{\intercal}\mu_{\theta}\right\Vert _{\psi_{2}}\le\sqrt{3}\left\Vert u\right\Vert $
and $u^{\intercal}\mu_{\theta}Y$ is sub-exponential with Orlicz norm
$\left\Vert u^{\intercal}\mu_{\theta}Y\right\Vert _{\psi_{1}}\le CK\left\Vert u\right\Vert $
where $K\coloneqq\left\Vert \theta^{*}\right\Vert +\sigma$.
\end{lem}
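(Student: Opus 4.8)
The plan is to exploit the explicit linear--Gaussian form of $\mu_\theta$. First I would record the decomposition that follows from $\mu_\theta = X_s + b_\theta$ together with $b_\theta(Y,X_s) = \dfrac{Y - \theta_s^\intercal X_s}{\sigma^2 + \|\theta_\tau\|^2}\,\theta_\tau$: since $X_s$ vanishes off $[s]$ and $\theta_\tau$ off $[\tau]$,
\[
u^\intercal \mu_\theta \;=\; u_s^\intercal X_s \;+\; \frac{u_\tau^\intercal \theta_\tau}{\sigma^2 + \|\theta_\tau\|^2}\bigl(Y - \theta_s^\intercal X_s\bigr).
\]
Both terms are sub-gaussian, so by the triangle inequality for $\|\cdot\|_{\psi_2}$, then \prettyref{lem:emp-pre-1-rmc} (which gives $\|u_s^\intercal X_s\|_{\psi_2} = \|u_s\|$ and, applied with $\theta$ in place of $u$, $\|Y - \theta_s^\intercal X_s\|_{\psi_2} \le \|\theta_\tau\| + \sigma$), and Cauchy--Schwarz ($|u_\tau^\intercal\theta_\tau| \le \|u_\tau\|\,\|\theta_\tau\|$), I obtain
\[
\bigl\|u^\intercal\mu_\theta\bigr\|_{\psi_2} \;\le\; \|u_s\| + \|u_\tau\|\cdot\frac{\|\theta_\tau\|\bigl(\|\theta_\tau\| + \sigma\bigr)}{\sigma^2 + \|\theta_\tau\|^2}.
\]

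The only step that is not completely mechanical is squeezing out the clean constant $\sqrt3$, which I expect to be the main obstacle: the crude AM--GM bound $\frac{\|\theta_\tau\|(\|\theta_\tau\|+\sigma)}{\sigma^2+\|\theta_\tau\|^2}\le\frac32$ is too lossy (it would only give $\sqrt{13}/2>\sqrt3$), so I would instead prove the sharper scalar inequality $\frac{\|\theta_\tau\|(\|\theta_\tau\|+\sigma)}{\sigma^2+\|\theta_\tau\|^2}\le\sqrt2$ by checking that the quadratic $(\sqrt2-1)t^2 - \sigma t + \sqrt2\sigma^2$ in $t=\|\theta_\tau\|$ has negative discriminant $(4\sqrt2-7)\sigma^2$, hence is nonnegative for all $t\ge0$. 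Then, pairing $\|u_s\|$ with $\|u_\tau\|$ (rather than bounding each by $\|u\|$) via Cauchy--Schwarz,
\[
\bigl\|u^\intercal\mu_\theta\bigr\|_{\psi_2} \;\le\; \|u_s\| + \sqrt2\,\|u_\tau\| \;\le\; \sqrt{1+2}\,\sqrt{\|u_s\|^2 + \|u_\tau\|^2} \;=\; \sqrt3\,\|u\|,
\]
which is the first assertion.

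For the second assertion I would invoke the standard fact that a product of two sub-gaussian variables is sub-exponential, $\|AB\|_{\psi_1}\le C\|A\|_{\psi_2}\|B\|_{\psi_2}$ (\prettyref{lem:orl-norm}\prettyref{enu:prod-sub-gau}), with $A = u^\intercal\mu_\theta$ and $B = Y$. The first part gives $\|A\|_{\psi_2}\le\sqrt3\,\|u\|$, while \prettyref{lem:emp-pre-1-rmc} gives $\|Y\|_{\psi_2}\le\sqrt{\|\theta^*\|^2+\sigma^2}\le\|\theta^*\|+\sigma = K$; multiplying and absorbing the numerical factor yields $\|u^\intercal\mu_\theta Y\|_{\psi_1}\le CK\|u\|$. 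Beyond the constant-chasing in the first part the argument is routine; the two points to be careful about are applying \prettyref{lem:emp-pre-1-rmc} to $Y-\theta_s^\intercal X_s$ and keeping the $(\|u_s\|,\|u_\tau\|)$ pairing so that the final constant does not degrade below $\sqrt3$.
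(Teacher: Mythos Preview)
Your proposal is correct and follows essentially the same approach as the paper: the same decomposition $u^\intercal\mu_\theta = u_s^\intercal X_s + \frac{u_\tau^\intercal\theta_\tau}{\sigma^2+\|\theta_\tau\|^2}(Y-\theta_s^\intercal X_s)$, the same invocation of \prettyref{lem:emp-pre-1-rmc}, and the same Cauchy--Schwarz pairing of $(\|u_s\|,\|u_\tau\|)$ to recover $\sqrt3\,\|u\|$. The only cosmetic difference is that the paper keeps the exact weight $\frac{\|\theta_\tau\|(\|\theta_\tau\|+\sigma)}{\sigma^2+\|\theta_\tau\|^2}$ through the Cauchy--Schwarz step and then simplifies the resulting sum of squares to $\frac{\sigma^2+3\|\theta_\tau\|^2}{\sigma^2+\|\theta_\tau\|^2}\le 3$, whereas you first bound that weight by $\sqrt2$ via a discriminant check and then apply Cauchy--Schwarz with weights $(1,\sqrt2)$; both routes land on the same constant.
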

\begin{proof}
It follows from \prettyref{lem:emp-pre-1-rmc} that $u^{\intercal}b_{\theta}=\frac{u_{\tau}^{\intercal}\theta_{\tau}}{\sigma^{2}+\left\Vert \theta_{\tau}\right\Vert ^{2}}\left(Y-\theta_{s}^{\intercal}X_{s}\right)$
is sub-gaussian with Orlicz norm $\left\Vert u^{\intercal}b_{\theta}\right\Vert _{\psi_{2}}\le\frac{\left\Vert \theta_{\tau}\right\Vert +\sigma}{\sigma^{2}+\left\Vert \theta_{\tau}\right\Vert ^{2}}\left|u_{\tau}^{\intercal}\theta_{\tau}\right|$
and hence $u^{\intercal}\mu_{\theta}=u^{\intercal}\left(X_{s}+b_{\theta}\right)$
is sub-gaussian with Orlicz norm
\begin{align}
\left\Vert u^{\intercal}\mu_{\theta}\right\Vert _{\psi_{2}} & \le\left\Vert u_{s}\right\Vert +\frac{\left\Vert \theta_{\tau}\right\Vert +\sigma}{\sigma^{2}+\left\Vert \theta_{\tau}\right\Vert ^{2}}\left|u_{\tau}^{\intercal}\theta_{\tau}\right|\nonumber \\
 & \le\frac{1}{\sigma^{2}+\left\Vert \theta_{\tau}\right\Vert ^{2}}\left[\left(\sigma^{2}+\left\Vert \theta_{\tau}\right\Vert ^{2}\right)\left\Vert u_{s}\right\Vert +\left(\left\Vert \theta_{\tau}\right\Vert ^{2}+\sigma\left\Vert \theta_{\tau}\right\Vert \right)\left\Vert u_{\tau}\right\Vert \right]\nonumber \\
 & \le\frac{1}{\sigma^{2}+\left\Vert \theta_{\tau}\right\Vert ^{2}}\left[\left(\sigma^{2}+\left\Vert \theta_{\tau}\right\Vert ^{2}\right)^{2}+\left\Vert \theta_{\tau}\right\Vert ^{2}\left(\left\Vert \theta_{\tau}\right\Vert +\sigma\right)^{2}\right]^{\frac{1}{2}}\left[\left\Vert u_{s}\right\Vert ^{2}+\left\Vert u_{\tau}\right\Vert ^{2}\right]^{\frac{1}{2}}\nonumber \\
 & \le\frac{1}{\sigma^{2}+\left\Vert \theta_{\tau}\right\Vert ^{2}}\left[\left(\sigma^{2}+\left\Vert \theta_{\tau}\right\Vert ^{2}\right)\left(\sigma^{2}+3\left\Vert \theta_{\tau}\right\Vert ^{2}\right)\right]^{\frac{1}{2}}\left\Vert u\right\Vert \le\sqrt{3}\left\Vert u\right\Vert .\label{eq:mu-sub-gau-rmc}
\end{align}
Hence $u^{\intercal}\mu_{\theta}Y$ is sub-exponential with Orlicz
norm
\begin{equation}
\left\Vert u^{\intercal}\mu_{\theta}Y\right\Vert _{\psi_{1}}\le C_{1}\left\Vert u^{\intercal}\mu_{\theta}\right\Vert _{\psi_{2}}\left\Vert Y\right\Vert _{\psi_{2}}\le\sqrt{3}C_{1}\left\Vert u\right\Vert \sqrt{\left\Vert \theta^{*}\right\Vert ^{2}+\sigma^{2}}\le CK\left\Vert u\right\Vert ,\label{eq:mu-y-sub-exp-rmc}
\end{equation}
where $K\coloneqq\left\Vert \theta^{*}\right\Vert +\sigma$, since
$\left(\frac{K}{\sqrt{2}}\le\right)\sqrt{\left\Vert \theta^{*}\right\Vert ^{2}+\sigma^{2}}\le K$.
\end{proof}
\begin{lem}
\label{lem:emp-pre-3-rmc}For $u,v\in\mathbb{R}^{p}$, missing pattern
$\tau\in\{\circ,1\}^{p}$ and $s=\ones-\tau$, the random variable
$u^{\intercal}A_{\theta}v$ is bounded hence sub-gaussian with Orlicz
norm $\left\Vert u^{\intercal}A_{\theta}v\right\Vert _{\psi_{2}}\le2\left\Vert u\right\Vert \left\Vert v\right\Vert $
and $u^{\intercal}\Sigma_{\theta}v$ is sub-exponential with Orlicz
norm $\left\Vert u^{\intercal}\Sigma_{\theta}v\right\Vert _{\psi_{1}}\le C\left\Vert u\right\Vert \left\Vert v\right\Vert $.
\end{lem}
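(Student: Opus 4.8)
The plan is to exploit the explicit formulas \prettyref{eq:A-theta-rmc} and \prettyref{eq:sigma-theta-rmc}, together with the Orlicz-norm calculus already used in the preceding lemmas. First observe that, for a \emph{fixed} missing pattern $\tau$, the matrix
\[
A_{\theta}=\text{diag}\{\tau\}-\frac{1}{\sigma^{2}+\left\Vert \theta_{\tau}\right\Vert ^{2}}\theta_{\tau}\theta_{\tau}^{\intercal}
\]
of \prettyref{eq:A-theta-rmc} carries \emph{no randomness}: it depends only on $\tau$ and $\theta$, not on $(Y,X_{s})$, so $u^{\intercal}A_{\theta}v$ is a constant and it suffices to bound its magnitude. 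Since $\text{diag}\{\tau\}$ is an orthogonal projection and $\frac{1}{\sigma^{2}+\left\Vert \theta_{\tau}\right\Vert ^{2}}\theta_{\tau}\theta_{\tau}^{\intercal}\succeq0$ with $\left\Vert \theta_{\tau}\right\Vert ^{2}/(\sigma^{2}+\left\Vert \theta_{\tau}\right\Vert ^{2})\le1$, one checks $0\preceq A_{\theta}\preceq I_{p}$, whence $\left|u^{\intercal}A_{\theta}v\right|\le\left\Vert u\right\Vert \left\Vert v\right\Vert \le2\left\Vert u\right\Vert \left\Vert v\right\Vert$; a bounded random variable is sub-gaussian by \prettyref{lem:orl-norm}, giving $\left\Vert u^{\intercal}A_{\theta}v\right\Vert _{\psi_{2}}\le2\left\Vert u\right\Vert \left\Vert v\right\Vert$. (If one prefers to avoid the operator-norm estimate, the same bound follows from the triangle inequality and Cauchy--Schwarz applied to the two summands of $u^{\intercal}A_{\theta}v=u_{\tau}^{\intercal}v_{\tau}-\frac{(u_{\tau}^{\intercal}\theta_{\tau})(\theta_{\tau}^{\intercal}v_{\tau})}{\sigma^{2}+\left\Vert \theta_{\tau}\right\Vert ^{2}}$.)

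For the second assertion I would use the decomposition $\Sigma_{\theta}=\mu_{\theta}\mu_{\theta}^{\intercal}+A_{\theta}$ from \prettyref{eq:sigma-theta-rmc}, which gives
\[
u^{\intercal}\Sigma_{\theta}v=\left(u^{\intercal}\mu_{\theta}\right)\left(\mu_{\theta}^{\intercal}v\right)+u^{\intercal}A_{\theta}v.
\]
By \prettyref{lem:emp-pre-2-rmc}, both $u^{\intercal}\mu_{\theta}$ and $\mu_{\theta}^{\intercal}v$ are sub-gaussian with Orlicz norms at most $\sqrt{3}\left\Vert u\right\Vert$ and $\sqrt{3}\left\Vert v\right\Vert$ respectively, so by \prettyref{lem:orl-norm}\prettyref{enu:prod-sub-gau} their product is sub-exponential with $\left\Vert \left(u^{\intercal}\mu_{\theta}\right)\left(\mu_{\theta}^{\intercal}v\right)\right\Vert _{\psi_{1}}\le C\left\Vert u\right\Vert \left\Vert v\right\Vert$. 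The remaining term $u^{\intercal}A_{\theta}v$ is bounded by $2\left\Vert u\right\Vert \left\Vert v\right\Vert$ as above, hence sub-exponential with $\psi_{1}$ norm of the same order, and the triangle inequality for $\left\Vert \cdot\right\Vert _{\psi_{1}}$ yields $\left\Vert u^{\intercal}\Sigma_{\theta}v\right\Vert _{\psi_{1}}\le C\left\Vert u\right\Vert \left\Vert v\right\Vert$.

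I do not anticipate a genuine obstacle: every step is either a deterministic linear-algebra estimate (the bound $0\preceq A_{\theta}\preceq I_{p}$) or a direct application of the sub-gaussian/sub-exponential bookkeeping already carried out in \prettyref{lem:emp-pre-2-rmc} and in the proofs for the Mixture of Linear Regressions model. The only mild care needed is in tracking the numerical constants --- in particular verifying the operator-norm bound on $A_{\theta}$ and applying the product-of-sub-gaussians estimate with the $\sqrt{3}$ factors of \prettyref{lem:emp-pre-2-rmc} --- so that the final constants are consistent with the statement.
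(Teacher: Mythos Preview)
Your proposal is correct and follows essentially the same route as the paper: the paper bounds $|u^{\intercal}A_{\theta}v|$ via the triangle inequality and Cauchy--Schwarz on the two summands (exactly your parenthetical alternative), obtaining $\left(1+\frac{\|\theta_{\tau}\|^{2}}{\sigma^{2}+\|\theta_{\tau}\|^{2}}\right)\|u\|\|v\|<2\|u\|\|v\|$, and then handles $u^{\intercal}\Sigma_{\theta}v=(u^{\intercal}\mu_{\theta})(v^{\intercal}\mu_{\theta})+u^{\intercal}A_{\theta}v$ by invoking \prettyref{lem:emp-pre-2-rmc} just as you do. Your operator-norm argument $0\preceq A_{\theta}\preceq I_{p}$ is a slightly cleaner variant that even yields the sharper constant $1$ instead of $2$, but it is not materially different; one small quibble is that ``bounded $\Rightarrow$ sub-gaussian'' is not actually the content of \prettyref{lem:orl-norm}, so you should not cite that lemma for it (the paper simply uses the fact without citation).
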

\begin{proof}
In view of definition \prettyref{eq:A-theta-rmc}, we have
\[
\left|u^{\intercal}A_{\theta}v\right|\le\left|u_{\tau}^{\intercal}v_{\tau}\right|+\frac{\left|u_{\tau}^{\intercal}\theta_{\tau}\right|\left|v_{\tau}^{\intercal}\theta_{\tau}\right|}{\sigma^{2}+\left\Vert \theta_{\tau}\right\Vert ^{2}}\le\left(1+\frac{\left\Vert \theta_{\tau}\right\Vert ^{2}}{\sigma^{2}+\left\Vert \theta_{\tau}\right\Vert ^{2}}\right)\left\Vert u\right\Vert \left\Vert v\right\Vert <2\left\Vert u\right\Vert \left\Vert v\right\Vert ,
\]
and since $u^{\intercal}\Sigma_{\theta}v=\left(u^{\intercal}\mu_{\theta}\right)\left(v^{\intercal}\mu_{\theta}\right)+u^{\intercal}A_{\theta}v$,
the result follows from \prettyref{lem:emp-pre-2-rmc}.
\end{proof}
\begin{lem}
\label{lem:emp-pre-4-rmc}For $u\in\mathbb{R}^{p}$, missing pattern
$\tau\in\{\circ,1\}^{p}$ and $s=\ones-\tau$, the random variable
$u^{\intercal}\left(\mu_{\theta}-\mu_{\theta^{*}}\right)$ is sub-gaussian
with Orlicz norm
\[
\left\Vert u^{\intercal}\left(\mu_{\theta}-\mu_{\theta^{*}}\right)\right\Vert _{\psi_{2}}\le\frac{\left(1+\omega\right)\eta}{\sigma}\left(\eta\left(1+\eta\right)\left(2+\omega\right)+1\right)\left\Vert \theta-\theta^{*}\right\Vert \left\Vert u\right\Vert ,
\]
and $u^{\intercal}\left(\mu_{\theta}-\mu_{\theta^{*}}\right)Y$ is
sub-exponential with Orlicz norm
\[
\left\Vert u^{\intercal}\left(\mu_{\theta}-\mu_{\theta^{*}}\right)Y\right\Vert _{\psi_{1}}\le C\left(\eta\left(1+\eta\right)\left(2+\omega\right)+1\right)\eta\left(1+\eta\right)\left(1+\omega\right)\left\Vert \theta-\theta^{*}\right\Vert \left\Vert u\right\Vert .
\]
\end{lem}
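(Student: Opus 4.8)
The plan is to control $u^{\intercal}(\mu_{\theta}-\mu_{\theta^{*}})$ by an add-and-subtract decomposition of the difference $\mu_{\theta}-\mu_{\theta^{*}}$ into three pieces, each a deterministic scalar times a sub-gaussian random variable already handled by \prettyref{lem:emp-pre-1-rmc}. First, since $\mu_{\theta}=X_{s}+b_{\theta}$ by \prettyref{eq:mu-theta-rmc} with $b_{\theta}=\frac{Y-\theta_{s}^{\intercal}X_{s}}{\sigma^{2}+\Vert\theta_{\tau}\Vert^{2}}\theta_{\tau}$, the $\theta$-independent term $X_{s}$ cancels and $\mu_{\theta}-\mu_{\theta^{*}}=b_{\theta}-b_{\theta^{*}}$. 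Writing $a_{\theta}\coloneqq\sigma^{2}+\Vert\theta_{\tau}\Vert^{2}$ and $\rho_{\theta}\coloneqq Y-\theta_{s}^{\intercal}X_{s}$, a telescoping identity gives
\[
\mu_{\theta}-\mu_{\theta^{*}}=\frac{\rho_{\theta}-\rho_{\theta^{*}}}{a_{\theta}}\theta_{\tau}^{*}+\frac{a_{\theta^{*}}-a_{\theta}}{a_{\theta}a_{\theta^{*}}}\rho_{\theta^{*}}\theta_{\tau}^{*}+\frac{\rho_{\theta}}{a_{\theta}}(\theta_{\tau}-\theta_{\tau}^{*}),
\]
where $\rho_{\theta}-\rho_{\theta^{*}}=(\theta_{s}^{*}-\theta_{s})^{\intercal}X_{s}$ and $a_{\theta^{*}}-a_{\theta}=\Vert\theta_{\tau}^{*}\Vert^{2}-\Vert\theta_{\tau}\Vert^{2}$.

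Next I would contract this identity with $u$ and estimate each term in the Orlicz $\psi_{2}$ norm. In the three terms the random factor is, respectively, $(\theta_{s}^{*}-\theta_{s})^{\intercal}X_{s}$ (Gaussian with $\psi_{2}$ norm $\Vert\theta_{s}^{*}-\theta_{s}\Vert\le\Vert\theta-\theta^{*}\Vert$), $\rho_{\theta^{*}}=Y-\theta_{s}^{*\intercal}X_{s}$ (sub-gaussian with $\psi_{2}$ norm $\le\Vert\theta_{\tau}^{*}\Vert+\sigma$), and $\rho_{\theta}$ (sub-gaussian with $\psi_{2}$ norm $\le\Vert\theta_{\tau}\Vert+\sigma$), all by \prettyref{lem:emp-pre-1-rmc}. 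The remaining deterministic prefactors I would bound by Cauchy--Schwarz together with the elementary facts $a_{\theta},a_{\theta^{*}}\ge\sigma^{2}$, $\Vert\theta_{\tau}\Vert\le\Vert\theta\Vert$, $\Vert\theta_{\tau}-\theta_{\tau}^{*}\Vert\le\Vert\theta-\theta^{*}\Vert$ and $\Vert\theta_{s}-\theta_{s}^{*}\Vert\le\Vert\theta-\theta^{*}\Vert$ (projection onto a set of coordinates is a contraction), $\big|\Vert\theta_{\tau}^{*}\Vert^{2}-\Vert\theta_{\tau}\Vert^{2}\big|\le\Vert\theta_{\tau}-\theta_{\tau}^{*}\Vert(\Vert\theta_{\tau}\Vert+\Vert\theta_{\tau}^{*}\Vert)$, and $\Vert\theta\Vert\le(1+\omega)\Vert\theta^{*}\Vert$ for $\theta\in B_{r}(\theta^{*})$ with $r=\omega\Vert\theta^{*}\Vert$. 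Rewriting everything through $\eta=\Vert\theta^{*}\Vert/\sigma$ and $\omega$ and adding the three contributions yields a bound of the form $\frac{1}{\sigma}P(\eta,\omega)\Vert\theta-\theta^{*}\Vert\Vert u\Vert$; collecting $P$ inside the factor $(1+\omega)\eta\big(\eta(1+\eta)(2+\omega)+1\big)$ gives the stated sub-gaussian estimate.

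For the sub-exponential bound, I would use that $Y$ is Gaussian with $\Vert Y\Vert_{\psi_{2}}\le\sqrt{\Vert\theta^{*}\Vert^{2}+\sigma^{2}}\le K=\sigma(1+\eta)$ (\prettyref{lem:emp-pre-1-rmc}) and that the product of two sub-gaussian variables is sub-exponential with $\Vert AB\Vert_{\psi_{1}}\le C\Vert A\Vert_{\psi_{2}}\Vert B\Vert_{\psi_{2}}$ (\prettyref{lem:orl-norm}); taking $A=u^{\intercal}(\mu_{\theta}-\mu_{\theta^{*}})$, $B=Y$ and inserting the bound just obtained produces the claimed estimate for $\Vert u^{\intercal}(\mu_{\theta}-\mu_{\theta^{*}})Y\Vert_{\psi_{1}}$.

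There is no conceptual obstacle here---each quantity is a linear functional of the jointly Gaussian $(X,W)$ restricted to the observed coordinates, hence Gaussian with an explicit variance, and the product rule for Orlicz norms is off the shelf. The main work, and the only place care is needed, is the bookkeeping in the second step: carrying the degree-four-in-$(1+\eta)$ and degree-two-in-$(1+\omega),(2+\omega)$ polynomial prefactors through the three terms and checking that they fold into the compact form displayed in the statement.
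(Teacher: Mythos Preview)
Your proposal is correct and follows essentially the same route as the paper: reduce to $b_\theta-b_{\theta^*}$, telescope into pieces that are deterministic scalars times the sub-gaussian variables of \prettyref{lem:emp-pre-1-rmc}, bound each in $\psi_2$, and then multiply by $Y$ via \prettyref{lem:orl-norm}\prettyref{enu:prod-sub-gau} for the $\psi_1$ estimate. The only cosmetic difference is the telescoping order---the paper factors $\theta_\tau$ (rather than $\theta_\tau^*$) out of the first two pieces and uses $\rho_{\theta^*}/a_{\theta^*}$ (rather than $\rho_\theta/a_\theta$) in the third, which is what makes the common prefactor $(1+\omega)\eta$ appear directly; with your ordering the constants assemble slightly differently, so the bookkeeping step you flag will need a small amount of slack to land on the exact displayed polynomial.
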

\begin{proof}
In view of definition \prettyref{eq:mu-theta-rmc}, we have
\[
\mu_{\theta}-\mu_{\theta^{*}}=\left(\frac{Y-\theta_{s}^{\intercal}X_{s}}{\sigma^{2}+\left\Vert \theta_{\tau}\right\Vert ^{2}}-\frac{Y-\theta_{s}^{*\intercal}X_{s}}{\sigma^{2}+\left\Vert \theta_{\tau}^{*}\right\Vert ^{2}}\right)\theta_{\tau}+\frac{Y-\theta_{s}^{*\intercal}X_{s}}{\sigma^{2}+\left\Vert \theta_{\tau}^{*}\right\Vert ^{2}}\left(\theta_{\tau}-\theta_{\tau}^{*}\right).
\]
Note the first summand can be rewritten as
\[
\frac{1}{\sigma^{2}+\left\Vert \theta_{\tau}\right\Vert ^{2}}\left(\frac{Y-\theta_{s}^{*\intercal}X_{s}}{\sigma^{2}+\left\Vert \theta_{\tau}^{*}\right\Vert ^{2}}\left(\left\Vert \theta_{\tau}^{*}\right\Vert ^{2}-\left\Vert \theta_{\tau}\right\Vert ^{2}\right)+\left(\theta_{s}-\theta_{s}^{*}\right)^{\intercal}X_{s}\right)\theta_{\tau},
\]
and it follows that,
\begin{align*}
\left|u^{\intercal}\left(\mu_{\theta}-\mu_{\theta^{*}}\right)\right| & \le\frac{\left\Vert \theta_{\tau}\right\Vert }{\sigma^{2}}\left(\frac{1}{\sigma^{2}}\left(\left\Vert \theta_{\tau}^{*}\right\Vert +\left\Vert \theta_{\tau}\right\Vert \right)\left|Y-\theta_{s}^{*\intercal}X_{s}\right|\left\Vert \theta_{\tau}^{*}-\theta_{\tau}\right\Vert +\left|\left(\theta_{s}-\theta_{s}^{*}\right)^{\intercal}X_{s}\right|\right)\left\Vert u\right\Vert \\
 & \le\frac{\left(1+\omega\right)\eta}{\sigma^{2}}\left(\left(2+\omega\right)\eta\left|Y-\theta_{s}^{*\intercal}X_{s}\right|\left\Vert \theta_{\tau}^{*}-\theta_{\tau}\right\Vert +\sigma\left|\left(\theta_{s}-\theta_{s}^{*}\right)^{\intercal}X_{s}\right|\right)\left\Vert u\right\Vert .
\end{align*}
Hence in view of \prettyref{lem:emp-pre-1-rmc}, $u^{\intercal}\left(\mu_{\theta}-\mu_{\theta^{*}}\right)$
is sub-gaussian with Orlicz norm
\begin{align*}
\left\Vert u^{\intercal}\left(\mu_{\theta}-\mu_{\theta^{*}}\right)\right\Vert _{\psi_{2}} & \le\frac{\left(1+\omega\right)\eta}{\sigma^{2}}\left(\left(2+\omega\right)\eta\left(\left\Vert \theta_{\tau}^{*}\right\Vert +\sigma\right)\left\Vert \theta_{\tau}^{*}-\theta_{\tau}\right\Vert +\sigma\left\Vert \theta_{s}-\theta_{s}^{*}\right\Vert \right)\left\Vert u\right\Vert \\
 & \le\frac{\left(1+\omega\right)\eta}{\sigma}\left(\eta\left(1+\eta\right)\left(2+\omega\right)+1\right)\left\Vert \theta-\theta^{*}\right\Vert \left\Vert u\right\Vert .
\end{align*}
Therefore $u^{\intercal}\left(\mu_{\theta}-\mu_{\theta^{*}}\right)Y$
is sub-exponential with Orlicz norm
\begin{align*}
\left\Vert u^{\intercal}\left(\mu_{\theta}-\mu_{\theta^{*}}\right)Y\right\Vert _{\psi_{1}} & \le C\left\Vert u^{\intercal}\left(\mu_{\theta}-\mu_{\theta^{*}}\right)\right\Vert _{\psi_{2}}\left\Vert Y\right\Vert _{\psi_{2}}\\
 & \le C\left(\eta\left(1+\eta\right)\left(2+\omega\right)+1\right)\eta\left(1+\eta\right)\left(1+\omega\right)\left\Vert \theta-\theta^{*}\right\Vert \left\Vert u\right\Vert .
\end{align*}
\end{proof}
\begin{lem}
\label{lem:emp-pre-5-rmc}For $u,v\in\mathbb{R}^{p}$, missing pattern
$\tau\in\{\circ,1\}^{p}$ and $s=\ones-\tau$, the random variable
$u^{\intercal}\left(A_{\theta}-A_{\theta^{*}}\right)v$ is bounded
hence sub-gaussian with Orlicz norm
\[
\left\Vert u^{\intercal}\left(A_{\theta}-A_{\theta^{*}}\right)v\right\Vert _{\psi_{2}}\le\frac{1}{\sigma}\left(\left(1+\omega\right)\eta^{2}+1\right)\left(2+\omega\right)\eta\left\Vert \theta^{*}-\theta\right\Vert \left\Vert u\right\Vert \left\Vert v\right\Vert ,
\]
and $u^{\intercal}\left(\mu_{\theta}\mu_{\theta}^{\intercal}-\mu_{\theta^{*}}\mu_{\theta^{*}}^{\intercal}\right)v$
is sub-exponential with Orlicz norm
\[
\left\Vert u^{\intercal}\left(\mu_{\theta}\mu_{\theta}^{\intercal}-\mu_{\theta^{*}}\mu_{\theta^{*}}^{\intercal}\right)v\right\Vert _{\psi_{1}}\le\sqrt{3}C\frac{\left(1+\omega\right)\eta}{\sigma}\left(\eta\left(1+\eta\right)\left(2+\omega\right)+1\right)\left\Vert \theta-\theta^{*}\right\Vert \left\Vert u\right\Vert \left\Vert v\right\Vert .
\]
\end{lem}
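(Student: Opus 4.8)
The plan is to establish the two claims separately, both by reducing to the structural lemmas already proved for $\mu_\theta$ together with elementary perturbation estimates. For the first claim, observe from \prettyref{eq:A-theta-rmc} that $A_\theta(\tau)-A_{\theta^*}(\tau)$ is a \emph{deterministic} matrix (it depends only on $\theta,\theta^*$ and the missing pattern $\tau$, not on $(Y,X_s)$), so $u^\intercal\left(A_\theta-A_{\theta^*}\right)v$ is a bounded random variable and hence trivially sub-gaussian with $\psi_2$-norm comparable to its absolute value. It then remains to bound $\left|\frac{(u_\tau^\intercal\theta_\tau)(v_\tau^\intercal\theta_\tau)}{\sigma^2+\|\theta_\tau\|^2}-\frac{(u_\tau^\intercal\theta_\tau^*)(v_\tau^\intercal\theta_\tau^*)}{\sigma^2+\|\theta_\tau^*\|^2}\right|$. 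I would split this via the identity $\frac{ab}{c}-\frac{a'b'}{c'}=\frac{(a-a')b+a'(b-b')}{c}+\frac{a'b'(c'-c)}{cc'}$ with $a=u_\tau^\intercal\theta_\tau$, $b=v_\tau^\intercal\theta_\tau$, $c=\sigma^2+\|\theta_\tau\|^2$ and primed quantities for $\theta^*$, then estimate each piece using the Cauchy--Schwartz inequality, the bound $\|\theta_\tau-\theta_\tau^*\|\le\|\theta-\theta^*\|$ (Hadamard multiplication by $\tau\in\{\circ,1\}^p$ only zeroes coordinates), $c,c'\ge\sigma^2$, $\|\theta_\tau\|,\|\theta_\tau^*\|\le\|\theta\|\le(1+\omega)\|\theta^*\|$, and $c-c'=(\theta_\tau-\theta_\tau^*)^\intercal(\theta_\tau+\theta_\tau^*)$ with the split $\|\theta_\tau+\theta_\tau^*\|\le 2\|\theta_\tau\|+\|\theta_\tau-\theta_\tau^*\|$. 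Collecting powers of $\eta=\|\theta^*\|/\sigma$ and $\omega$ yields the stated bound $\frac{1}{\sigma}\left((1+\omega)\eta^2+1\right)(2+\omega)\eta\,\|\theta^*-\theta\|\,\|u\|\,\|v\|$.

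For the second claim, write $u^\intercal\left(\mu_\theta\mu_\theta^\intercal-\mu_{\theta^*}\mu_{\theta^*}^\intercal\right)v=(u^\intercal\mu_\theta)(v^\intercal\mu_\theta)-(u^\intercal\mu_{\theta^*})(v^\intercal\mu_{\theta^*})$ and telescope as $(u^\intercal\mu_\theta)\,v^\intercal(\mu_\theta-\mu_{\theta^*})+u^\intercal(\mu_\theta-\mu_{\theta^*})\,(v^\intercal\mu_{\theta^*})$. Each summand is a product of a sub-gaussian random variable of $\psi_2$-norm at most $\sqrt{3}\|u\|$ or $\sqrt{3}\|v\|$ (by \prettyref{lem:emp-pre-2-rmc}) and a sub-gaussian random variable whose $\psi_2$-norm is controlled by \prettyref{lem:emp-pre-4-rmc}; hence by the Orlicz-norm bound for products of sub-gaussian variables (\prettyref{lem:orl-norm}) each summand is sub-exponential with $\psi_1$-norm at most $\sqrt{3}\,C\,\frac{(1+\omega)\eta}{\sigma}\left(\eta(1+\eta)(2+\omega)+1\right)\|\theta-\theta^*\|\,\|u\|\,\|v\|$. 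Adding the two summands (absorbing the factor $2$ into the numerical constant $C$) gives the claimed bound.

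The analytical content here is light; essentially all the work is the perturbation bookkeeping for the rational expression appearing in $A_\theta$. The main obstacle will be matching the precise constants in the first claim: one must track carefully which terms in the telescoping decomposition carry an extra factor $\|\theta_\tau\|/\sigma$ (contributing a power of $\eta$) versus a bare ratio $\sigma^2/\sigma^2$, so that the final coefficient organizes into the stated form $\left((1+\omega)\eta^2+1\right)(2+\omega)\eta$. Once this decomposition is set up, everything else is a direct invocation of \prettyref{lem:emp-pre-2-rmc}, \prettyref{lem:emp-pre-4-rmc}, and the standard Orlicz-norm calculus for products of sub-gaussian random variables.
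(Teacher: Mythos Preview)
Your proposal is correct and follows essentially the same route as the paper. For the first claim the paper also notes that $A_\theta-A_{\theta^*}$ is deterministic and uses the same add--subtract decomposition, only organized at the matrix level (writing $A_\theta-A_{\theta^*}=\frac{\|\theta_\tau\|^2-\|\theta_\tau^*\|^2}{(\sigma^2+\|\theta_\tau^*\|^2)(\sigma^2+\|\theta_\tau\|^2)}\theta_\tau^*\theta_\tau^{*\intercal}+\frac{1}{\sigma^2+\|\theta_\tau\|^2}(\theta_\tau^*\theta_\tau^{*\intercal}-\theta_\tau\theta_\tau^\intercal)$ and then $\theta_\tau^*\theta_\tau^{*\intercal}-\theta_\tau\theta_\tau^\intercal=(\theta_\tau^*-\theta_\tau)\theta_\tau^{*\intercal}+\theta_\tau(\theta_\tau^*-\theta_\tau)^\intercal$), which is algebraically equivalent to your scalar identity for $\frac{ab}{c}-\frac{a'b'}{c'}$; for the second claim the paper uses the identical telescoping $\mu_\theta\mu_\theta^\intercal-\mu_{\theta^*}\mu_{\theta^*}^\intercal=(\mu_\theta-\mu_{\theta^*})\mu_\theta^\intercal+\mu_{\theta^*}(\mu_\theta-\mu_{\theta^*})^\intercal$ together with \prettyref{lem:emp-pre-2-rmc}, \prettyref{lem:emp-pre-4-rmc} and \prettyref{lem:orl-norm}\prettyref{enu:prod-sub-gau}.
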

\begin{proof}
Since we can write
\begin{align*}
A_{\theta}-A_{\theta^{*}} & =\frac{1}{\sigma^{2}+\left\Vert \theta_{\tau}^{*}\right\Vert ^{2}}\theta_{\tau}^{*}\theta_{\tau}^{*\intercal}-\frac{1}{\sigma^{2}+\left\Vert \theta_{\tau}\right\Vert ^{2}}\theta_{\tau}\theta_{\tau}^{\intercal}\\
 & =\frac{\left\Vert \theta_{\tau}\right\Vert ^{2}-\left\Vert \theta_{\tau}^{*}\right\Vert ^{2}}{\left(\sigma^{2}+\left\Vert \theta_{\tau}^{*}\right\Vert ^{2}\right)\left(\sigma^{2}+\left\Vert \theta_{\tau}\right\Vert ^{2}\right)}\theta_{\tau}^{*}\theta_{\tau}^{*\intercal}+\frac{1}{\sigma^{2}+\left\Vert \theta_{\tau}\right\Vert ^{2}}\left(\theta_{\tau}^{*}\theta_{\tau}^{*\intercal}-\theta_{\tau}\theta_{\tau}^{\intercal}\right)
\end{align*}
and
\[
\theta_{\tau}^{*}\theta_{\tau}^{*\intercal}-\theta_{\tau}\theta_{\tau}^{\intercal}=\left(\theta_{\tau}^{*}-\theta_{\tau}\right)\theta_{\tau}^{*\intercal}+\theta_{\tau}\left(\theta_{\tau}^{*}-\theta_{\tau}\right)^{\intercal},
\]
it follows that
\[
\left|u^{\intercal}\left(\theta_{\tau}^{*}\theta_{\tau}^{*\intercal}-\theta_{\tau}\theta_{\tau}^{\intercal}\right)v\right|\le\left(\left\Vert \theta_{\tau}^{*}\right\Vert +\left\Vert \theta_{\tau}\right\Vert \right)\left\Vert \theta_{\tau}^{*}-\theta_{\tau}\right\Vert \left\Vert u\right\Vert \left\Vert v\right\Vert ,
\]
and hence
\begin{align*}
\left|u^{\intercal}\left(A_{\theta}-A_{\theta^{*}}\right)v\right| & \le\left(\frac{\left\Vert \theta_{\tau}^{*}\right\Vert \left\Vert \theta_{\tau}\right\Vert }{\sigma^{2}+\left\Vert \theta_{\tau}^{*}\right\Vert ^{2}}+1\right)\frac{\left\Vert \theta_{\tau}^{*}\right\Vert +\left\Vert \theta_{\tau}\right\Vert }{\sigma^{2}+\left\Vert \theta_{\tau}\right\Vert ^{2}}\left\Vert \theta_{\tau}^{*}-\theta_{\tau}\right\Vert \left\Vert u\right\Vert \left\Vert v\right\Vert \\
 & \le\frac{1}{\sigma}\left(\left(1+\omega\right)\eta^{2}+1\right)\left(2+\omega\right)\eta\left\Vert \theta_{\tau}^{*}-\theta_{\tau}\right\Vert \left\Vert u\right\Vert \left\Vert v\right\Vert .
\end{align*}
Similarly, since
\[
\mu_{\theta}\mu_{\theta}^{\intercal}-\mu_{\theta^{*}}\mu_{\theta^{*}}^{\intercal}=\left(\mu_{\theta}-\mu_{\theta^{*}}\right)\mu_{\theta}^{\intercal}+\mu_{\theta^{*}}\left(\mu_{\theta}-\mu_{\theta^{*}}\right)^{\intercal}
\]
and in view of \prettyref{lem:emp-pre-2-rmc} and \prettyref{lem:emp-pre-4-rmc},
$u^{\intercal}\left(\mu_{\theta}\mu_{\theta}^{\intercal}-\mu_{\theta^{*}}\mu_{\theta^{*}}^{\intercal}\right)v$
is sub-exponential with Orlicz norm
\begin{align*}
\left\Vert u^{\intercal}\left(\mu_{\theta}\mu_{\theta}^{\intercal}-\mu_{\theta^{*}}\mu_{\theta^{*}}^{\intercal}\right)v\right\Vert _{\psi_{1}} & \le\left\Vert u^{\intercal}\left(\mu_{\theta}-\mu_{\theta^{*}}\right)\mu_{\theta}^{\intercal}v\right\Vert _{\psi_{1}}+\left\Vert u^{\intercal}\mu_{\theta^{*}}\left(\mu_{\theta}-\mu_{\theta^{*}}\right)^{\intercal}v\right\Vert _{\psi_{1}}\\
 & \le C\left(\left\Vert u^{\intercal}\left(\mu_{\theta}-\mu_{\theta^{*}}\right)\right\Vert _{\psi_{2}}\left\Vert v^{\intercal}\mu_{\theta}\right\Vert _{\psi_{2}}+\left\Vert u^{\intercal}\mu_{\theta}\right\Vert _{\psi_{2}}\left\Vert v^{\intercal}\left(\mu_{\theta}-\mu_{\theta^{*}}\right)\right\Vert _{\psi_{2}}\right)\\
 & \le\sqrt{3}C\frac{\left(1+\omega\right)\eta}{\sigma}\left(\eta\left(1+\eta\right)\left(2+\omega\right)+1\right)\left\Vert \theta-\theta^{*}\right\Vert \left\Vert u\right\Vert \left\Vert v\right\Vert .
\end{align*}
\end{proof}

\subsubsection{\label{subsec:pf-eps-1-rmc}Proof of \prettyref{lem:epsilon-1-rmc}}
\begin{proof}
By \prettyref{eq:Gam-rmc}, for $u\in\mathbb{S}^{p-1}$, we have
\[
u^{\intercal}\Gamma(\theta;(Y,X_{s}))=\frac{1}{\sigma^{2}}\left[u^{\intercal}\left(\mu_{\theta}-\mu_{\theta^{*}}\right)Y-u^{\intercal}\left(\Sigma_{\theta}-\Sigma_{\theta^{*}}\right)\theta^{*}\right]=\frac{1}{\sigma^{2}}\left[D_{1}-D_{2}-D_{3}\right],
\]
where $D_{1}\coloneqq u^{\intercal}\left(\mu_{\theta}-\mu_{\theta^{*}}\right)Y$,
$D_{2}\coloneqq u^{\intercal}\left(\mu_{\theta}\mu_{\theta}^{\intercal}-\mu_{\theta^{*}}\mu_{\theta^{*}}^{\intercal}\right)\theta^{*}$
and $D_{3}\coloneqq u^{\intercal}\left(A_{\theta}-A_{\theta^{*}}\right)\theta^{*}$,
and it follows from \prettyref{lem:emp-pre-4-rmc} and \prettyref{lem:emp-pre-5-rmc}
that $D_{1}$ and $D_{2}$ are sub-exponential with Orlicz norms
\[
\left\Vert D_{1}\right\Vert _{\psi_{1}}\le C_{1}\left(\eta\left(1+\eta\right)\left(2+\omega\right)+1\right)\eta\left(1+\eta\right)\left(1+\omega\right)\left\Vert \theta-\theta^{*}\right\Vert \text{ and}
\]
\[
\left\Vert D_{2}\right\Vert _{\psi_{1}}\le C_{2}\left(\eta\left(1+\eta\right)\left(2+\omega\right)+1\right)\left(1+\omega\right)\eta^{2}\left\Vert \theta-\theta^{*}\right\Vert ,
\]
while $D_{3}$, which is \emph{independent} of $D_{1}$ and $D_{2}$,
is sub-gaussian with Orlicz norm
\[
\left\Vert D_{3}\right\Vert _{\psi_{2}}\le\left(\left(1+\omega\right)\eta^{2}+1\right)\left(2+\omega\right)\eta^{2}\left\Vert \theta^{*}-\theta\right\Vert .
\]
It follows that $u^{\intercal}\Gamma(\theta;(Y,X_{s}))$ is sub-exponential
with Orlicz norm
\[
\left\Vert u^{\intercal}\Gamma(\theta;(Y,X_{s}))\right\Vert _{\psi_{1}}\le\frac{C(\eta,\omega)}{\sigma^{2}}\left\Vert \theta^{*}-\theta\right\Vert ,
\]
where $C(\eta,\omega)=O\left(\left(1+\omega\right)^{2}\left(1+\eta\right)^{4}\right)$.
Now the result follows from \prettyref{lem:constr-sub-exp} on the
concentration of sub-exponential random vectors.
\end{proof}

\subsubsection{\label{subsec:pf-esp-2-rmc}Proof of \prettyref{lem:epsilon-2-rmc}}
\begin{proof}
In view of \prettyref{eq:V-rmc} and by \prettyref{lem:emp-pre-3-rmc},
$V(\theta'|\theta;(Y,X_{s}))$ is sub-exponential with Orlicz norm
\[
\left\Vert V(\theta'|\theta;(Y,X_{s}))\right\Vert _{\psi_{1}}\le\frac{C}{\sigma^{2}}\left\Vert \theta'-\theta^{*}\right\Vert ^{2},
\]
and the result follows from \prettyref{lem:constr-sub-exp} on the
concentration of sub-exponential random variables.
\end{proof}

\subsubsection{\label{subsec:pf-esp-s-rmc}Proof of \prettyref{lem:epsilon-s-rmc}}
\begin{proof}
In view of \prettyref{eq:E-rmc}, for $u\in\mathbb{S}^{p-1}$, we
have
\[
u^{\intercal}\mathcal{E}(Y,X_{s})=\frac{1}{\sigma^{2}}\left[u^{\intercal}\mu_{\theta^{*}}Y-u^{\intercal}\Sigma_{\theta^{*}}\theta^{*}\right].
\]
Then $u^{\intercal}\mu_{\theta^{*}}Y$ is sub-exponential with Orlicz
norm $\left\Vert u^{\intercal}\mu_{\theta^{*}}Y\right\Vert _{\psi_{1}}\le C_{1}K$
by \prettyref{lem:emp-pre-2-rmc}, and $u^{\intercal}\Sigma_{\theta^{*}}\theta^{*}$
is sub-exponential with Orlicz norm $\left\Vert u^{\intercal}\Sigma_{\theta^{*}}\theta^{*}\right\Vert _{\psi_{1}}\le C_{2}\left\Vert \theta^{*}\right\Vert $
by \prettyref{lem:emp-pre-3-rmc}. Hence $u^{\intercal}\mathcal{E}(Y,X_{s})$
is sub-exponential with Orlicz norm
\[
\left\Vert u^{\intercal}\mathcal{E}(Y,X_{s})\right\Vert _{\psi_{1}}\le\frac{1}{\sigma^{2}}\left(C_{1}K+C_{2}\left\Vert \theta^{*}\right\Vert \right)\le\frac{C}{\sigma}\left(1+\eta\right),
\]
and the result follows from \prettyref{lem:constr-sub-exp} on the
concentration of sub-exponential random vectors.
\end{proof}

\section{Miscellaneous Results and Proofs}

We collect various results and proofs in this section.

\subsection{\label{subsec:pf-sel-con-loglik}Proof of \prettyref{prop:sel-con-loglik}}
\begin{proof}
We show that $L_{*}\left(\theta\right)\le L_{*}\left(\theta^{*}\right)$
for $\theta\in\Omega$. By definition
\[
L_{*}(\theta)=\int_{\mathcal{Y}}\left(\log p_{\theta}(y)\right)p_{\theta^{*}}(y)dy\le\int_{\mathcal{Y}}\left(\log p_{\theta^{*}}(y)\right)p_{\theta^{*}}(y)dy=L_{*}(\theta^{*}),
\]
where the inequality follows from a version of the Jensen's inequality
in \prettyref{lem:jen-ineq}.
\end{proof}

\subsection{\label{subsec:pf-self-const}Proof of \prettyref{prop:self-const}}
\begin{proof}
We show that $Q_{*}\left(\theta'|\theta^{*}\right)\le Q_{*}\left(\theta^{*}|\theta^{*}\right)$
for $\theta'\in\Omega$. By definition
\begin{align*}
Q_{*}(\theta'|\theta^{*}) & =\int_{\mathcal{Y}}\left(\int_{\mathcal{Z}(y)}\log\left(f_{\theta'}(y,z)\right)k_{\theta^{*}}(z|y)dz\right)p_{\theta^{*}}(y)dy\\
 & =\int_{\mathcal{\mathcal{Y}\times\mathcal{Z}}}\log\left(f_{\theta'}(y,z)\right)k_{\theta^{*}}(z|y)p_{\theta^{*}}(y)dzdy\\
 & =\int_{\mathcal{Y}\times\mathcal{Z}}\log\left(f_{\theta'}(y,z)\right)f_{\theta^{*}}(y,z)dzdy,
\end{align*}
then the result follows from a version of the Jensen's inequality
in \prettyref{lem:jen-ineq}.
\end{proof}

\subsection{\label{subsec:inter-con-ineq}An Interpretation of the Convergence
Inequality}
\begin{lem}
Suppose $\theta^{*}\in\mathbb{R}^{p}$, $\kappa<1$, $\varepsilon>0$
and a sequence $\{\theta^{t}\}_{t=0}^{T}$ such that $\left\Vert \theta^{t}-\theta^{*}\right\Vert >\varepsilon$
for $0\le t\le T$. Then it satisfies the inequality
\[
\left\Vert \theta^{t}-\theta^{*}\right\Vert \le\kappa^{t}\left\Vert \theta^{0}-\theta^{*}\right\Vert +\varepsilon\text{ for }0\le t\le T,
\]
if and only if there exists a sequence $\{\zeta^{t}\}_{t=0}^{T}$
such that $\zeta^{t}\in\mathbb{S}_{\varepsilon}^{p-1}\left(\theta^{*}\right)$
and
\[
\left\Vert \theta^{t}-\zeta^{t}\right\Vert \le\kappa^{t}\left\Vert \theta^{0}-\theta^{*}\right\Vert \text{ for }0\le t\le T,
\]
where $\mathbb{S}_{\varepsilon}^{p-1}\left(\theta^{*}\right)\coloneqq\{u\in\mathbb{R}^{p}\mid\left\Vert u-\theta^{*}\right\Vert =\varepsilon\}$
is the sphere centered at $\theta^{*}$ with radius $\varepsilon$.
\end{lem}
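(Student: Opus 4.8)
The plan is to prove the two implications separately; the only substantive ingredient is a single explicit construction, namely the radial projection of $\theta^{t}$ onto the sphere $\mathbb{S}_{\varepsilon}^{p-1}(\theta^{*})$.

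For the ``if'' direction, suppose a sequence $\{\zeta^{t}\}_{t=0}^{T}$ as described exists. Then for each $0\le t\le T$, the triangle inequality together with $\zeta^{t}\in\mathbb{S}_{\varepsilon}^{p-1}(\theta^{*})$ gives
\[
\left\Vert \theta^{t}-\theta^{*}\right\Vert \le\left\Vert \theta^{t}-\zeta^{t}\right\Vert +\left\Vert \zeta^{t}-\theta^{*}\right\Vert \le\kappa^{t}\left\Vert \theta^{0}-\theta^{*}\right\Vert +\varepsilon ,
\]
which is the claimed inequality. Note this direction does not use the hypothesis $\left\Vert \theta^{t}-\theta^{*}\right\Vert >\varepsilon$.

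For the ``only if'' direction, assume $\left\Vert \theta^{t}-\theta^{*}\right\Vert \le\kappa^{t}\left\Vert \theta^{0}-\theta^{*}\right\Vert +\varepsilon$ for $0\le t\le T$. Since $\left\Vert \theta^{t}-\theta^{*}\right\Vert >\varepsilon>0$, the point $\theta^{t}$ is distinct from $\theta^{*}$, so I would set
\[
\zeta^{t}:=\theta^{*}+\varepsilon\,\frac{\theta^{t}-\theta^{*}}{\left\Vert \theta^{t}-\theta^{*}\right\Vert },
\]
which satisfies $\left\Vert \zeta^{t}-\theta^{*}\right\Vert =\varepsilon$, i.e. $\zeta^{t}\in\mathbb{S}_{\varepsilon}^{p-1}(\theta^{*})$. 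Writing $\theta^{t}-\zeta^{t}=(\theta^{t}-\theta^{*})\bigl(1-\varepsilon/\left\Vert \theta^{t}-\theta^{*}\right\Vert \bigr)$ and observing that the scalar factor is nonnegative because $\left\Vert \theta^{t}-\theta^{*}\right\Vert >\varepsilon$, one obtains
\[
\left\Vert \theta^{t}-\zeta^{t}\right\Vert =\left\Vert \theta^{t}-\theta^{*}\right\Vert -\varepsilon\le\kappa^{t}\left\Vert \theta^{0}-\theta^{*}\right\Vert ,
\]
where the last step is the assumed inequality. This furnishes the required sequence.

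The argument is essentially immediate, and the one point deserving care is that the hypothesis $\left\Vert \theta^{t}-\theta^{*}\right\Vert >\varepsilon$ is invoked twice in the forward direction: once to make the radial projection well defined (we need $\theta^{t}\ne\theta^{*}$), and once to guarantee that the factor $1-\varepsilon/\left\Vert \theta^{t}-\theta^{*}\right\Vert $ is nonnegative, so that $\left\Vert \theta^{t}-\zeta^{t}\right\Vert $ equals $\left\Vert \theta^{t}-\theta^{*}\right\Vert -\varepsilon$ rather than $\bigl|\left\Vert \theta^{t}-\theta^{*}\right\Vert -\varepsilon\bigr|$ with an ambiguous sign. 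Geometrically the lemma merely restates ``$\theta^{t}$ lies within distance $\kappa^{t}\left\Vert \theta^{0}-\theta^{*}\right\Vert $ of the closed ball $\overline{B}_{\varepsilon}(\theta^{*})$'' as ``$\theta^{t}$ lies within that distance of the sphere $\mathbb{S}_{\varepsilon}^{p-1}(\theta^{*})$,'' which are equivalent precisely when $\theta^{t}$ lies outside the ball.
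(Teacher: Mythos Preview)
Your proof is correct and essentially identical to the paper's: both directions use the triangle inequality for sufficiency, and for necessity both construct $\zeta^{t}$ as the radial projection of $\theta^{t}$ onto the sphere (the paper writes it as the convex combination $(1-\lambda^{t})\theta^{*}+\lambda^{t}\theta^{t}$ with $\lambda^{t}=\varepsilon/\|\theta^{t}-\theta^{*}\|$, which is the same point). Your additional remarks on where the hypothesis $\|\theta^{t}-\theta^{*}\|>\varepsilon$ is used and the geometric interpretation match the paper's follow-up remark as well.
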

\begin{proof}
Sufficiency. By triangle inequality
\[
\left\Vert \theta^{t}-\theta^{*}\right\Vert \le\left\Vert \theta^{t}-\zeta^{t}\right\Vert +\left\Vert \zeta^{t}-\theta^{*}\right\Vert \le\kappa^{t}\left\Vert \theta^{0}-\theta^{*}\right\Vert +\varepsilon.
\]

Necessity. Let $\zeta^{t}\coloneqq(1-\lambda^{t})\theta^{*}+\lambda^{t}\theta^{t}$
where $\lambda^{t}\coloneqq\frac{\varepsilon}{\left\Vert \theta^{t}-\theta^{*}\right\Vert }<1$
for $0\le t\le T$. Then since $\left\Vert \zeta^{t}-\theta^{*}\right\Vert =\varepsilon$
and
\[
\left\Vert \theta^{t}-\zeta^{t}\right\Vert +\left\Vert \zeta^{t}-\theta^{*}\right\Vert =\left\Vert \theta^{t}-\theta^{*}\right\Vert \le\kappa^{t}\left\Vert \theta^{0}-\theta^{*}\right\Vert +\varepsilon,
\]
the result follows.
\end{proof}
\begin{rem*}
It is clear from the proof that for each $t$, the point $\zeta^{t}$
is simply the intersection of the line segment joining $\theta^{*}$
and $\theta^{t}$ with the sphere $\mathbb{S}_{\varepsilon}^{p-1}\left(\theta^{*}\right)$,
and hence $\left\Vert \theta^{t}-\zeta^{t}\right\Vert $ is simply
the distance between $\theta^{t}$ and the ball $B_{\varepsilon}(\theta^{*})$.
In the context of an empirical EM sequence, when $\theta^{t}$ lies
outside the ball $B_{\varepsilon}(\theta^{*})$ of statistical error,
it ``converges'' geometrically onto it at the rate $\kappa$.
\end{rem*}

\subsection{\label{subsec:infor-mat}A Digression to the Theory of Information
Matrices}

The Fisher information matrix, the complete and missing information
matrices as well as the convergence rate matrix are classical objects
for analyzing the asymptotic convergence of the EM algorithm \cite{Dempster,Redner,Meng-1,Meng-2}.
In this section, we briefly formulate and extend the classical information
matrix theory to make connections with our analysis of oracle convergence
of the EM algorithm.

On both sides of \prettyref{eq:int-1-em}, differentiating twice with
respect to $\theta'$ and taking conditional expectation of $Z$ given
$y$ at parameter $\theta$, we have
\[
\mathcal{I}(\theta';y)=\mathcal{I}_{c}(\theta'|\theta;y)-\mathcal{I}_{m}(\theta'|\theta;y),
\]
where we define $\mathcal{I}(\theta';y)\coloneqq-\nabla_{1}\nabla_{1}^{\intercal}L(\theta';y)$
as the \emph{negative} of the Hessian matrix of the stochastic log-likelihood
function and define\footnote{In this section the differential operators $\nabla$ and $\nabla\nabla^{\intercal}$
are with respect to the parameter $\theta'$.}
\[
\mathcal{I}_{c}(\theta'|\theta;y)\coloneqq-\int_{\mathcal{Z}(y)}\nabla\nabla^{\intercal}\left(\log f_{\theta'}(y,z)\right)k_{\theta}(z|y)dz=-\nabla_{1}\nabla_{1}^{\intercal}Q(\theta'|\theta;y),
\]
\[
\mathcal{I}_{m}(\theta'|\theta;y)\coloneqq-\int_{\mathcal{Z}(y)}\nabla\nabla^{\intercal}\left(\log k_{\theta'}(z|y)\right)k_{\theta}(z|y)dz
\]
for $\theta\in\Omega$, whenever these matrices are well-defined.
Note $\mathcal{I}_{m}(\theta|\theta;y)$ is positive semi-definite
for $y\in\mathcal{Y}$ by \prettyref{lem:fisher-ident}.

By taking expectations, we define the \emph{observed information matrix}
\begin{equation}
\mathcal{I}(\theta')\coloneqq\int_{\mathcal{Y}}\mathcal{I}(\theta';y)p_{\theta^{*}}(y)dy=-\nabla_{1}\nabla_{1}^{\intercal}L_{*}(\theta'),\label{eq:obs-inf-mat}
\end{equation}
the \emph{complete information matrix}
\begin{equation}
\mathcal{I}_{c}(\theta'|\theta)\coloneqq\int_{\mathcal{Y}}\mathcal{I}_{c}(\theta'|\theta;y)p_{\theta^{*}}(y)dy=-\nabla_{1}\nabla_{1}^{\intercal}Q_{*}(\theta'|\theta),\label{eq:cmp-inf-mat}
\end{equation}
and the \emph{missing information matrix}
\[
\mathcal{I}_{m}(\theta'|\theta)\coloneqq\int_{\mathcal{Y}}\mathcal{I}_{m}(\theta'|\theta;y)p_{\theta^{*}}(y)dy.
\]
We obtain the \emph{oracle information equation }or the \emph{missing
information principle} (\cite{Orchard,McLachlan}) at the \emph{population}
level
\begin{equation}
\mathcal{I}(\theta')=\mathcal{I}_{c}(\theta'|\theta)-\mathcal{I}_{m}(\theta'|\theta).\label{eq:ora-infor-eq}
\end{equation}

At the true population parameter $\theta^{*}$ and by \prettyref{lem:fisher-ident},
we have
\begin{equation}
\mathcal{I}(\theta^{*})=\int_{\mathcal{Y}}\left[\nabla\log p_{\theta^{*}}(y)\right]\left[\nabla^{\intercal}\log p_{\theta^{*}}(y)\right]p_{\theta^{*}}(y)dy=I(\theta^{*}),\label{eq:fis-inf-mat}
\end{equation}
which is just the Fisher information matrix and is positive semi-definite.
Similarly, we have
\[
\mathcal{I}_{c}(\theta^{*}|\theta)=-\int_{\mathcal{X}}\nabla\nabla^{\intercal}\left(\log f_{\theta^{*}}(y,z)\right)k_{\theta}(z|y)p_{\theta^{*}}(y)dydz,
\]
\[
\mathcal{I}_{m}(\theta^{*}|\theta)=-\int_{\mathcal{X}}\nabla\nabla^{\intercal}\left(\log k_{\theta^{*}}(z|y)\right)k_{\theta}(z|y)p_{\theta^{*}}(y)dzdy
\]
and by \prettyref{lem:fisher-ident},
\[
\mathcal{I}_{c}(\theta^{*}|\theta^{*})=\int_{\mathcal{X}}\left[\nabla\log f_{\theta^{*}}(y,z)\right]\left[\nabla^{\intercal}\log f_{\theta^{*}}(y,z)\right]dydz,
\]
\[
\mathcal{I}_{m}(\theta^{*}|\theta^{*})=\int_{\mathcal{X}}\left[\nabla\log k_{\theta^{*}}(z|y)\right]\left[\nabla^{\intercal}\log k_{\theta^{*}}(z|y)\right]p_{\theta^{*}}(y)dzdy,
\]
which are positive semi-definite.

In classical analysis of parameter estimation by MLE, we usually require
that the Fisher information matrix of the parametric density be positive
definite at $\theta^{*}$. A connection of this condition and our
strong concavity condition of the oracle $Q$-function is made in
\prettyref{prop:fisher-non-empt-V} for which we give the following
proof.
\begin{proof}[Proof of \prettyref{prop:fisher-non-empt-V}]
\label{proof:fisher-non-empt-V}In view of \prettyref{eq:ora-infor-eq},
we have
\[
I(\theta^{*})=\mathcal{I}(\theta^{*})=\mathcal{I}_{c}(\theta^{*}|\theta)-\mathcal{I}_{m}(\theta^{*}|\theta)\text{ for }\theta\in B_{r_{1}}(\theta^{*}),
\]
for some $r_{1}>0$. Since $\mathcal{I}_{m}(\theta^{*}|\theta^{*})$
is positive semi-definite and $\mathcal{I}(\theta^{*})$ is positive
definite by our assumption, $\mathcal{I}_{c}(\theta^{*}|\theta)$
is positive definite at $\theta=\theta^{*}$ and hence its minimal
eigenvalue $\lambda_{\min}(\theta^{*})>0$. Then by continuity of
$\lambda_{\min}$, there exists $0<r_{2}<r_{1}$ such that
\[
\nu:=\frac{1}{3}\inf\left\{ \lambda_{\min}(\theta)\mid\theta\in\overline{B}_{r_{2}}(\theta^{*})\right\} >0.
\]
Now since $\nabla_{1}\nabla_{1}^{\intercal}Q_{*}(\theta^{*}|\theta)=-\mathcal{I}_{c}(\theta^{*}|\theta)$
by \prettyref{eq:cmp-inf-mat}, which implies that there exists $0<r<r_{2}$
such that
\[
Q_{*}(\theta'|\theta)-Q_{*}(\theta^{*}|\theta)-\left\langle \nabla_{1}Q_{*}(\theta^{*}|\theta),\theta'-\theta^{*}\right\rangle \le-\nu\left\Vert \theta'-\theta^{*}\right\Vert ^{2}
\]
whenever $\theta',\theta\in B_{r}(\theta^{*})$ and it follows that
$\nu\in\mathcal{V}(r,r)\neq\varnothing$.
\end{proof}

\subsection{\label{subsec:mea}A Note on the Measurability Issue}

In the statement of some definitions and assumptions in this paper,
we implicitly used the fact that certain \emph{uncountable} operations
of a family of measurable functions preserve the measurability of
the resulting function. To be specific, let $T\subset\mathbb{R}^{q}$
be a (possibly uncountable) index set and $(\mathscr{S},\mathscr{E},\mathbb{P})$
be a probability measure space.
\begin{lem}
If $g(y,\theta):\mathbb{R}\times T\to\overline{\mathbb{R}}$ is a
Borel measurable function, then for any random variable $Y$ on $(\mathscr{S},\mathscr{E},\mathbb{P})$,
the supremum $Z\coloneqq\sup_{\theta\in T}g(Y,\theta)$ is an $\mathscr{E}$-measurable
function hence a random variable.
\end{lem}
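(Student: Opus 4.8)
The plan is to reduce the statement to a measurability claim for a single function on $\mathbb{R}$ and then invoke the classical projection theorem for analytic sets. First I would set $\mathbb{P}_Y\coloneqq\mathbb{P}\circ Y^{-1}$, the law of $Y$ on $\mathbb{R}$, and let $\overline{\mathcal{B}}$ denote the $\mathbb{P}_Y$-completion of the Borel $\sigma$-algebra of $\mathbb{R}$. Define $h:\mathbb{R}\to\overline{\mathbb{R}}$ by $h(y)\coloneqq\sup_{\theta\in T}g(y,\theta)$, so that $Z=h\circ Y$. It suffices to show that $h$ is $\overline{\mathcal{B}}$-measurable: for $A\in\overline{\mathcal{B}}$, writing $A=B\cup N$ with $B$ Borel and $N$ a subset of a $\mathbb{P}_Y$-null Borel set $M$, we get $Y^{-1}(A)=Y^{-1}(B)\cup Y^{-1}(N)$ with $Y^{-1}(B)\in\mathscr{E}$ and $Y^{-1}(N)\subseteq Y^{-1}(M)$, a $\mathbb{P}$-null set; hence $Z$ is measurable with respect to the $\mathbb{P}$-completion of $\mathscr{E}$, and passing to that completion is harmless since it changes neither probabilities nor any statement of the form "with probability at least $1-\delta$". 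One may also assume $T$ is a Borel subset of $\mathbb{R}^q$, as in every application in the paper (punctured balls, products of balls); extending $g$ by $-\infty$ on $(\mathbb{R}\times\mathbb{R}^q)\setminus(\mathbb{R}\times T)$ preserves joint Borel measurability and the value of $h$.

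The core step is then the following. For each $a\in\mathbb{R}$ one has $\sup_{\theta\in T}g(y,\theta)>a$ iff $g(y,\theta)>a$ for some $\theta$, so $\{y:h(y)>a\}=\pi\bigl(\{(y,\theta)\in\mathbb{R}^{1+q}:g(y,\theta)>a\}\bigr)$, where $\pi$ is the projection onto the first coordinate. The set $\{(y,\theta):g(y,\theta)>a\}$ is Borel in $\mathbb{R}^{1+q}$ by joint measurability of $g$, hence analytic (Suslin); by the projection theorem its image $\pi(\,\cdot\,)$ is again analytic, and every analytic subset of $\mathbb{R}$ is universally measurable, in particular a member of $\overline{\mathcal{B}}$. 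Since the sets $\{h>a\}$, $a\in\mathbb{R}$, generate the Borel $\sigma$-algebra of $\overline{\mathbb{R}}$, it follows that $h$ is $\overline{\mathcal{B}}$-measurable, and the lemma follows from the reduction above.

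The main obstacle, and the reason this is not a one-line appeal to "countable suprema of measurable functions are measurable", is precisely that the projection of a Borel set need not be Borel: $h$ cannot be claimed to be Borel measurable, so one is forced through the theory of analytic sets and universal measurability, and correspondingly through (the completion of) the probability space. I would close with a remark that in all three canonical models the relevant functions $g$ are in fact continuous in $\theta\in T$ for each fixed $y$, with $T\subseteq\mathbb{R}^q$ separable; choosing a countable dense $D\subseteq T$ then gives $h(y)=\sup_{\theta\in D}g(y,\theta)$ (the inequality $\le$ using lower semicontinuity of $g(y,\cdot)$, which continuity supplies), a countable supremum of the Borel sections $y\mapsto g(y,\theta)$, so in the cases actually used $h$ is genuinely Borel and no completion is needed.
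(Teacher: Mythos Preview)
Your proposal is correct and follows precisely the route the paper intends: the paper does not give its own proof but simply cites Appendix~C of Pollard's \emph{Convergence of Stochastic Processes}, and that appendix develops exactly the analytic-set/measurable-projection machinery you outline (projections of Borel sets are analytic, analytic sets are universally measurable, hence the supremum is measurable with respect to the completed $\sigma$-algebra). Your observation that, as literally stated, the lemma only yields measurability with respect to the $\mathbb{P}$-completion of $\mathscr{E}$ rather than $\mathscr{E}$ itself is well taken and is the standard caveat; your closing remark that in the three canonical models the integrand is continuous in $\theta$, so a countable dense subset of $T$ already suffices and genuine Borel measurability holds, is a useful sharpening beyond what the paper records.
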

\begin{proof}
See Appendix C of \cite{Pollard} for a proof.
\end{proof}
\begin{rem*}
This result can be readily generalized to random vectors $Y=(Y_{1},\cdots,Y_{n})$
on $(\mathscr{S},\mathscr{E},\mathbb{P})$. In all our cases, the
index set $T=B_{r}(\theta^{*})$ or $T=B_{r}(\theta^{*})\times B_{R}(\theta^{*})$,
and as a simple consequence, the sets like
\[
\{\varpi\mid g(Y,\theta)\le a\text{ for }\theta\in T\}=\bigcap_{\theta\in T}\{\varpi\mid g(Y,\theta)\le a\}=\{\varpi\mid Z\le a\}
\]
are indeed measurable. See \cite{Dellacherie,Pollard} for more detailed
discussions on this topic.
\end{rem*}

\section{Auxiliaries}

We give auxiliary results used throughout the paper in this section.

\subsection{Supporting Lemmas}
\begin{lem}
\label{lem:fun-bounds}For real valued functions $f$ and $g$ on
a non-empty set $X$.
\begin{enumerate}[label=(\alph*),ref=(\alph*)]
\item \label{enu:fun-bnd-a} If $\sup f(x)<+\infty$ and $\sup g(x)<+\infty$, then
\[
\left|\sup f(x)-\sup g(x)\right|\le\sup\left|f(x)-g(x)\right|;
\]
\item \label{enu:fun-bnd-b} If $\inf f(x)>-\infty$ and $\inf g(x)>-\infty$, then
\[
\left|\inf f(x)-\inf g(x)\right|\le\sup\left|f(x)-g(x)\right|.
\]
\end{enumerate}
\end{lem}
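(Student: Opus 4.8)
The plan is to establish part \ref{enu:fun-bnd-a} directly from the defining property of the supremum and then deduce part \ref{enu:fun-bnd-b} by a sign change. Set $M \coloneqq \sup_{x \in X} |f(x) - g(x)|$; if $M = +\infty$ there is nothing to prove, so assume $M < +\infty$. First I would note that for every $x \in X$,
\[
f(x) \le g(x) + |f(x) - g(x)| \le \sup_{x' \in X} g(x') + M,
\]
where the last inequality uses the hypothesis $\sup g < +\infty$, so that the right-hand side is a finite upper bound for $f$ not depending on $x$. Taking the supremum over $x$ gives $\sup f \le \sup g + M$. Interchanging the roles of $f$ and $g$ — legitimate since $\sup f < +\infty$ as well — yields $\sup g \le \sup f + M$, and combining the two inequalities gives $|\sup f - \sup g| \le M$, which is \ref{enu:fun-bnd-a}.

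For part \ref{enu:fun-bnd-b}, I would apply \ref{enu:fun-bnd-a} to the functions $-f$ and $-g$: the hypotheses $\inf f > -\infty$ and $\inf g > -\infty$ are exactly $\sup(-f) < +\infty$ and $\sup(-g) < +\infty$, so \ref{enu:fun-bnd-a} gives $|\sup(-f) - \sup(-g)| \le \sup_{x} |(-f)(x) - (-g)(x)|$. Rewriting via $\sup(-f) = -\inf f$, $\sup(-g) = -\inf g$ and $|(-f)(x) - (-g)(x)| = |f(x) - g(x)|$ turns this into $|\inf f - \inf g| \le \sup_{x} |f(x) - g(x)|$, which is \ref{enu:fun-bnd-b}.

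There is essentially no obstacle here; the only point deserving a moment's care is the extended-real-valued setting, namely ensuring that every arithmetic expression above ($\sup g + M$, $-\inf f$, and so on) is well-defined. This is precisely the role of the finiteness hypotheses, and it is also why this lemma is invoked in the main text only under conditions — compare \prettyref{lem:bounds} and the event $\mathscr{E}_{n}$ — that guarantee $\overline{\varGamma}_{n}$ and $\overline{V}_{n}$ are finite with high probability. When $M = +\infty$ the inequality is vacuous; in every other case all the quantities appearing above are finite real numbers, so the displayed chains of inequalities are valid as written.
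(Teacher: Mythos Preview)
Your proof is correct and follows essentially the same approach as the paper: both use the pointwise inequality $f(x)\le g(x)+|f(x)-g(x)|$, take suprema, interchange the roles of $f$ and $g$, and then reduce part \ref{enu:fun-bnd-b} to part \ref{enu:fun-bnd-a} by applying the latter to $-f$ and $-g$. Your version is slightly more careful in explicitly disposing of the case $M=+\infty$ and noting where finiteness is used, but the argument is otherwise identical.
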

\begin{proof}
$(a)$ Since $f(x)\le g(x)+\left|f(x)-g(x)\right|$, we have
\[
\sup f(x)\le\sup g(x)+\sup\left|f(x)-g(x)\right|.
\]
Then since $\sup g(x)<+\infty$, subtracting it from both sides yields
\[
\sup f(x)-\sup g(x)\le\sup\left|f(x)-g(x)\right|.
\]
By exchanging the roles of $f$ and $g$, we have $\sup g(x)-\sup f(x)\le\sup\left|g(x)-f(x)\right|$,
which then combines to give the desired result. $(b)$ Apply $(a)$
to $-f$ and $-g$, the result follows.
\end{proof}
\begin{lem}
\label{lem:quad-taylor}If $F(x)=x^{\intercal}Ax+b^{\intercal}x+c$
is a quadratic function of $x\in\mathbb{R}^{p}$, where $A\in\mathbb{R}^{p\times p}$
is symmetric, then $F(x)-F(x_{0})-\left\langle \nabla F(x_{0}),x-x_{0}\right\rangle =(x-x_{0})^{\intercal}A(x-x_{0})$.
\end{lem}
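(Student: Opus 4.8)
The plan is to reduce everything to a direct expansion after the substitution $h \coloneqq x - x_0$. First I would record the gradient of $F$ at an arbitrary point: since $A$ is symmetric, differentiating $F(x) = x^{\intercal}Ax + b^{\intercal}x + c$ gives $\nabla F(x_0) = 2Ax_0 + b$. This is the only place the symmetry hypothesis is used in an essential way, so I would flag it explicitly.

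Next I would write $x = x_0 + h$ and expand $F(x_0 + h) = (x_0+h)^{\intercal}A(x_0+h) + b^{\intercal}(x_0+h) + c$. Using symmetry of $A$ to combine the two cross terms, $x_0^{\intercal}Ah + h^{\intercal}Ax_0 = 2h^{\intercal}Ax_0$, this becomes
\[
F(x_0 + h) = x_0^{\intercal}Ax_0 + 2h^{\intercal}Ax_0 + h^{\intercal}Ah + b^{\intercal}x_0 + b^{\intercal}h + c = F(x_0) + (2Ax_0 + b)^{\intercal}h + h^{\intercal}Ah.
\]
Since $(2Ax_0 + b)^{\intercal}h = \langle \nabla F(x_0), h\rangle = \langle \nabla F(x_0), x - x_0\rangle$, rearranging yields exactly
\[
F(x) - F(x_0) - \langle \nabla F(x_0), x - x_0\rangle = h^{\intercal}Ah = (x - x_0)^{\intercal}A(x - x_0),
\]
which is the claim.

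There is essentially no obstacle here; the lemma is a one-line consequence of the exact second-order Taylor expansion of a quadratic. The only thing to be careful about is the bookkeeping of the cross term, i.e. that $A = A^{\intercal}$ is what lets $x_0^{\intercal}Ah$ and $h^{\intercal}Ax_0$ be added as $2h^{\intercal}Ax_0$ rather than left as two unequal scalars; without symmetry one would instead get $\tfrac12 h^{\intercal}(A + A^{\intercal})h$ in the remainder, so the statement as written genuinely needs the hypothesis. I would keep the write-up to the single displayed computation above.
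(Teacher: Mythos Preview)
Your proof is correct and matches the paper's approach: the paper simply records that the result ``follows from simple calculation,'' and your expansion via $h = x - x_0$ with $\nabla F(x_0) = 2Ax_0 + b$ is exactly that calculation made explicit. Your remark on where symmetry is used (to collapse the cross terms and to get the gradient formula) is accurate and worth keeping.
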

\begin{proof}
This result follows from simple calculation.
\end{proof}
\begin{lem}
\label{lem:jen-ineq}Suppose $f(x)$ and\textup{ $g(x)$} are positive
and Lebesgue integrable functions on $\mathbb{R}^{p}\ (p\ge1)$. If
$\int_{\mathbb{R}^{p}}f(x)dx=\int_{\mathbb{R}^{p}}g(x)dx=1$, then
\[
\int_{\mathbb{R}^{p}}g(x)\log f(x)dx\le\int_{\mathbb{R}^{p}}g(x)\log g(x)dx.
\]
\end{lem}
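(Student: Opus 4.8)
The plan is to prove this as an instance of Gibbs' inequality (equivalently, the non-negativity of the Kullback--Leibler divergence), whose analytic core is the elementary bound $\log t \le t-1$, valid for every $t>0$ with equality iff $t=1$.

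First I would record the pointwise estimate. Since $f$ and $g$ are strictly positive everywhere on $\mathbb{R}^p$, the ratio $f(x)/g(x)$ is a well-defined positive real for each $x$; applying $\log t \le t-1$ with $t=f(x)/g(x)$ and multiplying by $g(x)>0$ gives
\[
g(x)\log\frac{f(x)}{g(x)} \le f(x)-g(x)\quad\text{for all }x\in\mathbb{R}^p.
\]
Next I would integrate. The right-hand side $f-g$ is integrable by hypothesis, and $\int_{\mathbb{R}^p}(f-g)\,dx = 1-1 = 0$. For the left-hand side I first observe that its positive part is dominated by the integrable function $f$ (because $g\log(f/g)\le f-g\le f$), so $\int_{\mathbb{R}^p} g\log(f/g)\,dx$ is well-defined with value in $[-\infty,0]$ by monotonicity of the integral. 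In the case where both $\int g\log f$ and $\int g\log g$ are finite, additivity of the integral turns this into
\[
\int_{\mathbb{R}^p} g(x)\log f(x)\,dx - \int_{\mathbb{R}^p} g(x)\log g(x)\,dx \le 0,
\]
which is the claim; if instead $\int g\log g = +\infty$ the inequality is vacuous, and if $\int g\log f = -\infty$ it is immediate.

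The only genuine subtlety -- and hence the step I expect to need the most care -- is the integrability bookkeeping that lets one split $\int g\log(f/g)$ into $\int g\log f$ and $\int g\log g$; the inequality $\log t\le t-1$ together with the normalization $\int f = \int g = 1$ does all the real work. In the applications of this lemma in the paper (with $f=p_\theta$, $g=p_{\theta^*}$ for \prettyref{prop:sel-con-loglik}, or the joint densities $f_{\theta'}$, $f_{\theta^*}$ for \prettyref{prop:self-const}), these integrals are the negative cross-entropy and the entropy, which are finite under the model's regularity assumptions, so the splitting is automatic there.
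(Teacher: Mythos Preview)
Your argument is correct. The paper takes a closely related but slightly different route: it defines the probability measure $d\mu(x)=g(x)\,dx$, notes that $f/g\in L^{1}(\mu)$ with $\int (f/g)\,d\mu=1$, and applies Jensen's inequality to the convex function $-\log$ to obtain
\[
0=-\log\!\left(\int_{\mathbb{R}^{p}}\frac{f}{g}\,d\mu\right)\le\int_{\mathbb{R}^{p}}-\log\!\left(\frac{f}{g}\right)d\mu,
\]
which rearranges to the claim. Your use of the tangent-line bound $\log t\le t-1$ is essentially the pointwise content of Jensen for $-\log$, so the two arguments are morally the same; yours is more elementary (no appeal to Jensen as a black box) and you are more explicit about the integrability bookkeeping needed to split $\int g\log(f/g)$ into $\int g\log f-\int g\log g$, a point the paper's proof glosses over. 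Conversely, packaging the estimate as Jensen lets the paper sidestep that discussion in one line.
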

\begin{proof}
Let $d\mu(x)\coloneqq g(x)dx$, then $\left(\mathbb{R}^{p},\mathscr{B}^{p},\mu\right)$
is clearly a probability measure space. Since $-\log(x)$ is convex
on $\mathbb{R}^{+}$ and $\frac{f(x)}{g(x)}\in L^{1}\left(\mu\right)$,
by applying the Jensen's Inequality \cite{Folland,Kuczma}, one has
\[
-\log\left(\int_{\mathbb{R}^{p}}\frac{f(x)}{g(x)}d\mu(x)\right)\le\int_{\mathbb{R}^{p}}-\log\left(\frac{f(x)}{g(x)}\right)d\mu(x).
\]
Since $\int_{\mathbb{R}^{p}}f(x)dx=1$, the left-hand side of the
above inequality is zero, hence

\[
0\ge\int_{\mathbb{R}^{p}}\log\left(\frac{f(x)}{g(x)}\right)d\mu(x)=\int_{\mathbb{R}^{p}}g(x)\log f(x)dx-\int_{\mathbb{R}^{p}}g(x)\log g(x)dx,
\]
and the lemma follows.
\end{proof}
\begin{lem}
\label{lem:fisher-ident}For a family of parametric densities $\{p_{\theta}(x)\}_{\theta\in\Omega}$
where $\Omega\subseteq\mathbb{R}^{d}$, there holds
\[
-\int_{\mathcal{X}}\nabla\nabla^{\intercal}\left(\log p_{\theta}(x)\right)p_{\theta}(x)dx=\int_{\mathcal{X}}\left[\nabla\log p_{\theta}(x)\right]\left[\nabla^{\intercal}\log p_{\theta}(x)\right]p_{\theta}(x)dx
\]
for $\theta\in\Omega$ and this $d\times d$ matrix is positive semi-definite.
\end{lem}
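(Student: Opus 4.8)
The plan is to obtain the identity by differentiating the normalization constraint $\int_{\mathcal{X}} p_{\theta}(x)\,dx = 1$ twice with respect to $\theta$, under the standard regularity hypotheses that permit interchanging differentiation and integration. First I would record the elementary fact that, wherever $p_{\theta}(x) > 0$,
\[
\nabla p_{\theta}(x) = p_{\theta}(x)\,\nabla\log p_{\theta}(x),
\]
where throughout $\nabla$ and $\nabla\nabla^{\intercal}$ are taken with respect to $\theta$. Differentiating the normalization once and passing $\nabla$ inside the integral gives $\int_{\mathcal{X}} \nabla p_{\theta}(x)\,dx = 0$; differentiating a second time (applying the transpose gradient) gives $\int_{\mathcal{X}} \nabla\nabla^{\intercal} p_{\theta}(x)\,dx = 0$.

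Next I would expand $\nabla\nabla^{\intercal} p_{\theta}$ by applying the product rule to the factorization $\nabla^{\intercal} p_{\theta} = p_{\theta}\,\nabla^{\intercal}\log p_{\theta}$, obtaining
\[
\nabla\nabla^{\intercal} p_{\theta}(x) = p_{\theta}(x)\,[\nabla\log p_{\theta}(x)][\nabla^{\intercal}\log p_{\theta}(x)] + p_{\theta}(x)\,\nabla\nabla^{\intercal}\log p_{\theta}(x).
\]
Integrating over $\mathcal{X}$ and invoking $\int_{\mathcal{X}} \nabla\nabla^{\intercal} p_{\theta}(x)\,dx = 0$ yields
\[
0 = \int_{\mathcal{X}} [\nabla\log p_{\theta}(x)][\nabla^{\intercal}\log p_{\theta}(x)]\,p_{\theta}(x)\,dx + \int_{\mathcal{X}} \bigl(\nabla\nabla^{\intercal}\log p_{\theta}(x)\bigr)\,p_{\theta}(x)\,dx,
\]
which is exactly the claimed equation after transposing the second integral to the left-hand side.

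For positive semi-definiteness I would fix an arbitrary $v\in\mathbb{R}^{d}$ and compute
\[
v^{\intercal}\left(\int_{\mathcal{X}} [\nabla\log p_{\theta}(x)][\nabla^{\intercal}\log p_{\theta}(x)]\,p_{\theta}(x)\,dx\right)v = \int_{\mathcal{X}} \bigl(v^{\intercal}\nabla\log p_{\theta}(x)\bigr)^{2}\,p_{\theta}(x)\,dx \ge 0,
\]
since the integrand is the product of a square and a nonnegative density; by the identity just proved, the matrix $-\int_{\mathcal{X}} \nabla\nabla^{\intercal}\log p_{\theta}(x)\,p_{\theta}(x)\,dx$ is therefore positive semi-definite as well. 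The only delicate point, which I would flag as the main thing to be acknowledged rather than a genuine obstacle, is the legitimacy of differentiating twice under the integral sign; this is the customary regularity requirement underlying Fisher information identities and is tacitly in force here, being routinely justified by dominated convergence for the smooth parametric families appearing in the canonical models of this paper.
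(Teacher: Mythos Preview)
Your proof is correct and follows essentially the same route as the paper: both arguments expand $\nabla\nabla^{\intercal}p_{\theta}$ via the product rule applied to $p_{\theta}\nabla^{\intercal}\log p_{\theta}$, integrate, and use that $\int_{\mathcal{X}}\nabla\nabla^{\intercal}p_{\theta}(x)\,dx=0$ from differentiating the normalization. The paper's version is terser and leaves the positive semi-definiteness implicit from the outer-product form of the right-hand side, whereas you spell it out explicitly; there is no substantive difference.
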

\begin{proof}
Direct calculation yields
\[
\nabla\nabla^{\intercal}\left(\log p_{\theta}(x)\right)p_{\theta}(x)=\nabla\nabla^{\intercal}p_{\theta}(x)-\left[\nabla\log p_{\theta}(x)\right]\left[\nabla^{\intercal}\log p_{\theta}(x)\right]p_{\theta}(x).
\]
Then the result follows by integration on both sides and noting $\int_{\mathcal{X}}p_{\theta}(x)dx=1$
for $\theta\in\Omega$.
\end{proof}

\subsection{\label{subsec:cov-num}The $\texorpdfstring{\epsilon}{epsilon}$-Net
and Discretization of Norm}

Suppose $(X,d)$ is a compact metric space, a \emph{finite} subset
$N\subseteq X$ is called an \emph{$\epsilon$-net} if for any $x\in X$
there exists $y\in N$ such that $d(x,y)<\epsilon$.\footnote{It follows from the compactness of $X$ that there exists an $\epsilon$-net
for any $\epsilon>0$.} Then $\mathcal{N}_{\epsilon}(X):=\min\{\text{card}(N)\mid N\text{ is an }\epsilon\text{-net of }X\}$
is called the \emph{$\epsilon$-covering number of} $X$. For the
unit sphere $\mathbb{S}^{p-1}$ with induced Euclidean norm, we have
$\mathcal{N}_{\epsilon}(\mathbb{S}^{p-1})<\left(1+\frac{2}{\epsilon}\right)^{p}$.
See \cite{Vershynin} for a proof.
\begin{lem}[Discretization of Norm]
\label{lem:discret-norm}There exists $\{u_{i}\in\mathbb{S}^{p-1}\mid1\le i\le L\}$
with $L<5^{p}$ such that for any $Z\in\mathbb{R}^{p}$, there holds
the inequality
\[
\left\Vert Z\right\Vert \le2\max_{1\le i\le L}u_{i}^{\intercal}Z.
\]
\end{lem}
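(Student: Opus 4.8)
The plan is to take a $\tfrac12$-net of the unit sphere and exploit the reverse triangle inequality. Concretely, I would first let $\{u_{i}\}_{i=1}^{L}$ be a $\tfrac12$-net of $\mathbb{S}^{p-1}$ with respect to the Euclidean metric; by the covering number bound recalled in \prettyref{subsec:cov-num}, such a finite net exists and may be chosen with $L=\mathcal{N}_{\frac12}(\mathbb{S}^{p-1})<\left(1+\tfrac{2}{1/2}\right)^{p}=5^{p}$. These are the vectors $u_{i}$ claimed in the statement.

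Next I would dispose of the trivial case $Z=0$, for which the asserted inequality reads $0\le 2\max_{i}u_{i}^{\intercal}0=0$. For $Z\neq 0$, set $u\coloneqq Z/\left\Vert Z\right\Vert\in\mathbb{S}^{p-1}$. By the defining property of the net there is an index $i$ with $\left\Vert u-u_{i}\right\Vert<\tfrac12$. Writing $u_{i}^{\intercal}Z=u^{\intercal}Z+(u_{i}-u)^{\intercal}Z$, using $u^{\intercal}Z=\left\Vert Z\right\Vert$ and applying the Cauchy--Schwarz inequality to the second term, I obtain
\[
u_{i}^{\intercal}Z\ge\left\Vert Z\right\Vert-\left\Vert u_{i}-u\right\Vert\left\Vert Z\right\Vert>\left\Vert Z\right\Vert-\tfrac12\left\Vert Z\right\Vert=\tfrac12\left\Vert Z\right\Vert.
\]
Hence $\max_{1\le j\le L}u_{j}^{\intercal}Z\ge u_{i}^{\intercal}Z>\tfrac12\left\Vert Z\right\Vert$, and multiplying through by $2$ gives the desired inequality $\left\Vert Z\right\Vert\le 2\max_{1\le j\le L}u_{j}^{\intercal}Z$.

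There is essentially no obstacle here; the construction and the bound $L<5^{p}$ are quoted directly from \prettyref{subsec:cov-num}. The only point that deserves a moment's care is that the conclusion involves the \emph{signed} maximum $\max_{j}u_{j}^{\intercal}Z$ rather than $\max_{j}\left|u_{j}^{\intercal}Z\right|$, but the argument above already exhibits a single index $i$ at which $u_{i}^{\intercal}Z$ is strictly positive and at least $\tfrac12\left\Vert Z\right\Vert$, so passing to the maximum over all net points only strengthens the bound, and the case $Z=0$ is automatically covered since then every $u_{j}^{\intercal}Z=0$.
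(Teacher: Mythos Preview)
Your proof is correct and follows essentially the same approach as the paper: choose a $\tfrac12$-net of $\mathbb{S}^{p-1}$ with fewer than $5^{p}$ points and use Cauchy--Schwarz to compare $u^{\intercal}Z$ with $u_{i}^{\intercal}Z$. The only cosmetic difference is that the paper bounds $u^{\intercal}Z$ for every $u\in\mathbb{S}^{p-1}$ and then takes the supremum $\left\Vert Z\right\Vert=\sup_{u}u^{\intercal}Z$, whereas you go directly to the optimal direction $u=Z/\left\Vert Z\right\Vert$; both lead to the same inequality.
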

\begin{proof}
Let $\{u_{i}\in\mathbb{S}^{p-1}\mid1\le i\le L\}$ be a $\frac{1}{2}$-net
of the $\mathbb{S}^{p-1}\subset\mathbb{R}^{p}$ such that $L=\mathcal{N}_{\frac{1}{2}}\left(\mathbb{S}^{p-1}\right)<5^{p}$,
then for any $u\in\mathbb{S}^{p-1}$, there exists $u_{i}$ such that
$\left\Vert u-u_{i}\right\Vert \le\frac{1}{2}$. For a vector $Z\in\mathbb{R}^{p}$,
we have
\[
u^{\intercal}Z\le\left|u^{\intercal}Z-u_{i}^{\intercal}Z\right|+u_{i}^{\intercal}Z\le\left\Vert u-u_{i}\right\Vert \left\Vert Z\right\Vert +\max_{1\le i\le L}u_{i}^{\intercal}Z\le\frac{1}{2}\left\Vert Z\right\Vert +\max_{1\le i\le L}u_{i}^{\intercal}Z.
\]
Then we have
\[
\left\Vert Z\right\Vert =\sup_{u\in\mathbb{S}^{p-1}}u^{\intercal}Z\le\frac{1}{2}\left\Vert Z\right\Vert +\max_{1\le i\le L}u_{i}^{\intercal}Z,
\]
and the lemma follows.
\end{proof}

\subsection{Concentration of Random Vectors}

In this section we prove some Concentration Inequalities for sub-gaussian
and sub-exponential random vectors. We exploit the Orlicz norm in
the proofs. An exposition on Orlicz norm and concentration of random
variables can be found in \cite{Vershynin}. Here, we mention the
following facts.
\begin{lem}
\label{lem:orl-norm}Let $X$ and $Y$ be random variables.
\begin{enumerate}[label=(\alph*),ref=(\alph*)]
\item \label{enu:cent}(Centering) If $X$ has mean $\mathbb{E}X$, then
$\left\Vert X-\mathbb{E}X\right\Vert _{\psi_{i}}\le2\left\Vert X\right\Vert _{\psi_{i}}$
for $i=1,2$;
\item \label{enu:prod-sub-gau}(Product of Sub-gaussians) If $X$ and $Y$
are sub-gaussian, then $XY$ is sub-exponential with Orlicz norm $\left\Vert XY\right\Vert _{\psi_{1}}\le C\left\Vert X\right\Vert _{\psi_{2}}\left\Vert Y\right\Vert _{\psi_{2}}.$
\end{enumerate}
\end{lem}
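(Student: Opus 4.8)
The plan is to derive both claims from three elementary properties of the Luxemburg norm $\left\Vert X\right\Vert_{\psi_i}=\inf\{t>0:\mathbb{E}[\exp(|X|^i/t^i)]\le 2\}$: it is a genuine norm (hence obeys the triangle inequality), it is monotone in the sense that $|U|\le|V|$ pointwise implies $\left\Vert U\right\Vert_{\psi_i}\le\left\Vert V\right\Vert_{\psi_i}$, and it is positively homogeneous. All three hold because $\psi_i(x)=e^{x^i}-1$ is a Young function (convex, vanishing at $0$), and I would simply invoke \cite{Vershynin} for them, disposing at the outset of the degenerate cases in which a relevant norm is $0$ or $+\infty$ (there the asserted inequalities are trivial).

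For part (a), first I would record that a constant satisfies $\left\Vert c\right\Vert_{\psi_i}=|c|/(\log 2)^{1/i}$, obtained by solving $\exp(|c|^i/t^i)=2$ for $t$. Next, evaluating the defining inequality at $t=\left\Vert X\right\Vert_{\psi_i}$ (legitimate since the infimum is attained, by monotone convergence) and applying Jensen's inequality to the convex map $x\mapsto e^{x^i}$ gives $\exp(\mathbb{E}[|X|^i]/t^i)\le\mathbb{E}[\exp(|X|^i/t^i)]\le 2$, i.e.\ $\mathbb{E}[|X|^i]\le(\log 2)\,t^i$; combined with the power-mean bound $\mathbb{E}|X|\le(\mathbb{E}[|X|^i])^{1/i}$ (valid since $i\ge 1$) this yields $|\mathbb{E}X|\le\mathbb{E}|X|\le(\log 2)^{1/i}\left\Vert X\right\Vert_{\psi_i}$, whence $\left\Vert\mathbb{E}X\right\Vert_{\psi_i}=|\mathbb{E}X|/(\log 2)^{1/i}\le\left\Vert X\right\Vert_{\psi_i}$. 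The triangle inequality then closes the argument: $\left\Vert X-\mathbb{E}X\right\Vert_{\psi_i}\le\left\Vert X\right\Vert_{\psi_i}+\left\Vert\mathbb{E}X\right\Vert_{\psi_i}\le 2\left\Vert X\right\Vert_{\psi_i}$.

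For part (b), I would first establish the identity $\left\Vert X^2\right\Vert_{\psi_1}=\left\Vert X\right\Vert_{\psi_2}^2$, which is immediate from the two definitions via the substitution $t\mapsto t^2$ in the infimum. Then Young's inequality $|XY|\le\tfrac12(X^2+Y^2)$ holds pointwise, so monotonicity and the triangle inequality of $\left\Vert\cdot\right\Vert_{\psi_1}$ give $\left\Vert XY\right\Vert_{\psi_1}\le\tfrac12(\left\Vert X^2\right\Vert_{\psi_1}+\left\Vert Y^2\right\Vert_{\psi_1})=\tfrac12(\left\Vert X\right\Vert_{\psi_2}^2+\left\Vert Y\right\Vert_{\psi_2}^2)$. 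Finally I would homogenize by replacing $X,Y$ with $X/\left\Vert X\right\Vert_{\psi_2}$ and $Y/\left\Vert Y\right\Vert_{\psi_2}$, reducing to the unit-norm case where the last display reads $\left\Vert XY\right\Vert_{\psi_1}\le 1$; undoing the scaling gives $\left\Vert XY\right\Vert_{\psi_1}\le\left\Vert X\right\Vert_{\psi_2}\left\Vert Y\right\Vert_{\psi_2}$, so in fact $C=1$ works. There is no genuine obstacle here — these are textbook facts — and the only care needed is the bookkeeping just indicated: that $\psi_i$ is a Young function so $\left\Vert\cdot\right\Vert_{\psi_i}$ is truly a norm, that the defining infimum is attained, and that the degenerate cases are set aside; all of this is standard and may be relegated to \cite{Vershynin}.
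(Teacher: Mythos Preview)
Your argument is correct. The paper does not actually prove this lemma: its entire proof reads ``See \cite{Vershynin}.'' You, by contrast, supply a self-contained derivation, invoking \cite{Vershynin} only for the background facts that $\left\Vert\cdot\right\Vert_{\psi_i}$ is a genuine norm and is monotone. Your route for (a) via $\left\Vert\mathbb{E}X\right\Vert_{\psi_i}\le\left\Vert X\right\Vert_{\psi_i}$ plus the triangle inequality, and for (b) via the identity $\left\Vert X^2\right\Vert_{\psi_1}=\left\Vert X\right\Vert_{\psi_2}^2$, Young's inequality, and homogenization, is the standard textbook argument and in fact yields the sharp constant $C=1$ in (b). So your proposal is strictly more informative than the paper's own treatment, at the cost of a few lines the authors chose to outsource.
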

\begin{proof}
See \cite{Vershynin}.
\end{proof}
A concentration inequality for sub-gaussian random vectors.
\begin{lem}
\label{lem:constr-sub-gau}Suppose $Y$ is a centered random vector
in $\mathbb{R}^{p}$ such that $u^{\intercal}Y$ is sub-gaussian with
Orlicz norm $\left\Vert u^{\intercal}Y\right\Vert _{\psi_{2}}\le K$
for any $u\in\mathbb{S}^{p-1}$. If $Y_{k}$ is an i.i.d. copy of
$Y$ for $k=1,\cdots,n$, then for $\delta>0$ there holds
\[
\left\Vert \frac{1}{n}\sum_{k=1}^{n}Y_{k}\right\Vert \le CK\sqrt{\frac{\log(L/\delta)}{n}}
\]
with probability at least $1-\delta$.
\end{lem}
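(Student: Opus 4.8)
The plan is to combine the one-dimensional sub-gaussian tail bound for each projection $u^{\intercal}Y$ with the discretization-of-norm trick from \prettyref{lem:discret-norm}, followed by a union bound over a fixed $\tfrac{1}{2}$-net of $\mathbb{S}^{p-1}$. First I would fix $u\in\mathbb{S}^{p-1}$ and consider the i.i.d.\ copies $u^{\intercal}Y_{1},\dots,u^{\intercal}Y_{n}$, which are centered sub-gaussian random variables with Orlicz norm at most $K$. Their average $\tfrac{1}{n}\sum_{k=1}^{n}u^{\intercal}Y_{k}$ is then sub-gaussian with Orlicz norm at most $CK/\sqrt{n}$ (this is the standard fact that a normalized sum of independent centered sub-gaussians is sub-gaussian with the claimed parameter; it follows from the MGF characterization of the Orlicz norm together with independence). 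Hence for any $t>0$,
\[
\Pr\left\{\left|u^{\intercal}\tfrac{1}{n}\textstyle\sum_{k=1}^{n}Y_{k}\right|\ge t\right\}\le 2\exp\left(-\frac{cnt^{2}}{K^{2}}\right).
\]

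Next I would invoke \prettyref{lem:discret-norm}: there is a set $\{u_{i}\}_{i=1}^{L}\subseteq\mathbb{S}^{p-1}$ with $L<5^{p}$ such that $\|Z\|\le 2\max_{1\le i\le L}u_{i}^{\intercal}Z$ for every $Z\in\mathbb{R}^{p}$. Applying this to $Z=\tfrac{1}{n}\sum_{k}Y_{k}$ and taking a union bound over the $L$ net points gives
\[
\Pr\left\{\left\|\tfrac{1}{n}\textstyle\sum_{k=1}^{n}Y_{k}\right\|\ge 2t\right\}\le\sum_{i=1}^{L}\Pr\left\{\left|u_{i}^{\intercal}\tfrac{1}{n}\textstyle\sum_{k}Y_{k}\right|\ge t\right\}\le 2L\exp\left(-\frac{cnt^{2}}{K^{2}}\right).
\]
Setting the right-hand side equal to $\delta$ and solving for $t$ yields $t=CK\sqrt{\log(L/\delta)/n}$ (absorbing the factor $2$ and the constant $c$ into $C$), which after renaming gives the stated bound with probability at least $1-\delta$.

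The only genuinely delicate point is the claim that the normalized sum $\tfrac{1}{n}\sum_{k}u^{\intercal}Y_{k}$ has sub-gaussian Orlicz norm $O(K/\sqrt{n})$; everything else is bookkeeping. This is a textbook fact (see \cite{Vershynin}), so I would either cite it directly or prove it in one line via the moment generating function bound: $\mathbb{E}\exp(\lambda u^{\intercal}Y_{k})\le\exp(C\lambda^{2}K^{2})$ for all $\lambda$, hence by independence $\mathbb{E}\exp(\tfrac{\lambda}{n}\sum_{k}u^{\intercal}Y_{k})\le\exp(C\lambda^{2}K^{2}/n)$, which is exactly the sub-gaussian MGF with parameter $CK/\sqrt{n}$, and the tail bound above follows by the Chernoff method. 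I do not anticipate any obstacle beyond keeping the numerical constants consistent, and I would note in passing (as the footnote in \prettyref{sec:app-cla-mod} already records) that $\log(L/\delta)\le C(p+\log(1/\delta))$, so the bound scales as expected in the dimension $p$ and sample size $n$.
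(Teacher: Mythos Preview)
Your proposal is correct and follows essentially the same approach as the paper: reduce to one-dimensional sub-gaussian concentration via the MGF/Chernoff bound, control the Orlicz norm of the normalized sum by $O(K/\sqrt{n})$ using independence, and pass from projections to the full norm via the $\tfrac12$-net of \prettyref{lem:discret-norm} together with a union bound. The only cosmetic difference is ordering---the paper bounds the MGF of $\|Z\|$ directly by $\sum_i \mathbb{E}\exp(2\lambda u_i^{\intercal}Z)$ before applying Chernoff, whereas you apply Chernoff in each direction first and then union bound---but the two are equivalent.
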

\begin{proof}
Let $\{u_{i}\}_{i=1}^{L}$ be a $\frac{1}{2}$-net of the unit sphere
$\mathbb{S}^{p-1}\subset\mathbb{R}^{p}$ and let $Z:=\frac{1}{n}\sum_{k=1}^{n}Y_{k}$,
then by \prettyref{lem:discret-norm}, $\left\Vert Z\right\Vert \le2\max_{1\le i\le L}u_{i}^{\intercal}Z$.
By rotation invariance of sub-gaussian variables, we have
\[
\left\Vert u^{\intercal}Z\right\Vert _{\psi_{2}}^{2}=\frac{1}{n^{2}}\left\Vert \sum_{i=1}^{n}u^{\intercal}Y_{k}\right\Vert _{\psi_{2}}^{2}\le\frac{C_{1}}{n^{2}}\sum_{i=1}^{n}\left\Vert u^{\intercal}Y_{k}\right\Vert _{\psi_{2}}^{2}\le\frac{C_{2}K^{2}}{n}.
\]
Then the moment generating function of $\left\Vert Z\right\Vert $
is bounded by
\begin{align*}
\mathbb{E}\exp\left(\lambda\left\Vert Z\right\Vert \right) & \le\mathbb{E}\exp\left(2\lambda\max_{1\le i\le L}u_{i}^{\intercal}Z\right)=\mathbb{E}\left[\max_{1\le i\le L}\exp\left(2\lambda u_{i}^{\intercal}Z\right)\right]\\
 & \le\sum_{i=1}^{L}\mathbb{E}\exp\left(2\lambda u_{i}^{\intercal}Z\right)\stackrel{(a)}{\le}\sum_{i=1}^{L}\exp\left(C_{3}\frac{4\lambda^{2}K^{2}}{n}\right)\\
 & =L\exp\left(C_{4}\frac{\lambda^{2}K^{2}}{n}\right)
\end{align*}
for $\lambda>0$, where $(a)$ follows from the sub-gaussianity of
$u_{i}^{\intercal}Z$. Hence by Chernoff bound, for any $t>0$, we
have
\begin{align*}
\Pr\left\{ \left\Vert Z\right\Vert \ge t\right\}  & \le\inf_{\lambda>0}\left\{ \exp\left(-\lambda t\right)\mathbb{E}\exp\left(\lambda\left\Vert Z\right\Vert \right)\right\} \\
 & \le\inf_{\lambda>0}\left\{ L\exp\left(C_{4}\frac{\lambda^{2}K^{2}}{n}-\lambda t\right)\right\} \\
 & =L\exp\left(-\frac{nt^{2}}{4C_{4}K^{2}}\right),
\end{align*}
and the lemma follows by setting $L\exp\left(-\frac{nt^{2}}{4C_{4}K^{2}}\right)=\delta$
and solving for $t$.
\end{proof}
\begin{rem*}
In view of \prettyref{lem:orl-norm}$\prettyref{enu:cent}$, the result
above can be extended to non-centered random vectors by simply replacing
$Y$ with $Y-\mathbb{E}Y$.
\end{rem*}
A concentration inequality for sub-exponential random vectors.
\begin{lem}
\label{lem:constr-sub-exp}Suppose $Y$ is a centered random vector
in $\mathbb{R}^{p}$ such that $u^{\intercal}Y$ is sub-exponential
with Orlicz norm $\left\Vert u^{\intercal}Y\right\Vert _{\psi_{1}}\le K$
for any $u\in\mathbb{S}^{p-1}$. If $Y_{k}$ is an i.i.d. copy of
$Y$ for $k=1,\cdots,n$, then for $\delta>0$ and $n>c\log\left(L/\delta\right)$
there holds
\[
\left\Vert \frac{1}{n}\sum_{k=1}^{n}Y_{k}\right\Vert \le CK\sqrt{\frac{\log(L/\delta)}{n}}
\]
with probability at least $1-\delta$.
\end{lem}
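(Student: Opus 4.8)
The plan is to combine the discretization-of-norm device with a Bernstein-type tail bound for averages of i.i.d.\ sub-exponential scalars, following the same template as the proof of \prettyref{lem:constr-sub-gau}. First I would fix a $\tfrac12$-net $\{u_i\}_{i=1}^{L}$ of $\mathbb{S}^{p-1}$ with $L<5^{p}$ as in \prettyref{lem:discret-norm}, so that, setting $Z\coloneqq\frac1n\sum_{k=1}^{n}Y_k$, we have $\lVert Z\rVert\le 2\max_{1\le i\le L}u_i^{\intercal}Z$. It then suffices to bound, uniformly over the $L$ net directions, the scalar average $u^{\intercal}Z=\frac1n\sum_{k=1}^{n}u^{\intercal}Y_k$, which is a mean of $n$ i.i.d., centered, sub-exponential random variables with $\psi_1$-norm at most $K$ (centering is harmless, and if one only assumes $\lVert u^{\intercal}Y\rVert_{\psi_1}\le K$ without centering, \prettyref{lem:orl-norm} lets us replace $Y$ by $Y-\mathbb{E}Y$ at the cost of a factor $2$).

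Next I would estimate moment generating functions. For a fixed unit vector $u$, sub-exponentiality gives an absolute constant with $\mathbb{E}\exp(\lambda\,u^{\intercal}Y_k)\le\exp(C\lambda^2K^2)$ for all $\lvert\lambda\rvert\le c/K$, hence by independence $\mathbb{E}\exp(\lambda\,u^{\intercal}Z)\le\exp(C\lambda^2K^2/n)$ for $\lvert\lambda\rvert\le cn/K$. Using $\lVert Z\rVert\le 2\max_i u_i^{\intercal}Z$ and summing over the net as in \prettyref{lem:constr-sub-gau} yields $\mathbb{E}\exp(\lambda\lVert Z\rVert)\le L\exp(C_1\lambda^2K^2/n)$ on the same range of $\lambda$. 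A Chernoff bound then gives, for every $t>0$,
\[
\Pr\{\lVert Z\rVert\ge t\}\le L\inf_{0<\lambda\le cn/K}\exp\!\Bigl(C_1\tfrac{\lambda^2K^2}{n}-\lambda t\Bigr),
\]
and carrying out the constrained minimization reproduces the usual two-regime bound $\Pr\{\lVert Z\rVert\ge t\}\le L\exp\bigl(-c_2 n\min(t^2/K^2,\,t/K)\bigr)$.

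Finally I would set the right-hand side equal to $\delta$ and solve for $t$. The sample-size hypothesis $n>c\log(L/\delta)$ is exactly what keeps the Chernoff optimizer $\lambda$ inside the admissible window $\lambda\le cn/K$; equivalently, it forces the solution to satisfy $t\asymp K\sqrt{\log(L/\delta)/n}\le K$, so that the quadratic (sub-gaussian) branch $t^2/K^2$ is the active one in the $\min$. This produces $t=CK\sqrt{\log(L/\delta)/n}$ and the claimed inequality with probability at least $1-\delta$. I expect the only real obstacle to be this regime check --- verifying that the stated lower bound on $n$ keeps the optimization in the linear-exponent range so that the sub-exponential tail behaves sub-gaussianly at the scale of interest; everything else is the routine net-plus-Chernoff bookkeeping already carried out for \prettyref{lem:constr-sub-gau}.
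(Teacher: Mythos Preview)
Your proposal is correct and follows essentially the same route as the paper: fix a $\tfrac12$-net, reduce $\lVert Z\rVert$ to a maximum over net directions via \prettyref{lem:discret-norm}, bound the MGF of each $u_i^{\intercal}Z$ using independence and the sub-exponential MGF bound on the restricted range $|\lambda|\lesssim n/K$, sum over the net, apply Chernoff, and use the hypothesis $n>c\log(L/\delta)$ to ensure the Chernoff optimizer stays in the admissible window (equivalently, that $t\lesssim K$ so the sub-gaussian branch of the Bernstein tail is the active one). The paper does not state the two-regime $\min(t^2/K^2,t/K)$ bound explicitly but works directly with the constraint $t<C_4K$; this is the same argument in slightly different bookkeeping.
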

\begin{proof}
Let $\{u_{i}\}_{i=1}^{L}$ be a $\frac{1}{2}$-net of the unit sphere
$\mathbb{S}^{p-1}\subset\mathbb{R}^{p}$ and let $Z:=\frac{1}{n}\sum_{k=1}^{n}Y_{k}$,
then by \prettyref{lem:discret-norm}, $\left\Vert Z\right\Vert \le2\max_{1\le i\le L}u_{i}^{\intercal}Z$.

Then for $0<\lambda<C_{1}n/K$, the moment generating function of
$\left\Vert Z\right\Vert $ exists and is bounded by
\begin{align*}
\mathbb{E}\exp\left(\lambda\left\Vert Z\right\Vert \right) & \le\mathbb{E}\exp\left(2\lambda\max_{1\le i\le L}u_{i}^{\intercal}Z\right)=\mathbb{E}\left[\max_{1\le i\le L}\exp\left(2\lambda u_{i}^{\intercal}Z\right)\right]\\
 & \le\sum_{i=1}^{L}\mathbb{E}\exp\left(2\lambda u_{i}^{\intercal}Z\right)=\sum_{i=1}^{L}\mathbb{E}\exp\left(\sum_{k=1}^{n}\frac{2\lambda}{n}u_{i}^{\intercal}Y_{k}\right)\\
 & \stackrel{(a)}{=}\sum_{i=1}^{L}\prod_{k=1}^{n}\mathbb{E}\exp\left(\frac{2\lambda}{n}u_{i}^{\intercal}Y_{k}\right)\stackrel{(b)}{\le}\sum_{i=1}^{L}\prod_{k=1}^{n}\exp\left(C_{2}\left(\frac{2\lambda K}{n}\right)^{2}\right)\\
 & =\sum_{i=1}^{L}\exp\left(C_{2}\frac{4\lambda^{2}K^{2}}{n}\right)=L\exp\left(C_{3}\frac{\lambda^{2}K^{2}}{n}\right),
\end{align*}
where $(a)$ follows from the independence of $Y_{k}$; $(b)$ follows
from the fact that $u_{i}^{\intercal}Y_{k}$ is sub-exponential. Then
by Chernoff bound, for any $t>0$, we have
\begin{align*}
\Pr\left\{ \left\Vert Z\right\Vert \ge t\right\}  & \le\inf\left\{ \exp\left(-\lambda t\right)\mathbb{E}\exp\left(\lambda\left\Vert Z\right\Vert \right)\mid0<\lambda<C_{1}n/K\right\} \\
 & \le\inf\left\{ L\exp\left(C_{3}\frac{\lambda^{2}K^{2}}{n}-\lambda t\right)\mid0<\lambda<C_{1}n/K\right\} \\
 & =L\exp\left(-\frac{nt^{2}}{4C_{3}K^{2}}\right),
\end{align*}
if $\frac{nt}{2C_{3}K^{2}}<C_{1}n/K$ or $t<C_{4}K$. By setting $L\exp\left(-\frac{nt^{2}}{4C_{3}K^{2}}\right)=\delta$,
we have $t=CK\sqrt{\frac{\log(L/\delta)}{n}}$, and the lemma follows
whenever $n>c\log\left(L/\delta\right)$.
\end{proof}
\begin{rem*}
In view of \prettyref{lem:orl-norm}$\prettyref{enu:cent}$, the result
above can be extended to non-centered random vectors by simply replacing
$Y$ with $Y-\mathbb{E}Y$.
\end{rem*}
\newpage

\pdfbookmark[1]{References}{sec:ref}

\end{document}